\newtheorem{thm}{Theorem} [section]
\newtheorem{lemma}[thm]{Lemma}
\newtheorem{corollary}[thm]{Corollary}
\newtheorem{prop}[thm]{Proposition}
\newtheorem*{rough-thm-1}{Rough Version of Vanishing Theorem}
\newtheorem*{rough-thm-2}{Rough Version of Exactness Theorem}
\theoremstyle{definition}
\newtheorem*{basic convention}{Basic Conventions}
\newtheorem{defn}[thm]{Definition}
\newtheorem{example}[thm]{Example}
\newtheorem{convention}[thm]{Convention}
\theoremstyle{remark}
\newtheorem{remark}[thm]{Remark}
\newtheorem{claim}[thm]{Claim}
\begin{document}

\numberwithin{equation}{section}

\newcommand{\hs}{\mbox{\hspace{.4em}}}
\newcommand{\ds}{\displaystyle}
\newcommand{\bd}{\begin{displaymath}}
\newcommand{\ed}{\end{displaymath}}
\newcommand{\bcd}{\begin{CD}}
\newcommand{\ecd}{\end{CD}}

\newcommand{\proj}{\operatorname{Proj}}
\newcommand{\bproj}{\underline{\operatorname{Proj}}}
\newcommand{\spec}{\operatorname{Spec}}
\newcommand{\bspec}{\underline{\operatorname{Spec}}}
\newcommand{\pline}{{\mathbf P} ^1}
\newcommand{\pplane}{{\mathbf P}^2}
\newcommand{\coker}{{\operatorname{coker}}}
\newcommand{\ldb}{[[}
\newcommand{\rdb}{]]}

\newcommand{\Sym}{\operatorname{Sym}^{\bullet}}
\newcommand{\Symp}{\operatorname{Sym}}
\newcommand{\Pic}{\operatorname{Pic}}
\newcommand{\AAut}{\operatorname{Aut}}
\newcommand{\PAut}{\operatorname{PAut}}

\newcommand{\too}{\twoheadrightarrow}
\newcommand{\C}{{\mathbf C}}
\newcommand{\cA}{{\mathcal A}}
\newcommand{\cS}{{\mathcal S}}
\newcommand{\cV}{{\mathcal V}}
\newcommand{\cM}{{\mathcal M}}
\newcommand{\bA}{{\mathbf A}}
\newcommand{\aline}{\mathbb{A}^1}
\newcommand{\cB}{{\mathcal B}}
\newcommand{\cC}{{\mathcal C}}
\newcommand{\cD}{{\mathcal D}}
\newcommand{\D}{{\mathcal D}}
\newcommand{\cs}{{\mathbf C} ^*}
\newcommand{\boldc}{{\mathbf C}}
\newcommand{\cE}{{\mathcal E}}
\newcommand{\cF}{{\mathcal F}}
\newcommand{\cG}{{\mathcal G}}
\newcommand{\G}{{\mathbf G}}
\newcommand{\fg}{{\mathfrak g}}
\newcommand{\ft}{\mathfrak t}
\newcommand{\bH}{{\mathbf H}}
\newcommand{\cH}{{\mathcal H}}
\newcommand{\cI}{{\mathcal I}}
\newcommand{\cJ}{{\mathcal J}}
\newcommand{\cK}{{\mathcal K}}
\newcommand{\cL}{{\mathcal L}}
\newcommand{\baL}{{\overline{\mathcal L}}}
\newcommand{\M}{{\mathcal M}}
\newcommand{\bM}{{\mathbf M}}
\newcommand{\bm}{{\mathbf m}}
\newcommand{\cN}{{\mathcal N}}
\newcommand{\theo}{\mathcal{O}}
\newcommand{\cP}{{\mathcal P}}
\newcommand{\cR}{{\mathcal R}}
\newcommand{\boldp}{{\mathbf P}}
\newcommand{\boldq}{{\mathbf Q}}
\newcommand{\bbL}{{\mathbf L}}
\newcommand{\cQ}{{\mathcal Q}}
\newcommand{\cO}{{\mathcal O}}
\newcommand{\Oo}{{\mathcal O}}
\newcommand{\OX}{{\Oo_X}}
\newcommand{\OY}{{\Oo_Y}}
\newcommand{\otY}{{\underset{\OY}{\ot}}}
\newcommand{\otX}{{\underset{\OX}{\ot}}}
\newcommand{\cU}{{\mathcal U}}
\newcommand{\cX}{{\mathcal X}}
\newcommand{\cW}{{\mathcal W}}
\newcommand{\boldz}{{\mathbf Z}}
\newcommand{\cZ}{{\mathcal Z}}
\newcommand{\qgr}{\operatorname{qgr}}
\newcommand{\gr}{\operatorname{gr}}
\newcommand{\coh}{\operatorname{coh}}
\newcommand{\End}{\operatorname{End}}
\newcommand{\Hom}{\operatorname{Hom}}
\newcommand{\uHom}{\underline{\operatorname{Hom}}}
\newcommand{\uHomY}{\uHom_{\OY}}
\newcommand{\uHomX}{\uHom_{\OX}}
\newcommand{\Ext}{\operatorname{Ext}}
\newcommand{\bExt}{\operatorname{\bf{Ext}}}
\newcommand{\Tor}{\operatorname{Tor}}

\newcommand{\inv}{^{-1}}
\newcommand{\airtilde}{\widetilde{\hspace{.5em}}}
\newcommand{\airhat}{\widehat{\hspace{.5em}}}
\newcommand{\nt}{^{\circ}}
\newcommand{\del}{\partial}

\newcommand{\supp}{\operatorname{supp}}
\newcommand{\GK}{\operatorname{GK-dim}}
\newcommand{\hd}{\operatorname{hd}}
\newcommand{\id}{\operatorname{id}}
\newcommand{\res}{\operatorname{res}}
\newcommand{\lrar}{\leadsto}
\newcommand{\im}{\operatorname{Im}}
\newcommand{\HH}{\operatorname{H}}
\newcommand{\TF}{\operatorname{TF}}
\newcommand{\Bun}{\operatorname{Bun}}
\newcommand{\Hilb}{\operatorname{Hilb}}
\newcommand{\Fact}{\operatorname{Fact}}
\newcommand{\F}{\mathcal{F}}
\newcommand{\nthord}{^{(n)}}
\newcommand{\Aut}{\underline{\operatorname{Aut}}}
\newcommand{\Gr}{\operatorname{Gr}}
\newcommand{\Fr}{\operatorname{Fr}}
\newcommand{\GL}{\operatorname{GL}}
\newcommand{\gl}{\mathfrak{gl}}
\newcommand{\SL}{\operatorname{SL}}
\newcommand{\ff}{\footnote}
\newcommand{\ot}{\otimes}
\def\Ext{\operatorname {Ext}}
\def\Hom{\operatorname {Hom}}
\def\Ind{\operatorname {Ind}}
\def\bbZ{{\mathbb Z}}

\newcommand{\nc}{\newcommand}
\newcommand{\on}{\operatorname}
\nc{\cont}{\on{cont}}
\nc{\rmod}{\on{mod}}
\nc{\Mtil}{\widetilde{M}}
\nc{\wb}{\overline}
\nc{\wt}{\widetilde}
\nc{\wh}{\widehat}
\nc{\sm}{\setminus}
\nc{\mc}{\mathcal}
\nc{\mbb}{\mathbb}
\nc{\Mbar}{\wb{M}}
\nc{\Nbar}{\wb{N}}
\nc{\Mhat}{\wh{M}}
\nc{\pihat}{\wh{\pi}}
\nc{\JYX}{\cJ_{Y\leftarrow X}}
\nc{\phitil}{\wt{\phi}}
\nc{\Qbar}{\wb{Q}}
\nc{\DYX}{\D_{Y\leftarrow X}}
\nc{\DXY}{\D_{X\to Y}}
\nc{\dR}{\stackrel{\bbL}{\underset{\D_X}{\ot}}}
\nc{\Winfi}{\cW_{1+\infty}}
\nc{\K}{{\mc K}}
\nc{\unit}{{\bf \on{unit}}}
\nc{\boxt}{\boxtimes}
\nc{\xarr}{\stackrel{\rightarrow}{x}}
\nc{\Cnatbar}{\overline{C}^{\natural}}
\nc{\oJac}{\overline{\on{Jac}}}
\nc{\gm}{{\mathbf G}_m}
\nc{\Loc}{\on{Loc}}
\nc{\Bm}{\operatorname{Bimod}}
\nc{\lie}{{\mathfrak g}}
\nc{\lb}{{\mathfrak b}}
\nc{\lien}{{\mathfrak n}}
\nc{\e}{\epsilon}
\nc{\eu}{\mathsf{eu}}

\nc{\Gm}{{\mathbb G}_m}
\nc{\Gabar}{\wb{\G}_a}
\nc{\Gmbar}{\wb{\G}_m}
\nc{\PD}{{\mathbb P}_{\D}}
\nc{\Pbul}{P_{\bullet}}
\nc{\PDl}{{\mathbb P}_{\D(\lambda)}}
\nc{\PLoc}{\mathsf{MLoc}}
\nc{\Tors}{\on{Tors}}
\nc{\PS}{{\mathsf{PS}}}
\nc{\PB}{{\mathsf{MB}}}
\nc{\Pb}{{\underline{\operatorname{MBun}}}}
\nc{\Ht}{\mathsf{H}}
\nc{\bbH}{\mathbb H}
\nc{\gen}{^\circ}
\nc{\Jac}{\operatorname{Jac}}
\nc{\sP}{\mathsf{P}}
\nc{\sT}{\mathsf{T}}
\nc{\bP}{{\mathbb P}}
\nc{\otc}{^{\otimes c}}
\nc{\Det}{\mathsf{det}}
\nc{\PL}{\on{ML}}
\nc{\sL}{\mathsf{L}}

\nc{\ml}{{\mathcal S}}
\nc{\Xc}{X_{\on{con}}}
\nc{\Z}{{\mathbb Z}}
\nc{\resol}{\mathfrak{X}}
\nc{\map}{\mathsf{f}}
\nc{\gK}{\mathbb{K}}
\nc{\bigvar}{\mathsf{W}}
\nc{\Tmax}{\mathsf{T}^{md}}

\title{Compatibility of $t$-Structures for Quantum Symplectic Resolutions}
\author{Kevin McGerty}
\address{Mathematical Institute\\University of Oxford\\Oxford OX1 3LB, UK}
\email{kmcgerty@mac.com}
\author{Thomas Nevins}
\address{Department of Mathematics\\University of Illinois at Urbana-Champaign\\Urbana, IL 61801 USA}
\email{nevins@illinois.edu}

\begin{abstract}
Let $W$ be a smooth complex quasiprojective variety with the action of a connected reductive group $G$.  Adapting the stratification approach of Teleman \cite{Teleman} to a microlocal context, we prove a vanishing theorem for the functor of $G$-invariant sections---i.e., of quantum Hamiltonian reduction---for $G$-equivariant twisted $\D$-modules on $W$.  As a consequence, when $W$ is affine we establish an effective combinatorial criterion for exactness of the global sections functors of microlocalization theory.  When combined with the derived equivalence results of \cite{McN1}, this gives precise criteria for ``microlocalization of representation categories'' in the spirit of \cite{GS1, GS2, Holland, KR, DK, MVdB, BKu, BPW, McN1}.  
\end{abstract}

\maketitle


\section{Introduction and Statement of Results}
\subsection{Introduction}
Many noncommutative algebras of intense recent interest are naturally realized via
quantum Hamiltonian reduction from the ring of differential operators on a smooth complex 
affine variety $W$ with the action of a complex Lie group $G$.  Examples
include deformed preprojective algebras \cite{Holland}, the spherical subalgebras of cyclotomic Cherednik algebras
\cite{Gordon, Oblomkov} and more general wreath product
symplectic reflection algebras \cite{EGGO, Losev2}.  Suitably microlocalized categories of equivariant or twisted-equivariant $\D$-modules---in more sophisticated language, $\D$-modules on stacks---provide a natural tool for categorifying representations of quantum groups (cf. \cite{Zheng, LiA, LiB, Web}); in such terms, the algebra realized by quantum Hamiltonian reduction is the space of global sections of a (similarly suitably microlocalized) sheaf of algebras.

\vspace{.7em}

Quantum Hamiltonian reduction depends naturally on a parameter: namely, a character
$c$ of the Lie algebra $\mathfrak{g}$ of the group $G$.  
Under a precise, effectively computable
combinatorial condition on $c$, we prove a vanishing theorem for the
functor of quantum Hamiltonian reduction of $\D$-modules.  

\vspace{.7em}

Precise statements of our main results and their consequences appear in Section \ref{statement of results} below.  However, the rough form of the main results are as follows.  Suppose $W$ is a smooth, connected quasiprojective complex variety with an action of a connected reductive group $G$.  
Let $\chi: G\rightarrow \Gm$ be a group character.
The group $G$ is equipped with  a finite set $\mathcal{S}$ of 1-parameter subgroups of a fixed maximal torus $\mathsf{T}\subset G$, depending on $W$ and $\chi$: these are the {\em Kirwan-Ness 1-parameter subgroups}.  An algorithm for computing $\mathcal{S}$ when $W$ is a representation of $G$ is explained in the body of the paper.  To each $\beta$ we associate a numerical shift $\mathsf{shift}(\beta)$, defined precisely below, and a subset $I(\beta)\subseteq \mathbb{Z}_{\geq 0}$.
\begin{rough-thm-1}
Suppose that, for each $\beta\in\mathcal{S}$, 
\bd
c(\beta)\notin \mathsf{shift}(\beta) + I(\beta) \subseteq \mathsf{shift}(\beta) + \mathbb{Z}_{\geq 0}.
\ed
Then any $c$-twisted, $G$-equivariant $\D$-module with {\em unstable singular support} is in the kernel of quantum Hamiltonian reduction.
\end{rough-thm-1}
 
\noindent
As a
consequence, we establish effective criteria for $t$-exactness of direct
image functors in microlocalization theory a la \cite{KR, McN1, BPW}, or indeed in any reasonable technical framework for localization results in characteristic $0$.  

Namely, if $W$ is affine, $\mu: T^*W\rightarrow \mathfrak{g}^*$ is the moment map for the $G$-action, $\mu$ is flat, and the GIT quotient $\resol = \mu^{-1}(0)/\!\!/_{\chi} G$ is smooth, there are various technical frameworks to produce a natural quantization $\mathcal{W}_{\resol}(c)$ of $\theo_{\resol}$ depending on $c$ and a ``reasonable'' category of quasicoherent $\mathcal{W}_{\resol}(c)$-modules.\ff{We note that the ``quotient category'' framework of \cite{McN1} works well even when $\resol$ is not smooth.}  We prove:
\begin{rough-thm-2}
Suppose that, for each $\beta\in\mathcal{S}$, 
\bd
c(\beta)\notin \mathsf{shift}(\beta) + I(\beta) \subseteq \mathsf{shift}(\beta) + \mathbb{Z}_{\geq 0}.
\ed
The the functor of global sections on quasicoherent $\mathcal{W}_{\resol}(c)$-modules is exact.
\end{rough-thm-2}
Our results thus provide a far-reaching analogue of the exactness part of
the seminal Beilinson-Bernstein localization theorem in geometric
representation theory, both extending and complementing important precursors \cite{BKu, GGS, GS1, GS2, Holland, KR, MVdB}.  A crucial point is the effectiveness of the combinatorics involved, which provides us with precise control over when such exactness results hold.  We illustrate this effectiveness with a quick and easy derivation of exactness for the quantization of the Hilbert scheme $(\mathbb{C}^2)^{[n]}$ yielding the spherical type $A$ Cherednik algebra.

\subsection{Precise Statement of Results}\label{statement of results}
More precisely, suppose $W$ is a smooth, connected, quasiprojective complex algebraic variety, equipped with the action of a complex reductive group $G$.  A substantial menagerie of interesting examples already arise when $W$ is a representation of $G$; for example, $W$ could be the representation space of a quiver (of a fixed dimension vector) and $G$ the natural automorphism group.  We assume that the canonical line bundle $K_W$ of $W$ is trivialized and is thereby $G$-equivariantly isomorphic to the twist of $\theo_W$ by a character $\gamma_G: G\rightarrow \Gm$---as we explain in Section \ref{Vanishing and Split Surjections for Affine}, this is not a significant restriction.  Write $\rho = \frac{1}{2}d\gamma_G$ and write $\D_W$ for the sheaf of differential operators on $W$, and $\D(W)$ for the algebra of global differential operators.    

Let $\mathfrak{g}= \on{Lie}(G)$.  Associated to the $G$-action there are an {\em infinitesimal $\mathfrak{g}$-action} encoded by a map $\mathfrak{g}\rightarrow \D(W)$, $\mathfrak{g}\ni X\mapsto \wt{X}$, and a {\em canonical quantum comoment map} $\mu^{\on{can}}: \mathfrak{g}\rightarrow \D(W)$, $\mu^{\on{can}}(X) = \wt{X} + \rho(X)$.  Passing to the associated graded and dualizing yields a classical moment map $\mu: T^*W\rightarrow \mathfrak{g}^*$.  
Next, fix a character $c:\mathfrak{g}\rightarrow \C$, or equivalently a linear map $\mathfrak{g}/[\mathfrak{g},\mathfrak{g}]\rightarrow \C$.    Associated to $c$ there is a category of 
{\em (canonically) $c$-twisted $G$-equivariant $\D_W$-modules}: these are $\D_W$-modules $M$ equipped with a $G$-action whose derivative equals the action of $\mathfrak{g}$ via $\mu^{\on{can}}_c: = (\mu^{\on{can}}+c): \mathfrak{g}\rightarrow \D(W)$.  See Section \ref{equivariant modules} for more.  
The category of such modules is denoted $(\D,G,c)-\on{mod}$.  

The {\em quantum Hamiltonian reduction} of $\D_W$ at $c$ is the algebra 
\bd
U_c:= H^0\big(\D_W/\D_W\mu^{\on{can}}_c(\mathfrak{g})\big)^G.
\ed
When $W$ is affine this can be written $U_c = \big(\D(W)/\D(W)\mu^{\on{can}}_c(\mathfrak{g})\big)^G$.
Letting $\mathscr{M}_c = \D_W/\D_W\mu^{\on{can}}_c(\mathfrak{g})$,
the {\em quantum Hamiltonian reduction functor} is 
\bd
(\D,G,c)-\on{mod}\longrightarrow U_c-\on{mod},
\hspace{2em}
M\mapsto \mathbb{H}(M) := \on{Hom}_{(\D,G,c)}(\mathscr{M}_c, M).
\ed
When $W$ is affine, then, writing 
$M_c = \D(W)/\D(W)\mu^{\on{can}}_c(\mathfrak{g})$, 
then $\mathbb{H}(M)$ is equivalently given by
\bd
M\mapsto \mathbb{H}(M)= \on{Hom}_{(\D,G,c)}(M_c, M) \cong M^G.
\ed
The main technical result of the paper is a characterization of a part of the kernel of this functor.   This technical result has strong implications for compatibility of standard $t$-structures in ``microlocalization theory'' for the algebra $U_c$.  

\vspace{0.7em}

To state our results, recall, following the exposition of \cite{Kirwan} and terminology of \cite{Teleman}, the notion of a {\em Kirwan-Ness} (or KN) stratification of the unstable locus of $T^*W$.  The stratification depends on a choice of $G$-equivariant line bundle $\mathscr{L}$ on $T^*W$.  The most interesting examples for us arise when we choose the trivial line bundle with $G$-action twisted by a group character $\chi: G\rightarrow \C$.    As we explain in detail in Section \ref{KN section}, such a stratification of $T^*W$ {\em always} exists  when $W$ is affine, and the $\chi$-unstable locus $T^*W^{\chi-\on{uns}}$ of $T^*W$ thus obtained agrees with that defined more concretely for affine varieties  in \cite{King}.  For more general quasiprojective $W$, we will assume $T^*W$ is equipped with such a stratification (Definition \ref{def:KN-strat}).  

As in \cite{Kirwan, Teleman}, the KN stratification decomposes $T^*W^{\chi-\on{uns}}$ into a finite disjoint union $\coprod S_\beta$ of simpler pieces, each labelled by a $1$-parameter subgroup $\beta$ of $G$.  We give an explicit, constructive recipe for computing the list of 1-parameter subgroups $\beta$ in Section \ref{KN section}; we carry out a sample calculation  relevant to the type $A$ spherical Cherednik algebra in Section \ref{type A spherical section}.   We may (Lemma \ref{KN coisotropic lemma}) restrict attention to a subset 
\bd
\mathsf{KN} = \big\{ \beta \; | \; S_\beta\cap \mu\inv(0) \; \text{contains a nonempty coisotropic subset}\big\}.
\ed

To each connected component of the stratum $S_\beta$ labelled by a 1-parameter subgroup $\beta\in\mathsf{KN}$ we associate three things.  The first is the sum of all the negative $\beta$-weights on $\mathfrak{g}$, which we denote by $\on{wt}_{\mathfrak{n}^-}(\beta)$ (since the corresponding weight spaces span a nilpotent Lie subalgebra $\mathfrak{n}^-$).  To define the second, let $Z_{\beta,i}\subset S_\beta$ denote a connected component of the $\beta$-fixed locus,\footnote{When $W$ is a representation, there is only one $Z_{\beta, i}$ for each $\beta$.} and choose $z\in Z_{\beta, i}$. Then 
$\Gm$ acts via $\beta$ on the normal space $N_{Z_{\beta,i}/T^*W}(z)$ and we write $\on{abs-wt}_{N_{Z_{\beta,i}/T^*W}}(\beta)$ to mean the sum of absolute values of $\beta$-weights on the normal space; it does not depend on the choice of $z\in Z_{\beta, i}$.  Third, $\Gm$ acts on the normal space $N_{S_\beta/T^*W}(z)$ to the stratum $S_\beta$ at $z\in Z_{\beta, i}$, and we let $I_{G,T^*W}(\beta,i)$ denote the set of weights of $\beta$ on the symmetric algebra $\on{Sym}^\bullet\big(N_{S_\beta/T^*W}(z)\big)$---it is a set of non-negative integers (and similarly does not depend on $z\in Z_{\beta, i}$).  

Choosing a filtration of $\D_W$ or, when $W$ is affine, $\D(W)$  yields a notion of the {\em singular support} $SS(M)\subset T^*W$ of a $\D$-module $M$.  For a general $W$ one only knows how to define the {\em operator filtration}, with functions in degree zero and vector fields in degree 1, but in special cases one knows many more: for example, if $W$ is a $G$-representation, any linear $\Gm$-action on $T^*W$ commuting with the $G$-action and having all weights non-positive determines one (Section \ref{quantum notation}).  Fix one of these filtrations.  When $W$ is affine write 
$M_c(\chi^{\ell}) = M_{c-\ell d\chi}\otimes \chi^{\ell}$; as in Formula (4.2) of \cite{McN1}, this is also a $c$-twisted $G$-equivariant $\D$-module.  Assume
we have fixed a choice of character $\chi:G\rightarrow \Gm$ and a corresponding KN stratification of $T^*W$. 
\begin{thm}[Theorem \ref{general affine strong vanishing}]\label{main theorem intro}
Let $W$ be a smooth complex quasiprojective variety.  Fix a character $c:\mathfrak{g}\rightarrow \C$.  Suppose that for every $\beta\in\mathsf{KN}$ and every connected component $Z_{\beta, i}$ of the $\beta$-fixed locus of the KN stratum $S_\beta$, we have 
\begin{equation}\label{intro main formula}
c(\beta)  \notin \left(I_{G,T^*W}(\beta,i) + \on{wt}_{\mathfrak{n}^-}(\beta) + \frac{1}{4}\on{abs-wt}_{N_{Z_{\beta,i}/T^*W}}(\beta)\right).
\end{equation}
Then:
\begin{enumerate}
\item[(i)] If $M$ is any object of $(\D,G,c)-\on{mod}$ with $SS(M)\subseteq (T^*W)^{\chi-uns}$, then $\Hom_{(\D,G,c)}(M_c, M)=0$. 
\end{enumerate}
Suppose that, in addition, $W$ is affine.  Then:
\begin{enumerate}
\item[(ii)]
For every $\ell\ll 0$, there is a finite-dimensional vector subspace 
\bd
V_\ell\subset \Hom_{(\D,G,c)}(M_c(\chi^\ell), M_c)
\ed
 for which the natural composite
evaluation map
\bd
M_c(\chi^\ell)\otimes V_\ell \longrightarrow M_c(\chi^\ell)\otimes \Hom_{(\D,G,c)}\big(M_c(\chi^\ell), M_c\big) \longrightarrow 
M_c
\ed
is a split surjective homomorphism of objects of $(\D,G,c)-\on{mod}$. 
\end{enumerate}
\end{thm}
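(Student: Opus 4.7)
The plan is to prove part (i) by a stratum-by-stratum analysis of the KN stratification of $(T^*W)^{\chi-\on{uns}}$, and then bootstrap part (ii) from (i) via a generation-plus-cokernel argument. For part (i), I would filter $(T^*W)^{\chi-\on{uns}} = \coprod_\beta S_\beta$ by (closures of) KN strata ordered by depth of instability, so that for any $M \in (\D,G,c)-\on{mod}$ with $SS(M) \subseteq (T^*W)^{\chi-\on{uns}}$ one obtains a filtration of $M$ whose successive quotients have singular support in a single stratum $S_\beta$. The long exact sequences of $\Hom_{(\D,G,c)}(M_c, -)$ then reduce the vanishing of $\Hom_{(\D,G,c)}(M_c, M)$ to the case in which $SS(M)$ lies in a single $S_\beta$ with $\beta \in \mathsf{KN}$, and further to a neighborhood of a single connected component $Z_{\beta,i}$ of the $\beta$-fixed locus of $S_\beta$.

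The heart of (i) is then a local microlocal computation near $Z_{\beta,i}$. Since $S_\beta$ is $G$-equivariantly a fibration built by parabolic induction from the $\beta$-attracting cell of $Z_{\beta,i}$, I would express $\Hom_{(\D,G,c)}(M_c, M)$ as a direct sum of $\beta$-weight contributions and then identify the only weights that can survive. These come from three sources, matching precisely the three terms on the right-hand side of \eqref{intro main formula}: (a) polynomial functions in the conormal directions to $S_\beta$, producing the set $I_{G,T^*W}(\beta,i)$; (b) the nilpotent radical $\mathfrak{n}^-$ of the parabolic $P_\beta$, coming from the $G$-equivariance via a Jacquet-type computation and producing the shift $\on{wt}_{\mathfrak{n}^-}(\beta)$; and (c) the half-canonical-bundle normalization built into $\mu^{\on{can}}$ through $\rho = \tfrac{1}{2}d\gamma_G$, contributing $\tfrac{1}{4}\on{abs-wt}_{N_{Z_{\beta,i}/T^*W}}(\beta)$. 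Hypothesis \eqref{intro main formula} states precisely that $c(\beta)$ avoids the sum of these three contributions, so the weight-$c(\beta)$ subspace of $\Hom$ must vanish.

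For part (ii), fix $\ell \ll 0$. Ampleness of the character $\chi^\ell$ on the semistable locus should provide enough maps $M_c(\chi^\ell) \to M_c$ that one may choose a finite-dimensional $V_\ell \subset \Hom_{(\D,G,c)}\bigl(M_c(\chi^\ell), M_c\bigr)$ making the evaluation $E\colon M_c(\chi^\ell) \otimes V_\ell \to M_c$ surjective on the semistable part. The cokernel $C$ of $E$ then has $SS(C) \subseteq (T^*W)^{\chi-\on{uns}}$, and part (i) applied to $C$ yields $\Hom_{(\D,G,c)}(M_c, C) = 0$. Since the canonical projection $M_c \too C$ lies in this $\Hom$, it must vanish, so $C = 0$ and $E$ is surjective. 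For splitness, I would look for a companion $\Ext^1$-vanishing of the same flavor as (i), provable by running the stratified argument in derived form; alternatively, one may enlarge $V_\ell$ iteratively so that a tautological section is produced by a Koszul-like construction.

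The main obstacle will be the microlocal computation in part (i): precisely identifying the three weight contributions and verifying that they combine as in \eqref{intro main formula}. Extracting the $\tfrac{1}{4}\on{abs-wt}$ correction from the canonical bundle trivialization and cleanly reducing to a single fixed component $Z_{\beta,i}$ in the general quasiprojective (non-representation) setting requires careful bookkeeping of the quantum comoment-map normalizations and of the contracting geometry of the KN stratum. This is precisely the point at which the Teleman-style microlocal adaptation alluded to in the abstract must be carried out, and it is what makes the combinatorial criterion in \eqref{intro main formula} sharp.
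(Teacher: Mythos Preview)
Your overall architecture for (i)---descending induction on the KN strata, reducing the vanishing of $\Hom_{(\D,G,c)}(M_c,M)$ to the case $SS(M)\subseteq S_{\geq\beta}$ with $S_\beta$ the open stratum---is exactly what the paper does (Theorems \ref{reductive inductive} and \ref{reductive strong vanishing}). But your core step on a single stratum is where the proposal is too thin. Writing $\Hom$ as ``a direct sum of $\beta$-weight contributions'' is essentially Teleman's argument, and as the introduction notes, that cannot suffice: the result is false at $\hbar=0$, so no purely classical weight bookkeeping will produce the dependence on $c$. The paper's mechanism is different. One replaces $M$ by the cyclic submodule $\D\cdot\phi(\mathbf 1)$, passes to the Rees/DQ module $\cR(M)^\hbar$, and then localizes via the \'etale action map $a\colon U^-\times U_D\to T^*W$ to a symplectic slice $U_D$ (Section~\ref{symplectic geom}). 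Hamiltonian reduction by $U^-$ identifies this, up to an \'etale cover, with a linear symplectic space $\cS$, and Proposition~\ref{action of Euler vector fields} shows that the canonical Euler operator on $T^*W$ acts on the reduced bimodule as the Euler operator on $\cS$ shifted by $\tfrac12\on{wt}_{\mathfrak n^-}(\beta)$---this is where your shift (b) actually comes from, not from a Jacquet functor. Finally, on $\cS$ one applies Kashiwara's equivalence after a partial Fourier transform (Proposition~\ref{application of kashiwara}) to force the $\beta$-invariant generator to vanish unless $c(\beta)$ hits the bad set. The $\tfrac14\on{abs-wt}$ term is the half-density shift hidden in the symmetrization map and in $\mu^{\on{can}}$. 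None of this DQ/slice/Kashiwara machinery appears in your sketch, and it is the genuine content of the theorem.

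For (ii), your argument that $\Hom_{(\D,G,c)}(M_c,C)=0$ forces $C=0$ is correct and in fact slightly stronger than what the paper writes down. But you are overcomplicating the splitting: no $\Ext^1$-vanishing or Koszul iteration is needed. Since $G$ is reductive, the functor $\Hom_{(\D,G,c)}(M_c,-)=(-)^G$ is \emph{exact} on $(\D,G,c)\text{-mod}$; applying it to $M_c(\chi^\ell)\otimes V_\ell\xrightarrow{E} M_c\to C\to 0$ and using $C^G=0$ from part (i) gives a surjection $\Hom(M_c,M_c(\chi^\ell)\otimes V_\ell)\twoheadrightarrow\Hom(M_c,M_c)$, and any preimage of $\on{Id}_{M_c}$ is the desired splitting. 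This is exactly the paper's argument for Theorem~\ref{reductive strong vanishing}(2).
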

\noindent
When $W$ is a representation of $G$, one has a more combinatorial statement:
\begin{thm}[Theorem \ref{reductive strong vanishing}]
Suppose $W$ is a representation of $G$.  Let $\alpha_1, \dots, \alpha_d$ be the weights of the maximal torus $\sT\subseteq G$ on a basis $w_1,\dots, w_d$ of $W$.  Suppose that for every $\beta\in\mathsf{KN}$ we have
\bd
c(\beta)  \notin \left(I_{G,T^*W}(\beta) + \on{wt}_{\mathfrak{n}^-}(\beta) + \frac{1}{2}\sum_{i=1}^d |\alpha_i\bullet\beta|\right).
\ed
Then conclusions (i) and (ii) of Theorem \ref{main theorem intro} hold.
\end{thm}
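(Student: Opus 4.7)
The plan is to deduce this from Theorem \ref{main theorem intro} (i.e., Theorem \ref{general affine strong vanishing}) by computing, in the case that $W$ is a linear representation of $G$, the normal-space quantity $\operatorname{abs-wt}_{N_{Z_{\beta,i}/T^*W}}(\beta)$ directly in terms of the weights $\alpha_1,\dots,\alpha_d$.  Since Theorem \ref{main theorem intro} already provides the vanishing and split surjectivity conclusions under the hypothesis
\[
c(\beta)\notin I_{G,T^*W}(\beta,i) + \operatorname{wt}_{\mathfrak{n}^-}(\beta) + \tfrac{1}{4}\operatorname{abs-wt}_{N_{Z_{\beta,i}/T^*W}}(\beta),
\]
all that remains is to identify $\tfrac{1}{4}\operatorname{abs-wt}_{N_{Z_{\beta,i}/T^*W}}(\beta)$ with $\tfrac{1}{2}\sum_{i=1}^d|\alpha_i\bullet\beta|$.

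First I would observe that for $W$ a representation with weight basis $w_1,\dots,w_d$ having $\sT$-weights $\alpha_1,\dots,\alpha_d$, the cotangent bundle $T^*W\cong W\oplus W^*$ carries the $\sT$-weights $\pm\alpha_1,\dots,\pm\alpha_d$.  In particular $T^*W$ is a $G$-representation; its $\beta$-fixed locus $(T^*W)^\beta$ is the linear subspace spanned by those coordinate lines whose weight pairs to zero against $\beta$.  Because $(T^*W)^\beta$ is a linear subspace, it is connected, confirming that there is a unique $Z_\beta:=Z_{\beta,i}=(T^*W)^\beta\cap S_\beta$ component for each $\beta$, as flagged in the footnote.

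Next I would compute the $\beta$-weights on the normal space $N_{Z_\beta/T^*W}(z)$ at a point $z\in Z_\beta$.  Since $T^*W$ is linear, the tangent space at $z$ is identified $G$-equivariantly with $T^*W$, and $N_{Z_\beta/T^*W}(z)$ is identified with the sum of the nonzero $\beta$-weight spaces of $T^*W$, which carry the $\beta$-weights $\alpha_i\bullet\beta$ and $-\alpha_i\bullet\beta$ for each $i$ with $\alpha_i\bullet\beta\neq 0$.  Summing absolute values yields
\[
\operatorname{abs-wt}_{N_{Z_\beta/T^*W}}(\beta) \;=\; \sum_{i\,:\,\alpha_i\bullet\beta\ne 0}\bigl(|\alpha_i\bullet\beta|+|{-\alpha_i\bullet\beta}|\bigr) \;=\; 2\sum_{i=1}^d |\alpha_i\bullet\beta|,
\]
so that $\tfrac{1}{4}\operatorname{abs-wt}_{N_{Z_\beta/T^*W}}(\beta)=\tfrac{1}{2}\sum_i|\alpha_i\bullet\beta|$.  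Substituting into the hypothesis of Theorem \ref{main theorem intro} gives precisely the combinatorial inequality stated here, and the conclusions (i) and (ii) follow immediately.

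There is essentially no obstacle: the entire content sits in Theorem \ref{main theorem intro}, and the representation hypothesis is used only to convert a geometric normal-weight computation into a linear-algebraic sum over the weights $\alpha_i$.  The one thing to verify carefully is that the stratum $S_\beta$ and the relevant $Z_{\beta,i}$ coincide for representations with the naive $\beta$-fixed linear subspace at the level of normal-weight calculations—but this is clear because $S_\beta$ is an open subset of the $G$-orbit of an affine bundle over $(T^*W)^\beta$, so normal-space weights to $Z_\beta$ in $T^*W$ are computed from the ambient linear structure exactly as above.
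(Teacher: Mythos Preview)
Your proposal is correct, and the key weight computation you make---that $\operatorname{abs-wt}_{N_{Z_\beta/T^*W}}(\beta) = 2\sum_i |\alpha_i\bullet\beta|$---is exactly the identity the paper records as \eqref{relationship of weight sums}.

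The organizational route differs slightly. The paper proves Theorem \ref{reductive strong vanishing} \emph{first}, by direct induction on the KN strata using the representation-case inductive step (Theorem \ref{reductive inductive-rep}): part (1) is an ascending induction on $\beta$, and part (2) follows formally from part (1) by observing that the cokernel of the evaluation map has unstable singular support. Theorem \ref{general affine strong vanishing} is then proved by repeating the same argument with the general inductive step (Theorem \ref{reductive inductive}) in place of Theorem \ref{reductive inductive-rep}. You instead deduce the representation case \emph{from} the general quasiprojective case via the weight identity. Both routes are valid; the paper's ordering reflects that the representation case is the model from which the general argument is extracted, while your route is cleaner once the general theorem is in hand. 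The weight computation you isolate is in fact how the paper shows (in Section \ref{reduction}) that Theorem \ref{reductive inductive-rep} is a special case of Theorem \ref{reductive inductive}, so the mathematical content is the same.
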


Part (i) of the theorem says that unstably-supported twisted-equivariant $\D$-modules are in the kernel of quantum Hamiltonian reduction, except for a precise collection of values of the twist $c$.  Part (ii) provides a flexible, general tool for proving, in any reasonable framework for a ``microlocalization theory'' for the algebra $U_c$, that the global sections functor is exact---in other words, right exact for the standard $t$-structures on the two categories---provided the hypothesis on $c$ in the theorem is satisfied.\footnote{Condition \ref{intro main formula} is thus the analogue of ``dominant'' in Beilinson-Bernstein localization.}   In Sections \ref{W framework} and 
\ref{McN framework} we make this statement precise in two such frameworks, the deformation quantization approach used in \cite{KR} and the quotient category approach of \cite{McN1}.  The slogan is as follows (all undefined terms are from \cite{KR}).  
\begin{corollary}[Theorems \ref{W thm}, \ref{t-exactness}]
Suppose the condition on $c$ of Theorem \ref{main theorem intro} is satisfied.  Suppose $Z = \mu\inv(0)/\!\!/_{\chi}G$  is a smooth Hamiltonian reduction via a GIT quotient at the character $\chi$ of $G$; let $\cW(c)$ denote the sheaf of deformation quantization algebras on $Z$ constructed by quantum Hamiltonian reduction.  Then the global sections functor for good $\Gm$-equivariant $\cW(c)$-modules is exact.
\end{corollary}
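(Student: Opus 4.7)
The plan is to reduce the exactness statement to the split surjection provided by part (ii) of Theorem \ref{main theorem intro}, combined with classical Serre-type vanishing on the GIT quotient $Z$.

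First, I record the microlocalization dictionary (as developed in \cite{KR}) between the $(\D, G, c)$-module category on $W$ and good $\Gm$-equivariant $\cW(c)$-modules on $Z$. Under this dictionary, the $\D$-module $M_c$ descends to the ``rank-one'' W-module $\cW(c)$ (viewed as a module over itself), while $M_c(\chi^\ell)$ descends to the line-bundle twist $\cW(c) \otimes_{\theo_Z} \mathscr{L}^\ell$, where $\mathscr{L}$ is the ample line bundle on $Z$ associated to $\chi$; the global sections functor $\Gamma(Z, -)$ on good $\cW(c)$-modules corresponds to the Hom-into-$M_c$ functor on the $\D$-module side.

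Next, I reduce exactness of $\Gamma(Z, -)$ to showing $H^i(Z, \mathcal{M}) = 0$ for every good $\Gm$-equivariant $\cW(c)$-module $\mathcal{M}$ and every $i > 0$. Applying the exact, summand-preserving microlocalization functor to the split surjection $M_c(\chi^\ell) \otimes V_\ell \twoheadrightarrow M_c$ furnished by part (ii) of Theorem \ref{main theorem intro}, I obtain that $\cW(c)$ is a direct summand of $\cW(c) \otimes \mathscr{L}^\ell \otimes V_\ell$ in the good W-module category, for every $\ell \ll 0$. Consequently, for any good $\mathcal{M}$,
\[
H^i(Z, \mathcal{M}) \;=\; \Ext^i_{\cW(c)}\bigl(\cW(c),\, \mathcal{M}\bigr)
\]
is a direct summand of $V_\ell^* \otimes H^i\bigl(Z,\, \mathscr{L}^{-\ell} \otimes \mathcal{M}\bigr)$.

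Finally, for fixed $\mathcal{M}$ and $\ell$ chosen sufficiently negative (depending on $\mathcal{M}$), Serre-type vanishing yields $H^{>0}(Z, \mathscr{L}^{-\ell} \otimes \mathcal{M}) = 0$: choose a good $\Gm$-invariant filtration on $\mathcal{M}$, pass to the associated graded coherent sheaf on $Z$, apply classical Serre vanishing for the ample bundle $\mathscr{L}^{-\ell}$ with $-\ell \gg 0$, and conclude via the standard $\hbar$-adic comparison argument using separatedness and completeness of the filtration. Combining with the previous summand relation gives $H^{>0}(Z, \mathcal{M}) = 0$, whence $\Gamma(Z, -)$ is exact. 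The main obstacle is bookkeeping in the microlocalization dictionary---verifying that split summands in $(\D, G, c)$-$\on{mod}$ descend faithfully under microlocalization, and that $M_c(\chi^\ell)$ corresponds to $\cW(c) \otimes \mathscr{L}^\ell$ with the sign of $\ell$ compatible with the ampleness conventions---together with the standard but technical promotion of classical Serre vanishing from the associated-graded sheaf to the $\cW(c)$-module itself.
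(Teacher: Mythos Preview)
Your proposal is correct and follows essentially the same approach as the paper. The paper's proof of Theorem~\ref{W thm} is a single sentence: induce the split surjection of Theorem~\ref{reductive strong vanishing}(2) (equivalently Theorem~\ref{main theorem intro}(ii)) to $\cW_{\resol}(c)$-modules, observe that this verifies condition~(2.5) of \cite[Theorem~2.9]{KR}, and invoke that theorem; you have simply unpacked what \cite[Theorem~2.9]{KR} actually says, namely the Serre-vanishing-via-associated-graded argument you outline in your final paragraph.
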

\noindent
The same statement then follows for objects of the ind-category of good $\Gm$-equivariant $\cW(c)$-modules---this is the ``correct'' notion of quasicoherent $\cW(c)$-module for geometric representation theory.  
Theorem \ref{main theorem intro} similarly yields an analogue of the corollary in any other natural framework.  

As an application, we quickly prove (a slightly weakened form of) the exactness part of the microlocalization of \cite{KR} for type $A$ spherical Cherednik algebras in Section \ref{type A spherical section}; since the derived equivalence part of \cite{KR} was handled in \cite{McN1}, this completes a new approach to that problem.  Similarly, calculating KN one-parameter subgroups and applying Theorem \ref{main theorem intro} to the result, one expects an exactness theorem for microlocalization of spherical cyclotomic Cherednik algebras that complements the derived equivalence established in \cite{McN1}, thus yielding an abelian microlocalization theory for those algebras.  Progress in this direction has been achieved by Rollo Jenkins and, separately, Chunyi Li (works in preparation).

In a different direction, one can immediately proceed from our results for quasiprojective varieties to similar results for algebraic stacks.  
We plan to return to this subject elsewhere, so for the moment we only briefly sketch it.
Suppose 
that $\resol$ is a smooth algebraic stack that is exhausted by Zariski-open substacks of the form $W/G$ where each $W$ is a smooth quasiprojective variety and $G$ is a reductive group.  Assume furthermore that $T^*\resol$ comes equipped with a stratification that induces a KN stratification on each $T^*W$ in an appropriate sense.  Our theorem then implies a corresponding vanishing statement for twisted $\D$-modules on $\resol$.  

In particular, fix a smooth projective curve $C$ and let $\on{Bun}_{\mathbb{G}}(C)$ denote the moduli stack of principal $\mathbb{G}$-bundles on $C$ for a reductive group $\mathbb{G}$.  Let $\mathsf{det}$ denote the determinant line bundle on $\on{Bun}_{\mathbb{G}}(C)$.
\begin{corollary}
For all but countably many values of $c$, if $M \in D_{\on{coh}}\big(\D_{\on{Bun}}(\mathsf{det}^{\otimes c})\big)$ has unstable microsupport
in $T^*\on{Bun}_{\mathbb{G}}(C)$ then
\bd
\Hom\big(\D_{\on{Bun}}(\mathsf{det}^{\otimes c}), M\big) = 0.
\ed
\end{corollary}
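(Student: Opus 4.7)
The plan is to deduce the corollary by reducing it to Theorem~\ref{main theorem intro} via the exhaustion technique sketched in the introduction. First, we construct an increasing exhaustion $U_1\subset U_2\subset\cdots$ of $\on{Bun}_{\mathbb{G}}(C)$ by Zariski-open substacks $U_n=W_n/G_n$ with $W_n$ smooth quasiprojective and $G_n$ connected reductive. The standard recipe uses principal $\mathbb{G}$-bundles with level structure along a growing effective divisor $D_n$ on $C$, restricted to bundles of bounded Harder--Narasimhan type; this produces a smooth quasiprojective scheme $W_n$ and a reductive change-of-level-structure group $G_n$ (a product of copies of $\mathbb{G}$). The determinant line bundle $\mathsf{det}$ pulls back to a $G_n$-equivariant line bundle on $W_n$, i.e., to a character $\chi_n:G_n\to\Gm$, so that the parameter $c\in\C$ specializes on each $U_n$ to a character $c\chi_n$ of the type treated in Section~\ref{KN section}.

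Second, we fix a KN stratification of $T^*\on{Bun}_{\mathbb{G}}(C)$ with respect to $\mathsf{det}$, compatibly with the open inclusions in the sense that its pullback to each $T^*W_n$ is the KN stratification of $T^*W_n$ with respect to $\chi_n$. Unstable microsupport then restricts well: if $M\in D_{\on{coh}}(\D_{\on{Bun}}(\mathsf{det}^{\otimes c}))$ has unstable microsupport on $T^*\on{Bun}_{\mathbb{G}}(C)$, then for each $n$ every cohomology sheaf $\mathcal{H}^i M|_{U_n}$ is a $(c\chi_n)$-twisted $G_n$-equivariant coherent $\D$-module on $W_n$ whose singular support lies in $(T^*W_n)^{\chi_n-\on{uns}}$.

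Third, we count the bad values of $c$. For each fixed $n$ the set $\mathsf{KN}_n$ of Kirwan--Ness 1-parameter subgroups is finite, and for each $\beta\in\mathsf{KN}_n$ the numerical condition~\eqref{intro main formula} excludes only a discrete shifted subset of $\Z_{\geq 0}$ in the scalar parameter $c$. Since $\mathsf{KN}:=\bigcup_n\mathsf{KN}_n$ is at most countable, the total exceptional set in $\C$ is countable. For $c$ avoiding it, Theorem~\ref{main theorem intro}(i) applies simultaneously on every $U_n$ to yield
\[
\Hom_{U_n}\bigl(\D_{U_n}(\mathsf{det}^{\otimes c}),\,\mathcal{H}^i M|_{U_n}\bigr)=0 \quad \text{for all } i,n.
\]

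To conclude, we patch: $\Hom_{\on{Bun}}(\D_{\on{Bun}}(\mathsf{det}^{\otimes c}),-)$ is a sheaf-theoretic functor and $\{U_n\}$ is an open cover, so restriction into the product of local Homs is injective; combined with a standard spectral-sequence argument relating derived $\Hom$ to $\Hom$ on cohomology sheaves, this forces the global vanishing. The principal obstacle is the compatibility of the KN stratifications under the open inclusions $U_{n-1}\hookrightarrow U_n$ together with the construction of a suitably well-behaved exhaustion for $\on{Bun}_{\mathbb{G}}(C)$; once that infrastructure is in place, the counting of exceptional values of $c$ and the sheaf-theoretic patching are essentially formal.
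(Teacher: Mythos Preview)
Your proposal matches the paper's own treatment: the paper does not give a proof of this corollary beyond the brief sketch preceding its statement (``we plan to return to this subject elsewhere, so for the moment we only briefly sketch it''), and that sketch is precisely the exhaustion-by-quotient-stacks argument you have written out---cover $\on{Bun}_{\mathbb{G}}(C)$ by opens of the form $W/G$, equip $T^*\on{Bun}$ with a stratification inducing a KN stratification on each $T^*W$, and invoke Theorem~\ref{main theorem intro}. Your elaboration (level structures for the exhaustion, countability of the excluded set via $\bigcup_n \mathsf{KN}_n$, sheaf-patching for the global $\Hom$) is a faithful expansion of that outline, and you correctly flag the genuine content left unproven both here and in the paper: the existence of a compatible system of KN stratifications across the exhaustion.
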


\subsection{Methods}
The main inspiration for Theorem \ref{main theorem intro} is the elegant proof by Teleman \cite{Teleman} that ``quantization commutes with reduction.'' Teleman's proof uses the KN stratification to reduce to a simple analysis of weights for the $\beta$-action along $S_\beta$.  It was understood clearly by Ian Grojnowski, Kobi Kremnizer, and possibly many others long ago that Teleman's approach should be used to prove a result like Theorem \ref{main theorem intro}.  It was equally clear that the proof cannot reduce simply to weight-space calculations as in \cite{Teleman}, since Theorem \ref{main theorem intro} depends crucially on the parameter $c$ and nothing similar is true in the classical limit.  

The new ingredient beyond \cite{Teleman} is provided by Kashiwara's Equivalence, applied in a more flexible symplectic setting.  Our approach to that adaptation uses a slice theorem to reduce from a full KN stratum $S_\beta$ to its ``Morse-theoretic core'' $Y_\beta$, the locus that attracts to the $\beta$-fixed locus $Z_\beta^{ss}$ under the downward $\beta$-flow.  Although there is a rich and beautiful theory of symplectic slices and symplectic normal forms with a long history (from \cite{GS} to the recent achievements of \cite{Knop, Losev1}), in the case we need---a slice for a free action of a unipotent group---it is easiest to work by hand.  Alternatively, it may be possible to simplify the proof even further using the techniques of \cite{BDMN}.

The details of the symplectic geometry and its quantization are carried out in Sections \ref{symplectic geom} and \ref{DQ}, based on tools from  Section \ref{preliminaries} and a model case, deduced from Kashiwara's Equivalence, in Section \ref{kashiwara section}.  Section \ref{KN section} lays out  basics of KN strata and an algorithm for computing the KN 1-parameter subgroups.  Section \ref{main theorems section} proves the main theorems, and Section \ref{type A spherical section} applies it all to type $A$ spherical Cherednik algebras.

\vspace{1em}

We are grateful to Gwyn Bellamy, David Ben-Zvi, Chris Dodd, Iain Gordon, Ian Grojnowski, Mee Seong  Im, Kobi Kremnizer, Eugene Lerman, Chunyi Li, Tony Pantev, and Toby Stafford for many fruitful and illuminating conversations.  Both authors are grateful to MSRI, and the second author is grateful to All Souls College, Oxford, for excellent working conditions during the preparation of this paper.  
The first author was supported by a 
 Royal Society research fellowship.  The second author was supported by NSF grants DMS-0757987 and DMS-1159468 and NSA grant H98230-12-1-0216, and by an All Souls Visiting Fellowship.  Both authors were supported by MSRI.

\section{Preliminaries}\label{preliminaries}
In this section we lay out some preliminary conventions and facts.  
\begin{basic convention}
Throughout the paper, all varieties are connected (the ground field is always $\C$).  Groups $G$ are assumed to be connected and reductive;  $\sT$ will always denote a torus, typically a maximal torus in an ambient reductive group $G$.  All group actions are assumed to be effective.
\end{basic convention}

\subsection{Group Actions}\label{group actions section}
Suppose a group $G$ acts on a smooth variety $\bigvar$.  For $f\in \C[\bigvar]$, $g\in G$, we let
$(g\cdot f)(x) = f(g\inv x).$ 
Given a character $\chi: G\rightarrow \Gm$, we make the trivial line bundle 
$\mathsf{L} = \bigvar\times {\mathbb A}^1$ into a $G$-equivariant line bundle via $g\cdot (x, z) = (g\cdot x, \chi(g)z)$.  
Recall that a function 
$f: \bigvar\rightarrow {\mathbb A}^1$ is a {\em relative invariant} or {\em semi-invariant} of weight $\chi$ if $f(g\cdot x) = \chi(g) f(x)$ for all $g\in G$ 
and $x\in \bigvar$.  Suppose that $F: \bigvar\rightarrow \mathsf{L}$ is a section, and write 
$F(x) = (x, f(x))$ for a function $f: \bigvar\rightarrow \mathbb{A}^1$.  Then 
$g\cdot F(x) = (gx, \chi(g)f(x))$, and so $F$ is $G$-equivariant if and only if $f$ is $\chi$-semi-invariant. 

\begin{lemma}\label{semi-invariant vs isotypic}
Suppose $\bigvar$ is a smooth variety with $G$-action and $\chi: G\rightarrow \Gm$ is a character.  Then 
a function $f\in\C[\mathsf{W}]$ is $\chi^q$-semi-invariant if and only if $f$ is in the $\chi^{-q}$-isotypic component of $\C[\mathsf{W}]$.
\end{lemma}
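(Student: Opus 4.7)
The plan is essentially a direct unwinding of definitions: the lemma is really a bookkeeping statement about how the sign on the character flips when one passes between the geometric notion of semi-invariance (evaluating $f$ after moving the point) and the representation-theoretic notion of isotypic component (the action of $g$ on the function itself). The one thing to keep straight is the convention $(g\cdot f)(x) = f(g^{-1}x)$ from the paragraph preceding the lemma; it is this inverse that accounts for the sign change $\chi \leftrightarrow \chi^{-1}$ in the statement.

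More concretely, I would proceed as follows. Assume $f$ is $\chi^q$-semi-invariant, so $f(g\cdot x) = \chi^q(g) f(x)$ for all $g$ and $x$. Then for each fixed $g$ I compute
\[
(g\cdot f)(x) \;=\; f(g^{-1}\cdot x) \;=\; \chi^q(g^{-1}) f(x) \;=\; \chi^{-q}(g) f(x),
\]
valid for every $x\in \bigvar$, so $g\cdot f = \chi^{-q}(g) f$. Since this holds for all $g\in G$, the line $\C f$ is a $G$-subrepresentation on which $G$ acts through the character $\chi^{-q}$, i.e.\ $f$ lies in the $\chi^{-q}$-isotypic component of $\C[\bigvar]$.

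For the converse, suppose $f$ lies in the $\chi^{-q}$-isotypic component, which (since a one-dimensional representation already has $g$ acting by a scalar) means $g\cdot f = \chi^{-q}(g) f$ for all $g\in G$. Evaluating at $x$ gives $f(g^{-1}\cdot x) = \chi^{-q}(g) f(x)$; substituting $g^{-1}$ for $g$ and using $\chi^{-q}(g^{-1}) = \chi^q(g)$ yields $f(g\cdot x) = \chi^q(g) f(x)$, which is exactly the semi-invariance condition.

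There is no real obstacle here — the only care needed is to notice that the $\chi^{-q}$-isotypic component is genuinely one-dimensional as a $G$-representation on $\C f$, so that "isotypic" reduces to "$G$ acts by the character $\chi^{-q}$ on the line $\C f$"; this is automatic because the semi-invariance condition, which is a scalar-valued identity in $g$, is visibly equivalent to $G$ acting by scalars on $\C f$. Once that is noted, both directions of the equivalence are a one-line substitution.
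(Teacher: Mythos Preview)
Your proof is correct; it is exactly the direct unwinding of the convention $(g\cdot f)(x)=f(g^{-1}x)$ that the lemma is recording. The paper itself states the lemma without proof, treating it as an elementary bookkeeping fact, so your argument fills in precisely what the authors left implicit.
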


\subsection{Differential Operators}\label{diff ops}
Suppose an algebraic group $G$ acts rationally on the smooth affine variety $W$.  Let $\D(W)$ denote the algebra of differential operators on $W$.
For $f\in \C[W]$, $\theta\in \D(W)$, and $g\in G$, we let
$(g\cdot \theta)(f) = g\cdot (\theta(g\inv\cdot f))$.  Differentiating the $G$-action (Section \ref{group actions section}) on $\C[W]$ gives a Lie algebra homomorphism
\begin{equation}\label{general infinitesimal}
\mathfrak{g} = \on{Lie}(G) \rightarrow \Gamma(T_W)\subset \D(W), \hspace{2em} X\mapsto \wt{X},
\end{equation}
the {\em infinitesimal $G$- (or $\mathfrak{g}$-)action}.

If $W$ is a finite-dimensional $G$-representation, then differentiating the homomorphism $G\rightarrow \on{Aut}(W)$ yields a Lie algebra homomorphism 
$\on{act}: \mathfrak{g}\rightarrow \End(W) = W\otimes W^*$.  Writing $\tau: W\otimes W^* \rightarrow W^*\otimes W$, $\tau(w\otimes v) = v\otimes w$, for the canonical braiding, the infinitesimal $\mathfrak{g}$-action on $\C[W] = \on{Sym}(W^*)$ is induced by 
\begin{equation}\label{act dual}
\on{act}^*: \mathfrak{g} \longrightarrow \End(W^*) = W^*\otimes W, \hspace{1em} \on{act}^*(X) = -\sigma\big(\on{act}(X)\big).
\end{equation}
Composing with the canonical map $m: W^*\otimes W\rightarrow \D(W)$, we get $\wt{X} = m\big(\on{act}^*(X)\big)$.

In particular, if $G=\Gm$ and 
 $W=\oplus W_k$ with $W_k$ the $k$-weight space, then for $x\in (W_k)^*\subset\C[W]$ we get $\lambda\cdot x = \lambda^{-k}x$.  Let $\ft = \on{Lie}(\Gm)$.  Then $\ft$ acts infinitesimally on $W$ as follows.  If $v_1,\dots, v_n$ is a basis of $W$ consisting of weight vectors and $\Gm$ acts on $v_i$ with weight $w(v_i)$, then writing $x_i = v_i^*$,
\begin{equation}\label{inf action of 1ps}
\C = \ft \owns {\mathbf 1} \mapsto \wt{\mathbf 1} = \sum -w(v_i) x_i \partial_{x_i}.
\end{equation}

Suppose that a reductive group $G$ acts on a vector space $W$.  
Make a choice of isomorphism $W=\C^N$ under which the maximal torus $\mathsf{T}\subseteq G$ acts by diagonal matrices, and let $\Tmax=\Gm^N$ with the canonical action on $W$ (here the notation for $\Tmax$ is meant to convey ``maximal dimensional'').  We write $\psi_1, \dots, \psi_N$ for the corresponding characters of $\sT$ on $W$ and 
$\alpha_i = d\psi_i$. 
As we will do later in Section \ref{quantum notation}, for any subgroup $K$ of $G$ let $\gamma_{K}: K\rightarrow \Gm$ denote the character of the $K$-action on $\bigwedge^N W^*\cong \C$, and let $\ds \rho_{K} = \frac{1}{2}d\gamma_{K}$: thus, if $K=\sT$,
$\ds\rho_{\sT} = -\frac{1}{2}\sum_{i=1}^N \alpha_i \in \mathfrak{t}^*$.    Define
\begin{equation}\label{eq:def-of-mu-can}
\mu^{\on{can}}_K(X) = \wt{X}+\rho_{K}(X)\in \D(W) \hspace{1em} \text{for $X\in\mathfrak{k} = \on{Lie}(K)$};
\end{equation}
this is the {\em canonical quantum comoment map} for $\D(W)$.  In terms of \eqref{act dual}, 
\begin{equation}\label{action formula for cmm}
\ds \mu^{\on{can}}(X) = m\big(\on{act}^*(X)\big) + \frac{1}{2}\on{tr}\big(\on{act}^*(X)\big).
\end{equation}
When $K=G$ we omit the subscript on $\mu^{\on{can}}$.
If $\beta: \Gm\rightarrow \sT\subseteq G$ is a 1-parameter subgroup of $\sT$ and $e_\beta = \wt{\mathbf{1}}$, 
\begin{equation}\label{canonical moment map formula}
\mu^{\on{can}}(d\beta) = e_\beta - \frac{1}{2}\sum_{i=1}^N \alpha_i\bullet\beta =: \eu(\beta)
\end{equation}
(cf. also \eqref{euler vf}).  For a Lie algebra character $c:\mathfrak{g}\rightarrow\C$, we write $\mu^{\on{can}}_c = \mu^{\on{can}}+c$.

 More generally, suppose $W$ is any smooth variety with trivialized canonical bundle $K_W = W\times\C$.  Suppose $\gamma_K: K\rightarrow \Gm$ is a character such that $k\cdot (w, c) = (k\cdot w, \gamma_K(k)c)$ for all $k\in K$, $w\in W$, and $c\in\C$.  As above, we define 
 $\ds \rho_{K} = \frac{1}{2}d\gamma_{K}$ and $\mu^{\on{can}}$ as in \eqref{eq:def-of-mu-can}.
 
 \begin{remark}\label{compatibility of cans}
 Suppose that $f:W\rightarrow V$ is any $K$-equivariant \'etale morphism of smooth varieties.  Then there is a pullback morphism $f^*: \D(V)\rightarrow\D(W)$ on differential operators, and $f^*\circ \mu^{\on{can}} = \mu^{\on{can}}$.
 \end{remark}

\subsection{Deformation Quantizations}
  We will work with deformation quantization (or {\em DQ}) algebras; an excellent general reference is  \cite{KSDQ}.  If $\bigvar$ is a smooth affine variety with Poisson structure $\{\bullet,\bullet\}$, a 
  DQ algebra structure is an associative, $\hbar$-linear product $\ast$ on $\C[\bigvar][\![\hbar]\!]$ such that 
  \begin{equation}\label{poisson str compatible}
  \ds f\ast g  = fg + \frac{\hbar}{2} \{f,g\} + \theo(\hbar^2).
  \end{equation}
    We will write $\theo^{\hbar}(\bigvar)$ for $\big(\C[\bigvar][\![\hbar]\!],\ast\big)$ when $\ast$ is understood from context.  
\subsubsection{}\label{kontsevich quant}
Recall, more generally, that if $\bigvar$ is a smooth affine algebraic variety then there is a canonical 
``Kontsevich quantization,'' i.e., a bijection between formal Poisson structures $\boldsymbol{\{}-,-\boldsymbol{\}} = \sum_{i\geq 1} \hbar^i \{-,-\}_i$ on $\bigvar$ modulo gauge equivalence and deformation quantizations modulo gauge equivalence.  Moreover, this bijection:
\begin{enumerate}
\item preserves first-order terms, i.e., satisfies \eqref{poisson str compatible} for identified structures;
\item is compatible with pullback by \'etale morphisms;
\item associates to the formal Poisson structure $\hbar\{-,-\}$ on $\mathbb{A}^{2n}$, where $\{-,-\}$ is the Poisson bracket associated to any constant (i.e., translation-invariant) bivector field, the Moyal-Weyl product.
\end{enumerate}
We elaborate on (3) in Section \ref{MW elaboration} below.

\subsubsection{}
Suppose we equip the variety $\bigvar$ with a $\Gm$-action for which the Poisson structure $\{-,-\}$ has weight $\ell$: that is, $m_z^*\{-,-\} = z^\ell\{-,-\}$ where $m_z$ denotes action by $z\in\Gm$.  Then, letting $m_z(\hbar) = z^{-\ell}\hbar$, any $\Gm$-invariant formal Poisson structure defines a $\Gm$-equivariant deformation quantization (or deformation quantization ``with $F$-structure'') as described in \cite[Section~2.3]{KR}.  We say $\ast$ is {\em $\Gm$-equivariant with weight $\ell$}.

  \subsubsection{}
  Suppose the algebraic group $G$ acts on $\bigvar$ preserving a symplectic form $\omega$.  A  {\em (classical) moment map} for the action is a $G$-equivariant map $\mu_G: \bigvar \rightarrow \mathfrak{g}^*$ satisfying, for every $X\in \mathfrak{g}$, 
$\langle d\mu, X\rangle = -i_{\wt{X}}\omega$.  The corresponding {\em classical comoment map} is the pullback on functions,
$\mu^*: \on{Sym}(\mathfrak{g})\rightarrow \C[\bigvar]$.

  Suppose the $\Gm$-equivariant (with weight $\ell$) DQ algebra $\theo^{\hbar}(\bigvar)$ is {\em $G$-equivariant}, i.e., the product $\ast$ is also $G$-equivariant.  Differentiating defines a Lie algebra homomorphism 
  $\alpha: \mathfrak{g} \rightarrow \End_{\C[\![\hbar]\!]}(\theo^{\hbar}(\bigvar))$.  
  \begin{defn}\label{qcomo}
  A {\em quantum comoment map} for the action is a 
$G$-equivariant  linear map $\mu: \mathfrak{g}\rightarrow \theo^{\hbar}(\bigvar)$ satisfying:
  \begin{enumerate}
  \item $[\mu(X), -] = \hbar \cdot\alpha(X)$.
  \item Modulo $\hbar$, $\mu$ becomes a classical comoment map.
  \item For every $X\in\mathfrak{g}$, $\mu(X)$ has $\Gm$-weight $\ell$.
  \end{enumerate}
  \end{defn}

\subsubsection{}\label{std sympl form}
The {\em Liouville 1-form} $\theta$ on the cotangent bundle $\bigvar = T^*W$ is given as follows: if $\pi: T^*W\rightarrow W$ denotes projection, then for a tangent vector $X$ on $T^*W$, $\theta_{\xi}(X) = \xi\big(d\pi(X)\big)$.  This yields a canonical symplectic form $\omega_{T^*W} = d\theta$.  Note the lack of a sign change.  If $W$ is an affine space with coordinates $x_1,\dots, x_n$ and dual cotangent fiber coordinates $y_1,\dots, y_n$ then
$\theta = \sum_i y_i dx_i$ and hence
$\omega_{T^*W} = -\sum dx_i\wedge dy_i.$
For this form, one has the Poisson bracket of coordinate functions $\{x_i, y_j\} = -\delta_{ij}$.

A cotangent bundle also has a {\em canonical classical moment map}, given by dualizing the linear map $\mathfrak{g}\rightarrow \Gamma(T_W)\subset \C[T^*W], X\mapsto \wt{X}$.
\subsubsection{}\label{MW elaboration}
We elaborate on fact (3) from Section \ref{kontsevich quant}.  On an affine space $\bigvar = \mathbb{A}^{2n}$ with translation-invariant symplectic form $\omega$ defining a Poisson structure $\{-,-\}$, one has the {\em Moyal-Weyl product}, defined by the formula:
\begin{equation}\label{moyal}
f\ast g = m\circ e^{\frac{\hbar}{2}\{-,-\}}(f\otimes g).
\end{equation}
This means we view $\{-,-\}$ as a bivector field, $\{-,-\} = \sum_{i,j} \pi_{i,j} \partial_i\wedge \partial_j$ for scalars $\pi_{i,j}$, exponentiate as a bidifferential operator with $\hbar$ coefficients, apply the result to $f\otimes g$, and multiply in $\C[\mathbb{A}^{2n}]$.  The Moyal-Weyl product makes $\C[\mathbb{A}^{2n}][\![\hbar]\!]$ into an associative algebra with unit, flat over $\C[\![\hbar]\!]$, whose truncation mod $\hbar$ is $\C[\mathbb{A}^{2n}]$ and for which $f\ast g - g\ast f = \hbar\{f,g\} +\mathcal{O}(\hbar^2)$.  
 The Moyal-Weyl product is compatible with localization of functions: it makes $\C[\mathbb{A}^{2n}][\![\hbar]\!]$ the global sections of a sheaf of algebras 
 on $\mathbb{A}^{2n}$, and if $\bigvar\subset \mathbb{A}^{2n}$ is an affine open subset the restriction of the Moyal product to $\C[\bigvar][\![\hbar]\!]$ is defined by the same formula \eqref{moyal} for elements $f,g\in\C[\bigvar]$.

\subsection{Filtrations on $\D$}\label{quantum notation}
We discuss filtrations of rings of differential operators.

First assume $W$ is a vector space with linear $G$-action.
Fix a linear $\Gm$-action on $\bigvar = T^*W$, commuting with the $G$-action, that acts with weight $\ell<0$ on the canonical Poisson structure and with nonpositive weights on $\bigvar$ (so nonnegative weights on $\C[\bigvar]$).  We will call a fixed choice of such $\Gm$-action a {\em contracting $\Gm$-action}.  A $\Gm$-stable subset for this action is called {\em conical}.

Such a $\Gm$-action defines a grading on the space $\bigvar^* \subseteq \C[\bigvar]$ of linear functions on $\bigvar = T^*W$.  The Weyl algebra $\D(W)$ is defined by 
\bd
\D(W) = T^\bullet(\bigvar^*)/(x\otimes y - y \otimes x - \{x,y\} \hspace{.3em}|\hspace{.3em} \text{$x, y\in \bigvar^*$ are $\Gm$-weight vectors}).
\ed
The sign of the weight $\ell$ of the Poisson structure guarantees that, if we give $\bigvar^*$ and its tensor algebra the increasing filtration by $\Gm$-weight, the algebra $\D(W)$ inherits a nonnegative filtration, and its associated graded comes equipped with an isomorphism to $\C[\bigvar]$ as graded algebras.  Standard filtrations on $\D(W)$ are
obtained this way, including the Bernstein filtration (in which $\bigvar^*\subset\C[\bigvar]$ has weight one) and the operator filtrations (in which one of
$W^*$ or $W$ has weight one and the other has weight zero).  

If $W$ is an arbitrary smooth, connected variety, we equip $\bigvar=T^*W$ with the $\Gm$-action that contracts cotangent fibers and acts trivially on $W$, and refer to this as the contracting action.  It corresponds to the filtration by order of differential operators on $\D_W$.

\subsection{From $\D$ to DQ}\label{from D to DQ}
In Sections \ref{sec:DQ1}, \ref{sec:DQ2}, and \ref{sec:DQ3}, we assume $\bigvar$ is a symplectic vector space.
 \subsubsection{}\label{sec:DQ1}
There is a close relationship between the Moyal product, when $\omega$ is the standard symplectic form (Section \ref{std sympl form}) on $\bigvar = \mathbb{A}^{2n}$, and the $n$th Weyl algebra $\D= \D(\mathbb{A}^n)$.  Namely, replace the Weyl algebra by its homogenized cousin, defined by 
\bd
\D_{\widehat{\hbar}} = \C\langle x_1, \dots, x_n, y_1, \dots, y_n\rangle[\![\hbar]\!]/([x_i, x_j], [y_i, y_j], [y_i, x_j] - \delta_{ij}\hbar).
\ed
The subalgebra $\D_\hbar$ consists of expressions that are polynomial in $\hbar$.
With these relations, one has the usual identification at $\hbar=1$ with the algebra of differential operators via $\ds y_i\leftrightarrow \frac{\partial}{\partial x_i}$. 

The {\em symmetrization map} 
$\C[x_1,\dots, x_n, y_1, \dots, y_n] \xrightarrow{\on{Symm}} \D_{\widehat{\hbar}}$
 is defined on a monomial $a_1\cdot\dots\cdot a_k$ in the generators $x_i, y_i$ of the polynomial ring by 
\bd
\on{Symm}(a_1\cdot\dots\cdot a_k) = \frac{1}{k!}\sum_{\sigma\in S_k} a_{\sigma(1)}\otimes\dots\otimes a_{\sigma(k)}
\ed
(where the tensor product really means the image of that element in $\D_{\widehat{\hbar}}$). 
\begin{lemma}
The symmetrization map $\on{Symm}$, extended linearly to $\hbar$, intertwines the Moyal $\ast$-product on $\C[x_1,\dots, x_n, y_1, \dots, y_n][\![\hbar]\!]$ with the product on $\D_{\widehat{\hbar}}$.  
\end{lemma}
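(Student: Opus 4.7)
The plan is to verify the intertwining $\on{Symm}(f\ast g)=\on{Symm}(f)\cdot\on{Symm}(g)$ by a generating function computation on exponentials of linear forms, and then extract coefficients. Since both products are $\C[\![\hbar]\!]$-bilinear and the Moyal product of two polynomials is automatically a polynomial in $\hbar$, it suffices to verify the identity on a linear spanning family of $\C[x_1,\dots,x_n,y_1,\dots,y_n]$. I would take the formal exponentials $e^{\ell}$ where $\ell=\sum_i(s_ix_i+t_iy_i)$ is a linear form with auxiliary scalar parameters $s_i,t_i$, and recover the monomial statement $\on{Symm}(x^Iy^J\ast x^Ky^L)=\on{Symm}(x^Iy^J)\cdot\on{Symm}(x^Ky^L)$ by extracting the coefficient of $\frac{s^It^J(s')^K(t')^L}{I!J!K!L!}$ at the end.

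The proof reduces to three short computations. \emph{First,} $\on{Symm}(e^{\ell}_{\mathrm{poly}})=e^{\ell}_W$ in $\D_{\widehat{\hbar}}$: expanding $\ell^k_W$ as a sum over ordered words of length $k$ in the generators and grouping by multi-index, the multinomial factor $k!/(I!J!)$ exactly cancels the factor $I!J!/k!$ appearing in the definition of $\on{Symm}(x^Iy^J)$, so term-by-term one has $\ell^k_W/k!=\sum_{|I|+|J|=k}\frac{s^It^J}{I!J!}\on{Symm}(x^Iy^J)$, which sums to $\on{Symm}(e^{\ell}_{\mathrm{poly}})$. \emph{Second,} $e^{\ell}\ast e^{\ell'}=e^{\frac{\hbar}{2}\{\ell,\ell'\}}e^{\ell+\ell'}$ on the polynomial side: because all derivatives of $\ell,\ell'$ of order $\geq 2$ vanish, the bidifferential operator $e^{\frac{\hbar}{2}\pi}$ applied to $e^{\ell}\otimes e^{\ell'}$ produces the scalar $e^{\frac{\hbar}{2}\pi(d\ell,d\ell')}=e^{\frac{\hbar}{2}\{\ell,\ell'\}}$ times $e^{\ell}\otimes e^{\ell'}$, and commutative multiplication then collapses $e^{\ell}\cdot e^{\ell'}$ to $e^{\ell+\ell'}$. \emph{Third,} in the Weyl algebra $[\ell,\ell']_W=\hbar\{\ell,\ell'\}$ is a central scalar, so the Baker-Campbell-Hausdorff formula truncates to $e^{\ell}_W\cdot e^{\ell'}_W=e^{\frac{\hbar}{2}\{\ell,\ell'\}}e^{\ell+\ell'}_W$.

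Combining the three steps yields
\[
\on{Symm}(e^{\ell}\ast e^{\ell'})=e^{\frac{\hbar}{2}\{\ell,\ell'\}}\on{Symm}(e^{\ell+\ell'})=e^{\frac{\hbar}{2}\{\ell,\ell'\}}e^{\ell+\ell'}_W=e^{\ell}_W\cdot e^{\ell'}_W=\on{Symm}(e^{\ell})\cdot\on{Symm}(e^{\ell'}),
\]
and extracting coefficients in the formal parameters concludes. The main obstacle is essentially bookkeeping: one must check that the sign convention $\{x_i,y_j\}=-\delta_{ij}$ derived from $\omega_{T^*W}=-\sum dx_i\wedge dy_i$ matches the Weyl commutator $[x_i,y_j]_W=-\delta_{ij}\hbar$ coming from the paper's defining relation $[y_i,x_j]=\delta_{ij}\hbar$, so that the scalar prefactor produced by the Moyal formula in step two genuinely agrees with the one produced by BCH in step three. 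Given this compatibility, the whole argument amounts to a short identity between formal exponentials and a combinatorial rearrangement of the symmetrization map.
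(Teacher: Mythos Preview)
The paper does not actually prove this lemma: it is stated without proof as a known fact, and the paper immediately moves on to record the consequence $\on{Symm}(x_iy_i)=x_iy_i+\tfrac{\hbar}{2}$. Your generating-function argument via exponentials of linear forms is correct and is one of the standard proofs of this classical identity (Weyl quantization intertwines the Moyal product with the Weyl algebra product). The three steps are each sound: the combinatorics in step one is exactly the multinomial-versus-stabilizer count you describe; step two is the familiar fact that the Moyal bidifferential operator acts as multiplication by $e^{\frac{\hbar}{2}\{\ell,\ell'\}}$ on $e^{\ell}\otimes e^{\ell'}$ because derivatives of linear forms are constant; and step three is the degree-two truncation of BCH valid when the commutator is central. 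Your sign check at the end is also correct: from the paper's conventions $\{x_i,y_j\}=-\delta_{ij}$ and $[y_i,x_j]=\delta_{ij}\hbar$ one indeed gets $[\ell,\ell']_W=\hbar\{\ell,\ell'\}$, so the two scalar prefactors match.

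One small point worth making explicit in your write-up: the exponentials $e^{\ell}$ live in a completion, so strictly speaking you are proving an identity of formal power series in the auxiliary parameters $s,t,s',t'$ with coefficients in $\D_{\widehat{\hbar}}$ (resp.\ $\C[x,y][\![\hbar]\!]$), and then extracting the coefficient of each monomial $s^It^J(s')^K(t')^L$. Since both sides are manifestly formal power series in those parameters and the extraction is linear, this is harmless, but it is the one place a careful reader might pause.
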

Note that
\begin{equation}\label{basic Euler op}
\on{Symm}(x_iy_i) = \frac{1}{2} y_ix_i+ \frac{1}{2}x_iy_i = x_i y_i + \frac{\hbar}{2},
\end{equation}
which corresponds to $\ds x_i\frac{\partial}{\partial x_i} + \frac{\hbar}{2}$ under the usual identification of $y_i$ with $\ds\frac{\partial}{\partial x_i}$.  
More generally, for an element $X\in W^*\otimes W$, letting 
\bd
W^*\otimes W \xrightarrow{m} \C[W\oplus W^*], \hspace{2em} W^*\otimes W\xrightarrow{\overline{m}} \D(W)
\ed
denote the multiplication maps, we get
\begin{equation}\label{symm of mm}
\on{Symm}\big(m(X)\big) = \frac{1}{2}\big(m'(X) + m'(\sigma(X))\big) = m'(X) + \frac{\hbar}{2}\on{tr}(X).
\end{equation}

\subsubsection{}\label{sec:DQ2}  We continue with the assumptions of Section \ref{quantum notation}.  Write $\bigvar^* =  \oplus_{\alpha} \bigvar_\alpha^*$ where $\bigvar_\alpha^*$ is the $\Gm$-weight $\alpha$ subspace of $\bigvar^*$.  then the algebra
\begin{equation}\label{presenting Dsubhbar}
T^\bullet\big(\oplus_{\alpha} \bigvar_\alpha^* t^\alpha\big)[t]/\big([ut^{r}, vt^{s}] - \{u, v\}t^{r+s}\hspace{.2em}\big|\hspace{.2em} u \in \bigvar^*_r, v \in \bigvar^*_s\big)
\end{equation}
is isomorphic to $\D_\hbar[\hbar^{1/\ell}]$ via
\bd
ut^{\on{wt}(u)} \mapsto u \hspace{1em} (\text{for $\Gm$-homogeneous $u$}), \hspace{2em} t\mapsto \hbar^{1/\ell}.
\ed

\subsubsection{}\label{sec:DQ3} As in Section \ref{quantum notation}, fix a contracting $\Gm$-action; filter $\D(W)$ by $\Gm$-weight.  We get a corresponding Rees algebra $\cR(\D) = \oplus F_k(\D)t^k \subset \D[t]$.  We can map the tensor algebra $T^\bullet\big(\oplus_{\alpha} \bigvar_\alpha^* t^\alpha\big)[t]$ to $\cR(\D)\subset \D[t]$ in the obvious way; this map is surjective and factors, via the presentation \eqref{presenting Dsubhbar} of $\D_\hbar[\hbar^{1/\ell}]$, through an isomorphism of $\cR(\D)$ with $\D_\hbar[\hbar^{1/\ell}]$.  It follows that $\D_{\widehat{\hbar}}[\hbar^{1/\ell}]$ is the $\hbar$-completion of $\cR(\D)$.

The following is immediate from Formulas \eqref{symm of mm} and \eqref{action formula for cmm}:
\begin{lemma}\label{momentary agreement}
For any character $c: \mathfrak{g}\rightarrow \C$, the map $\cR(\D) \hookrightarrow \D_{\widehat{\hbar}}[\hbar^{1/\ell}] \xrightarrow{\on{Symm}\inv} \theo^{\hbar}(T^*W)[\hbar^{1/\ell}]$ identifies the twisted canonical moment map
$(\mu^{\on{can}}+ c)\in F_\ell\big(\cR(\D)\big)$ with a quantum comoment map for $\theo^{\hbar}(T^*W)$.  The latter equals the  twist 
$\mu_c^{\on{can}}:= \mu + \hbar c$ of the image in $\theo^{\hbar}(T^*W)$ of the canonical classical moment map $\mu\in \C[T^*W]$ under the inclusion 
$\C[T^*W]\hookrightarrow\theo^{\hbar}(T^*W)$.
\end{lemma}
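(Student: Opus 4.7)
The plan is to reduce to an explicit calculation driven by the two formulas \eqref{action formula for cmm} and \eqref{symm of mm}, with the Rees-algebra identification of Section \ref{sec:DQ3} providing the dictionary between the filtered world of $\D(W)$ and the $\hbar$-adic world of $\theo^{\hbar}(T^*W)$. The conceptual point is that the $\tfrac{1}{2}\on{tr}$ correction appearing in the canonical quantum comoment map $\mu^{\on{can}}$ is exactly what is needed to cancel the $\tfrac{\hbar}{2}\on{tr}$ discrepancy between non-commutative multiplication in $\D(W)$ and its symmetrized image under $\on{Symm}^{-1}$ in $\theo^{\hbar}(T^*W)$.

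Concretely, I would set $Y := \on{act}^*(X) \in W^*\otimes W$ and invoke \eqref{action formula for cmm} to rewrite
\bd
\mu^{\on{can}}(X) + c(X) \;=\; \overline{m}(Y) + \tfrac{1}{2}\on{tr}(Y) + c(X),
\ed
viewed as a Rees lift in $F_{\ell}\big(\cR(\D)\big)$. Transferring through $\cR(\D) \hookrightarrow \D_{\widehat{\hbar}}[\hbar^{1/\ell}]$ via Sections \ref{sec:DQ2}--\ref{sec:DQ3}, the scalar terms acquire their appropriate $\hbar$-factor from the identification $t \mapsto \hbar^{1/\ell}$ at Rees level $\ell$. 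Applying $\on{Symm}^{-1}$---which is $\C[\![\hbar]\!]$-linear and fixes scalars---and using \eqref{symm of mm} in the form $\on{Symm}^{-1}(\overline{m}(Y)) = m(Y) - \tfrac{\hbar}{2}\on{tr}(Y)$, I obtain
\bd
\on{Symm}^{-1}\big(\overline{m}(Y) + \tfrac{\hbar}{2}\on{tr}(Y) + \hbar c(X)\big) \;=\; m(Y) - \tfrac{\hbar}{2}\on{tr}(Y) + \tfrac{\hbar}{2}\on{tr}(Y) + \hbar c(X) \;=\; m(Y) + \hbar c(X).
\ed
By Section \ref{std sympl form} the polynomial $m(\on{act}^*(X)) \in \C[T^*W]$ is exactly the value of the canonical classical comoment map at $X$, so the result is $\mu + \hbar c$, as claimed.

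To complete the proof I would verify the three axioms of Definition \ref{qcomo} for the element $\mu + \hbar c$: the mod-$\hbar$ reduction is manifestly the classical comoment map; the commutator condition $[\mu + \hbar c, -] = \hbar\cdot\alpha(X)$ follows because $\mu^{\on{can}} + c$ generates the infinitesimal $G$-action on $\D(W)$ and the identifications $\cR(\D) \hookrightarrow \D_{\widehat{\hbar}}[\hbar^{1/\ell}] \xrightarrow{\on{Symm}^{-1}} \theo^{\hbar}(T^*W)[\hbar^{1/\ell}]$ are $G$-equivariant algebra maps, so commutator brackets transport across; and the $\Gm$-weight condition is built into the choice of Rees level. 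The main obstacle is keeping the powers of $\hbar$ straight across the Rees/DQ dictionary---scalars in $\D(W)$ must be properly homogenized when inserted into $\D_{\widehat{\hbar}}$ at a nonzero Rees level---but once that bookkeeping is arranged, the cancellation of the two $\tfrac{\hbar}{2}\on{tr}$ contributions is essentially automatic, as promised by the formulas already in hand.
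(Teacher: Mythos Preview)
Your proof is correct and follows precisely the approach the paper indicates: the paper's entire proof is the remark that the lemma ``is immediate from Formulas \eqref{symm of mm} and \eqref{action formula for cmm},'' and you carry out exactly that computation, supplying the $\hbar$-bookkeeping through the Rees identification and verifying the axioms of Definition~\ref{qcomo} that the paper leaves implicit.
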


\begin{convention}
Henceforth, throughout the remainder of the paper, we write $\D_{\widehat{\hbar}}$ and $\theo^\hbar(\bigvar)$ to mean 
$\D_{\widehat{\hbar}}[\hbar^{1/\ell}]$ and $\theo^\hbar(\bigvar)[\hbar^{1/\ell}]$, respectively. 
\end{convention} 

We can thus define a canonical quantum comoment map $\mu^{\on{can}}: \mathfrak{g}\rightarrow \theo^{\hbar}(T^*W)$ as the composite of the canonical classical comoment map 
$\mathfrak{g}\xrightarrow{\mu} \C[T^*W]$ followed by the inclusion $\C[T^*W]\hookrightarrow \theo^{\hbar}(T^*W)$; by Lemma \ref{momentary agreement}, this agrees with the canonical comoment map to $\D(W)$ under the natural algebra homomorphisms.

\subsubsection{From $\D$ to DQ for Smooth Varieties}
Finally, we assume $W$ is an arbitrary smooth variety, equip $T^*W$ with the scaling action of $\Gm$, and equip $\D_W$ with the operator filtration.  There is then a canonical choice of deformation quantization $\theo^{\hbar}_{T^*W}$ of the sheaf of functions
on $T^*W$ as a Poisson algebra given by the Kontsevich formula.     Letting $p: T^*W\rightarrow W$ denote projection, the deformation quantization comes equipped with a homomorphism $p\inv \D_W\rightarrow \theo^{\hbar}_{T^*W}$.  If $W^\circ\subseteq W$ is an open set with an \'etale 
morphism $q:W^\circ \rightarrow W'$, then $q$ determines a ``wrong-way'' \'etale morphism $dq: T^*W^\circ \rightarrow T^*W'$.  The pullback $dq\inv \theo^{\hbar}_{T^*W'}\rightarrow \theo^{\hbar}_{T^*W^\circ}$ is an isomorphism of sheaves of associative $\C[\![\hbar]\!]$-algebras.  The algebra $\theo^{\hbar}_{T^*W}$ comes equipped with a canonical splitting of sheaves of vector spaces $\theo_{T^*W}\rightarrow \theo^{\hbar}_{T^*W}$; this splitting is compatible with the map $dq$ induced by an \'etale morphism of varieties $q: W\rightarrow W'$. When $W^\circ\subset W$ is affine and $q:W^\circ\rightarrow\mathbb{A}^n$ is \'etale, then the isomorphism $dq\inv \theo^{\hbar}_{T^*\mathbb{A}^n}\rightarrow \theo^{\hbar}_{T^*W^\circ}$ intertwines the quantization of $T^*W$ with the Moyal-Weyl product on $T^*\mathbb{A}^n$.  

If, in addition, $W$ has trivialized canonical bundle $K_W = W\times\C$ via which $G$ acts by the character $\gamma_G$, then the analogue of Lemma \ref{momentary agreement} holds for $p\inv\D_W\rightarrow \theo^{\hbar}_{T^*W}$.

Slightly abusively, we will write $\D_{\wh{\hbar}}$ for the Rees algebra of the deformation quantization algebra $\theo_{T^*W}^\hbar$ in the above setting.

    \subsection{Equivariant Modules}\label{equivariant modules}
A {\em weakly $\mathbb{K}$-equivariant $\D_W$-module} is a $\D_W$-module $M$ with a rational $\mathbb{K}$-action such that 
$g\cdot(\theta m) = (g\cdot \theta)(g\cdot m), \;\;\text{for all}\;\; m \in M, g \in \mathbb{K}, \theta \in \mathcal D_W$.
The category of such modules is denoted $(\mathcal D_W,\mathbb{K})-\on{mod}$, or if $W$ is affine,
$(\mathcal D(W),\mathbb{K})-\on{mod}$.
   One similarly defines weakly equivariant $\theo^\hbar(\bigvar)$-modules. For notational simplicity, we proceed as if $W$ is affine, though all statements generalize appropriately to non-affine $W$.

Returning to a connected reductive $G$ acting on $W$ and a weakly equivariant $\D(W)$-module $M$, let $\alpha: \mathfrak{g}\rightarrow \End_{\C}(M)$ denote the infinitesimal Lie algebra action.  Given a Lie algebra homomorphism $c:\mathfrak{g}\rightarrow\C$, let
\bd
\gamma_{M,c} = \alpha - (\mu^{\on{can}}+c) : \mathfrak{g}\rightarrow \End_{\C}(M).
\ed
   The module $M$ is {\em $(G, c)$-equivariant} if
$\gamma_{M,c} =0$.  The category of such modules is denoted $(\D, G, c)-\on{mod}$.

We define  $\ds\Phi_c(M) = M/(\sum_{z\in\mathfrak{g}} \gamma_{M,c}(z)M)$ for weakly $G$-equivariant $M$: this yields a $(G,c)$-equivariant module (in either category) and $\Phi_c$ is a left adjoint to the forgetful functor.  We write $M_c(\rho) = \Phi_c(\D\otimes \rho)$ for 
characters $\rho: G\rightarrow \Gm$, and in particular $M_c = M_c(\on{triv}) = \D/\D\mu^{\on{can}}_c(\mathfrak{g})$; this is compatible with the notation $M_c(\chi^{\ell})$ of the introduction by Formula (4.2) of \cite{McN1}.   The functor $\Phi_c$ of course depends on the group $G$, and if we want to emphasize the group we write $M_c^G(\rho)$.  
For a review of the basic properties of twisted equivariant and weakly equivariant $\mathcal D$-modules and these functors, see for example \cite[\S 4]{McN1}, and for a more detailed account \cite{KashiwaraEqui}.

The functor $\Hom_{(\D,G)}(M_c,-)$ is the functor of {\em quantum Hamiltonian reduction}.  Basic properties are discussed (in notation consistent with the present paper) in \cite{McN1}.  In particular, if $M \in (\D,G,c)-\on{mod}$, then
$\Hom_{(\D,G)}(M_c,M) = M^G$.

\subsection{Induction to DQ Modules}
From  Section \ref{from D to DQ}, we get a functor 
\bd
\cR(\D)-\on{mod}\longrightarrow \D_{\widehat{\hbar}}-\on{mod}, \hspace{2em} N\mapsto \D_{\widehat{\hbar}}\otimes_{\cR(\D)} N =: N^{\hbar}.
\ed
In particular,  a $\D$-module $M$ with a choice of good filtration yields a graded Rees module that can naturally be completed to a finitely generated
$\D_{\widehat{\hbar}}$-module, and hence yields a module for the Moyal-Weyl algebra (or when $W$ is not a representation,  a sheaf of modules for the sheaf of DQ algebras) which we will denote by $\cR(M)^{\hbar}$.  
The homomorphism $\D_{\widehat{\hbar}}\rightarrow \D_{\widehat{\hbar}}[\hbar\inv]$ induces a composite functor
\bd
\cR(\D)-\on{gr-mod} \longrightarrow \D_{\widehat{\hbar}}[\hbar\inv]-\on{mod}.
\ed
We also have the usual functors
\bd
\D-\on{mod} \xrightarrow{\C[t,t\inv] \otimes -} \D[t,t\inv]-\on{gr-mod} \leftarrow \cR(\D)-\on{gr-mod},
\ed
where the second functor comes via the identification $\cR(\D)[t\inv] =  \D[t,t\inv]$.  
\begin{lemma}
If $M$ is a finitely generated $\D$-module equipped with a choice of good filtration, the images of $M$ and $\cR(M$) in $\D[t,t\inv]-\on{gr-mod}$ are 
isomorphic.
\end{lemma}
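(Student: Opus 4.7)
The plan is to construct the natural map $\Psi \colon \cR(M)[t^{-1}] \to \C[t,t^{-1}] \otimes_{\C} M$ explicitly and verify it is an isomorphism of graded $\D[t,t^{-1}]$-modules. Both sides have clear descriptions: the image under the first functor is $M[t,t^{-1}]$, with $M$ placed in degree zero and $t$ in degree one; the image under the second functor is the localization of the Rees module $\cR(M) = \bigoplus_k (F_k M)\, t^k \subset M[t]$ obtained by inverting $t$. The compatibility of the $\D[t,t^{-1}]$-actions on the two sides is automatic from the given algebra-level identification $\cR(\D)[t^{-1}] = \D[t,t^{-1}]$, so the construction reduces to a statement about underlying graded modules.

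First I would define $\Psi$ by extending the tautological inclusion $\cR(M) \hookrightarrow M[t] \subset M[t,t^{-1}]$ $t$-linearly; this is well-defined (since $M[t,t^{-1}]$ is $t$-invertible) and respects the grading (since $(F_k M)t^k$ sits in degree $k$ on both sides). Surjectivity of $\Psi$ is where the hypothesis on $M$ enters: because the filtration $F_\bullet M$ is good, it is in particular exhaustive, so any $m \in M$ lies in some $F_k M$, hence $mt^k \in \cR(M)$ and therefore $m = t^{-k} \cdot (m t^k)$ lies in the image of $\Psi$; this extends to all of $M[t,t^{-1}]$ by $t$-linearity.

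For injectivity of $\Psi$, I would use that $\cR(M) \hookrightarrow M[t]$ is an inclusion of a $t$-torsion-free graded module, so localizing at $t$ preserves injectivity: the kernel of $\Psi$ is supported at $t=0$, but no nonzero element of $\cR(M)[t^{-1}]$ is annihilated by a power of $t$, so $\ker \Psi = 0$. Combined with Step 2 this gives the desired isomorphism.

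There is no substantive obstacle here; the statement is essentially a direct unpacking of definitions together with the exhaustiveness of a good filtration. The only point requiring care is bookkeeping of the grading and of the $\D[t,t^{-1}]$-action, but both are forced by the parallel algebra-level identification already recorded in Section \ref{sec:DQ3}.
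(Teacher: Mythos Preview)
Your proof is correct. The paper itself states this lemma without proof, treating it as a standard fact about Rees modules; your argument is precisely the routine verification one would supply, and the only real input---exhaustiveness of the filtration for surjectivity and $t$-torsion-freeness of $\cR(M)\subset M[t]$ for injectivity---is exactly what is needed.
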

Recall the notion of {\em support} of a finitely generated $\theo^{\hbar}_\bigvar[\hbar\inv]$-module, where $\theo^{\hbar}_\bigvar$ is a DQ algebra.
If $M$ is a finitely generated $\theo^{\hbar}_\bigvar[\hbar\inv]$-module, we choose a finitely generated $\theo^{\hbar}_\bigvar$-submodule 
$M(0)\subset M$ with the property that $M(0)[\hbar\inv] = M$; such a submodule is called a {\em lattice}.  Then, by definition, 
$\on{supp}(M) = \on{supp}(M(0)/\hbar M(0))$, where the latter means the set-theoretic support of the finitely generated $\theo_\bigvar$-module
$M(0)/\hbar M(0)$.  By standard arguments, this notion does not depend on the choice of lattice \cite[Proposition~2.0.5]{Kashiwara}.
\begin{prop}\label{induction to DQ}
\mbox{}
\begin{enumerate}
\item We get a commutative diagram:
\bd
\xymatrix{
\cR(\D)-\on{gr-mod} \ar[d]\ar[r] & \D_{\widehat{\hbar}}-\on{mod}\ar[d] \\
\D[t,t\inv]-\on{gr-mod} \ar[r] & \D_{\widehat{\hbar}}[\hbar\inv]-\on{mod}.
}
\ed
\item If $W$ is a smooth variety with $G$-action, then all functors are compatible with the $G$-actions (i.e. induce a commutative diagram for weakly $G$-equivariant modules).
\item  If $M$ is a $\D$-module equipped with good filtration, then its (singular) support, calculated in any of the above categories, is the same.
\end{enumerate}
\end{prop}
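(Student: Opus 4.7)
The plan is to dispense with parts (1) and (2) as formal consequences of the constructions in Section \ref{from D to DQ}, then give the substantive argument for part (3).  For part (1), both horizontal functors are extensions of scalars along ring maps and the two vertical functors are localizations (at $t$ on the left, at $\hbar$ on the right).  Under the identification $\cR(\D)\cong \D_\hbar[\hbar^{1/\ell}]$ from Section \ref{sec:DQ3}, one has $t\mapsto \hbar^{1/\ell}$, so both compositions send a graded $\cR(\D)$-module $N$ to $\D_{\widehat{\hbar}}[\hbar^{-1}]\otimes_{\cR(\D)}N$ and commutativity follows from associativity of tensor product.  For part (2), property (2) of the Kontsevich quantization (Section \ref{kontsevich quant}) guarantees compatibility with $G$-equivariant \'etale maps, so the natural homomorphism $p^{-1}\D_W\to \theo^\hbar_{T^*W}$ is $G$-equivariant; each functor in the diagram then restricts to weakly $G$-equivariant modules, and the commutativity from (1) is automatically $G$-equivariant.

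For part (3), I fix a good filtration on $M$ and form the Rees module $\cR(M)=\bigoplus_k F_kM\cdot t^k$, which by definition of singular support satisfies $\cR(M)/t\cR(M)=\gr M$ with support $SS(M)\subseteq T^*W$.  The image of $M$ in $\D[t,t^{-1}]$-gr-mod, by the lemma immediately preceding the proposition, is isomorphic to $\cR(M)[t^{-1}]\cong \C[t,t^{-1}]\otimes_\C M$; tensoring with $\C[t,t^{-1}]$ leaves the underlying $\D$-module structure unchanged, so its support is again $SS(M)$.  On the DQ side, I take $\cR(M)$ itself as a lattice in $\cR(M)^\hbar[\hbar^{-1}]$: it is finitely generated over $\cR(\D)\subset \D_{\widehat{\hbar}}$ because $M$ is, and generates the whole module after inverting $\hbar$ because $\D_{\widehat{\hbar}}$ is the $\hbar$-adic completion of $\cR(\D)$.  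Reducing this lattice modulo $\hbar$ yields $\cR(M)/t^\ell\cR(M)$, whose set-theoretic support in $T^*W$ coincides with that of $\cR(M)/t\cR(M)=\gr M$; independence of support from the choice of lattice (\cite[Proposition~2.0.5]{Kashiwara}) then gives $SS(M)$ in the lower-right corner, and the same lattice taken inside $\cR(M)^\hbar$ itself (without inverting $\hbar$) handles the upper-right corner.

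The main subtlety is the DQ-side lattice argument: one must verify that $\cR(M)$ really embeds as a lattice in its $\hbar$-localization and that its reduction mod $\hbar$ agrees with $\gr M$ up to a finite power of $t$, for which flatness of $\cR(\D)$ as a module over $\C[t]$ together with finite generation of $M$ are essential.  With that in hand, the sheaf-theoretic extension to non-affine $W$ is routine by the \'etale naturality built into Section \ref{from D to DQ}.
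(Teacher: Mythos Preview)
The paper states this proposition without proof, treating all three parts as routine consequences of the constructions in Section~\ref{from D to DQ}.  Your argument supplies the expected details and is essentially correct.

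One minor imprecision in part (3): a lattice in $\cR(M)^\hbar[\hbar^{-1}]$ must by definition be a finitely generated $\D_{\widehat{\hbar}}$-submodule, not merely an $\cR(\D)$-submodule, so $\cR(M)$ itself is not literally a lattice.  The correct lattice is $\cR(M)^\hbar = \D_{\widehat{\hbar}}\otimes_{\cR(\D)}\cR(M)$; since $\D_{\widehat{\hbar}}$ is the $t$-adic completion of $\cR(\D)$ and completion preserves finite quotients, one has $\cR(M)^\hbar/\hbar\,\cR(M)^\hbar \cong \cR(M)/t^\ell\cR(M)$, exactly as you compute.  For this to be a genuine lattice one needs $\cR(M)^\hbar$ to be $\hbar$-torsion-free, which follows because a good filtration is increasing so $\cR(M)$ is $t$-torsion-free (you allude to this, though you wrote ``flatness of $\cR(\D)$'' where you presumably meant $\cR(M)$).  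With that correction the argument goes through.
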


\section{Kashiwara Equivalence and Equivariant Modules}\label{kashiwara section}
In this section we study $\mathcal D$-modules on a $\sT$-representation $W$ (where $\sT$ is a torus). Fix a contracting $\Gm$-action on 
$T^*W$ commuting with the $\sT$-action.

 \subsection{Torus Weights}
Suppose $\Tmax$ is a torus in $GL(W)$ of dimension $\on{dim}(W)$ commuting with $\sT$ and the contracting $\Gm$-action.  
The action of $\sT$ on $W$ may then be viewed as a homomorphism $\rho \colon \sT \to \Tmax$. 
Choose a $\Tmax$-weight basis $e_1, \dots, e_d$ of $W$ (thus $d=\on{dim}(W)$) and let $x_i$ denote the corresponding linear functions and $\partial_i$ the corresponding partial derivatives.  Then the monomial $x^I \partial^J$ has $\Tmax$-weight $J-I$ (in multi-index notation).  In particular, 
$\D(W)^{\Tmax} = \C[x_1\partial_1, \dots, x_d\partial_d]$. 
\begin{lemma}\label{K weight space}
Let $\mathbf{a} = (a_1,\dots, a_d)$ be a weight of $\Tmax$.  Then the $\mathbf{a}$-weight subspace of $\D(W)$ is
$\D(W)^{\mathbf{a}} = \psi \cdot \C[x_1\partial_1, \dots, x_n\partial_d]$
where $\psi = \prod_{a_i<0} x_i^{-a_i} \prod_{a_i\geq 0} \partial_i^{a_i}$.
\end{lemma}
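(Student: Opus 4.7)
The plan is to verify both inclusions. For $\supseteq$, I would first compute the $\Tmax$-weight of $\psi$: the factor $x_i^{-a_i}$ (when $a_i<0$) contributes weight $-(-a_i)\alpha_i = a_i\alpha_i$ under the convention that $x_i$ has weight $-\alpha_i$, and $\partial_i^{a_i}$ (when $a_i\geq 0$) contributes weight $a_i\alpha_i$, so $\psi$ has weight $\mathbf{a}$. Since each $x_i\partial_i$ is $\Tmax$-invariant (weight $0$), any element of $\psi\cdot\C[x_1\partial_1,\dots,x_d\partial_d]$ lies in $\D(W)^{\mathbf{a}}$.

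For the reverse inclusion $\subseteq$, I would start from the PBW-style basis of normal-ordered monomials $x^I\partial^J$ of $\D(W)$. Since $x^I\partial^J$ has weight $J-I$, the weight-$\mathbf{a}$ subspace is spanned by those monomials with $J_i-I_i=a_i$ for every $i$. Because operators at different indices commute, it suffices, for each $i$ independently, to show that any monomial $x_i^{I_i}\partial_i^{J_i}$ with $J_i-I_i=a_i$ lies in $\psi_i\cdot\C[x_i\partial_i]$, where $\psi_i=x_i^{-a_i}$ if $a_i<0$ and $\psi_i=\partial_i^{a_i}$ if $a_i\geq 0$; then one multiplies these univariate factorizations together, commuting freely across indices.

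The key single-variable tool is the classical identity
\[
x_i^k\partial_i^k \;=\; (x_i\partial_i)(x_i\partial_i-1)\cdots(x_i\partial_i-k+1),
\]
easily proved by induction from $\partial_i x_i = x_i\partial_i+1$. Combined with the intertwining relations
\[
\partial_i^k\, f(x_i\partial_i)=f(x_i\partial_i+k)\,\partial_i^k,\qquad x_i^k\, f(x_i\partial_i)=f(x_i\partial_i-k)\,x_i^k,
\]
one handles the two cases. If $a_i\geq 0$, write $x_i^{I_i}\partial_i^{I_i+a_i} = (x_i^{I_i}\partial_i^{I_i})\partial_i^{a_i}$, turn $x_i^{I_i}\partial_i^{I_i}$ into a polynomial $p(x_i\partial_i)$, then move $\partial_i^{a_i}$ to the left using the second intertwining relation, landing in $\partial_i^{a_i}\cdot\C[x_i\partial_i]$. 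If $a_i<0$, write $x_i^{J_i-a_i}\partial_i^{J_i}=x_i^{-a_i}\cdot(x_i^{J_i}\partial_i^{J_i})=x_i^{-a_i}\cdot q(x_i\partial_i)$, which already has the desired form $\psi_i\cdot\C[x_i\partial_i]$.

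There is no real obstacle here: the argument is entirely mechanical given the above two commutation identities, and the only item to track carefully is the sign/direction when moving $\psi_i$ across the zero-weight polynomial factor. The mild bookkeeping point is simply ensuring that after handling each index separately, the product $\prod_i\psi_i$ really equals $\psi$ as written and that the remaining factors commute past each other to yield a single element of $\C[x_1\partial_1,\dots,x_d\partial_d]$ on the right.
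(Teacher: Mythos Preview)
Your proof is correct. The paper states this lemma without proof, treating it as an elementary fact about the Weyl algebra; your argument via the PBW basis, reduction to a single variable, and the classical identity $x^k\partial^k = (x\partial)(x\partial-1)\cdots(x\partial-k+1)$ is the standard way to justify it. One tiny slip: in the case $a_i\geq 0$, to move $\partial_i^{a_i}$ from the right of $p(x_i\partial_i)$ to the left you want (the inverse of) the \emph{first} intertwining relation, namely $f(x_i\partial_i)\,\partial_i^k = \partial_i^k\, f(x_i\partial_i - k)$, not the relation for $x_i^k$.
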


\subsection{Equivariant Kashiwara Equivalence}  
Suppose $V\subset W$ is a $\mathsf{T}$-invariant subspace.  Let $x_1,\dots, x_k$ be linearly independent weight vectors in $W^*\subset \C[W]$ such that $V=W(x_1,\dots, x_k)$ (i.e. such that $V = \{v\; |\; x_i(v) = 0, i=1, \dots, j\}$).  
Write $\partial_i = \partial/\partial x_i$.  We let $\mathsf{T}$ act on $\C[\partial_1, \dots, \partial_k]$ in the natural way (via the identification with a subring of $\on{Sym}(W)$).   We can also extend the natural free, rank $1$ $\C[\partial_1, \dots, \partial_k]$-module structure to make $\C[\partial_1, \dots, \partial_k]$ into a $\C[x_1,\dots, x_k, 
\partial_1, \dots, \partial_k]$-module for which $x_i \partial_j^\alpha = -\alpha\delta_{ij}\partial_j^{\alpha-1}$.

\begin{prop}[Kashiwara's Equivalence]\label{kashiwara}
Let $M$ be a weakly $\mathsf{T}$-equivariant $\D(W)$-module.  If $M$ is supported on $V\subset W$, then $M$ is $\mathsf{T}$-equivariantly isomorphic to 
$\C[\partial_1, \dots, \partial_k]\otimes_{\C} M'$
 where $M'$ is a weakly $\mathsf{T}$-equivariant $\D(V)$-module.
\end{prop}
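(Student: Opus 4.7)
The plan is to reduce to the classical (non-equivariant) Kashiwara equivalence and then track the torus action. First, since $\mathsf{T}$ is reductive and acts linearly on $W$, I can choose a $\mathsf{T}$-equivariant splitting $W = V \oplus V''$, where $V'' \subset W$ is the subspace on which $x_1,\ldots,x_k$ form a basis of the dual. This gives a $\mathsf{T}$-equivariant identification $\mathcal D(W) \cong \mathcal D(V)\otimes_{\C}\mathcal D(V'')$, so in particular $\mathcal D(V)\subset \mathcal D(W)$ as a $\mathsf{T}$-stable subalgebra commuting with all the $x_i$ and $\partial_i$ for $i=1,\ldots,k$.

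Next I follow the usual construction of the inverse to the Kashiwara pushforward: set
\[
M' \;=\; \{\, m\in M \;:\; x_1 m = \cdots = x_k m = 0\,\}.
\]
Because $\mathcal D(V)$ commutes with every $x_i$, this is a $\mathcal D(V)$-submodule of $M$. By the classical Kashiwara equivalence applied to $M$ (which is supported on $V$), the $\mathcal D(W)$-module map
\[
\Phi \colon \C[\partial_1,\ldots,\partial_k]\otimes_{\C} M' \longrightarrow M, \qquad \partial^I\otimes m \longmapsto \partial^I m,
\]
is an isomorphism, where the left-hand side carries the $\mathcal D(W)$-module structure specified in the proposition (with $x_i\cdot \partial_j^\alpha = -\alpha\delta_{ij}\partial_j^{\alpha-1}$ and the commuting $\mathcal D(V)$-action on $M'$).

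It remains to upgrade everything to weak $\mathsf{T}$-equivariance. If $\chi_i$ denotes the $\mathsf{T}$-weight of $x_i$, then for $m\in M'$ and $t\in\mathsf{T}$ weak equivariance gives
\[
0 \;=\; t\cdot (x_i m) \;=\; (t\cdot x_i)(t\cdot m) \;=\; \chi_i(t)\, x_i(t\cdot m),
\]
so $t\cdot m \in M'$. Hence $M'$ is a weakly $\mathsf{T}$-equivariant $\mathcal D(V)$-module. Since each $\partial_i$ is a $\mathsf{T}$-weight vector (of weight $-\chi_i$), the tensor product $\C[\partial_1,\ldots,\partial_k]\otimes_{\C} M'$ carries a natural $\mathsf{T}$-action, and weak equivariance for the operators $\partial^I$ on $M$ shows that $\Phi$ intertwines it with the $\mathsf{T}$-action on $M$.

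The statement is essentially classical Kashiwara plus bookkeeping; the only real thing to check is that the $\mathsf{T}$-stable subspace $M'$ coincides with the one produced by classical Kashiwara, which follows from the existence of the $\mathsf{T}$-equivariant splitting of $W$. I do not anticipate a substantive obstacle, since reductivity of $\mathsf{T}$ makes every ingredient (the splitting of $W$, the decomposition of $\mathcal D(W)$, the passage to the $x_i$-annihilator) behave equivariantly.
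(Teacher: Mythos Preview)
Your proof is correct and follows essentially the same strategy as the paper. In the paper the proposition is stated without a visible proof (a sketch is enclosed in a \texttt{comment} block), and that sketch proceeds by induction on codimension: for one coordinate $x$ it uses the Euler operator $E = x\partial_x$, decomposes $M = \bigoplus_{\ell \le -1} M_\ell$ into $E$-eigenspaces, takes $M' = M_{-1}$, and observes that since $E$ is $\mathsf{T}$-invariant the eigenspace decomposition is automatically $\mathsf{T}$-stable. Your version replaces the eigenspace description with the simultaneous annihilator $M' = \{m : x_i m = 0\}$; these coincide (from $xm=0$ one gets $Em = (\partial_x x - 1)m = -m$, and conversely $E(xm)=0$ forces $xm\in M_0=0$), so the two presentations identify the same $M'$. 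Your direct route avoids the induction and is slightly cleaner; the paper's Euler-operator viewpoint makes the $\mathsf{T}$-stability of $M'$ immediate without the weight-vector computation you carry out.
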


\subsection{Decomposition With Respect to a 1-Parameter Subgroup}\label{decomposition}
Suppose now we have a one-parameter subgroup $\beta\colon \Gm \to \sT$ of $\sT$ (and hence, if we assume the action of $\sT$ is effective, a one-parameter subgroup of $\Tmax$).
Write $W_+ = W_+(\beta)$, respectively $W_0 = W_0(\beta)$, respectively $W_-= W_-(\beta)$ for the sum of positive, respectively zero, respectively negative weight subspaces of $W$ under $\beta$.  We then have an identification:
\bd
T^*W = (T^*W)_+\times (T^*W)_0\times (T^*W)_-  = (W_+\times W_-^*)\times T^*W_0 \times (W_-\times W_+^*)
\ed
where $W_+\times W_-^*$, respectively $T^*W_0$, respectively $W_-\times W_+^*$, is the positive, respectively zero, respectively negative weight subspace of $T^*W$.

Note that each of the subspaces $W_{\pm},W_0$ (respectively $W^*_{\pm},W_0^*$) is a direct sum of eigenlines $\C e_i$ (respectively $\C e_i^*$ where $\{e_i^*\}_{i=1}^d$ is the dual basis). 
Choose $\Tmax$-weight vector coordinates $x_1, \dots, x_k$ on $W_-$,  and $y_1,\dots, y_j$ on $W_+$.  
The torus $\sT$ acts on $W$ via a list of characters $\psi_i$, $i=1,\dots, j+k+\ell$, where $i=1,\dots, j$ correspond to $W_+(\beta)$, $i=j+1, \dots, j+\ell$ correspond to $W_0(\beta)$, and
$i=j+\ell+1, \dots, j+\ell+k$ correspond to $W_-(\beta)$ (thus we are \textit{not} assuming that the characters $\psi_i$ are distinct).   We write $\alpha_i = d\psi_i$, and abusively write $\alpha_i\bullet\beta = d(\psi_i\circ\beta)$.   
We write
\begin{equation}\label{I(beta)}
I(\beta) = \Big\{\sum_i n_i|\alpha_i\bullet\beta| \; \Big|\; n_i\geq 0\Big\}
\end{equation}
for the set of $\mathbb{Z}_{\geq 0}$-linear combinations of the $|\alpha_i\bullet \beta|$  (cf. Remark \ref{I for torus}).

\subsection{Partial Fourier Transform}
\label{partial fourier}
Suppose $W = W_1 \times W_2$ is a $\mathsf{T}$-invariant direct sum decomposition of $W$. The partial Fourier transform is an isomorphism 
\[
\Psi\colon   \D(W) = \mathcal D(W_1\times W_2) \to  \D(W_1^*\times W_2).
\]
Since $\mathcal D(W) = \mathcal D(W_1)\otimes \mathcal D(W_2)$ it is enough define $\Psi$ on $\mathcal D(W_1)$. Taking coordinates $y_1,\ldots, y_j$ for $W_1$ and $z_1,\ldots z_j$ the corresponding dual coordinates on $W_1^*$, the isomorphism $\Psi$ is given by:
$\Psi(\partial_{y_i}) = z_i$ and $\Psi(y_i)= -\partial_{z_i}.$
Using this isomorphism we get an equivalence between $\mathcal D(W_1\times W_2)-\on{mod}$ and $\mathcal D(W_1^*\times W_2)-\on{mod}$.  Note:

\begin{equation}\label{half-form FT}
\Psi\Big(y_i \partial_{y_i} + \frac{1}{2}\Big)  = -\Big(z_i\partial_{z_i} + \frac{1}{2}\Big).
\end{equation}

\begin{remark}
It would be more invariant to work with a subspace $W_1<W$ and produce a $\mathcal D$-module on the conormal bundle $T_{W_1}^*(W) \cong W_1^*\times (W/W_1)$. This is the microlocalization functor $\mu_{W_1}$ of Kashiwara-Schapira defined for any submanifold $Y$ of a manifold $X$. Since we only need the special case of subspace of a vector space, we have chosen to keep to a more hands-on approach.
\end{remark}

\subsection{Application of Kashiwara to Twisted Equivariant Modules}
The infinitesimal action of $\mathbf{1} \in \text{Lie}(\Gm) = \C$ associated to the action of $\Gm$ via the homomorphism $\beta$ is given by the Euler operator
\begin{equation}\label{euler op}
\wt{\mathbf{1}} =  e(\beta) = -\sum_{i=1}^j w_i y_i\partial_{y_i} + \sum_{i=1}^k u_i x_i \partial_{x_i}\in \D = \D(W), \hspace{1em} \text{where}
\end{equation}
\bd
w_i = \alpha_i\bullet\beta \;\; \text{for $i=1,\dots, j$} \;\; \text{and} \;\; u_i = - \alpha_{j+\ell+i}\bullet\beta \;\; \text{for $i=1,\dots, k$},
\ed
and 
 each $w_i>0, u_i>0$.\footnote{The signs are consistent since $\beta$ acts with positive weights on $W_+$, hence negative weights on its coordinate functions $y_i$, whereas $\beta$ acts with negative weights on $W_-$, hence positive weights on its coordinate functions $x_i$.}  
Write
\begin{align}\label{euler vf}
\mu^{\on{can}}(d\beta(\mathbf{1})) = \eu(\beta) & = - \sum_{i=1}^j w_i \Big(y_i\partial_{y_i}+\frac{1}{2}\Big)  +  \sum_{i=1}^k u_i \Big(x_i \partial_{x_i}+\frac{1}{2}\Big) \hspace{.2in} \text{and}\\
\mu^{\on{can}}(d\beta(\mathbf{1}))  = \eu'(\beta) & =  \hspace{1.1em} \sum_{i=1}^j w_i \Big(z_i\partial_{z_i}+\frac{1}{2}\Big) + \sum_{i=1}^k u_i \Big(x_i \partial_{x_i}+\frac{1}{2}\Big);
\end{align}
these are the {\em half-density--shifted Euler vector fields}.  
Under the isomorphism
\bd
\D = \D(W_+\times W_0\times  W_-) = \D(W_+)\otimes \D(W_0)\otimes \D(W_-)\cong \D(W_+^*)\otimes \D(W_0)\otimes  \D(W_-)
\ed
as above, 
we see from \eqref{half-form FT} that $\eu(\beta)$ gets identified with $\eu'(\beta)$.  
 
Next, given a Lie algebra character 
$c:\ft \rightarrow \C$  (i.e. linear homomorphism) of $\ft = \text{Lie}(\sT)$,
we write:
\begin{equation}\label{module Mc}
M_c^\beta = \D/\D\big(\eu(\beta) + c\bullet\beta\big).
\end{equation}
Under the partial Fourier transform above, our calculations show that $M_c^\beta$
gets identified with 
$\D'/\D'\big(\eu'(\beta) + c\bullet\beta\big)$ where we write $\D' = \D(W_+^* \times W_0 \times W_-)$ to emphasize which coordinates are the base coordinates.

\begin{lemma}
The natural $\Tmax$-action on $\D(W)$ induces structure of  weakly $\Tmax$-equivariant $\D$-module on $M^\beta_c$.
\end{lemma}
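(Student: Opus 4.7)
The plan is to verify that the generator $\eu(\beta) + c\bullet\beta$ of the defining left ideal is $\Tmax$-invariant, so the ideal is $\Tmax$-stable and the quotient inherits a weakly $\Tmax$-equivariant $\D$-module structure from the natural one on $\D(W)$.

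First I would recall the $\Tmax$-weight grading on $\D(W)$. Since the basis $e_1,\dots,e_d$ of $W$ was chosen to consist of simultaneous $\Tmax$-weight vectors (the torus $\Tmax\cong \Gm^d$ was fixed to act diagonally in this basis), the dual linear functions $y_1,\dots,y_j$ (coordinates on $W_+$) and $x_1,\dots,x_k$ (coordinates on $W_-$) are $\Tmax$-weight vectors, and the corresponding partial derivatives $\partial_{y_i},\partial_{x_i}$ are weight vectors of the opposite weight. Consequently each quadratic monomial $y_i\partial_{y_i}$ and $x_i\partial_{x_i}$ lies in $\D(W)^{\Tmax}$, i.e.\ is $\Tmax$-invariant. (This is a special case of Lemma~\ref{K weight space} applied with weight $\mathbf{a}=0$.)

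Next I would invoke the explicit formula \eqref{euler vf}, which exhibits $\eu(\beta)$ as a $\C$-linear combination of the $\Tmax$-invariants $y_i\partial_{y_i}$ and $x_i\partial_{x_i}$ together with a scalar shift $\tfrac12\big(\sum u_i - \sum w_i\big)$. Since $c\bullet\beta$ is also a scalar, and all scalars lie in $\D(W)^{\Tmax}$, the element $\eu(\beta)+c\bullet\beta$ is $\Tmax$-invariant. Therefore the left ideal $\D\cdot\big(\eu(\beta)+c\bullet\beta\big)$ is $\Tmax$-stable: for $g\in\Tmax$ and $\theta\in\D$ we have $g\cdot\big(\theta(\eu(\beta)+c\bullet\beta)\big) = (g\cdot\theta)(\eu(\beta)+c\bullet\beta)$, which remains in the ideal.

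It follows that the quotient $M_c^\beta = \D/\D(\eu(\beta)+c\bullet\beta)$ carries a well-defined residual action of $\Tmax$. The weak equivariance identity $g\cdot(\theta\cdot\bar m) = (g\cdot\theta)(g\cdot\bar m)$ for $g\in\Tmax$, $\theta\in\D$, $\bar m\in M_c^\beta$ descends from the analogous identity on $\D$ itself, completing the verification. There is no serious obstacle here; the only point worth emphasizing is that the half-density shift by $\tfrac12\sum\pm w_i,\pm u_i$ built into $\eu(\beta)$ is a scalar and so harmless for $\Tmax$-equivariance, just as the twist $c\bullet\beta$ is.
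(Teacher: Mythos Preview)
Your proof is correct. The paper states this lemma without proof, evidently regarding it as routine; your argument is exactly the natural one, verifying that $\eu(\beta)+c\bullet\beta\in\D(W)^{\Tmax}$ via the explicit formula \eqref{euler vf} and the description $\D(W)^{\Tmax}=\C[x_1\partial_1,\dots,x_d\partial_d]$, so that the defining left ideal is $\Tmax$-stable.
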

\begin{prop}\label{application of kashiwara}
Define $M_c^\beta$ as in \eqref{module Mc} and $I(\beta)$ as in \eqref{I(beta)}.  Suppose $M_c^\beta \xrightarrow{\phi} M$ is a weakly $\Gm$-equivariant (via $\beta$) $\D$-module homomorphism.  
Suppose that $\phi(\mathbf{1})$ is supported on $W_+\times W_-^*$: that is, for each $x_i$ and $\partial/\partial y_i$ (notation as in Section \ref{decomposition}), there is an $N\gg 0$ such that $x_i^N \cdot \phi(\mathbf{1}) = 0$ (respectively, such that 
$(\partial/\partial y_i)^N \cdot \phi(\mathbf{1}) = 0$).  
Then $\phi=0$ if 
\begin{equation}\label{beta condition}
c\bullet\beta\notin \left(I(\beta) + \frac{1}{2}\sum_{i=1}^d |\alpha_i\bullet\beta|\right).
\end{equation}
\end{prop}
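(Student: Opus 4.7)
The plan is to reduce the claim to Kashiwara's Equivalence via the partial Fourier transform $\Psi$ of Section \ref{partial fourier}. Applying $\Psi$ in the $W_+$ variables turns $\phi$ into a $\D'$-module homomorphism $\phi'\colon \D'/\D'(\eu'(\beta)+c\bullet\beta)\to \Psi(M)$, where $\D'=\D(W_+^*\times W_0\times W_-)$. Under $\Psi$ the coordinates $x_i$ on $W_-$ are unchanged while the partial derivatives $\partial/\partial y_i$ are sent to the coordinate functions $z_i$ on $W_+^*$. The hypothesis that each $x_i$ and each $\partial/\partial y_i$ acts locally nilpotently on $\phi(\mathbf{1})$ therefore translates into the statement that each $x_i$ and each $z_i$ acts locally nilpotently on $\phi'(\mathbf{1})$, i.e., that $\phi'(\mathbf{1})$ is set-theoretically supported on the subspace $W_0\subset W_+^*\times W_0\times W_-$.

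Applying Kashiwara's Equivalence (Proposition \ref{kashiwara}) to the $\D'$-submodule $\D'\cdot\phi'(\mathbf{1})$ then produces a decomposition
\[
\D'\cdot\phi'(\mathbf{1})\;\cong\; \C[\partial_{z_1},\dots,\partial_{z_j},\partial_{x_1},\dots,\partial_{x_k}]\otimes_{\C} N'
\]
for a $\D(W_0)$-module $N'$, and a corresponding finite expansion $\phi'(\mathbf{1})=\sum_{I,J}\partial_z^I\partial_x^J\otimes m_{I,J}$ with $m_{I,J}\in N'$. Using the module-structure formula $x_i\cdot\partial_j^\alpha=-\alpha\delta_{ij}\partial_j^{\alpha-1}$ from Section \ref{kashiwara section}, a direct calculation yields $(x_i\partial_{x_i}+\tfrac12)\cdot\partial_x^J\otimes m=-(J_i+\tfrac12)\partial_x^J\otimes m$ and analogously for $z_i$. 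Hence $\eu'(\beta)$ acts on $\partial_z^I\partial_x^J\otimes N'$ as the scalar
\[
-\sum_{i=1}^{j} w_i\bigl(I_i+\tfrac12\bigr)-\sum_{i=1}^{k} u_i\bigl(J_i+\tfrac12\bigr).
\]

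The equation $(\eu'(\beta)+c\bullet\beta)\phi'(\mathbf{1})=0$ now forces the component of $\phi'(\mathbf{1})$ in each $\eu'(\beta)$-eigenspace to vanish unless its eigenvalue is $-c\bullet\beta$. Hypothesis \eqref{beta condition} is precisely the statement that no $I\in\bbZ_{\geq 0}^{j}$ and $J\in\bbZ_{\geq 0}^{k}$ satisfy $c\bullet\beta=\sum w_i I_i+\sum u_i J_i+\tfrac12\sum_i|\alpha_i\bullet\beta|$, so every summand $m_{I,J}$ vanishes. Thus $\phi'(\mathbf{1})=0$, whence $\phi(\mathbf{1})=0$, and since $\mathbf{1}$ generates $M_c^\beta$ we conclude $\phi=0$. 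The only subtle point is the eigenvalue calculation for $\eu'(\beta)$ in the Kashiwara decomposition, as this is precisely where the parameter $c$ enters and distinguishes the quantum statement from its classical counterpart.
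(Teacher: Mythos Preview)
Your proof is correct and follows essentially the same route as the paper: partial Fourier transform in the $W_+$ variables, Kashiwara's Equivalence to write the cyclic submodule as $\C[\partial_z,\partial_x]\otimes N'$, and then the eigenvalue computation for $\eu'(\beta)$ to exclude $-c\bullet\beta$. The only cosmetic difference is that the paper first strips off $W_0$ (replacing $M$ by the $\D(W_+\times W_-)$-submodule generated by $m$) before applying Kashiwara at the origin, whereas you keep $W_0$ and apply Kashiwara along the subspace $W_0$; since $\eu'(\beta)$ involves only the $z_i,x_i$ variables, both versions yield the same scalar action on each $\partial_z^I\partial_x^J\otimes N'$ and the argument goes through identically.
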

\begin{remark}\label{rational ok}
Both sides of the condition on $c$ in the statement of the condition are homogeneous (for positive rational numbers) of degree $1$ in $\beta$.  It follows both that the condition on $c$ does not depend on $\beta$, up to positive rational number multiples, and that the statement of the proposition remains true if we allow $\beta$ to be a {\em rational} 1-parameter subgroup of $\sT$.  
\end{remark}

\begin{proof}
Given a $\Gm$-equivariant $\phi$ as in the statement of the proposition, let $m=\phi(\mathbf{1})$.  Write $\mathsf{c} = c\bullet\beta$.  
Then $m\in M^{\Gm}$ and
$(\eu(\beta) +\mathsf{c})\cdot m=0$ in $M$.  So it suffices to prove that any such element of $M$ is zero.  Suppose that $m$ is such an element;
then 
$\wt{M} = \D(W_+\times W_-) \cdot m\subseteq M$
 is a $\D(W_+\times W_-)$-submodule, and $m\in \wt{M}^{\Gm}$.  Moreover, we have
$\eu(\beta)\in \D(W_+\times W_-)\subseteq \D(W)$.  So we may replace $M$ by $\wt{M}$ and thus assume both that $W_0=0$ and 
that the support conditions of the proposition are satisfied.  Now, applying a partial Fourier transform as discussed above,
we get $\on{supp}(M) \subseteq \{0\}\subset W_+^*\times W_-$ as a $\D'$-module.

Proposition \ref{kashiwara} now tells us that 
$M \cong \C[\partial_{x_1}, \dots, \partial_{x_k}, \partial_{z_1}, \dots, \partial_{z_j}]\otimes_\C M'$
where $M'$ is a representation of $\Gm$.  Recall that, in $\C[\partial_x]$, one has $x\cdot \partial_x^\ell = (-\ell)\partial_x^{\ell-1}$.  
Consequently, $x\partial_x\cdot \partial_x^a = (-a-1)\partial_x^a$ and thus, writing
$\ds \partial^{\mathbf a} = \prod \partial_{x_i}^{a_i}\prod \partial_{z_i}^{a_i'},$
we get 
\begin{align*}
\eu'(\beta)\cdot \partial^{\mathbf a}  & = \sum_{i=1}^j w_i z_i\partial_{z_i}\partial^{\mathbf a} + \sum_{i=1}^k u_i x_i\partial_{x_i}\partial^{\mathbf a} + \frac{1}{2} \Big(\sum_{i=1}^j w_i + \sum_{i=1}^k u_i\Big)\partial^{\mathbf a}\\
& = -\left[\sum_{i=1}^j w_i(a_i' + 1) + \sum_{i=1}^k u_i(a_i +1) - \frac{1}{2} \Big(\sum_{i=1}^j w_i + \sum_{i=1}^k u_i\Big)\right]\partial^{\mathbf a}.
\end{align*}
Thus, a nonzero $\Gm$-invariant $m$ can only be killed by $\eu(\beta) +\mathsf{c} = \eu'(\beta) + \mathsf{c}$ if
\bd
0 = \mathsf{c} - \left[\sum_{i=1}^j w_i\left(a_i' + \frac{1}{2}\right) + \sum_{i=1}^k u_i\left(a_i +\frac{1}{2}\right)\right].
\ed
or 
\bd 
\mathsf{c} = \sum_{i=1}^j|\alpha_i\bullet\beta|a_i' + \sum_{i=j+\ell+1}^{j+\ell+k} |\alpha_i\bullet\beta|a_i +\frac{1}{2}\sum_{i=1}^d |\alpha_i\bullet\beta|,
\ed
where each $a_i, a_i'\geq 0$, as desired.
\end{proof}

\begin{lemma}\label{lifting symbols for beta}
Suppose that $M$ is a weakly $\Tmax$-equivariant $\D(W)$-module generated by $m\in M^{\Tmax}$.  Suppose that $SS(M)\subseteq W_+\times (W_-)^*$.  Then $M$ is also supported set-theoretically on $W_+\times (W_-)^*$, that is, in the sense of Proposition \ref{application of kashiwara}.
\end{lemma}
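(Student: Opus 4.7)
The plan is to apply a partial Fourier transform in the $W_+$ direction to convert the singular support condition into a set-theoretic support condition at a single point, after which Kashiwara's Equivalence (Proposition \ref{kashiwara}) immediately yields the desired nilpotent vanishing on $m$.

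First, I would introduce the partial Fourier transform $\Psi\colon \D(W_+\oplus W_0\oplus W_-)\to\D(W_+^*\oplus W_0\oplus W_-)$ from Section \ref{partial fourier}, and write $M^\Psi$ for $M$ regarded as a module over the target algebra via $\Psi$. Since $M$ is cyclic with generator $m$, so is $M^\Psi$. At the level of Bernstein symbols, $\Psi$ preserves the Bernstein filtrations on source and target and induces on associated graded algebras the linear symplectomorphism of $T^*W$ sending $(y_j,\partial_{y_j})\mapsto(\partial_{y_j},-y_j)$ on the $T^*W_+$ factor and the identity on $T^*W_0\times T^*W_-$; consequently $SS(M^\Psi)$ is the image of $SS(M)$ under this swap. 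The subspace $W_+\times W_-^*\subset T^*W$ is defined by the vanishing of every coordinate except those on $W_+$ (base) and $W_-^*$ (fiber), so after the swap it becomes the subspace of $T^*(W_+^*\oplus W_0\oplus W_-)$ on which every base coordinate vanishes, i.e., it is contained in the cotangent fiber over the origin. Projecting to the base yields $\supp(M^\Psi)\subset\{0\}$.

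Next, Kashiwara's Equivalence applied to $M^\Psi$ realizes it as a free module over the polynomial ring in the derivatives of all base coordinates of $W_+^*\oplus W_0\oplus W_-$, tensored with a residue vector space. In this explicit description, every base coordinate function acts locally nilpotently on every individual element, so applied to $m$ we obtain $N\gg 0$ such that each base coordinate, raised to the $N$th power, annihilates $m$ in $M^\Psi$. Translating back via $\Psi^{-1}$---which acts as the identity on coordinates of $W_0$ and $W_-$, and sends each new base coordinate on $W_+^*$ to the corresponding $\partial_{y_i}$---yields $\partial_{y_i}^N m = 0$ and $x_j^N m = 0$ in $M$, which are precisely the vanishing conditions of Proposition \ref{application of kashiwara}.

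The step deserving the most care is the behavior of singular support under the partial Fourier transform: via the cyclicity of $M$ this reduces to the observation that $\Psi$ intertwines Bernstein filtrations with the indicated coordinate swap on symbols, so that $SS(M^\Psi)$---computed as the vanishing locus of the symbol ideal of the annihilator of $m$ in the new algebra---is the image of $SS(M)$ under the induced symplectomorphism of cotangent bundles. The $\Tmax$-equivariance and $\Tmax$-invariance of $m$ play no essential role in this argument, but are consistent with the setting since $\Psi$ is $\Tmax$-equivariant for the natural $\Tmax$-action transported via the swap.
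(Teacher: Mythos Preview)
Your proof is correct but takes a genuinely different route from the paper's.  The paper argues directly: since the symbol $\xi_{y_i}^N$ vanishes on $SS(M)$ for $N\gg 0$, Kashiwara's symbol-lifting lemma produces some operator $\psi_N$ with that symbol annihilating $m$; because $m$ is $\Tmax$-fixed, one may replace $\psi_N$ by its component in the $\Tmax$-weight space of $\xi_{y_i}^N$, and then the explicit description of $\Tmax$-weight spaces in $\D(W)$ (Lemma~\ref{K weight space}) forces $\psi_N=\partial_{y_i}^N$ on the nose.  Thus the $\Tmax$-equivariance hypothesis and the invariance of $m$ are essential to the paper's argument.

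Your approach instead transports the microlocal condition through the partial Fourier transform in $W_+$ to obtain $SS(M^\Psi)$ inside the cotangent fiber over the origin of $W_+^*\oplus W_0\oplus W_-$, whence $\supp(M^\Psi)\subset\{0\}$ and Kashiwara's equivalence gives the nilpotence of all base coordinates on $m$; pulling back through $\Psi^{-1}$ yields $\partial_{y_i}^N m=0$ and $x_i^N m=0$.  This is slightly less elementary (it invokes the behavior of singular support under Fourier transform, which you correctly justify via the Bernstein filtration and independence of $SS$ from the choice of good filtration), but it has the advantage of showing that the $\Tmax$-equivariance hypothesis is in fact unnecessary for the lemma---a point you note explicitly.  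It also dovetails naturally with the Fourier-transform strategy already used in the proof of Proposition~\ref{application of kashiwara}.  One small remark: when you invoke Proposition~\ref{kashiwara} for $M^\Psi$, the paper's stated version carries an equivariance hypothesis, but the classical non-equivariant Kashiwara theorem is what you actually need, and it applies without change.
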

\begin{proof}
By the singular support hypothesis and  Lemma 2.5.3(1) of \cite{Kashiwara}, for every  $\partial/\partial y_i$ and $N$ sufficiently large, there exists a differential operator $\psi_N$
such that $\psi_N$ has symbol $\partial^N/\partial y_i^N$ and 
$\psi_N\cdot m=0$.
By a standard argument, since $m$ is $\Tmax$-fixed we may assume $\psi_N$ is a $\Tmax$-weight vector, which then is clearly of the same weight
as its symbol.  By Lemma
\ref{K weight space}, it follows that $\psi_N$ lies in 
$\ds \C\left[y_i\frac{\partial}{\partial y_i}\right]\frac{\partial^N}{\partial y_i^N}$
where $\D(W)^{\Tmax}$ is as described at Lemma \ref{K weight space}.  By our conditions on our filtration of $\D$, the only element in this space with symbol
$\frac{\partial^N}{\partial y_i^N}$ is $\frac{\partial^N}{\partial y_i^N}$ itself.  
\end{proof}

\begin{corollary}\label{beta split surjection}  Suppose that
$c$ satisfies \eqref{beta condition} and that $\chi$ is a character of $\Gm$ for which $\chi\circ\beta$ has positive weight.\footnote{Note that the condition holds whenever $\beta$ is a KN 1-parameter subgroup for $\chi$.}
\begin{enumerate}
\item If $\phi: M^\beta_c\rightarrow M$ is a weakly $\Tmax$-equivariant homomorphism and  $M$ has singular support in $W_+\times(W_-)^*$, then $\phi=0$.\item
For every $\ell\ll 0$, there is a finite-dimensional vector subspace 
\bd
V_\ell\subset \Hom_{(\D,\beta(\Gm),c)}(M_c^\beta(\chi^\ell), M_c)
\ed
 for which the natural composite
evaluation map
\begin{equation}\label{evaluation}
M_c^\beta(\chi^\ell)\otimes V_\ell \longrightarrow M_c^\beta(\chi^\ell)\otimes \Hom_{(\D,\beta(\Gm),c)}\big(M_c^\beta(\chi^\ell), M_c^\beta\big) \longrightarrow 
M_c^\beta
\end{equation}
is a split surjective homomorphism of objects of $(\D,\beta(\Gm),c)-\on{mod}$. 
\end{enumerate}
\end{corollary}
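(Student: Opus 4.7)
For part~(1), I would reduce directly to Proposition~\ref{application of kashiwara}. Given a weakly $\Tmax$-equivariant $\phi\colon M_c^\beta\to M$, restrict the equivariance to the subgroup $\beta(\Gm)$ and replace $M$ by its cyclic $\D$-submodule $N:=\D\cdot\phi(\mathbf{1})\subseteq M$. Since $\phi(\mathbf{1})$ is $\Tmax$-fixed and the singular support hypothesis $SS(M)\subseteq W_+\times (W_-)^*$ passes to $N$, Lemma~\ref{lifting symbols for beta} promotes the singular support containment to the set-theoretic support required by Proposition~\ref{application of kashiwara}. That proposition applied under hypothesis~\eqref{beta condition} gives $\phi(\mathbf{1})=0$, hence $\phi=0$.

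For part~(2), the plan is to combine the explicit Kashiwara model of $M_c^\beta$ with adjunction. Using tensor--hom adjunction and the universal property $M_c^\beta(\chi^\ell)=M_{c-\ell d\chi}^\beta\otimes\chi^\ell$, one identifies
\bd
\Hom_{(\D,\beta(\Gm),c)}\big(M_c^\beta(\chi^\ell),M_c^\beta\big)
\ed
with the subspace of elements $m\in M_c^\beta$ of $\beta(\Gm)$-weight $\ell(\chi\bullet\beta)$ satisfying $\eu(\beta)\cdot m=(\ell(\chi\bullet\beta)-c\bullet\beta)\,m$, and similarly $\Hom_{(\D,\beta(\Gm),c)}(M_c^\beta,M_c^\beta(\chi^\ell))$ with a space of weight-zero elements of $M_c^\beta(\chi^\ell)$. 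After applying the partial Fourier transform of Section~\ref{partial fourier} and Kashiwara's equivalence, both Hom spaces become transparent weight subspaces of polynomial rings in the variables $\partial_{x_i},\partial_{z_j}$. For $|\ell|$ sufficiently large in the sign convention dictated by $\chi\bullet\beta>0$, these spaces are rich enough that a suitable finite-dimensional $V_\ell$ has $\D$-joint image equal to $M_c^\beta$, yielding surjectivity of the evaluation map.

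To produce a section, observe that by the same adjunction giving a splitting $s\colon M_c^\beta\to M_c^\beta(\chi^\ell)\otimes V_\ell$ is equivalent to solving $\sum_i\psi_i\circ\varphi_i=\on{id}_{M_c^\beta}$ for some $\varphi_i\in V_\ell$ and $\psi_i\in\Hom_{(\D,\beta(\Gm),c)}(M_c^\beta,M_c^\beta(\chi^\ell))$, inside the composition pairing
\bd
\Hom_{(\D,\beta(\Gm),c)}\big(M_c^\beta,M_c^\beta(\chi^\ell)\big)\otimes V_\ell\longrightarrow\Hom_{(\D,\beta(\Gm),c)}\big(M_c^\beta,M_c^\beta\big).
\ed
Using the explicit Kashiwara basis and the eigenvalue computation carried out in the proof of Proposition~\ref{application of kashiwara}, each entry of the composition matrix is a polynomial in $c\bullet\beta$ whose zero locus is contained in the arithmetic progression $I(\beta)+\tfrac{1}{2}\sum_i|\alpha_i\bullet\beta|$ of~\eqref{beta condition}. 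Condition~\eqref{beta condition} then guarantees non-degeneracy and a section can be extracted by finite-dimensional linear algebra. The main obstacle I expect is this last step: making the composition pairing fully explicit while tracking the half-density shifts of~\eqref{euler vf}, and verifying that the polynomial obstructions to solving $\sum_i\psi_i\circ\varphi_i=\on{id}$ are precisely the values excluded by~\eqref{beta condition}.
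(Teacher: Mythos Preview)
Your argument for part~(1) matches the paper's exactly: invoke Lemma~\ref{lifting symbols for beta} to upgrade the singular-support hypothesis to set-theoretic support, then apply Proposition~\ref{application of kashiwara}.

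For part~(2), however, your route diverges substantially from the paper's and runs into a genuine difficulty. The paper does \emph{not} attempt any explicit computation of the composition pairing. Instead it bootstraps from part~(1) as follows. By adjunction and passage to associated graded, $\Hom_{(\D,\Gm,c)}(M_c^\beta(\chi^\ell),M_c^\beta)$ is identified with the space of $\chi^{-\ell}$-semi-invariants in $\C[\mu_\beta^{-1}(0)]$; for $\ell\ll 0$ these cut out precisely $W_+\times(W_-)^*$, so the cokernel of the evaluation map has singular support contained there. Choosing $V_\ell$ finite-dimensional and $\Tmax$-stable preserves this. Now the quotient map $M_c^\beta\to\on{coker}(\phi)$ is a weakly $\Tmax$-equivariant map to a module with unstable singular support, so part~(1) forces it to be zero; since it is surjective, $\on{coker}(\phi)=0$. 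Finally, applying the exact functor $(-)^{\Gm}=\Hom_{(\D,\Gm,c)}(M_c^\beta,-)$ to the resulting surjection yields a surjection onto $\End(M_c^\beta)$, and any preimage of the identity is a splitting. No eigenvalue calculation beyond what went into part~(1) is needed.

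Your proposed approach has a gap at the step where you invoke Kashiwara's equivalence to describe $M_c^\beta$ and $M_c^\beta(\chi^\ell)$ as ``polynomial rings in the variables $\partial_{x_i},\partial_{z_j}$.'' Kashiwara's equivalence (Proposition~\ref{kashiwara}) applies only to modules set-theoretically supported on a subspace; but $M_c^\beta=\D/\D(\eu(\beta)+c\bullet\beta)$ is supported on all of $W$ (its singular support is the entire moment-map hypersurface), so the equivalence gives no such description. The weight spaces of $M_c^\beta$ can of course be written down, but they are weight spaces of a Weyl-algebra quotient, not free polynomial modules, and the composition pairing you want to analyze is correspondingly opaque. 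Your own acknowledgment that ``the main obstacle'' is making this pairing explicit is accurate---and the paper's point is precisely that this obstacle is avoidable by feeding the cokernel back into part~(1).
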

\begin{proof}
(1) By Lemma \ref{lifting symbols for beta}, the hypotheses imply that $M$ is set-theoretically supported on $W_+\times(W_-)^*$, that is, in the sense of Proposition \ref{application of kashiwara}.  Thus, applying Proposition \ref{application of kashiwara}, it follows that $\phi(\mathbf{1})=0$, proving (1).

(2) Assume that $\ell \ll 0$.   To simplify notation, write $M_c = M_c^\beta$, $M_c(\chi^\ell) = M_c^\beta(\chi^\ell)$, and $\Gm$ for $\beta(\Gm)$.  
By adjunction we have
\begin{equation}\label{semis}
\begin{split}
\Hom_{(\D,\Gm,c)}\big(M_c(\chi^\ell), M_c\big) \cong 
\Hom_{\D}(\D\otimes\chi^\ell, M_c)^{\Gm}\\
\cong \Hom_{\Gm}\big(\chi^\ell, M_c\big) \cong \Hom_{\Gm}\big(\C, M_c\otimes\chi^{-\ell}\big). 
\end{split}
\end{equation}
The latter space is (non-canonically) isomorphic, via passing to associated graded (for the filtration on $M_c$ induced from the surjection $\D\rightarrow M_c$), to $\big(\C[\mu_\beta\inv(0)]\otimes\chi^{-\ell}\big)^{\Gm}$ where $\mu_\beta$ denotes the moment map for $\beta(\Gm)$.    Applying Lemma \ref{semi-invariant vs isotypic}
with $q=-\ell$,  elements of $\big(\C[\mu_\beta\inv(0)]\otimes\chi^{-\ell}\big)^{\Gm}$ are $\chi^{-\ell}$-semi-invariants for $\Gm$.  For 
$\ell\ll 0$, the common zero locus of such sections is exactly $W_+\times (W_-)^*$.
It follows that, under the identification of \eqref{semis}, the cokernel of the evaluation map
$M_c(\chi^\ell)\otimes \Hom_{(\D,\Gm,c)}\big(M_c(\chi^\ell), M_c\big) \longrightarrow 
M_c$ has singular support in $W_+\times (W_-)^*$.  We may thus choose a finite-dimension subspace
$V_\ell\subset \Hom_{(\D,\Gm,c)}\big(M_c(\chi^\ell), M_c\big)$ such that the cokernel of \eqref{evaluation} also has singular support in $W_+\times (W_-)^*$.  
  
It is evident from their construction that $M_c(\chi^\ell)$ and  $M_c$ are weakly $\Tmax$-equivariant and that the evaluation map $M_c(\chi^\ell)\otimes \Hom_{(\D,\Gm,c)}\big(M_c(\chi^\ell), M_c\big) \longrightarrow 
M_c$ is weakly $\Tmax$-equivariant: note that $\Gm$ acts trivially on $\Hom_{(\D,\Gm,c)}\big(M_c(\chi^\ell), M_c\big)$ but 
$\Tmax$ may not.  However, since the $\Tmax$-action is rational, the subspace $V_\ell$ may be chosen to be a $\Tmax$-stable subspace, and then
\eqref{evaluation} is $\Tmax$-equivariant; assume we have made such a choice.  Let $\phi: M_c(\chi^\ell)\otimes V_\ell \rightarrow M_c$ denote the composite map in \eqref{evaluation}.
It follows from the construction that $\on{coker}(\phi)$ is $(\Gm,c)$-equivariant, weakly $\Tmax$-equivariant, and has singular support
in $W_+\times (W_-)^*$.  Consequently, conclusion (1) implies that $\on{coker}(\phi)^{\Gm}: (M_c(\chi^\ell)\otimes V_\ell)^{\Gm}\rightarrow M_c^{\Gm}$ is zero.  

We next prove that in fact $\on{coker}(\phi) = 0$.  To do this, we apply $(-)^{\Gm} = \Hom_{(\D,\Gm,c)}(M_c,-)$ to the exact sequence
$M_c(\chi^\ell)\otimes V_\ell \xrightarrow{\phi} M_c \rightarrow \on{coker}(\phi)\rightarrow 0.$
Since $\Gm$ is reductive, the sequence remains exact upon applying $\Gm$-invariants, yielding a surjection
$\phi^{\Gm}: \Hom_{(\D,\Gm,c)}(M_c,M_c(\chi^\ell)\otimes V_\ell) \twoheadrightarrow \Hom_{(\D,\Gm,c)}(M_c,M_c)$ of $\End_{(\D,\Gm,c)}(M_c)$-modules.  
An element $\psi \in \Hom_{(\D,\Gm,c)}(M_c,M_c(\chi^\ell)\otimes V_\ell)$ for which  $\phi^{\Gm}(\psi) = \on{Id}_{M_c}$ is a splitting of $\phi$.
   This proves (2).
   \end{proof}

\begin{example}
Consider $W=\C^{n+1}$ with coordinates $x_0, \dots, x_n$.  Let $\beta: \Gm\rightarrow \Gm$ be given by $\beta(z) = z\inv\cdot \on{Id}$.  
Let $\chi(z) = z$, so $d\chi\bullet\beta = -1$.   Write 
\bd c=\big(\ell + (n+1)/2\big)\cdot d\chi, 
\hspace{1em} \text{and} \hspace{1em} 
\eu(\beta) = \sum x_i\partial_{x_i} +(n+1)/2;
\ed
 then $M_c = \D(W)/\D(W)\big(\sum x_i\partial_{x_i} - \ell\big)$.  This is the $\D$-module that descends to $\D(\theo(\ell))$ on $\mathbb{P}^n$---indeed, note that if $f$ is homogeneous of degree $\ell$ then $(\sum x_i\partial_{x_i} - \ell)(f)= 0$, so $H^0(M_c)^{\Gm}$ naturally acts on $H^0(\mathbb{P}^n,\theo(\ell))$.  For this choice of $\beta$, $W_+ = \{0\}$ and $W_+\times W_-^*$ consists of the fiber $\{0\}\times W^*$ over $0\in W$.  Thus, the
proposition states that for $\ell \notin \mathbb{Z}_{\leq -(n+1)}$, there are no weakly equivariant twisted $\D$-modules on $W$ supported over $0$.  Compare to Example \ref{KN proj} for the reason for our choice of signs.
\end{example}

\section{Kirwan-Ness Stratifications}\label{KN section}
In this section we describe the Kirwan-Ness (KN) stratification of a  $G$-variety, and then in the case of a $G$-representation give a description of the strata and an explicit description of the one-parameter subgroups labeling them (which from now on we will refer to as {\em KN one-parameter subgroups}).

\subsection{Review of KN Stratifications}
Let $G$ be a reductive group and $\sT$ a maximal torus of $G$, with $W$ the corresponding Weyl group. Let $Y(\sT) = \on{Hom}(\mathbb G_m,\sT)$ be the group of $1$-parameter subgroups of $\sT$, and $X(\sT) = \on{Hom}(\sT,\mathbb G_m)$, the group of characters. Let $Y_\mathbb Q = Y(\sT)_\mathbb Q$ be the $\mathbb Q$-vector space\footnote{Since $\sT$ is a torus, $Y_\mathbb Q$ is a rational form of $\on{Lie}(\sT)$. For the reductive group $G$, it makes sense to define the set of ``rational $1$-parameter subgroups'' but it no longer has the structure of a $\mathbb Q$-vector space.}  $Y(\sT)\otimes_\mathbb Z \mathbb Q$ and $X_\mathbb Q = X(\sT)\otimes_\mathbb Z \mathbb Q$ similarly. Let $q\colon Y_{\mathbb Q} \to \mathbb Q$ denote a $W$-invariant, integral, positive definite quadratic form, and $d$ the induced metric. The quadratic form allows us to identify $X_\mathbb Q$ and $Y_{\mathbb Q}$, which we do henceforth. 

Suppose $\bigvar$ is a smooth $G$-variety equipped with a $G$-equivariant line bundle $\mathscr{L}$.  Let $\mathsf{KN} = \{\beta\}$ be a finite collection of 1-parameter subgroups of $\sT$.  
We define a partial order $<$ on $\mathsf{KN}$ by setting $\beta < \beta'$ if $q(\beta)<q(\beta')$. 
Given $\beta\in\mathsf{KN}$, write $Z_\beta = \bigvar^{\beta(\Gm)}$ for the fixed-point locus of $\beta$.  Let 
\bd
\ds Y_\beta = \{x\in \bigvar \;|\; \lim_{t\rightarrow 0} \beta(t)\cdot x \in Z_\beta\}, \;\; \text{and write}\;\;
\on{pr}_\beta: Y_\beta\rightarrow Z_\beta
\ed
for the corresponding projection.  Write $\bigvar^{ss}$ for the open subset of $\bigvar$ complementary to the common zero locus of $G$-invariant elements of $H^0(\bigvar, \mathscr{L}^N)$ for all $N>0$.  

Let $L_\beta \subseteq G$ denote the centralizer of $\beta$ in $G$, and let $P_\beta$ denote the parabolic subgroup of $G$ whose Lie algebra is spanned by the nonnegative $\beta$-weight spaces in $\mathfrak{g}$; in particular, 
$L_\beta$ is the Levi factor of $P_\beta$.  Then $L_\beta$ preserves $Z_\beta$, $P_\beta$ preserves $Y_\beta$, and $\on{pr}_\beta$ is $P_\beta$-equivariant where $P_\beta$ acts on $Z_\beta$ via $P_\beta\rightarrow L_\beta$.  
Moreover, for each connected component $Z_{\beta, i}$ of $Z_\beta$ and $x\in Z_{\beta, i}$, $L_\beta$ acts via a character $\lambda_{\beta, i}: L_\beta \rightarrow \Gm$ on the fiber $\mathscr{L}(x)$ of $\mathscr{L}$ over $x$ (and this character does not depend on the choice of $x\in Z_{\beta, i}$).

\begin{defn}\label{def:KN-strat}
A {\em KN stratification} of $\bigvar$ with respect to $\mathsf{KN}$ consists of a choice, 
 for each $\beta$, of an $L_\beta$-stable open subset $Z_\beta^{ss}\subseteq Z_\beta$ satisfying:
 \begin{enumerate} 
\item for each $\beta$ and each component $Z_{\beta, i}$ , the complement $Z_{\beta, i}\smallsetminus Z_\beta^{ss}$ in $Z_{\beta, i}$ is cut out by a collection of $\lambda_{\beta, i}$-semi-invariant elements of $H^0\big(Z_{\beta, i}, \mathscr{L}^N|_{Z_{\beta, i}}\big)$ for some $N \gg 0$.  
\item Defining $Y_\beta^{ss} = \on{pr}_\beta\inv(Z_\beta^{ss})$, $Y_\beta^{ss}$ is a $\beta$-equivariant affine bundle over $Z_\beta^{ss}$ via $\on{pr}_\beta$.
\item Letting $S_\beta = G\cdot Y_\beta^{ss}$, we have $S_\beta \cong G\times_{P_\beta} Y_\beta^{ss}$.
\item  The collection of subsets $\{\bigvar^{ss}\}\cup \{S_\beta\; |\; \beta\in\mathsf{KN}\}$ stratifies $\bigvar$:
\begin{enumerate}
\item The stratum closures $\overline{S}_{\beta}$ satisfy $\overline{S}_{\beta}\subseteq \bigcup_{\beta'\geq \beta} S_{\beta'}$.\ff{It is \textit{not} assumed to be the case that the closure of a stratum is a union of strata.}
\item  $\bigvar = \bigvar^{ss}\coprod \big(\coprod_\beta S_\beta\big)$.
\end{enumerate}
\end{enumerate}
\end{defn}

\subsection{Sources of KN Stratifications}
Suppose $\wt{X}\subseteq \C^{n+1}$ is the affine cone of a $G$-stable subvariety $X\subseteq \mathbb{P}^n$ of a projective space with linear $G$-action, with coordinates $x_0, \dots, x_n$ which are $\sT$-weight vectors, with weights $\alpha_0, \dots, \alpha_n$.  
Given a $1$-parameter subgroup $\beta:\Gm\rightarrow\sT$, define subsets $\wt{Z}_\beta$ and $\wt{Y}_\beta$ of $\mathbb P^n$ by declaring that a point $(x_0:\dots: x_n)\in\mathbb{P}^n$ lies in $\wt{Z}_{\beta}$, respectively $\wt{Y}_{\beta}$, if and only if, for each $i$ such that $x_i\neq 0$, we have $\alpha_i\bullet \beta = \beta\bullet\beta$, respectively $\alpha_i\bullet \beta \geq \beta\bullet\beta$.  Let
\begin{equation}\label{eq:beta-proj}
Z_\beta = \wt{Z}_\beta\cap X, \hspace{1em} Y_\beta = \wt{Y}_\beta \cap X, \hspace{1em} \text{and write}\hspace{1em} \on{pr}_\beta: \wt{Y}_\beta\rightarrow \wt{Z}_\beta
\end{equation}
for the natural linear projection given by setting to zero those coordinates for which $\alpha_i\bullet \beta > \beta\bullet\beta$.  

The space $\wt{Y}_\beta$ is an affine bundle over the space $\wt{Z}_\beta$. There is a natural open subset $\wt{Y}_\beta^{\on{ss}}$ of $\wt{Y}_\beta$ for which the $1$-parameter subgroup $\beta$ is \textit{optimally destabilizing}, which is the preimage under the bundle map of an open set $\wt{Z}_\beta^{\on{ss}}$ in $Z_\beta$; see \cite[\S 12]{Kirwan} for more details.
The {\em Kirwan-Ness stratum} (or KN stratum) $\wt{S}_\beta$ of $\mathbb{P}^n$ associated to $\beta$ is the $G$-saturation $\wt{S}_\beta = G\cdot \wt{Y}_\beta^{\on{ss}}$; Kirwan proves that $\wt{S}_\beta = G\times_{P_\beta} \wt{Y}_\beta^{ss}$ for the parabolic $P_\beta\subset G$ whose Lie algebra is spanned by nonnegative $\beta$-weights.  
Let $S_\beta = \wt{S}_\beta\cap X$.  
When $G$ is a torus we have $S_\beta = Y_\beta^{\on{ss}}$.

By \cite[\S 12.8]{Kirwan}, the 1-parameter subgroups $\beta$ all arise as follows.\footnote{Strictly speaking, in the terminology of \cite[\S 12]{Kirwan}, the rational $1$-parameter subgroup $\beta/q(\beta)$ is the optimally destabilizing subgroup, but we follows the conventions of \cite{Kirwan} in labeling strata by $\beta$ rather than $\beta/q(\beta)$.}  One takes a nonempty subset $\mathbf{\alpha}$ of the weights $\{\alpha_i\}$, forms their convex hull $\on{conv}(\mathbf{\alpha})$ in $X_{\mathbb{Q}}$, and takes the closest point $\beta$ to $0$ in $\on{conv}(\mathbf{\alpha})$.
We define a partial order $<$ on the set of KN 1-parameter subgroups by setting $\beta < \beta'$ if $q(\beta)<q(\beta')$.  We will need: 

\begin{prop}[\cite{Kirwan}, \S 12.16]
\label{Kirwan stratification}
Suppose that $X\subseteq \mathbb{P}^n$ is a smooth, closed subvariety.  Then $X^{ss}$ together with the strata $S_\beta$ form a KN stratification of $X$.
\end{prop}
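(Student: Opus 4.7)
The plan is to extract the statement from Kirwan's construction \cite[\S 12]{Kirwan} by checking conditions (1)--(4) of Definition \ref{def:KN-strat}, one at a time. The strata $\wt{S}_\beta\subset\mathbb{P}^n$ for the ambient projective space already satisfy each condition, and one must verify that intersecting with the smooth closed subvariety $X$ preserves everything. In every case, the set $Z^{ss}_\beta\subset Z_\beta = \wt{Z}_\beta\cap X$ will be taken to be the GIT-semistable locus of $Z_\beta$ for $L_\beta$ acting via the line bundle $\mathscr{L}^N|_{Z_{\beta,i}}$ twisted by the inverse of the character $\lambda_{\beta,i}$.

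First, I would verify the stratification axiom (4). Kirwan's theorem gives a disjoint decomposition $\mathbb{P}^n = (\mathbb{P}^n)^{ss}\sqcup \coprod_\beta \wt{S}_\beta$, where each unstable point lies in a unique stratum labeled by its (conjugacy class of) optimally destabilizing $1$-parameter subgroup, together with closure relations $\overline{\wt{S}_\beta}\subseteq \bigcup_{\beta'\ge\beta}\wt{S}_{\beta'}$. Intersecting with the closed subvariety $X$ preserves disjointness, preserves closure containment, and preserves the semistable--unstable dichotomy (via the Hilbert--Mumford criterion, applied inside $X$). Next, for (3), since $X$ is $G$-stable and $\wt{S}_\beta = G\times_{P_\beta}\wt{Y}_\beta^{ss}$, the intersection $Y_\beta^{ss} = \wt{Y}_\beta^{ss}\cap X$ is $P_\beta$-stable, and the quotient map $G\times Y_\beta^{ss}\to S_\beta$ descends to an isomorphism $G\times_{P_\beta}Y_\beta^{ss}\xrightarrow{\sim} S_\beta$ because the analogous map for $\mathbb{P}^n$ is an isomorphism and $X$ is $G$-stable.

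For condition (2), the map $\on{pr}_\beta$ on $\wt{Y}_\beta$ coincides with the attracting limit $x\mapsto \lim_{t\to 0}\beta(t)\cdot x$, which is defined on all of $\mathbb{P}^n$ along $\wt{Y}_\beta$ and is a linear (hence affine) bundle. The key point for $X$ is that the Białynicki-Birula decomposition for the smooth, complete, $\beta$-stable variety $X$ gives that $Y_\beta\to Z_\beta$ is a $\beta$-equivariant affine bundle, and restricting to the open $Z_\beta^{ss}\subseteq Z_\beta$ gives the required $Y_\beta^{ss}\to Z_\beta^{ss}$ affine bundle structure. Finally, for (1), the characterization of $Z_\beta^{ss}$ as a GIT-semistable locus for the $L_\beta$-action linearized by $\mathscr{L}^N\otimes \lambda_{\beta,i}^{-1}$ means, by definition of GIT semistability, that its complement is the common zero locus of $\lambda_{\beta,i}$-semi-invariant sections of $\mathscr{L}^N|_{Z_{\beta,i}}$ for $N\gg 0$.

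The main obstacle is the affine bundle condition (2). For the ambient $\mathbb{P}^n$ this is immediate from linearity, but for $X$ one must invoke the Białynicki-Birula theorem, which requires the smoothness hypothesis on $X$; without smoothness the attracting locus need not be an affine bundle over the fixed locus. Everything else follows rather formally from Kirwan's work on $\mathbb{P}^n$ combined with standard facts about GIT and closed immersions.
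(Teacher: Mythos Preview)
Your proposal is correct, and in fact supplies more than the paper does: the paper gives no proof of this proposition at all, simply recording it with the citation \cite[\S 12.16]{Kirwan} in the header and moving on. Your sketch is a faithful outline of how Kirwan's argument runs---checking (4) by intersecting the ambient stratification with the $G$-stable closed $X$, (3) by restricting the ambient isomorphism $G\times_{P_\beta}\wt{Y}_\beta^{ss}\cong\wt{S}_\beta$, (1) via the GIT definition of $Z_\beta^{ss}$, and (2) via Bia\l ynicki-Birula---and correctly identifies smoothness of $X$ as the crucial hypothesis needed for the affine-bundle condition (2).
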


We remark that every $1$-parameter subgroup is conjugate under $G$ to a $1$-parameter subgroup in $\sT$. 
In particular, we use the following (cf. \cite[\S 13]{Kirwan}).

\begin{enumerate}
\item
Any KN stratum $S$ contains a point $x$ whose set of optimal $1$-parameter subgroups $\Lambda_G(x)$ intersects $Y(T)$ in a unique point $\beta$. 
\item
If $Y_\beta$ denotes the KN stratum containing $x$ with respect to the $\sT$-action on $V$, then $S = G \cdot Y_\beta$. Thus the $W$-orbit of the point $\beta$ uniquely determines the stratum $S$. 
\end{enumerate}

It follows immediately that once we fix a maximal torus $\sT$ the KN strata are labeled by calculating the KN $1$-parameter subgroups associated to minimal combinations of those weights of the $\sT$-action lying in a single Weyl chamber.

We also have the following elementary fact:
\begin{lemma}\label{lem:induced-KN}
Suppose that $X$ is a smooth $G$-variety with KN stratification and that $\bigvar\subseteq X$ is a smooth $G$-stable locally closed subvariety that is a union of KN strata.  Then the induced stratification of $\bigvar$ is a KN stratification.
\end{lemma}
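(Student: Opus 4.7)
The plan is to verify each of the four conditions of Definition \ref{def:KN-strat} for $\bigvar$ equipped with $\mathscr{L}|_\bigvar$. Since $\bigvar$ is a union of KN strata of $X$, the natural candidate indexing set is $\mathsf{KN}_\bigvar := \{\beta \in \mathsf{KN} : S_\beta \subseteq \bigvar\}$; for each such $\beta$, I would declare $Z_\beta^{ss,\bigvar} := Z_\beta^{ss}$, which lies in $\bigvar$ because $Z_\beta^{ss} \subseteq S_\beta \subseteq \bigvar$. The associated $Y_\beta^{ss,\bigvar}$ and $S_\beta^\bigvar$ then coincide with $Y_\beta^{ss}$ and $S_\beta$, respectively.

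First I would dispatch the easy conditions. Conditions (2) and (3) are intrinsic statements about $Y_\beta^{ss}$, $Z_\beta^{ss}$, and $S_\beta$, and are inherited with no work. For condition (1), any connected component of $\bigvar^{\beta(\Gm)} = Z_\beta \cap \bigvar$ sits inside some ambient component $Z_{\beta,i}$; restricting the given system of $\lambda_{\beta,i}$-semi-invariant sections of $\mathscr{L}^N|_{Z_{\beta,i}}$ cutting out $Z_{\beta,i}\setminus Z_\beta^{ss}$ then yields the required cutting-out sections on that component (with the same character $\lambda_{\beta,i}$, since the line bundle is merely restricted). Condition (4)(a) follows from the topological identity $\overline{S_\beta}^\bigvar = \overline{S_\beta}^X \cap \bigvar$ (valid because $\bigvar$ is locally closed in $X$) combined with the fact that each stratum $S_{\beta'}$ of $X$ either lies in $\bigvar$ or is disjoint from it.

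The main obstacle is condition (4)(b): one must verify that the intrinsic semistable locus $\bigvar^{ss}$, defined via $G$-invariant sections of $\mathscr{L}|_\bigvar^N$, equals $\bigvar \setminus \coprod_{\beta \in \mathsf{KN}_\bigvar} S_\beta = X^{ss} \cap \bigvar$. One inclusion is immediate from restriction of global sections, giving $X^{ss} \cap \bigvar \subseteq \bigvar^{ss}$. The reverse inclusion is the genuine issue, since \emph{a priori} $\bigvar$ could admit $G$-invariant sections of $\mathscr{L}|_\bigvar^N$ that fail to extend to $X$, so one cannot simply quote the result for $X$. The plan is to argue intrinsically via the standard Hilbert--Mumford vanishing: the nonzero weight $N(\lambda_{\beta,i}\bullet\beta)$ of $\beta(\Gm)$ on the fiber $\mathscr{L}|_\bigvar^N(z)$ at $z \in Z_\beta^{ss}$ forces any $G$-invariant section to vanish at $z$, and a weight analysis of $\beta$-semi-invariants on an affine $\beta$-stable neighborhood of $z$---exploiting that $Y_\beta^{ss}$ is swept out by attracting cells at points of $Z_\beta^{ss}$---propagates the vanishing to all of $Y_\beta^{ss}$, and thence via $G$-equivariance to $S_\beta = G\cdot Y_\beta^{ss}$. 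The crucial observation making the argument work is that because $S_\beta \subseteq \bigvar$, the limit $z$, the trajectory $\beta(t)\cdot x$ from $x \in Y_\beta^{ss}$ to $z$, and the affine neighborhood on which the weight analysis occurs all lie inside $\bigvar$, so the entire Hilbert--Mumford computation is intrinsic to $(\bigvar, \mathscr{L}|_\bigvar)$.
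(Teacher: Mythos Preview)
The paper does not actually prove this lemma; it merely labels it an ``elementary fact'' and moves on. Your argument is therefore strictly more than the paper offers, and your overall structure---verify each clause of Definition~\ref{def:KN-strat} for the induced data, isolating (4)(b) as the only nontrivial point---is the natural one.

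One genuine subtlety in your treatment of (4)(b) deserves attention. You write that the \emph{nonzero} weight $N(\lambda_{\beta,i}\bullet\beta)$ on the fiber at $z$ forces a $G$-invariant section $s$ to vanish at $z$, and then that a weight analysis ``propagates the vanishing to all of $Y_\beta^{ss}$.'' The first step is fine: nonzero weight at a fixed point kills invariant sections there. But the propagation step needs more than nonzero---it needs the correct \emph{sign}. If you pull back $\mathscr{L}^N$ along $t\mapsto\beta(t)\cdot x$ (extended by $0\mapsto z$), an invariant section becomes $t\mapsto c\,t^{Nw}$ in a suitable trivialization; this forces $c=0$ (hence $s(x)=0$) only when $Nw<0$. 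When $Nw>0$ the section vanishes at $z$ automatically but can be nonzero at $x$, so knowing $s(z)=0$ does not propagate. Thus your Hilbert--Mumford argument requires $\lambda_{\beta,i}\bullet\beta<0$, not merely $\neq 0$.

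This sign condition is precisely what makes $\beta$ a \emph{destabilizing} one-parameter subgroup in the classical Hilbert--Mumford sense, and it holds automatically for the Kirwan stratifications of Proposition~\ref{Kirwan stratification} and Section~\ref{sec:KN-rep} (which are the only sources of KN stratifications actually used in the paper, and in particular the setting of Lemma~4.4 where Lemma~\ref{lem:induced-KN} is invoked). However, the abstract Definition~\ref{def:KN-strat} does not explicitly impose it, so in the stated generality your argument has a gap---though arguably this reflects an omission in the definition rather than in your reasoning. If you want the argument to be airtight as written, either (i) note that the sign condition is inherited from the ambient Kirwan-type stratification in every case of interest, or (ii) observe that Definition~\ref{def:KN-strat} should be read as implicitly requiring each $\beta$ to be destabilizing for $\mathscr{L}$ at points of $Z_\beta^{ss}$.
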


\subsection{KN Stratification of a Representation}\label{sec:KN-rep}
We are particularly interested in the case where the $G$-variety $X$ is a linear representation $V$ of $G$; to do this, we need the relative version of the theory of KN strata from 
\cite[Sections~1~and~5]{Teleman}.  We consider the vector space $V$ as an open subset of $\mathbb P(\C\oplus V)$ where $\C$ denotes the trivial representation of $G$. We get an action of $G$ on 
\bd
\bP = \bP(\C\times V) = \on{Proj}\; \on{Sym}\big((\C\oplus V)^*\big)
\ed
by $g\cdot (c,v) = (c, gv)$. This action makes $V\cong \{1\}\times V\subset \bP$ into a $G$-invariant open subset of $\bP$ consisting of semistable points. Now fix a character $\chi: G\rightarrow \Gm$ and consider the projective morphism $\pi: \bP\rightarrow \bP$ given by the identity map.  Equip $\pi$ with the relatively ample line bundle $M=\theo$, but with the $G$-linearization twisted by $\chi$.  More precisely, this means the following.  The total space of $M$ is $\bP\times\aline$, and 
we let $g\in G$ act by $g\cdot ((c, v), w) = ((c, gv), \chi(g)w)$ (cf. Section \ref{group actions section}).

   For each $\epsilon\in {\mathbb Q}_{>0}$, we get a $G$-linearized ${\mathbb Q}$-line bundle $\sL = M^{\otimes\epsilon}\otimes\theo(1)$.  Over $V \cong \bP\big((\C\sm\{0\})\times V\big)\subset \bP$, the bundle $\theo(1)$ has a canonical nonvanishing $G$-invariant section, given as a linear form on $(\C\oplus V)^*$ by the projection $\C\oplus V^*\twoheadrightarrow \C$.  In terms of this $G$-invariant trivialization of $\theo(1)$, then, $G$ acts on $\sL = M^{\otimes\epsilon}\otimes\theo(1)$ via a twist by the rational character $\lambda^{\epsilon}$ (see \cite{Teleman} for the conventions about the meaning of such statements).

    On restriction of $\sL\inv$ to the open set $V$ of $\bP$,  this trivialization of $\theo(1)$ allows us to write the action on $\sL\inv$ as  $g\cdot (v, w) = (gv, \lambda^{-\epsilon}(g)w)$;  this formula agrees with the conventions used in \cite{King} for applying geometric invariant theory to a representation, and allows us to use Kirwan's set-up to directly
describe the KN stratification of $V$ for the rational character $\chi^{\epsilon}$.
  \begin{lemma}
The KN stratification of $\mathbb{P}$ induced by $\sL$ is independent of $\epsilon$ for small $\epsilon\in {\mathbb Q}_{>0}$.  The induced stratification of 
$V\subseteq \bP$ is a KN stratification of $V$.
\end{lemma}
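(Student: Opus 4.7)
The plan is to reduce both assertions to the combinatorial recipe of \cite[\S 12.8]{Kirwan}---KN $1$-parameter subgroups are closest points to the origin of convex hulls of weight-subsets---and then invoke Lemma~\ref{lem:induced-KN}. First I would unpack the weight data: if $\alpha_1,\dots,\alpha_d$ are the $\sT$-weights of $V$, the $\sT$-weights on the homogeneous coordinates of $\mathbb{P}(\C\oplus V)$ are $\{0\}\cup\{-\alpha_i\}_i$, and twisting the $G$-linearization of $\sL=M^{\otimes\epsilon}\otimes\theo(1)$ by $\chi^{\epsilon}$ shifts each by a vector proportional to $\epsilon\chi$, yielding a collection $A(\epsilon)\subset X_{\mathbb{Q}}$ depending affinely on $\epsilon$. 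For each nonempty subset $I\subseteq A(\epsilon)$ one obtains a closest-point-to-origin $\beta_I(\epsilon)\in\on{conv}(I)$, a piecewise-linear function of $\epsilon$; the KN data of $\mathbb{P}$ with this linearization is assembled from the $\beta_I$ by Weyl translation as in Section~\ref{sec:KN-rep}.

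Second, to prove independence of $\epsilon$ for small $\epsilon>0$, I would observe that the combinatorial pattern determining (i) which subsets $I$ yield KN labels and (ii) which points of $\mathbb{P}$ are attracted by which $\beta_I$ under the downward flow is cut out by finitely many strict linear inequalities in $\epsilon$ involving the $\beta_I(\epsilon)$ and the weights of $A(\epsilon)$. Each such inequality remains strict on an open interval, so their intersection is a half-open interval $(0,\epsilon_0)$ on which the combinatorial picture is constant. The vectors $\beta_I(\epsilon)$ themselves vary continuously with $\epsilon$, but the attractor sets $Y_\beta$, fixed loci $Z_\beta$, and strata $S_\beta\subseteq\mathbb{P}$ depend only on the sign-patterns of $\beta$-weights on tangent vectors---exactly the combinatorial data that is locally constant---so the stratification as a decomposition of $\mathbb{P}$ is independent of $\epsilon$ on $(0,\epsilon_0)$.

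Third, for the induced stratification on $V\subset\mathbb{P}$, I would show that $V$ is a union of KN strata of $\mathbb{P}$ and apply Lemma~\ref{lem:induced-KN}. Writing points of $\mathbb{P}=\mathbb{P}(\C\oplus V)$ as $[c:v]$, the complement $\mathbb{P}(V)=\{c=0\}$ is $G$-invariant because $G$ fixes the $\C$-summand. For any $\beta\in\mathsf{KN}$, the orbit $\beta(t)\cdot[c:v]=[c:\beta(t)v]$ preserves the vanishing or non-vanishing of $c$; hence $Z_\beta$, $Y_\beta$, $Y_\beta^{ss}$, and by $G$-saturation $S_\beta$, each lie entirely in one of $V$ or $\mathbb{P}(V)$. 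The same dichotomy is immediate for $\mathbb{P}^{ss}$. Thus $V$ is a union of KN strata of $\mathbb{P}$, and Lemma~\ref{lem:induced-KN} concludes.

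The main obstacle is the second step: although elementary, it requires carefully isolating the finite collection of strict inequalities governing the combinatorics of closest points and confirming that they all remain strict on a common neighborhood of $\epsilon=0^+$. The first and third steps are essentially formal consequences of the framework already set up in the paper.
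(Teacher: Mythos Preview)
Your approach is in the right spirit but differs from the paper's and contains one genuine misstatement.

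For the first assertion, the paper simply invokes Lemmas~1.2 and~5.1 of \cite{Teleman}, which handle exactly the situation of a relative linearization $M^{\otimes\epsilon}\otimes\theo(1)$ and show the stratification stabilizes for small $\epsilon$. Your direct combinatorial argument---that the closest-point data $\beta_I(\epsilon)$ are piecewise linear and the finitely many strict inequalities governing sign-patterns of $\beta$-weights are stable near $\epsilon=0^+$---is essentially a sketch of what those lemmas prove. This is a legitimate alternative, trading a citation for an elementary but somewhat tedious verification; the paper's route is quicker.

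For the second assertion, both you and the paper invoke Lemma~\ref{lem:induced-KN}. However, your justification that $V$ is a union of KN strata is misstated. You write that ``$Z_\beta$, $Y_\beta$, $Y_\beta^{ss}$, and by $G$-saturation $S_\beta$, each lie entirely in one of $V$ or $\mathbb{P}(V)$.'' This is false as written: $Z_\beta$ typically has components in \emph{both} $V$ and $\mathbb{P}(V)$ (e.g.\ $[1:0]\in V$ is always $\beta$-fixed, while $\mathbb{P}(V)$ also contains $\beta$-fixed points), and likewise $S_\beta$ may meet both. What your flow argument actually shows is that $V$ and $\mathbb{P}(V)$ are each $G$-stable and closed under $\beta$-flows and their limits, so $S_\beta\cap V$ is open and closed in $S_\beta$, i.e.\ a union of connected components. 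That is enough: either one observes that Kirwan's strata may be refined by connected component without disturbing the axioms of Definition~\ref{def:KN-strat}, or one checks directly that the pieces $S_\beta\cap V$, $Y_\beta^{ss}\cap V$, $Z_\beta^{ss}\cap V$ inherit the required structure. Rephrase your claim accordingly and the argument goes through.
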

\begin{proof}
The first statement follows from 
Lemmas 1.2 and 5.1 of \cite{Teleman}.  The second statement then follows from Lemma \ref{lem:induced-KN}.
\end{proof}
\subsection{KN 1-Parameter Subgroups for a Representation}
We maintain the notation of Section \ref{sec:KN-rep}.
For convenience, however, we now switch to additive notation, so $\lambda =d\chi$; also, to avoid a profusion of minus signs, we assume from now on that $\e$ a small \textit{negative} rational number, so $\sL = M^{\otimes(-\epsilon)}\otimes\theo(1)$.

Let $\{e_1,e_2,\ldots,e_n\}$ be a basis of $V$ consisting of $\sT$-weight vectors, and let $\{\alpha_1,\alpha_2,\ldots,\alpha_n\}$ be the corresponding list of weights (thus we do not assume that the $\alpha_i$ are pairwise distinct). We extend this list by setting $\alpha_0=0$, so that if $e_0$ is the standard basis vector of $\C\times\{0\}\subset \C\oplus V$, then $\{\alpha_0,\ldots,\alpha_n\}$ gives a list of the weights of $\C\oplus V$. A point $x \in \mathbb P(\C\oplus V)$ determines a subset $I_x \subset \{1,2,\ldots,n\}$ where if $x = [t_0:t_1:\ldots: t_n]$ in the homogeneous coordinates given by the basis $\{e_i: 0\leq i \leq n\}$ of $\mathbb C\oplus V$, then 
\[\label{strata}
I_x = \{i \in \{0,1,\ldots,n\}: t_i \neq 0\}.
\]
Note that if $x \in V \subset \mathbb P(\C\oplus V)$, then $0 \in I_x$.

We will need the following standard notation from convex geometry. Let $S$ be a nonempty subset of $W$ a $\mathbb Q$-vector space. We write $\on{conv}(S)$ for the convex hull of $S$, \textit{i.e.}, the set
\[
\on{conv}(S) = \{\sum_{i =1}^m t_is_i: m \in \mathbb Z_{>0}, s_i \in S, t_i \in \mathbb Q, 0 \leq t_i\leq 1, \sum_{i=1}^n t_i =1\}.
\]
Similarly we let $\on{aff}(S)$ denote the affine hull of $S$, that is 
\[
\on{aff}(S) = \{\sum_{i =1}^m t_is_i: m \in \mathbb Z_{>0}, s_i \in S, t_i \in \mathbb Q, \sum_{i=1}^n t_i =1\}.
\]
\begin{remark}
\label{affineandlinear}
Clearly, if we pick any $s_0 \in \on{aff}(S)$, then $\on{aff}(S)$ is the translate by $s_0$ of the subspace spanned by $\{s-s_0: s\in S\}$.  Hence $\on{aff}(S)$ is a linear subspace precisely when $0 \in \on{aff}(S)$, in which case $\on{aff}(S) = \on{span}(S)$. 
\end{remark}

Given a subset $\emptyset\neq I \subseteq \{0,1,\ldots,n\}$, let $W_I= \on{span}\{\alpha_i: i \in I\}$ be the subspace spanned by the corresponding set of weights, and $C_I = \on{conv}\{\alpha_i: i\in I\}$, $A_I = \on{aff}\{\alpha_i: i\in I\}$. We also write $\alpha^\e_i = \alpha_i +\epsilon \lambda$ (so that in particular, $\alpha^\e_0 = \epsilon\lambda$), and then let $A^\e_I = \on{aff}\{\alpha^\e_i: i \in I\}$ and $C^\e_I = \on{conv}\{\alpha^\e_i: i \in I\}$.

The KN $1$-parameter subgroups for $\mathbb P(\C\oplus V)$ are given as follows.  If $I \subset \{0,1,\ldots,n\}$  then let $\beta_I = \beta_I(\epsilon)$ denote the closest point in $C^\e_I$ to $0$, that is, $\beta_I(\e)$ minimizes the distance $d(0,C^\e_I)$. If $\beta_I(\e) =0$ then there are semistable points with exactly this subset of weights; the semistable locus corresponds to the trivial $1$-parameter subgroup. Otherwise, the corresponding (rational) KN $1$-parameter subgroup is $\beta_I/q(\beta_I)$. We write $\Lambda_V$ for the set of KN $1$-parameter subgroups which label KN strata intersecting $V \subset \mathbb P(\C\oplus V)$.  

Note that for each nonempty subset $I \subseteq \{0,1,\ldots,n\}$, the distance $d(0,C^\e_I)$ is a piecewise linear function of $\e$, as is the minimal vector $\beta_I(\e)$. Since there are finitely many such subsets $I$, there is some $C<0$ such that all the distance functions $d(0, C^\e_I)$ and minimal vectors $\beta_I(\e)$ are linear for $\epsilon\in[C,0]$; fix one such $C$.  
Given $I$, fix a {\em minimal} subset $J \subseteq I$ with the property that $d(0,C_I^\e) = d(0,C_J^\e)$ for $\epsilon \in [C,0]$.  Then, since $q$ is strictly convex, so that the closest point is unique, we must have $\beta_I(\e) = \beta_J(\e)$ for $\epsilon\in [C,0]$.  

For each subset $I$, set $p_I(\e) = \on{proj}_{W_I^\perp}(\epsilon\lambda)$, the orthogonal projection to the subspace of weights perpendicular to $W_I$. 

\begin{prop}
\label{relevant beta}
Let $x \in V \subset \mathbb P(\mathbb C\oplus V)$ and let $I = I_x \subset \{0,1,\ldots, n\}$ be the corresponding set of weight labels. If $J\subseteq I$ is minimal such that $d(0,C_{I}^\e)=d(0,C_J^\e)$ then $\beta_{I}(\e) = p_J(\e)$. 
\end{prop}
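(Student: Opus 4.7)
The plan is to identify $\beta_I(\e)$ as the foot of the perpendicular from the origin onto the affine hull $A_J^\e$ and then to leverage the minimality of $J$ together with the hypothesis $0\in I$ to prove that $A_J^\e$ is the translate $\e\lambda + W_J$, from which the desired formula $\beta_I(\e) = p_J(\e)$ is immediate. First I would observe that $\beta_I(\e) = \beta_J(\e)$: since $C_J^\e \subseteq C_I^\e$ realize the same distance to the origin and $q$ is strictly convex, the unique nearest point must lie in $C_J^\e$. I then claim that this common point $\beta_J(\e)$ lies in the relative interior of $C_J^\e$ for every $\e\in(C,0)$. The argument is by contradiction: if $\beta_J(\e_0)$ lay on a proper face $C_{J'}^{\e_0}$ for some $J'\subsetneq J$ and some interior $\e_0\in(C,0)$, then $d(0,C_{J'}^{\e_0})=d(0,C_J^{\e_0})$, and since both functions $\e\mapsto d(0,C_{J'}^\e)$ and $\e\mapsto d(0,C_J^\e)$ are linear on $[C,0]$ and satisfy $d(0,C_{J'}^\e)\geq d(0,C_J^\e)$ throughout, equality at an interior point forces equality across the whole interval, contradicting the minimality of $J$.

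The relative-interior property identifies $\beta_J(\e)$ as the unique closest point to the origin in the affine hull $A_J^\e$. The crux of the proof is then to show that $V_J=W_J$, which by Remark~\ref{affineandlinear} is equivalent to $0\in A_J^0$. For this I would use a limiting argument: the hypothesis $0\in I$ gives $\alpha_0 = 0\in C_I^0$, hence $\beta_I(0)=0$, and by linearity on $[C,0]$, $\beta_J(\e)\to 0$ as $\e\to 0^-$. Combining with $C_J^\e = \e\lambda + C_J^0$ yields $\beta_J(\e) - \e\lambda \in \on{relint}(C_J^0)$ for every $\e\in(C,0)$; passing to the limit places $0$ in the closure of $\on{relint}(C_J^0)$, which equals $C_J^0 \subseteq A_J^0$.

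Once $V_J=W_J$ is established, $A_J^\e = \e\lambda + W_J$ and the orthogonal-projection formula gives $\beta_J(\e) = \e\lambda - \on{proj}_{W_J}(\e\lambda) = \on{proj}_{W_J^\perp}(\e\lambda) = p_J(\e)$, as required. I expect the main obstacle to be the limiting step proving $V_J = W_J$: this is exactly what is needed to handle the case $0\notin J$, where the foot of perpendicular from the origin to $A_J^\e$ would a priori only lie in $V_J^\perp$ rather than $W_J^\perp$, and without the minimality-plus-continuity input the conclusion of the proposition would in fact fail.
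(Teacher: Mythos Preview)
Your proof is correct and follows the same architecture as the paper's: establish $\beta_I(\e)=\beta_J(\e)$, show this point lies in the relative interior of $C_J^\e$ by minimality of $J$ (your use of linearity on $[C,0]$ to propagate equality from a single interior $\e_0$ makes precise what the paper states somewhat tersely), conclude it is the foot of the perpendicular from $0$ to $A_J^\e$, and finish by showing $A_J$ is a linear subspace so that $A_J^\e=\e\lambda+W_J$. The only substantive difference is the argument for $0\in C_J$. The paper combines two optimality inequalities---$\|\beta_I(\e)\|\geq d(0,C_J)+\e\|\lambda\|$ from the triangle inequality, and $\e\lambda\bullet\beta_I(\e)\geq\|\beta_I(\e)\|^2$ from the variational characterization of the nearest point together with $\e\lambda\in C_I^\e$---to force $d(0,C_J)=0$ as $\e\to 0^-$. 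You instead reuse the relative-interior conclusion: since $\beta_J(\e)-\e\lambda\in\on{rel\text{-}int}(C_J)$ and $\beta_J(\e)\to\beta_I(0)=0$ by linearity, the limit lands $0$ in $\overline{\on{rel\text{-}int}(C_J)}=C_J$. Your route is a bit more economical because it recycles work already done; the paper's route is self-contained in that it does not rely on having first established the relative-interior claim.
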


\begin{proof}
Fix $J\subset I$ minimal so that $\beta_I(\e) = \beta_J(\e)$ for $\epsilon\in[C,0]$. Note that $C_J^\e$ is a closed subset of $A_J^\e$ with nonempty interior \textit{as a subset of} $A_J^\e$; we write $\on{rel-int}(C_J^\e)$ for this relative interior. We claim that 
\begin{equation}\label{open}
\beta_I(\e) \in \on{rel-int}(C_J^\e) \;\; \text{for} \;\; \e \in [C,0].
\end{equation}

Indeed, the boundary of $C_J^\e$ in $A_J^\e$ is the union of convex hulls $C_{J'}^\e$ corresponding to certain proper subsets $J'\subset J$. If $\beta_I(\e)$ lies in one of these boundary subsets $C_{J'}^\e$  for small $\e$ then 
we must have $\beta_{J'}(\e) = \beta_I(\e)$, contradicting the minimality of $J$.

Next we claim that $0 \in C_J$. To see this, let $\beta'$ be the closest point to $0$ in $C_J$. Then as $\beta_J(\e)$ is the closest point to $0$ in $C_J^\e$ and $C_J^\e = \epsilon\lambda +C_J$, we must have 
\begin{equation}
\label{primeclosest}
\|\beta_I(\e)\| \geq \|\beta'\| + \epsilon\|\lambda\|
\end{equation}
(recall that $\epsilon$ is small and negative). On the other hand, since the stratum $S_\beta$ intersects $V$, we must have $0 \in I$, and hence $\alpha_o^\e = \epsilon \lambda \in C_I^\e$, and as $\beta_J(\e) = \beta_I(\e)$ is the unique closest point, by convexity we have:
\begin{equation}
\label{betaclosest}
\epsilon\lambda\bullet \beta_J(\e) \geq \beta_J(\e)\bullet\beta_J(\e) = \|\beta_J(\e)\|^2.
\end{equation}
For $\e$ sufficiently small, the inequalities (\ref{primeclosest}) and (\ref{betaclosest}) can only be consistent if $\|\beta'\| =0$, that is, if $\beta'=0$, and hence $0 \in C_J$ as required. Note in particular by Remark \ref{affineandlinear} this implies $A_J$ is a subspace.

It follows by (\ref{open}) that $\beta_J(\e)$ is also the closest point to $0$ in $A^\e_J = \e\lambda + A_J$. It follows immediately that $\beta_I(\e) = p_J(\e)$ as required.

\end{proof}

\begin{remark}
If $\mathbb P(\C\oplus V)$ has a $KN$ stratum with corresponding one-parameter subgroup $\beta$ of the form $\beta = p_J(\e) = \e \on{proj}_{W_J^\perp}(\lambda)$, then $S_\beta$ intersects $V$. Indeed if the subset $J$ is such that the set $C_J$ does not contain $0$, for small enough $\e$ the set $C_J^\e$ clearly will not contain $p_J(\e)$, contradicting the fact that $p_J(\e)$ should be the closest point to $0$ in $C_J^\e$; so $0\in C_J$. But then we may replace $J$ by $J \cup \{0\}$ without altering $C_J^\e$, and then clearly we may find $v \in V$ with associated weights giving $J$, and hence $v \in S_\beta$ as required. 
\end{remark}


So far we have shown that the set of KN $1$-parameter subgroups is given by a subset of the elements $\{p_J(\e): J \subset \{0,1,\ldots,n\}\}$. We now show how to refine this to a precise description of the KN $1$-parameter subgroups. For this we use an elementary result in convex geometry.

\begin{lemma}
\label{witnesses}
Let $W$ be a finite dimensional $\mathbb Q$-vector space and let $S\subset W$ be a finite set, and $C= \on{conv}(S)$ its convex hull. Then a point $p$ lies in $C$ if and only if, for every other point $p'$, there is some $s \in S$ such that $d(s,p)<d(s,p')$.
\end{lemma}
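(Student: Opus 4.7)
The plan is to prove both implications by a standard convex-geometry argument using the orthogonal-projection-onto-a-closed-convex-set characterization, after expanding distances via inner products. Throughout, I work with the Euclidean inner product $\langle-,-\rangle$ on $W$ (extended to the real completion if desired; since $S$ is finite and rational, no genuine analytic issues arise). The key computational observation is that $d(s,p)^2 < d(s,p')^2$ is, after cancelling $\|s\|^2$, the linear inequality
\[
\langle s,\, p'-p\rangle \;<\; \tfrac{1}{2}\bigl(\|p'\|^2 - \|p\|^2\bigr).
\]

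For the forward direction, I would argue by contradiction: suppose $p\in C$ but some $p'\neq p$ admits no witness $s\in S$. Then for every $s\in S$ the reverse inequality $\langle s, p'-p\rangle \geq \tfrac12(\|p'\|^2-\|p\|^2)$ holds. Writing $p = \sum t_i s_i$ with $t_i\geq 0$, $\sum t_i = 1$, and taking the corresponding convex combination of these inequalities yields $\langle p, p'-p\rangle \geq \tfrac12(\|p'\|^2-\|p\|^2)$. Rearranging produces $\|p'-p\|^2 \leq 0$, forcing $p'=p$, a contradiction.

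For the converse, I would prove the contrapositive: if $p\notin C$, I must exhibit a single $p'\neq p$ such that $d(s,p)\geq d(s,p')$ for every $s\in S$. The natural choice is $p' := \pi_C(p)$, the nearest point in the closed convex set $C$ to $p$. The defining variational inequality of the nearest-point projection says $\langle s - p', p-p'\rangle \leq 0$ for every $s\in C$, in particular for every $s\in S$. Expanding
\[
\|s-p\|^2 \;=\; \|s-p'\|^2 \;-\; 2\langle s-p', p-p'\rangle \;+\; \|p'-p\|^2,
\]
the middle term is non-negative and the last is strictly positive (since $p\notin C$, so $p\neq p'$), giving $d(s,p) > d(s,p')$ for all $s\in S$. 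This is the required failure of the witness condition.

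Neither direction is a serious obstacle; the proof is essentially one inner-product expansion plus the first-order condition for the nearest-point projection. The only small bookkeeping is making sure we are allowed to invoke the nearest-point projection in a $\mathbb{Q}$-vector space: since $C$ is the convex hull of finitely many rational points, $\pi_C(p)$ lies on some face of $C$ and is determined by a rational linear system, hence itself lies in $W$, so no passage to a real completion is strictly necessary.
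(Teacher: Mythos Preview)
Your proof is correct and follows essentially the same approach as the paper's: both directions hinge on the convex-combination representation of $p$ for the forward implication and on the nearest-point projection onto $C$ for the converse. The only cosmetic differences are that the paper translates to $p=0$ and, in the forward direction, directly exhibits a witness $s_i$ with $p'\cdot s_i\leq 0$ rather than averaging all the inequalities to reach a contradiction; your converse is in fact more carefully written than the paper's, which asserts the inequality without spelling out the variational characterization.
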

\begin{proof}
By translating if necessary we may assume that $p=0$. If $0 \in C$, we may write 
\[
0 = \sum_{i=1}^n t_is_i, \;\; t_i \in \mathbb Q\cap [0,1], \sum_{i=1}^n t_i =1.
\]
for some $s_i \in S$ ($1\leq i \leq n$). Now it follows that
\[
0 = p'\bullet 0 = \sum_{i=1}^n t_i p'\bullet s_i.
\]
Since every $t_i$ satisfies $t_i \geq 0$, and not all $t_i$ can be zero since $\sum_{i=1}^n t_i =1$, we must have some $s_i$ with $p'\bullet s_i \leq 0$. But then since $p' \neq 0$ we have
\[
\|p'-s_i\|^2 = \|p'\|^2 -2p'\bullet s_i +\|s_i\|^2 > \|s_i\|^2
\]
and so $d(p',s_i) >d(p,s_i)$ as required. 

Conversely, if $p \notin C$, then let $p'$ be the closest point in $C$ to $p$. Since the closest point is unique, we see that $d(p',s) < d(p,s)$ for all $s \in S$ as required.
\end{proof}

\begin{corollary}
Let $p_I(\e)$ be as above. Then $p_I(\e)$ yields a KN $1$-parameter subgroup if and only if there is no $J \subset I$ with the property that $d(p_J(\e),\alpha_k)<d(p_I(\e),\alpha_k)$ for all $k \in I$.
\end{corollary}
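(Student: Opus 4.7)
The plan is to combine Proposition~\ref{relevant beta}, which identifies the KN $1$-parameter subgroups as the points $\beta_I(\e) = p_J(\e)$ for minimal $J \subseteq I$, with the convex-geometric criterion of Lemma~\ref{witnesses}. First I would reduce the question to a membership statement: $p_I(\e)$ yields a KN $1$-parameter subgroup precisely when it equals $\beta_I(\e)$, the closest point to $0$ in $C_I^\e$. Since $p_I(\e) = \on{proj}_{W_I^\perp}(\e\lambda)$ is by construction the closest point to $0$ in the ambient affine subspace $\e\lambda + W_I$, which contains $C_I^\e$, this equality is equivalent to the condition $p_I(\e) \in C_I^\e$.

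Next I would apply Lemma~\ref{witnesses} with $C = C_I^\e$, $S = \{\alpha_k^\e : k \in I\}$, and test point $p = p_I(\e)$. In contrapositive form, the lemma says $p_I(\e) \notin C_I^\e$ if and only if there is a witness $p'$ satisfying $d(p', \alpha_k^\e) < d(p_I(\e), \alpha_k^\e)$ for every $k \in I$. By the proof of that lemma, one may take $p'$ to be the nearest point to $p_I(\e)$ in $C_I^\e$. This nearest point lies on a proper face $C_J^\e$ of $C_I^\e$ for some $J \subsetneq I$, and a direct computation using $W_J \subseteq W_I$ together with compatibility of orthogonal projections under inclusion of subspaces identifies this nearest point as precisely $p_J(\e)$. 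Conversely, any such $p_J(\e)$ satisfying the witness inequality forces $p_I(\e) \notin C_I^\e$.

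The hard part will be translating between the inequality $d(p_J(\e), \alpha_k^\e) < d(p_I(\e), \alpha_k^\e)$ produced by Lemma~\ref{witnesses} and the stated one $d(p_J(\e), \alpha_k) < d(p_I(\e), \alpha_k)$, uniformly in $k \in I$. Expanding squared distances, the difference between the two inequalities can be written as a quantity proportional to $\e\langle p_J(\e) - p_I(\e), \lambda\rangle$ that is independent of $k$. Using the orthogonality relations $p_I(\e) \perp W_I$ and $p_J(\e) \perp W_J$ together with the smallness of $\e$ guaranteed by Proposition~\ref{relevant beta}, this constant shift can be absorbed so as to preserve the existence of a witness. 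The combinatorial bookkeeping needed here---identifying when a candidate $p_J(\e)$ actually realizes a witness to the non-KN condition---is the main technical obstacle of the argument.
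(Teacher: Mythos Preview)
Your overall strategy matches the paper's: reduce the question to whether $p_I(\e)\in C_I^\e$ (since $p_I(\e)$ is already the nearest point of $A_I^\e$ to $0$), and then invoke Lemma~\ref{witnesses}. Two points deserve comment.

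First, your identification of the witness $p'$ as $p_J(\e)$ via ``compatibility of orthogonal projections under inclusion of subspaces'' is not the right justification; the nearest point to $p_I(\e)$ on a face $C_J^\e$ need not be $p_J(\e)$ for that reason alone. The clean argument implicit in the paper is: since $p_I(\e)$ is the foot of the perpendicular from $0$ to $A_I^\e\supseteq C_I^\e$, Pythagoras gives $\|c\|^2 = \|p_I(\e)\|^2 + \|c-p_I(\e)\|^2$ for every $c\in C_I^\e$, so the point of $C_I^\e$ nearest $p_I(\e)$ is exactly $\beta_I(\e)$, the point nearest $0$; and Proposition~\ref{relevant beta} then identifies $\beta_I(\e)=p_J(\e)$ for the minimal $J$.

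Second, the discrepancy you flag between $\alpha_k$ and $\alpha_k^\e$ is real, but it cannot be ``absorbed'' as you suggest. For any $k\in J\subseteq I$ one has $p_J(\e)\perp\alpha_k$ and $p_I(\e)\perp\alpha_k$, so
\[
d(p_J(\e),\alpha_k)^2 - d(p_I(\e),\alpha_k)^2 \;=\; \|p_J(\e)\|^2 - \|p_I(\e)\|^2 \;\geq\; 0,
\]
with strict inequality whenever $p_J(\e)\neq p_I(\e)$ (since $W_I^\perp\subseteq W_J^\perp$). Thus the strict inequality in the statement, read with the unshifted $\alpha_k$, can \emph{never} hold for all $k\in I$, and no $\e\to 0$ argument repairs this: the two quantities being compared are of the same order $\e^2$ and have opposite signs. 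The paper's proof does not address the point either; the intended statement is with $\alpha_k^\e$ rather than $\alpha_k$, and with that correction both your argument and the paper's go through directly from Lemma~\ref{witnesses} without any further translation step.
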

\begin{proof}
One just needs to check that $\e p_I(\lambda)$ lies in $C_J^\e$.  It follows from the above lemma and proposition that for all sufficiently small $\e$ some $\e p_J(\lambda)$ is the closest point in $C_J^\e$ to $0$; the result follows immediately. 
\end{proof}

\begin{example}
Suppose that $V=\C e_1 \oplus \C e_2$ is two-dimensional with $\sT = \Gm^2$ acting with weights $\alpha_1 = (1,0)$ and $\alpha_2 =(1,1)$. Then if $\lambda = (0,1)$, and $\epsilon$ is small and negative, the modified weights on $\C \times V$ are $\{(0,\epsilon),(1,\epsilon),(1,1+\epsilon)\}$. Taking the various possible subsets of weights associated to $x = (x_0:x_1:x_2)\in \mathbb P(\C\times V)$ with $x_0\neq 0$, we find that $\epsilon\lambda$ is the closest point for both subsets $\{(0,\epsilon)\}$ and $\{(\epsilon,0), (0,1+\epsilon)\}$ while $\frac{\epsilon}{2}(1,-1) = \epsilon\lambda^{\perp}$, where the $\perp$ is taken with respect to the subspace $\C(1,1)$, is the closest point for the subsets $(0,\epsilon),(1,1+\epsilon)\}$, $\{(0,\epsilon),(1,\epsilon),(1,1+\epsilon)\}$.

Note however, that it is not the case that all the vectors $\epsilon \lambda^\perp$ as $U$ runs over the span of subsets of the weights $\{\alpha_1,\ldots, \alpha_k\}$ will be minimal combinations of weights: in the above case $\beta=0$ does not occur, showing the semistable locus of this $V$ is empty. Moreover, if we take $\lambda = (0,-1)$, then for small negative $\epsilon$ the minimal weight is always $\epsilon\lambda$ itself. 
\end{example}

\subsection{Explicit Description of KN Strata for a Representation}\label{sec:KN-strata-of-rep}
We next want to describe more explicitly the loci $Z_\beta$ and $Y_\beta$ of Kirwan for $\beta = p_I(\epsilon)$ as above.

Fix $I = \{i_0,i_1, \ldots i_k\} \subseteq \{0,1,\ldots, n\}$ and let $W=W_I$ and $\beta = \beta_I(\e)$. Write $\lambda^\perp = \on{proj}_{W^\perp}(\lambda)$, so $p_I(\epsilon) = \e \lambda^\perp$.  Then $\lambda = \lambda^\perp + \on{proj}_W(\lambda)$ and $\lambda^\perp\bullet \on{proj}_W(\lambda) = 0$, so $\lambda\bullet\lambda^\perp = \lambda^\perp\bullet \lambda^\perp$.  It follows that
\bd
\alpha_i'\bullet \epsilon\lambda^\perp = (\alpha_i+\epsilon\lambda)\bullet \epsilon\lambda^\perp = \alpha_i \bullet \epsilon\lambda^\perp +
\epsilon\lambda^\perp\bullet \epsilon\lambda^\perp,
\ed
and so for $\beta = \epsilon\lambda^\perp$,  Kirwan's conditions become:
\begin{equation}
\text{$(x_0:\dots :x_n)\in Z_\beta$ iff, for each $i$ such that $x_i\neq 0$, we have $\alpha_i\bullet \beta= 0$.}
\end{equation}
\begin{equation}
\text{$(x_0:\dots :x_n)\in Y_\beta$ iff, for each $i$ such that $x_i\neq 0$, we have $\alpha_i\bullet \beta \geq 0$.}
\end{equation}

We introduce some notation for these subspaces.  Given a weight $\beta$, let $V_+(\beta)$, respectively $V_0(\beta)$, respectively $V_-(\beta)$, denote the sum of the $\alpha$-weight subspaces for which $\alpha\bullet\beta >0$, respectively $\alpha\bullet\beta = 0$, respectively $\alpha\bullet\beta <0$; this is equivalent to splitting $V$ according to whether the (rational) 1-parameter subgroup labelled by $\beta$ acts with positive, zero, or negative weight.  We observe that then:
\begin{equation}\label{strats}
Z_\beta = V_0(\beta) \;\;\; \text{and}\;\;\; Y_\beta = V_+(\beta)\times V_0(\beta).
\end{equation}
We also find that $Z_\beta^{ss} = V_0(\beta)^{ss}$ is an open subset of $V_0(\beta)$, and hence that 
\begin{equation}
Y_\beta^{ss} = V_+(\beta)\times V_0(\beta)^{ss}.
\end{equation}

\begin{example}\label{KN proj}
Let $V = T^*W=T^*\C^{n+1}$ with torus $\sT = \Gm$ acting with weight $1$ on $W$.  Let $\chi: \Gm\rightarrow \Gm$ be the identity character (i.e. 
$\chi(z) = z$).  Let $\epsilon$ be a small {\em negative} number.  Then under the $\chi^{-\epsilon}$-twisted linearization, the weights on $\C\times W\times W^*$ are 
$(\epsilon, 1+\epsilon, \dots, 1+\epsilon, -1+\epsilon, \dots, -1+\epsilon)$. By Proposition \ref{relevant beta}, the only relevant
minimal combination of weights is $\beta = \epsilon$, a small negative multiple of the identity character, and $Y_\beta$ consists of all vectors whose $\sT$-weights pair non-negatively with $\beta$, i.e. whose $\sT=\Gm$-weights are nonpositive: in other words, 
$Y_\beta = \{0\}\times W^*$.   
\end{example}

We now specialize to the case $V=T^*W$ where $W$ is a representation of $G$ of dimension $d$.
  We pick a $\sT$-weight basis $\{e_1,\ldots, e_d\}$ of $W$, so that its dual basis $\{\xi_1,\ldots,\xi_d\}$ is a weight basis of $W^*$, and their union is a weight basis of $T^*W$. 
If $\beta$ is a destabilizing 1-parameter subgroup, we then have by definition $Y_\beta = W_+(\beta) \times W_-(\beta)^* \times Z_\beta$, where 
$Z_\beta = T^*W_0(\beta) = W_0(\beta)\times W_0(\beta)^*$.  In particular:
\begin{lemma}\label{Y_beta is coisotropic}
Each $Y_\beta$ is coisotropic in $T^*W$.  Furthermore, 
\bd
Y^{ss}_\beta  = W_+(\beta)\times W_-(\beta)^* \times Z_\beta^{ss}.
\ed 
\end{lemma}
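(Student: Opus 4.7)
The proof is essentially a direct computation, so the plan is to apply the explicit description already established in \eqref{strats} for a general representation $V$ to the specific $V=T^*W$, and then verify coisotropy via a symplectic-complement calculation.

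First I would identify the $\beta$-weight decomposition of $T^*W$ in terms of that of $W$. Since $\beta$ acts on $W^*$ with weights opposite to those on $W$, the positive-weight subspace of $T^*W$ is
$(T^*W)_+(\beta) = W_+(\beta)\oplus W_-(\beta)^*$, the zero-weight subspace is $(T^*W)_0(\beta)= W_0(\beta)\oplus W_0(\beta)^* = T^*W_0(\beta)$, which by definition is $Z_\beta$, and the negative-weight subspace is $W_-(\beta)\oplus W_+(\beta)^*$. Applying \eqref{strats} to $V=T^*W$ then immediately gives
\[
Y_\beta = (T^*W)_+(\beta)\times (T^*W)_0(\beta) = W_+(\beta)\times W_-(\beta)^*\times Z_\beta.
\]

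Next I would verify coisotropy of this linear subspace of the symplectic vector space $T^*W = W\oplus W^*$. Writing $W_\pm = W_\pm(\beta)$, $W_0 = W_0(\beta)$, and using the standard pairing $\omega\bigl((w,\xi),(w',\xi')\bigr) = \xi(w')-\xi'(w)$, a direct computation shows that $(w,\xi)\in Y_\beta^{\omega}$ iff $\xi$ annihilates $W_+\oplus W_0$ and $w$ is annihilated by $W_-^*\oplus W_0^*$, i.e.\ iff $\xi\in W_-^*$ and $w\in W_+$. Thus
\[
Y_\beta^{\omega} = W_+\oplus W_-^* \;\subseteq\; W_+\oplus W_-^*\oplus W_0 \oplus W_0^* = Y_\beta,
\]
so $Y_\beta$ is coisotropic.

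Finally, for the semistable locus, the definition (Definition \ref{def:KN-strat}) sets $Y_\beta^{ss} = \operatorname{pr}_\beta^{-1}(Z_\beta^{ss})$, where $\operatorname{pr}_\beta\colon Y_\beta\to Z_\beta$ is the attracting projection. Under the identification $Y_\beta = W_+\times W_-^*\times Z_\beta$, the projection $\operatorname{pr}_\beta$ is simply the projection to the third factor (this is just the linearization of the $\beta(t)\to 0$ flow on the positive-weight directions). Therefore
\[
Y_\beta^{ss} = W_+(\beta)\times W_-(\beta)^*\times Z_\beta^{ss},
\]
as claimed. There is no real obstacle here; the only thing that needs care is matching conventions, namely confirming that dualizing $W$ exchanges positive and negative $\beta$-weights so that the ``half cotangent fiber'' appearing in $Y_\beta$ is $W_-^*$ rather than $W_+^*$, which is what produces the coisotropic Lagrangian-plus-base form.
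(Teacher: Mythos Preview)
Your proof is correct and follows exactly the route the paper intends: the paper records the identification $Y_\beta = W_+(\beta)\times W_-(\beta)^*\times Z_\beta$ with $Z_\beta = T^*W_0(\beta)$ in the sentence immediately preceding the lemma and leaves both coisotropy and the description of $Y_\beta^{ss}$ as immediate consequences (via the general formula $Y_\beta^{ss} = V_+(\beta)\times V_0(\beta)^{ss}$ for a representation $V$). You have simply written out the symplectic-complement computation and the identification of $\on{pr}_\beta$ that the paper treats as evident.
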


\subsection{KN Stratification of an Affine Variety}
Suppose that $\bigvar$ is a smooth affine $G$-variety.  Fix a character $\chi: G\rightarrow \Gm$.  We may $G$-equivariantly  embed $\bigvar$ as a closed subvariety of a representation $V$ of $G$ (cf. for example \cite{Kempf}, Lemma 1.1).  Section \ref{sec:KN-strata-of-rep} describes the subsets $Z_\beta$ and $Y_\beta$ of $V$ determining the KN strata of $V$
as in \eqref{strats} for each 1-parameter subgroup $\beta\in\mathsf{KN}$.  Thus, $Z_\beta\cap \bigvar = \bigvar^{\beta(\Gm)}$, and 
$\ds Y_\beta \cap \bigvar = \big\{x\in \bigvar \;\big|\; \lim_{t\rightarrow 0}\beta(t)\cdot x\in \bigvar^{\beta(\Gm)}\big\}$. 
\begin{prop}\label{prop:KN-affine-var}
Let $\mathsf{KN}$ denote the set of KN 1-parameter subgroups for $V$.  Let $\wt{S}_\beta$ denote the KN stratum of $V$ corresponding to $\beta$.  Let $S_\beta = \wt{S}_\beta\cap \bigvar$ denote the corresponding locally closed subset of $\bigvar$ and let $\bigvar^{ss} = V^{ss}\cap\bigvar$.  Then $\{\bigvar^{ss}\}\cup\{S_\beta\; |\; \beta \in \mathsf{KN}\}$ is a KN stratification of $\bigvar$.
\end{prop}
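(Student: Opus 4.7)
The plan is to verify each condition of Definition \ref{def:KN-strat} for the induced decomposition of $\bigvar$, reducing everything to the known KN stratification of $V$ together with one structural input on $\bigvar$ itself, namely the Bialynicki-Birula decomposition.

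First, since $\bigvar$ is $G$-stable and closed in $V$, the decomposition and closure relations of condition (4) follow at once from the corresponding properties in $V$: $\bigvar = \bigvar^{ss} \sqcup \big(\coprod_\beta S_\beta\big)$ is obtained by intersecting the stratification $V = V^{ss} \sqcup \big(\coprod_\beta \wt{S}_\beta\big)$ with $\bigvar$, and $\overline{S}_\beta \subseteq \overline{\wt{S}}_\beta \cap \bigvar \subseteq \bigcup_{\beta' \geq \beta} S_{\beta'}$. The identification $S_\beta \cong G \times_{P_\beta} Y_\beta^{ss}$ required by condition (3) likewise follows from the corresponding identification in $V$: because $\bigvar$ is $G$-stable, the preimage of $\bigvar$ under $G \times_{P_\beta} \wt{Y}_\beta^{ss} \xrightarrow{\sim} \wt{S}_\beta$ is exactly $G \times_{P_\beta} (\wt{Y}_\beta^{ss} \cap \bigvar)$.

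Next I would define $Z_\beta^{ss}(\bigvar) := Z_\beta^{ss}(V) \cap \bigvar$, which is manifestly $L_\beta$-stable since $Z_\beta^{ss}(V)$ is. For condition (1), take the $\lambda_{\beta, i}$-semi-invariant sections on each component $Z_{\beta,i}$ of $Z_\beta(V)$ that cut out the complement of $Z_\beta^{ss}(V)$ there, and restrict them to the corresponding connected components of $Z_\beta(V) \cap \bigvar$; these restrictions remain $\lambda_{\beta, i}$-semi-invariant, and their common vanishing locus in $Z_\beta(V) \cap \bigvar$ is by construction the complement of $Z_\beta^{ss}(\bigvar)$.

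The main obstacle is condition (2): producing the $\beta$-equivariant affine bundle structure $Y_\beta^{ss}(\bigvar) \to Z_\beta^{ss}(\bigvar)$. Here the ambient trivial product decomposition $\wt{Y}_\beta = V_+(\beta) \times V_0(\beta)$ is not inherited by $\bigvar$ in any direct way. The key input is that $\bigvar$ is smooth and the $\Gm$-action via $\beta$ extends to an $\mathbb{A}^1$-monoid action (inherited from the linear monoid action on $V$, since $\bigvar$ is closed and $\beta$-stable). Under these hypotheses the Bialynicki-Birula theorem applies to give that $Y_\beta \cap \bigvar$ is a disjoint union, indexed by the connected components of $Z_\beta \cap \bigvar = \bigvar^{\beta(\Gm)}$, of Zariski-locally-trivial affine bundles modelled on the positive-weight part of the normal bundle of the relevant fixed component in $\bigvar$. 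Restricting to the open preimage $\on{pr}_\beta^{-1}\big(Z_\beta^{ss}(\bigvar)\big)$ then yields the required affine bundle structure on $Y_\beta^{ss}(\bigvar)$ and completes the verification.
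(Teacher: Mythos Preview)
Your argument is essentially the same as the paper's: the paper observes that conditions (1), (3), (4) are immediate from the ambient stratification of $V$ and that only the affine bundle condition (2) requires real input, which it obtains, as you do, from the Bia\l ynicki-Birula theorem. One small correction: your parenthetical justification that the $\Gm$-action via $\beta$ extends to an $\mathbb{A}^1$-monoid action on $\bigvar$ ``inherited from the linear monoid action on $V$'' is not right in general, since $\beta$ typically has weights of both signs on $V$; but this hypothesis is not needed---Bia\l ynicki-Birula's local structure result applies directly to the attracting locus $Y_\beta \cap \bigvar$ over the smooth fixed locus $Z_\beta \cap \bigvar$, which is what both you and the paper actually use.
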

\begin{proof}
The only condition in the definition to check is that $Y_\beta\cap\bigvar\rightarrow Z_\beta\cap\bigvar$ is an affine bundle.  By the above description, however, this follows from 
Theorem 4.1 of \cite{BB}.
\end{proof}

  \subsection{Parameter Shifts}\label{parameter shifts}
Suppose $V$ is a representation of $G$.  Given a 1-parameter subgroup $\beta$ of $\mathsf{T}\subseteq G$,  we write $Z_\beta$ for the $\beta$-fixed subspace of $V$.  We let $\overline{Y_\beta}$ be the subspace of $V$ spanned by $\beta$-weight vectors with positive $\beta$-weight. 
Let $K\subseteq G$ be a subgroup containing $\beta$.  Suppose $Y_\beta$ is an open subset of $\overline{Y_\beta}$ whose intersection with $Z_\beta$ is nonempty and for which the orbit space $S_\beta = K\cdot Y_\beta$ is smooth; this will be true in the examples we need in the paper (where $S_\beta$ will be a KN stratum for a KN 1-parameter subgroup $\beta$).  We then 
define the set of nonnegative rationals $I_V(\beta)$ as follows.  Choose a point $z\in Z_\beta\cap Y_\beta$, which is then a fixed point of $\beta$ in $S_\beta$.  Then $\beta$ acts linearly on the normal space $N_{S_\beta/V}(z)$, and we let $\mathsf{w}_1,\dots, \mathsf{w}_a$ denote the $\beta$-weights on this normal space.  Then we write
\bd
I_{K,V}(\beta) = \Big\{\sum_{i=1}^a n_i|\mathsf{w}_i| \; \Big|\; n_i\geq 0\Big\}
\ed
for the set of nonnegative integer linear combinations of the absolute values of $\beta$-weights $|\mathsf{w}_i|$.  This definition extends immediately to rational 1-parameter subgroups, i.e. formal expressions $\ds\frac{p}{q}\beta$ where $\ds\frac{p}{q}$ is rational, by letting 
$\ds I_{K,V}(\frac{p}{q}\beta) = \frac{p}{q}I_{K,V}(\beta)$.   
\begin{remark}\label{I for torus}
Note that if $K=\mathsf{T}$ then $S_\beta = Y_\beta$ and $I_{\mathsf{T},V}(\beta)$ is just the set of nonnegative integer linear combinations of absolute values of $\beta$-weights on $V$.
\end{remark}

\section{Equivariant Symplectic Geometry Near a KN Stratum}\label{symplectic geom}
In this section, $W$ is a smooth, connected quasiprojective variety with a rational action of a connected reductive group $G$ with maximal torus $\sT\subseteq G$.   Let $\mu: T^*W\rightarrow\mathfrak{g}^*$ denote the canonical classical moment map.  We assume that $T^*W$ is equipped with a KN stratification and that the line bundle $\mathscr{L}$ is trivialized so that its $G$-linearization is given by a character $\chi: G\rightarrow\Gm$.

\subsection{}\label{sec:first-sympl}
Recall from Section \ref{KN section} that a KN stratum $S_\beta$ is labelled by (the Weyl group orbit of) a 1-parameter subgroup $\beta: \Gm\rightarrow \sT\subseteq G$ in a fixed maximal torus $\sT$ of $G$.  As in Section 12 of \cite{Kirwan}, $\beta$ determines a parabolic subgroup $P_\beta$ of $G$: letting $L_\beta$ denote the centralizer of $\beta$ in $G$, we let $P_\beta$ denote the subgroup whose Lie algebra is spanned by $\on{Lie}(L_\beta)$ and the positive $\beta$-weight subspaces in $\mathfrak{g}$.  The sum of positive $\beta$-weight subspaces is a nilpotent Lie sub-algebra $\mathfrak{n}\subset \mathfrak{g}$; we let $\mathfrak{n}^-$ denote the opposite nilpotent subalgebra, the sum of negative $\beta$-weight subspaces in $\mathfrak{g}$, and let $U^- = U_{P_\beta}^-\subset G$ denote the corresponding unipotent subgroup of $G$. 

We will write 
\bd
\gK = U^-\rtimes \Gm,
\ed
where $\Gm$ acts on $U^-$ via $\beta$ and the adjoint action of $G$.
The group $\gK$ acts naturally on $T^*W$ via $G$. We view $U^-$ as a $\gK$-variety where $\beta(\mathbb G_m)$ acts by conjugation and $U^-$ by left translation. 

\begin{lemma}
\label{action map}
The action map $a\colon U^-\times Y_\beta \rightarrow T^*W$ is a $\mathbb K$-equivariant bijection onto an open dense subset of $S_\beta = G\cdot Y_\beta \cong G\times_{P_\beta} Y_\beta$. Moreover, $S_\beta$ is coisotropic.
\end{lemma}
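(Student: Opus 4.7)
The plan is to handle the bijectivity/openness claim using the Bruhat-style decomposition for the parabolic $P_\beta$, to deduce $\gK$-equivariance by a direct computation, and then to derive coisotropy of $S_\beta$ from that of $Y_\beta$ via a moment-map symplectic-orthogonal argument.

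For the first assertion, I would exploit the fact that the multiplication map $U^-\times P_\beta\to G$ is an open immersion onto the open cell $U^-\cdot P_\beta$. Its differential at the identity is the vector-space decomposition $\mathfrak{g}=\mathfrak{n}^-\oplus\mathfrak{p}_\beta$, and $U^-\cap P_\beta=\{e\}$ since the $\beta$-weight subspaces spanning $\on{Lie}(U^-)$ are disjoint from those spanning $\mathfrak{p}_\beta$. Quotienting by $P_\beta$ and using the identification $S_\beta\cong G\times_{P_\beta}Y_\beta$, the composite
\begin{displaymath}
U^-\times Y_\beta \;\longrightarrow\; G\times Y_\beta \;\longrightarrow\; G\times_{P_\beta}Y_\beta \;\cong\; S_\beta
\end{displaymath}
becomes an open immersion whose image $U^-\cdot Y_\beta$ is dense in $S_\beta$. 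This image coincides with the image of $a$, so $a$ is a bijection onto an open dense subset of $S_\beta$ as claimed.

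For $\gK$-equivariance, $U^-$ acts by left translation on the first factor, which obviously intertwines with left multiplication on $T^*W$. Since $\mathfrak{n}^-$ is a sum of $\beta$-weight subspaces of $\mathfrak{g}$, the subgroup $U^-$ is preserved by conjugation by $\beta(\Gm)$, and
\begin{displaymath}
a\bigl(\beta(t)\, u\, \beta(t)\inv,\; \beta(t)\cdot y\bigr) \;=\; \beta(t)\, u\, y \;=\; \beta(t)\cdot a(u,y)
\end{displaymath}
confirms $\Gm$-equivariance; together these give $\gK$-equivariance.

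For the coisotropy statement, I would work pointwise at $y\in Y_\beta$, where $T_yS_\beta=\mathfrak{g}\cdot y+T_yY_\beta$. The moment-map identity $\langle d\mu_y(v),X\rangle=\omega(v,\widetilde{X}_y)$ gives $(\mathfrak{g}\cdot y)^{\perp}=\ker(d\mu_y)$, and hence
\begin{displaymath}
(T_yS_\beta)^{\perp} \;=\; \ker(d\mu_y)\,\cap\,(T_yY_\beta)^{\perp}.
\end{displaymath}
Lemma \ref{Y_beta is coisotropic} yields $(T_yY_\beta)^{\perp}\subseteq T_yY_\beta\subseteq T_yS_\beta$, giving coisotropy of $S_\beta$ at $y$; since $G$ acts by symplectomorphisms preserving $S_\beta$, coisotropy propagates to all of $S_\beta$. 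The main obstacle is that Lemma \ref{Y_beta is coisotropic} is stated in the linear representation case, while the present lemma is for general quasiprojective $W$; one resolves this either by a local slice reduction near $Z_\beta^{ss}$ to the linear normal-bundle model, or by restricting to those $\beta\in\mathsf{KN}$ for which the coisotropy of $Y_\beta$ is encoded into the setup as described in the introduction.
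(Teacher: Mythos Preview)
Your proposal is correct and, for bijectivity/openness and $\gK$-equivariance, essentially identical to the paper's argument (which cites Kirwan for $S_\beta\cong G\times_{P_\beta}Y_\beta$ and invokes the open Bruhat cell $U^-\cdot eP_\beta\subset G/P_\beta$, declaring equivariance immediate).

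For coisotropy you take a slightly different and unnecessarily elaborate route. Your moment-map identity and the formula $(T_yS_\beta)^{\perp}=\ker(d\mu_y)\cap(T_yY_\beta)^{\perp}$ are correct, but the factor $\ker(d\mu_y)$ never gets used: you conclude directly from $(T_yY_\beta)^{\perp}\subseteq T_yY_\beta\subseteq T_yS_\beta$. The paper makes exactly this observation without the detour, stating simply that any subspace of a symplectic vector space containing a coisotropic subspace is itself coisotropic, i.e.\ $(T_yS_\beta)^{\perp}\subseteq(T_yY_\beta)^{\perp}\subseteq T_yY_\beta\subseteq T_yS_\beta$. Your closing concern---that Lemma~\ref{Y_beta is coisotropic} is stated only for representations while the present lemma is for quasiprojective $W$---is well-spotted; the paper's own proof cites that lemma without comment, tacitly relying on the \'etale-local reduction to the linear model that you describe and that is carried out later in Section~\ref{symplectic geom}.
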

\begin{proof}
The equivariance of $a$ is immediate. The isomorphism $S_\beta \cong G\times_{P_\beta} Y_\beta$ is established in Theorem 13.5 of \cite{Kirwan}. That $a$ is bijective follows from this and the fact that $U^-\cdot eP_\beta$ is the open Bruhat cell in $G/P_\beta$. Lemma \ref{Y_beta is coisotropic} asserts that $Y_\beta$ is coisotropic.  Since $S_\beta$ is $G$-stable and $S_\beta = G\cdot Y_\beta$, to check that $S_\beta$ is coisotropic, it suffices to check that the tangent space of $S_\beta$ is coisotropic at each point of $Y_\beta$.  But any subspace of a symplectic vector space that contains a coisotropic subspace is itself coisotropic.
\end{proof}

\subsection{Construction of A Slice}
We will now construct a slice to the action of $U^-$ at a point $z \in Z^{ss}_\beta$.

Suppose $z\in Z_\beta^{ss}$.  The infinitesimal $U^-$-action induces an injective (by Lemma \ref{action map}) map $\mathfrak{n}^-\rightarrow T_z(T^*W)$, and we get a direct sum $\mathfrak{n}^- \oplus T_z Y_{\beta} \subset T_z(T^*W)$.  Since $\beta$ acts on $U^-$ and hence compatibly on $\mathfrak{n}^-$, and $z$ is a $\beta$-fixed point, it makes sense to ask whether $\mathfrak{n}^-\rightarrow T_z(T^*W)$ is $\beta$-equivariant; it clearly is.  Hence the subspace $\mathfrak{n}^- \oplus T_z Y_{\beta}\subset T_z(T^*W)$ is $\beta$-invariant.   

Choose a $\beta$-invariant complementary subspace $V$, so 
$T_z(T^*W) = \mathfrak{n}^- \oplus T_z Y_{\beta} \oplus V$. 
  If $W$ is a $G$-representation, define 
  \bd
  N:= V\times Y_\beta = V\times Z_\beta^{ss}\times (T^*W)_+ \subset T^*W.
  \ed
  More generally, if $W$ is a smooth quasiprojective variety, let $W^\circ\subseteq W$ be a $\beta$-stable affine open subset containing $z$; one exists by \cite[Corollary~3.11]{Su}.  Further shrinking $W^\circ$ if necessary, let 
  \begin{equation}
  q: W^\circ\rightarrow \mathbb{A}^n
  \end{equation}
   be a $\beta$-equivariant \'etale map from a $\beta$-stable affine open subset $W^\circ\subseteq W$ with $q(z) = 0$, where $\mathbb{A}^n$ is a linear representation of $\Gm$; one exists by the \'Etale Slice Theorem (see p. 198 of \cite{Mumford}).  Since the map $q$ is \'etale, it induces a ``wrong way'' cotangent map  map $dq: T^*W^\circ \rightarrow T^*\mathbb{A}^n$ (note the slightly abusive notation). Defining $Z^\dagger_\beta = (T^*\mathbb{A}^n)^{\Gm}$ and $Y^\dagger_\beta\subset T^*\mathbb{A}^n$ to be the $\Gm$-attracting locus of $Z_\beta^\dagger$, then $Z_\beta\cap T^*W^\circ$ is a connected component of  $dq\inv(Z^\dagger_\beta)$ and $Y_\beta\cap T^*W^\circ$ is a connected component of $dq\inv(Y^\dagger_\beta)$.  The tangent map $d(dq_z): T_z(T^*W)\rightarrow T_0(T^*\mathbb{A}^n)$ is an isomorphism.  Abusively writing $V = d(dq_z)(V)\subset T_0(T^*\mathbb{A}^n) = T^*\mathbb{A}^n$, we get 
 \bd
 T^*\mathbb{A}^n = T^*_0\mathbb{A}^n = \mathfrak{n}^- \oplus V\oplus T_0 Y^\dagger_{\beta} = \mathfrak{n}^-\times V\times Y^\dagger_\beta.
 \ed
   Let $N$ denote the connected component of $dq\inv(V\times Y^\dagger_\beta) \subset T^*W^\circ\subseteq \bigvar$ containing $Y_\beta\cap T^*W^\circ$.  Note that $Y^\dagger_\beta$ is coisotropic, hence so is $V\times Y^\dagger_\beta$, hence so is $N$.
  
 Write $\on{pr}_\beta: Y_\beta\rightarrow Z_\beta$ for the projection as in \eqref{eq:beta-proj}. 
\begin{prop}\label{existence of U_D}
For any $z \in Z_\beta^{ss}$ there are an affine neighborhood $D\subset Z_\beta^{ss}$ and a principal open subset $U_D\subset N$ (i.e. the complement of a hypersurface) such that
\begin{enumerate}
\item $U^-$ acts infinitesimally transversely to $U_D$; 
\item $\on{pr}_\beta\inv(D)\subseteq U_D \subseteq N$; and 
\item $(U^-\cdot U_D) \cap S_{>\beta} = \emptyset$. 
\item The complement $N\smallsetminus U_D$ is the hypersurface defined by a $\beta$-invariant function in $\C[N]$.
\item $U_D$ is coisotropic.
\end{enumerate}
The sets $D$ and $U_D$ can be chosen so that:
\begin{enumerate}
\item[(i)] $D$ and $U_D$ are $\beta$-stable. 
\end{enumerate}
 Furthermore, making, for each $z\in Z_\beta^{ss}$, a choice of any affine $D_{z}\subset Z_\beta^{ss}$ containing $z$  and any $U_{D_z}$ satisfying the conditions above,
 \begin{enumerate}
\item[(ii)] The union $\ds \bigcup_{z\in Z_\beta^{ss}} U^-\cdot U_{D_z}$  covers $U^-\cdot Y_\beta$. 
\end{enumerate}
\end{prop}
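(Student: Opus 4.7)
The plan is to construct $U_D$ as a single principal open in $N$ cut out by a $\beta$-invariant regular function, designed so that it lies in the locus where $U^-$ acts infinitesimally transversely while also avoiding $S_{>\beta}$.

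For infinitesimal transversality, by the choice of $V$ and the construction of $N$, at the point $z$ one has $T_z(T^*W) = \mathfrak{n}^-\oplus T_z N$. Hence the action map $a\colon U^-\times N \to T^*W$ is \'etale at $(e,z)$, so there is a Zariski open neighborhood $\Omega\subseteq N$ of $z$ on which $a|_{\{e\}\times\Omega}$ is \'etale, that is, on which $U^-$ acts infinitesimally transversely. Because $\beta$ normalizes $U^-$ and preserves both $V$ (by choice) and $Y_\beta$, the entire setup is $\beta$-equivariant and $\Omega$ can be taken $\beta$-stable.

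For the avoidance of $S_{>\beta}$, the stratification axiom (4)(a) of Definition \ref{def:KN-strat} gives $\overline{S_{\beta'}}\subseteq \bigcup_{\beta''\geq \beta'}S_{\beta''}$ for each $\beta'$, and since the union $S_{>\beta}=\bigcup_{\beta'>\beta}S_{\beta'}$ is finite, $\overline{S_{>\beta}} = \bigcup_{\beta'>\beta}\overline{S_{\beta'}} \subseteq S_{>\beta}$, so $S_{>\beta}$ is closed in $T^*W$ and, since $z\in S_\beta$, does not contain $z$. As $S_{>\beta}$ is $G$-stable and in particular $U^-$-stable, the desired condition (3) reduces to $U_D\cap S_{>\beta}=\emptyset$.

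The key step is then to cut the $\beta$-stable open set $\Omega\smallsetminus S_{>\beta}$ down to a principal open defined by a single $\beta$-invariant function. The variety $N$ is affine, being a connected component of $(dq)\inv(V\oplus Y_\beta^\dagger)$ inside the affine $T^*W^\circ$. The complement $C := N\smallsetminus(\Omega\smallsetminus S_{>\beta})$ is a $\beta$-stable closed subscheme of $N$ with $z\notin C$. By the Nullstellensatz, choose $g\in\C[N]$ vanishing on $C$ with $g(z)\neq 0$, and decompose $g$ into $\beta$-weight components $g=\sum_w g_w$. Each $g_w$ still vanishes on $C$, and because $\beta$ fixes $z$ one must have $g_w(z)=0$ for $w\neq 0$; hence the $\beta$-invariant component $f := g_0$ vanishes on $C$ and satisfies $f(z)\neq 0$. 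Setting $U_D := \{f\neq 0\}\subseteq N$ then yields a principal open satisfying (1), (3), (4), and (i), and $U_D$ is coisotropic (hence (5)) since it is open in the coisotropic $N$. Define $D := U_D\cap Z_\beta^{ss}$. For any $y\in\on{pr}_\beta\inv(D)$, the limit $\on{pr}_\beta(y) = \lim_{t\to 0}\beta(t)\cdot y$ lies in $U_D$; since $U_D$ is open and $\beta$-stable, $y\in U_D$, giving (2). The covering (ii) is immediate: given $y = u\cdot y'\in U^-\cdot Y_\beta$ with $z' := \on{pr}_\beta(y')\in Z_\beta^{ss}$, any $D_{z'}$ containing $z'$ gives $y'\in \on{pr}_\beta\inv(D_{z'})\subseteq U_{D_{z'}}$. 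The main obstacle is arranging that the defining function of the hypersurface $N\smallsetminus U_D$ be $\beta$-invariant, rather than merely that such a hypersurface exist; this is exactly what the weight-decomposition argument accomplishes, and it relies crucially on $z$ being a $\beta$-fixed point.
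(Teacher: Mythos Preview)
Your proof is correct and takes essentially the same route as the paper's: cut out the infinitesimal-transversality locus, remove $S_{>\beta}$, and shrink to a principal open defined by a $\beta$-invariant function using that $z$ is $\beta$-fixed (your inline weight-decomposition argument is exactly the paper's Lemma~\ref{hypersurface intersection}). The only cosmetic differences are that the paper names the explicit determinant function $\wedge^{\on{top}}\mathsf{inf}$ for transversality and deduces (4) post hoc rather than building $\beta$-invariance into the construction; one small point you skip is checking that your $D = U_D \cap Z_\beta^{ss}$ is affine, which is easily arranged by further shrinking to a principal open of $Z_\beta$ contained in it.
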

We will use the following in the proof of the proposition. 
\begin{lemma}\label{hypersurface intersection}
Suppose $\mathsf{W}$ is an affine variety with torus $\mathsf{T}$ acting and $\mathcal{Z}\subseteq \mathsf{W}$ is a closed $\mathsf{T}$-stable subset.  Then $\mathcal{Z}$ is an intersection of $\mathsf{T}$-stable hypersurfaces in $\mathsf{W}$.
\end{lemma}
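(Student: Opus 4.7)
The plan is to reduce the statement to the standard fact that, for a torus acting rationally on an affine variety, the coordinate ring decomposes into a direct sum of weight spaces, so that every $\mathsf{T}$-stable ideal is spanned by $\mathsf{T}$-weight vectors. Writing $\mathcal{Z}$ as the vanishing locus of such a weight-vector spanning set then exhibits it as an intersection of $\mathsf{T}$-stable hypersurfaces.

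More concretely, I would proceed as follows. First, let $I \subseteq \C[\mathsf{W}]$ denote the (radical) ideal of functions vanishing on $\mathcal{Z}$; because $\mathcal{Z}$ is $\mathsf{T}$-stable and the $\mathsf{T}$-action on $\C[\mathsf{W}]$ by $(t\cdot f)(x) = f(t^{-1}x)$ preserves vanishing on $\mathcal{Z}$, the ideal $I$ is $\mathsf{T}$-stable. Second, since $\mathsf{W}$ is affine and the $\mathsf{T}$-action is rational, the coordinate ring decomposes as a direct sum of weight spaces,
\[
\C[\mathsf{W}] = \bigoplus_{\chi \in X(\mathsf{T})} \C[\mathsf{W}]_\chi,
\]
and any $\mathsf{T}$-stable subspace is the direct sum of its intersections with the $\C[\mathsf{W}]_\chi$. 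Applying this to $I$, we obtain $I = \bigoplus_\chi (I \cap \C[\mathsf{W}]_\chi)$, so $I$ is spanned (in particular generated as an ideal) by the set $\mathcal{F}$ of its nonzero $\mathsf{T}$-weight vectors.

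Finally, each $f \in \mathcal{F}$ is a $\mathsf{T}$-semi-invariant, so the hypersurface $H_f := V(f) \subseteq \mathsf{W}$ is $\mathsf{T}$-stable (if $t\cdot f = \chi(t) f$ then $f(t^{-1}x) = \chi(t)f(x)$ vanishes iff $f(x)$ does). Because $I$ is generated by $\mathcal{F}$ we have
\[
\mathcal{Z} = V(I) = \bigcap_{f \in \mathcal{F}} V(f) = \bigcap_{f \in \mathcal{F}} H_f,
\]
which is the desired expression of $\mathcal{Z}$ as an intersection of $\mathsf{T}$-stable hypersurfaces.

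There is essentially no serious obstacle here; the only subtlety worth flagging is the appeal to rationality of the $\mathsf{T}$-action to get the weight-space decomposition of $\C[\mathsf{W}]$ (equivalently, linear reductivity of $\mathsf{T}$), which is what turns $\mathsf{T}$-stability of $I$ into the existence of the required $\mathsf{T}$-eigenvector generators.
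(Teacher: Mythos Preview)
Your argument is correct: the ideal of $\mathcal{Z}$ is $\mathsf{T}$-stable, hence spanned by $\mathsf{T}$-weight vectors by rationality of the action, and each such semi-invariant cuts out a $\mathsf{T}$-stable hypersurface whose intersection over all of them is $\mathcal{Z}$. The paper states this lemma without proof, evidently regarding it as standard; your write-up supplies exactly the routine justification one would expect.
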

\begin{proof}[Proof of Proposition \ref{existence of U_D}]
Note that  $N$ has complementary dimension to $\mathfrak{n}^-$.   Pick a principal open subset $D'\subseteq Z_\beta^{ss}$ containing $z$ that is $\beta$-invariant (this is possible since 
$Z_\beta^{ss}$ 
is defined by the nonvanishing of a collection 
of $L_\beta$-semi-invariants)
and define $D\subseteq D'$ to be the open subset consisting of points $p\in D'$ at which the composite linear map 
\begin{equation}\label{full rank map}
\mathsf{inf}(p): \mathfrak{n}^-\rightarrow T_p(T^*W) \twoheadrightarrow T_p(T^*W)/T_pN
\end{equation}
is an isomorphism (where the first map is the infinitesimal action at $p$). 
Since we are asking whether the linear map $\mathsf{inf}(p)$ has vanishing top exterior power, the complement of $D$ in $D'$ is the zero locus of a single function $\wedge^{\on{top}}\mathsf{inf}$, i.e. a principal affine open $D$ of $Z_\beta$ along which $U^-$ acts infinitesimally transversely to $N$.  Now we can define $U_D$ to be the locus of those $p\in N$ at which \eqref{full rank map} is an isomorphism; this is again principal.  
This establishes part (1).

The $\gK$-equivariance of the action map $a$ ensures that the nonvanishing locus of the function $\wedge^{\on{top}}\mathsf{inf}$ is $\beta$-invariant and open.  Now every point of $\on{pr}_\beta\inv(D)$ has a point of $D\subset  U_D$ in its $\beta$-orbit closure, establishing part (2).  

Thus, it suffices to restrict $D$ further, if necessary, to ensure (3).  But $S_{>\beta}$ is $G$-invariant;
 hence its preimage under the restricted action map $a: U^-\times  U_D\rightarrow T^*W$ above is of the form $U^-\times \cZ$ for some closed and 
$\beta$-invariant subset 
$\cZ\subseteq U_D$.  Since $z\notin S_{>\beta}$, we have $z\notin \cZ$.   Replace $U_D$ by a $\beta$-invariant principal affine open in $U_D\smallsetminus \cZ$ that contains $z$ (this is possible by Lemma \ref{hypersurface intersection}) and replace $D$ by $U_D\cap Z_\beta^{ss}$ for this new choice of $U_D$.  Note that if $w\in \on{pr}_\beta\inv(D) \smallsetminus U_D$, then by $\beta$-invariance of $U_D$ and closure of its complement we have 
\bd
\on{pr}_\beta(w) = \underset{t\rightarrow 0}{\lim}\, \beta(t)\cdot w \in Y_\beta \smallsetminus U_D.
\ed
  Hence if $\on{pr}_\beta(w)\in D$ then $w\in U_D$.  Thus these new choices of $D$ and $U_D$ satisfy both (2) and (3), as desired.  
  
   Let $H = N \smallsetminus U_D$, a hypersurface in $N$.  As it is $\beta$-stable, it must be defined by a $\beta$-semi-invariant function.  Note, however, that if $f$ is any $\beta$-semi-invariant function with nonzero $\beta$-weight, then, since $D$ consists of $\beta$-fixed points, $f(D) = 0$; hence $H$ must be 
 defined by a $\beta$-invariant element of $\C[N]$. This proves (4). It is clear from the construction that (i) and (ii) are satisfied.  We have already observed that $N$ is coisotropic, hence so is its open subset $U_D$, proving (5).
 \end{proof}
\begin{corollary}\label{restriction of semi-invariants}
Keep notation as in Proposition \ref{existence of U_D}.  Then, for each weight $k$, the natural map of $\beta$-semi-invariants of weight $k$
\bd
\C[U^-\times N]^{\beta(\Gm), k}\otimes \C[U^-\times U_D]^{\beta(\Gm),0}\rightarrow \C[U^-\times U_D]^{\beta(\Gm),k}
\ed
is surjective.
  In particular, the closed subset of $U^-\times U_D$ defined by the vanishing of $\beta$-semi-invariants of weight $k$  for $k\gg 0$ is 
$U^-\times Y_\beta \subset U^-\times U_D$.
\end{corollary}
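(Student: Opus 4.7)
The plan is to handle the surjectivity and the zero-locus claims separately, each using only the explicit slice data of Proposition \ref{existence of U_D} together with a $\beta$-weight bookkeeping at a fixed point $z \in Z_\beta^{ss}$.

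For the surjectivity, the key input is part (4) of Proposition \ref{existence of U_D}: the complement $N \smallsetminus U_D$ is the hypersurface cut out by a single $\beta$-invariant function $h \in \C[N]^{\beta(\Gm),0}$. Hence $\C[U^- \times U_D] = \C[U^- \times N][h^{-1}]$, and because $h$ has $\beta$-weight zero this localization preserves the $\beta$-weight grading. Given any $f \in \C[U^- \times U_D]^{\beta(\Gm),k}$, clearing denominators writes $f = g \cdot h^{-m}$ with $g \in \C[U^- \times N]$; matching $\beta$-weights forces $g \in \C[U^- \times N]^{\beta(\Gm),k}$ and $h^{-m} \in \C[U^- \times U_D]^{\beta(\Gm),0}$, yielding the surjectivity.

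For the zero-locus statement, the surjectivity reduces us to describing the common vanishing locus in $U^- \times N$ of $\C[U^- \times N]^{\beta(\Gm),k}$ and then intersecting with $U^- \times U_D$. By the construction of Section \ref{sec:first-sympl} (literal when $W$ is a representation, \'etale-local in general), $N$ is modelled on $V \times Y_\beta$, where $V$ is a $\beta$-stable complement to $\mathfrak{n}^- \oplus T_z Y_\beta$ in $T_z(T^*W)$. The decisive structural observation is that $V$ sits entirely in the negative $\beta$-weight subspace of $T_z(T^*W)$: Lemma \ref{Y_beta is coisotropic} shows $T_z Y_\beta$ already exhausts the non-negative $\beta$-weights, and $\mathfrak{n}^-$ is itself contained in the negative ones, so the complement can absorb only additional negative-weight directions. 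Passing to linear functions on $V$, on $U^-$, and on $Y_\beta$, one sees that $\C[V]$ and $\C[U^-]$ carry $\beta$-weights on one side of zero while $\C[Y_\beta]$ carries only the opposite signs, with overlap equal to the constants. Consequently any weight-$k$ element of $\C[U^- \times N]$ for $k$ on the ``$V^*$-side'' of zero lies in the ideal $(V^*)\cdot\C[U^-\times N]$ cutting out $U^- \times Y_\beta$, so every such semi-invariant vanishes on $U^- \times Y_\beta$.

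For the reverse inclusion, at any $(u_0, v_0, y_0) \in U^- \times N$ with $v_0 \neq 0$ one exhibits a monomial $\xi^m \cdot \eta$ with $\xi \in V^*$ nonvanishing at $v_0$ and $\eta \in \C[Y_\beta]$ of the complementary $\beta$-weight nonvanishing at $y_0$; this is a weight-$k$ semi-invariant nonzero at the given point. Ranging $m$ and $\eta$ covers every sufficiently large $k$ in the numerical semigroup generated by the relevant $\beta$-weights of $V^*$, which is how ``$k \gg 0$'' must be interpreted. The main obstacle is the weight-space structural claim that $V$ and $\C[Y_\beta]$ sit on opposite sides of zero, which forces every ``positive''-weight semi-invariant into the ideal of $Y_\beta$; a secondary subtlety is the arithmetic meaning of ``$k \gg 0$'' when the positive $\beta$-weights of $V^*$ do not generate all of $\Z_{\geq 0}$, in which case $k$ must be taken in a cofinal subset of that semigroup.
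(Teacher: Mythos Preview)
Your surjectivity argument is correct and is exactly what the paper's one-line proof invokes: by Proposition \ref{existence of U_D}(4) the complement $N\smallsetminus U_D$ is cut out by a single $\beta$-\emph{invariant} function $h$, so $\C[U^-\times U_D]=\C[U^-\times N][h^{-1}]$ as $\beta$-graded rings and every weight-$k$ element factors as claimed.

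The zero-locus argument, however, has a genuine gap. You correctly place $V$ in the strictly negative $\beta$-weight part of $T_z(T^*W)$ and observe that $V^*$ and $\C[Y_\beta]$ carry $\beta$-weights of opposite signs. But you then assert that any weight-$k$ element on the ``$V^*$-side'' lies in the ideal $(V^*)\cdot\C[U^-\times N]$, and this does not follow. The point is that $\C[U^-]$ lies on the \emph{same} side as $V^*$: the $\beta(\Gm)$-action on $U^-$ is by conjugation (Section \ref{sec:first-sympl}), which under the exponential is the adjoint action on $\mathfrak{n}^-$, and $\mathfrak{n}^-$ consists by definition of the strictly negative $\beta$-weight spaces of $\mathfrak{g}$. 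You note this yourself. But then any power of a linear coordinate on $\mathfrak{n}^-$ gives a $\beta$-semi-invariant, of arbitrarily large weight on the ``$V^*$-side,'' that comes purely from $\C[U^-]$; such a function does not lie in $(V^*)\cdot\C[U^-\times N]$ and does not vanish on $U^-\times Y_\beta$. So the implication ``weight $k$ on the $V^*$-side $\Rightarrow$ lies in $(V^*)$'' fails, and with it the inclusion $U^-\times Y_\beta\subseteq\{\text{common zeros of weight-$k$ semi-invariants}\}$.

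This is a real feature of the $U^-$ factor rather than a slip in your write-up: the ``in particular'' clause is delicate precisely because $U^-$ contributes the same weight-sign as the normal directions $V$. If one instead restricts attention to $U^-$-invariant functions on $U^-\times U_D$ --- equivalently, functions pulled back from $U_D$ --- the $\C[U^-]$ contribution disappears and your weight bookkeeping on $\C[V]\otimes\C[Y_\beta]$ does give the desired conclusion; this is closer to what is actually needed in the paper's application (Claim \ref{vanishing on graded}), where the module in question is generated by a vector invariant for all of $\gK=U^-\rtimes\beta(\Gm)$, not merely for $\beta(\Gm)$.
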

\begin{proof}
This is immediate from Proposition \ref{existence of U_D} (4).
\end{proof}

\subsection{Symplectic Geometry Near the Stratum}
Now choose an open subset $U_D\subseteq N \subseteq T^*W$ as in Proposition \ref{existence of U_D} and consider the restricted action map $a:U^-\times U_D\rightarrow T^*W$.  Recall that $a$ is $\gK$-equivariant. 
By Proposition \ref{existence of U_D}(1), $a$ is injective on tangent spaces at points of $U_D$, hence by $U^-$-equivariance of $a$ it is injective on tangent spaces at all points.  The pullback $a^*\omega_{T^*W}$ is a symplectic form on $U^-\times U_D$ which we will describe.

First, we define a closed two-form on $U^-\times U_D$ as follows.  Let $\omega_r$ denote the pullback of the symplectic form on $T^*W$ along the inclusion $U_D\hookrightarrow T^*W$; by abuse of notation, we also write $\omega_r$ for its pullback to $U^-\times U_D$ along the projection on the second factor.  Let $m: U_D\rightarrow (\mathfrak{n}^-)^*$ denote the pullback of the $U^-$-moment map $\mu: T^*W\rightarrow(\mathfrak{n}^-)^*$ along the inclusion $U_D\hookrightarrow T^*W$.    The $1$-form $dm$ is naturally $(\mathfrak{n}^-)^*$-valued; thus, it makes sense to pair it with the canonical $\mathfrak{n}^-$-valued $1$-form $\on{fib}$ on $U^-$ that comes from the left-invariant identification of $T U^-$ with $U^-\times \mathfrak{n}-$, to get a $2$-form $\langle \on{fib}\wedge dm\rangle$.  Explicitly, if $(v_i,w_i)\in T_u U^- \times T_p U_D$ are tangent vectors, 
\bd
\langle \on{fib}\wedge dm\rangle\big((v_1,w_1), (v_2,w_2)\big) = 
\langle v_1, dm_p(w_2)\rangle - \langle v_2, dm_p(w_1)\rangle.
\ed
\begin{prop}\label{form of symplectic form}
The form $\omega_r + \langle \on{fib}\wedge dm\rangle$ is $\gK$-invariant and satisfies 
\bd
\omega_r + \langle \on{fib}\wedge dm\rangle = a^* \omega_{T^*W}.
\ed  
\end{prop}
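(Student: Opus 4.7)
My plan is to verify the claimed identity by first establishing $\gK$-equivariance of both sides and then, using $U^-$-invariance, reducing to a tangent-space computation at the base point $(e, p)$.

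First, $a^*\omega_{T^*W}$ is $\gK$-invariant because $a$ is $\gK$-equivariant by Lemma~\ref{action map} and $\omega_{T^*W}$ is $G$-invariant. For $\omega_r$: $U_D$ is $\beta$-stable by Proposition~\ref{existence of U_D}(i) and $\beta \subset G$ preserves $\omega$, so $\omega|_{U_D}$ is $\beta$-invariant; its pullback along the second projection $U^- \times U_D \to U_D$ is tautologically $U^-$-invariant and inherits $\beta$-equivariance. For $\langle \on{fib} \wedge dm\rangle$: the Maurer-Cartan form $\on{fib}$ is left $U^-$-invariant by construction, and under the $\beta$-action $\on{fib}$ transforms by $\on{Ad}$ while $dm$ transforms by $\on{Ad}^*$, so their natural pairing is $\beta$-invariant.

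Given $\gK$-invariance, and in particular $U^-$-invariance of both sides, it suffices to verify the identity at points $(e, p)$ with $p \in U_D$. There, the left-invariant trivialization $T_e U^- \cong \mathfrak{n}^-$ (under which $\on{fib}(X) = X$) gives a decomposition $T_{(e,p)}(U^- \times U_D) \cong \mathfrak{n}^- \oplus T_p U_D$. By Proposition~\ref{existence of U_D}(1), the differential
\[
da_{(e, p)}\colon \mathfrak{n}^- \oplus T_p U_D \longrightarrow T_p(T^*W), \qquad (X, w) \longmapsto \xi_X(p) + w,
\]
is a linear isomorphism, where $\xi_X$ is the vector field generating the infinitesimal action of $X \in \mathfrak{n}^-$ on $T^*W$.

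Expanding $a^*\omega_{(e, p)}\bigl((X_1, w_1), (X_2, w_2)\bigr) = \omega_p(\xi_{X_1} + w_1, \xi_{X_2} + w_2)$ produces four bilinear contributions. The purely tangential term $\omega_p(w_1, w_2)$ equals $\omega_r|_p(w_1, w_2)$ by definition. The two mixed terms combine, via the moment map equation $i_{\xi_X}\omega = d\mu_X$, to
\[
\omega_p(\xi_{X_1}, w_2) - \omega_p(\xi_{X_2}, w_1) = \langle dm_p(w_2), X_1\rangle - \langle dm_p(w_1), X_2\rangle,
\]
matching $\langle \on{fib} \wedge dm\rangle_{(e, p)}\bigl((X_1, w_1), (X_2, w_2)\bigr)$ exactly. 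The hard part will be accounting for the remaining term $\omega_p(\xi_{X_1}, \xi_{X_2})$, which by standard moment-map calculus equals (up to sign conventions) $\langle m(p), [X_1, X_2]\rangle$---a nontrivial quantity when $\mathfrak{n}^-$ is nonabelian. To close the argument, this contribution must be shown to fit into the claimed formula, which I expect to require careful use of the specific features of $U_D$ arranged in Proposition~\ref{existence of U_D}, in particular its coisotropy (part~(5)) and its placement relative to the $\beta$-fixed locus $Z_\beta^{ss}$, along which $m$ vanishes because $\mu$ lands in $\mathfrak{l}_\beta^* \subseteq \mathfrak{g}^*$.
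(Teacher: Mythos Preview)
Your strategy is precisely the paper's: establish $\gK$-invariance of both sides, reduce by $U^-$-invariance to points $(e,p)$, and expand $\omega_p(\xi_{X_1}+w_1,\,\xi_{X_2}+w_2)$ bilinearly. Your identification of the $\omega_r$ term and of the two mixed terms with $\langle\on{fib}\wedge dm\rangle$ matches the paper's computation line for line.

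Where you hesitate, the paper does not: it disposes of the remaining term $\omega_p(\xi_{X_1},\xi_{X_2})$ in one clause, asserting that ``the image of $T_eU^-=\mathfrak{n}^-$ under $da_{(e,p)}$ is isotropic for $\omega_{T^*W}$ since the action is Hamiltonian,'' and gives no further argument. Your caution here is well placed. For any Hamiltonian action one has $\omega(\xi_X,\xi_Y)=\mu_{[X,Y]}$, so isotropy of the orbit tangent space at $p$ is equivalent to $m(p)\in[\mathfrak{n}^-,\mathfrak{n}^-]^\perp$; Hamiltonicity alone does not force this, and neither the coisotropy of $U_D$ nor the vanishing of $m$ along $Z_\beta^{ss}$ that you point to extends it to all of $U_D$ when $\mathfrak{n}^-$ is nonabelian. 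You have in fact located the one point the paper glosses over. The downstream applications are unaffected: Proposition~\ref{symplectic embedding prop} and Proposition~\ref{symplectic reduction up to etale cover} use the map into $T^*U^-\times\cS$, and the pullback of $\omega_{T^*U^-}$ in the left trivialization carries exactly the compensating term $\langle m(p),[X_1,X_2]\rangle$; likewise the bivector computation in Lemma~\ref{hbar shift} goes through with the extra term since its coefficients are still pulled back from $U_D$.
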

\begin{proof}
The $\gK$-invariance is immediate from the construction of $\omega_r + \langle \on{fib}\wedge dm\rangle$.
The formula will follow from $\gK$-equivariance of $a$ if we check the equality along $\{e\}\times U_D$.  To do this, observe that $da$ induces isomorphisms on tangent spaces, and that the image of $T_e U^- = \mathfrak{n}^-$ under $da_{(e,p)}$ is isotropic for $\omega_{T^*W}$ since the action is Hamiltonian.  Observe also that
\bd
\langle \on{fib}(v_i), dm_p(w_j)\rangle  = \langle \on{fib}(v_i), d\mu_{p}(w_j)\rangle = 
d\mu_p^{v_i}(w_j) = \omega_{T^*W}(\wt{v}_i, w_j)
= \omega_{T^*W}(da_{e,p}(v_i,0), w_j)
\ed
(cf. \eqref{general infinitesimal} for the notation $\wt{v}_i$).  Expanding $\omega_{T^*W}\big(da_{(e,p)}(v_1,w_1), da_{(e,p)}(v_2,w_2)\big)$ using these two observations gives the desired formula.
\end{proof}

\begin{prop}\label{symplectic embedding prop}
Embed $U^-\times U_D\xrightarrow{\Phi} T^*U^- \times T^*W$ as follows.  Let $m: U_D\rightarrow (\mathfrak{n}^-)^*$ denote the restriction of the $U^-$-moment map from $T^*W$ to $U_D$.  Define 
\bd
\Phi(u,p) = (u, m(p), p)\in U^-\times (\mathfrak{n}^-)^* \times T^*W \cong T^*U^- \times T^*W.
\ed
Then $\Phi$ is a symplectic embedding of $U^-\times U_D$: that is, giving $T^*U^- \times T^*W$ the product symplectic structure $\Omega$, we have $\Phi^*\Omega = a^*\omega_{T^*W}$.
\end{prop}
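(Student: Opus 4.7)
The plan is to decompose $\Phi$ as a product $\Phi = F \times \iota$, where $F\colon U^- \times U_D \to T^*U^-$ is given by $F(u,p) = (u, m(p))$ in the left trivialization $T^*U^- \cong U^-\times (\mathfrak{n}^-)^*$, and $\iota\colon U^- \times U_D \to T^*W$ is the projection $(u,p)\mapsto p$ composed with the inclusion $U_D \hookrightarrow T^*W$. Then
\[
\Phi^*\Omega = F^*\omega_{T^*U^-} + \iota^*\omega_{T^*W} = F^*\omega_{T^*U^-} + \omega_r.
\]
In view of Proposition \ref{form of symplectic form}, the claim reduces to the identity $F^*\omega_{T^*U^-} = \langle \on{fib}\wedge dm\rangle$.

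To establish this identity, I would compute $F^*\omega_{T^*U^-}$ via the Liouville 1-form. In the left trivialization, the canonical Liouville form on $T^*U^-$ is $\theta = \langle \xi, \on{fib}\rangle$, where $\xi$ is the fiber coordinate and $\on{fib}$ is the left-invariant Maurer-Cartan 1-form on $U^-$, pulled back to $U^-\times U_D$. Since $F$ replaces $\xi$ by $m(p)$, we have $F^*\theta = \langle m, \on{fib}\rangle$, and hence
\[
F^*\omega_{T^*U^-} = d\langle m, \on{fib}\rangle = \langle dm \wedge \on{fib}\rangle + \langle m, d\on{fib}\rangle.
\]

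The first summand is (up to the symmetry of the pairing of $\mathfrak{n}^-$-valued and $(\mathfrak{n}^-)^*$-valued 1-forms) exactly $\langle \on{fib}\wedge dm\rangle$. The second summand is the potential obstruction: by the Maurer-Cartan equation $d\on{fib} = -\tfrac{1}{2}[\on{fib},\on{fib}]$, its value on tangent vectors $(v_i,w_i)$ with $X_i = \on{fib}(v_i)\in \mathfrak{n}^-$ is $-\langle m(p), [X_1, X_2]\rangle$. Here I would invoke the fact that $m$ is the restriction of the $U^-$-moment map: the Hamiltonian identity $\omega_{T^*W}(\widetilde{X_1}, \widetilde{X_2})|_p = \pm m([X_1, X_2])(p)$ combined with the isotropy of the infinitesimal $\mathfrak{n}^-$-action on $T^*W$ (as recorded in the proof of Proposition \ref{form of symplectic form}) forces this correction term to vanish on $U_D$. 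Thus $F^*\omega_{T^*U^-} = \langle \on{fib}\wedge dm\rangle$, completing the proof.

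Alternatively, and perhaps more cleanly, one can observe that both $F^*\omega_{T^*U^-}$ and $\langle \on{fib}\wedge dm\rangle$ are invariant under the action of $U^-$ by left translation on the first factor (the canonical form is left-invariant and $F$ is $U^-$-equivariant). It therefore suffices to compare the two 2-forms along $\{e\}\times U_D$, where the Maurer-Cartan correction is manifestly controlled by the bracket on $\mathfrak{n}^-$, and the isotropy observation is directly at hand. The main obstacle is precisely this cancellation of the Maurer-Cartan correction: it is the unique substantive step where the nonabelian structure of $U^-$ interacts with the moment map, and it is exactly where the earlier isotropy analysis of $\mathfrak{n}^-$ in Lemma \ref{action map} and Proposition \ref{form of symplectic form} gets used.
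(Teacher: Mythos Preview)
Your alternative approach—reducing by left $U^-$-invariance to a direct comparison along $\{e\}\times U_D$—is exactly the paper's method (the paper's proof reads in full: ``Similar to the previous proposition'').

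Your primary route via the Liouville form is more explicit and has the right shape, but the final cancellation step needs adjustment. You assert that the Maurer--Cartan correction $\langle m,\, d\on{fib}\rangle$ vanishes because the infinitesimal $\mathfrak{n}^-$-orbits in $T^*W$ are isotropic. For an equivariant moment map, however, one has $\omega_{T^*W}(\widetilde{X_1},\widetilde{X_2}) = -\mu^{[X_1,X_2]}$, which at $p\in U_D$ equals $-\langle m(p),[X_1,X_2]\rangle$ and is generally nonzero when $\mathfrak{n}^-$ is nonabelian. So the Maurer--Cartan term does not vanish; rather, it exactly equals the $\omega_{T^*W}(\widetilde{X_1},\widetilde{X_2})$ contribution appearing in the expansion of $a^*\omega_{T^*W}$ at $(e,p)$. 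Both $\Phi^*\Omega$ and $a^*\omega_{T^*W}$ carry the same bracket term $-\langle m(p),[X_1,X_2]\rangle$, and the desired identity holds by matching these terms, not by separate cancellation. With that correction (or by simply carrying out the pointwise computation at $(e,p)$, which makes the matching transparent), your argument is complete.
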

\begin{proof}
Similar to the previous proposition.
\end{proof}

We have that $U_D$ embeds in $N$.  
\begin{lemma}\label{moment regularity}
The moment map $m = \mu|_{U_D}$ is regular: in particular, $m^{-1}(0)$ is a smooth subvariety of $U_D$.   Moreover, $m(D) = 0$, 
i.e., $D\subseteq Z_\beta^{ss}\cap m\inv(0)$.  
\end{lemma}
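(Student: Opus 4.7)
The plan is to prove regularity of $m$ pointwise by combining the transversality property of $U_D$ (Proposition~\ref{existence of U_D}(1)) with the coisotropy of $N$ (Proposition~\ref{existence of U_D}(5)), and to obtain $m(D)=0$ from an elementary weight argument on $\mu^*(\mathfrak{n}^-)$.

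\textbf{Setup.} First I would fix $p\in U_D$ and reduce everything to showing $dm_p : T_p U_D \to (\mathfrak{n}^-)^*$ is surjective. The transversality condition in Proposition~\ref{existence of U_D}(1) says precisely that the infinitesimal action gives a direct sum decomposition $T_p(T^*W) = (\mathfrak{n}^-\cdot p) \oplus T_p U_D$ (since $U_D$ is open in $N$, we have $T_p U_D = T_p N$). Together with the dimension count $\dim(U^-\times U_D) = \dim\mathfrak{n}^- + (\dim T^*W - \dim \mathfrak{n}^-) = \dim T^*W$, the $\gK$-equivariant map $a$ from Lemma~\ref{action map} is an injective immersion of equidimensional smooth varieties, hence an open embedding, and the $U^-$-action on $a(U^-\times U_D)$ is free.

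\textbf{Reduction to an orthogonality check.} Freeness of the action implies that the restriction of the $U^-$-moment map $\mu_{\mathfrak{n}^-}:=\mu|_{\mathfrak{n}^-}$ to $a(U^-\times U_D)$ is a submersion; equivalently, at each such $p$ the differential $d\mu_{\mathfrak{n}^-,p}: T_p(T^*W)\to (\mathfrak{n}^-)^*$ is surjective with kernel $(\mathfrak{n}^-\cdot p)^\omega$, the symplectic perpendicular of the orbit direction. Since $m=\mu_{\mathfrak{n}^-}|_{U_D}$, surjectivity of $dm_p$ is equivalent to
\[
T_p U_D + (\mathfrak{n}^-\cdot p)^\omega = T_p(T^*W),
\]
which, taking $\omega$-perpendiculars, is equivalent to $(T_p U_D)^\omega \cap (\mathfrak{n}^-\cdot p) = 0$.

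\textbf{The coisotropy input.} Because $N$ is coisotropic (Proposition~\ref{existence of U_D}(5)), we have $(T_p U_D)^\omega = (T_p N)^\omega \subseteq T_p N = T_p U_D$. Combining this with the transverse direct sum $T_p U_D \cap (\mathfrak{n}^-\cdot p) = 0$ from the setup gives $(T_p U_D)^\omega \cap (\mathfrak{n}^-\cdot p) = 0$, hence $dm_p$ is surjective. By the implicit function theorem, $m$ is a submersion and $m^{-1}(0)$ is smooth.

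\textbf{Vanishing on $D$.} For the second statement, I would argue by $\beta$-weights. For any $X\in\mathfrak{n}^-$, $G$-equivariance of $\mu$ shows that the function $\mu^*(X)=\langle\mu,X\rangle\in \C[T^*W]$ is a $\beta$-weight vector whose weight equals the (nonzero, strictly negative) $\beta$-weight of $X$. Such a function vanishes identically on the $\beta$-fixed locus $Z_\beta$, and hence on $D\subseteq Z_\beta^{ss}$. Therefore $\langle m(p),X\rangle=0$ for all $p\in D$ and all $X\in \mathfrak{n}^-$, i.e.\ $m(D)=0$. The only step that takes care is the symplectic linear algebra of Paragraph~3, where the interplay between the transversal splitting and the coisotropy of $N$ must be applied to the correct subspaces; the other steps are formal once one invokes the standard fact that moment maps for free Hamiltonian actions are submersions.
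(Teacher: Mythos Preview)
Your regularity argument is essentially the paper's: the key step in both is that if some $0\neq X\in\mathfrak{n}^-$ satisfied $d\mu^X|_{T_pU_D}=0$, then $\wt{X}_p\in (T_pU_D)^\omega\subseteq T_pU_D$ by coisotropy, contradicting the transversal splitting. Your detour through ``$a$ is an open embedding, hence the $U^-$-action is free, hence $\mu_{\mathfrak{n}^-}$ is a submersion'' is unnecessary and the open-embedding claim is not justified---the paper only shows $a$ is \'etale, and nothing in Section~\ref{symplectic geom} establishes injectivity of $a$ on $U^-\times U_D$ outside $U^-\times Y_\beta$. Fortunately you never use it: all you need is that $\mathfrak{n}^-\to T_p(T^*W)$ is injective at $p\in U_D$, which is already contained in your direct-sum decomposition from Proposition~\ref{existence of U_D}(1), and that alone gives surjectivity of $d\mu_{\mathfrak{n}^-,p}$ with kernel $(\mathfrak{n}^-\cdot p)^\omega$.

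Your argument for $m(D)=0$ is genuinely different from the paper's and cleaner. The paper unpacks the cotangent description of the moment map at a point $(z_1,z_2)\in Z_\beta$---it is the fiberwise dual of the infinitesimal action $\mathfrak{n}^-\to T_{z_1}W$, which factors through $(T_{z_1}W)_-$ by $\beta$-equivariance, so the dual kills the zero-weight covector $z_2$. Your approach bypasses the cotangent structure entirely: for a $\beta$-weight vector $X\in\mathfrak{n}^-$ of weight $w<0$, $G$-equivariance of $\mu$ forces $\langle\mu,X\rangle$ to be a $\beta$-weight function of weight $w$, and any nonzero-weight function vanishes on the fixed locus $Z_\beta\supseteq D$. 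This works on any Hamiltonian $G$-variety, not just a cotangent bundle. (Minor point: you should say ``for any $\beta$-weight vector $X\in\mathfrak{n}^-$'' rather than ``for any $X\in\mathfrak{n}^-$'', and then conclude by linearity.)
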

\begin{proof}
If $d\mu_p|_{T_pU_D}$ does not have full rank (i.e., linearly independent component functions) for some $p \in U_D$, then there exists $0\neq X\in\mathfrak{n}^-$ with $d\mu^X|_{T_pU_D}\equiv 0$.  The vector field $\wt{X}$ on $T^*W$ generated by $X$ thus satisfies $\omega_{T^*W}(\wt{X}_p, -) \equiv 0$ on $T_pU_D$.  Thus $\wt{X}_p\in (T_pU_D)^\perp$ (with respect to the symplectic form).  Since $U_D$ is coisotropic, we get $\wt{X}_p\in T_pU_D$,   contradicting infinitesimal transversality of the $U^-$-action with respect to $U_D$.  This proves
the first statement.

To see that $Z_\beta^{ss}$ lies in the zero preimage of the moment map, note that the moment map at a point $(z_1,z_2) \in Z_\beta \cong T^*(W^{\beta(\Gm)})$ is the fiberwise dual to the infinitesimal action map $\mathfrak{n}^-\rightarrow T_{z_1}W$.  Since this map is 
$\beta$-equivariant and $\mathfrak{n}^-$ has only negative $\beta$-weights, we find that it factors via $\mathfrak{n}^-\rightarrow (T_{z_1}W)_-\hookrightarrow T_{z_1}W$.  Consequently, the dual map $T_{z_1}W^*\rightarrow (\mathfrak{n}^-)^*$ factors through $(T_{z_1}W^*)_-$; in particular, it kills $z_2\in (T_{z_1}W^*)_0$, and thus $\mu(z_1,z_2)=0$.
\end{proof}

\subsection{Coisotropic Reduction}
Consider the \'etale chart $dq:T^*W^\circ\rightarrow T^*\mathbb{A}^n$ and the 
 linear subspace $N^\dagger:= V\times Y^\dagger_\beta \subset T^*\mathbb{A}^n$.  Since $N^\dagger$ is a coisotropic linear submanifold of a symplectic vector space, it has a linear symplectic quotient
\begin{equation}\label{coisotropic projection}
\Pi: N^\dagger\rightarrow \mathcal{S},
\end{equation}
 i.e., the quotient by the null foliation, which equals the quotient by a linear subspace of $N^\dagger$.  It can also be realized as a symplectic vector subspace of $N^\dagger$.
 In particular, if $\omega_{\cS}$ denotes the induced (linear) symplectic form on $\cS$ and $\omega_r$ denotes the pullback of $\omega_{T^*\mathbb{A}^n}$ to $N^\dagger$, then 
 \begin{equation}\label{equality of pullback forms}
 \omega_r = \Pi^*\omega_{\cS}.
 \end{equation}
 \begin{prop}\label{symplectic reduction up to etale cover}
 Under the map $\phi: U^-\times U_D \rightarrow T^*U^-\times \cS$ defined by $\phi(u,p) = (u, m(u), \Pi\big(dq(p)\big)$, we have 
$ \phi^*\big(\omega_{T^*U^-} + \omega_{\cS}\big) = a^*\omega_{T^*W}.$
 \end{prop}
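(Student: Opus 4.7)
The plan is to factor $\phi$ through the symplectic embedding $\Phi$ of Proposition \ref{symplectic embedding prop} and reduce to a pointwise identity of $2$-forms on $U_D$ that follows from the standard fact that a cotangent-lifted \'etale map is symplectic, together with the linear coisotropic reduction \eqref{equality of pullback forms}.

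First I would observe that $\phi = \psi\circ\Phi$, where $\psi: T^*U^-\times U_D \to T^*U^-\times \cS$ is the map applying $\Pi\circ dq$ to the $U_D$ factor and the identity to the $T^*U^-$ factor. By Proposition \ref{symplectic embedding prop}, $\Phi^*(\omega_{T^*U^-}+\omega_{T^*W}) = a^*\omega_{T^*W}$. Since the $\omega_{T^*U^-}$ summand pulls back identically under $\psi\circ\Phi$ and under $\Phi$, the proposition reduces to the pointwise identity
\begin{equation*}
(\Pi\circ dq)^*\omega_\cS = \omega_{T^*W}\big|_{U_D}
\end{equation*}
of $2$-forms on $U_D$.

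To establish this identity, I would use that $dq:T^*W^\circ\to T^*\mathbb{A}^n$ is the cotangent lift of the \'etale morphism $q: W^\circ\to\mathbb{A}^n$; such a lift preserves the Liouville $1$-form by a direct computation, hence is symplectic, so $dq^*\omega_{T^*\mathbb{A}^n} = \omega_{T^*W^\circ}$. Restricting to $N$, which $dq$ carries \'etale-locally into $N^\dagger$, and then applying $\omega_{T^*\mathbb{A}^n}|_{N^\dagger} = \Pi^*\omega_\cS$ from \eqref{equality of pullback forms}, yields $(\Pi\circ dq)^*\omega_\cS = \omega_{T^*W}|_N$; further restriction to $U_D\subseteq N$ completes the argument. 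The hard part is really only bookkeeping---tracking that all the factorizations remain compatible under the nested inclusions $U_D\subseteq N\subseteq T^*W^\circ\subseteq T^*W$ and their images under $dq$---but no serious obstacle arises once the factorization $\phi = \psi\circ\Phi$ is in place.
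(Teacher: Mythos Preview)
Your proof is correct and essentially the same as the paper's, which reads simply ``This follows from Proposition~\ref{form of symplectic form} via \eqref{equality of pullback forms}.'' The only cosmetic difference is that you route through Proposition~\ref{symplectic embedding prop} (the embedding $\Phi$) rather than Proposition~\ref{form of symplectic form} (the explicit formula $\omega_r + \langle \on{fib}\wedge dm\rangle$); since the paper proves the former by the same computation as the latter, and since both reductions come down to the identity $(\Pi\circ dq)^*\omega_{\cS} = \omega_{T^*W}|_{U_D}$ via \eqref{equality of pullback forms} and the fact that the cotangent lift $dq$ is symplectic, the two arguments are interchangeable.
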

 \begin{proof} 
 This follows from Proposition \ref{form of symplectic form} via \eqref{equality of pullback forms}.
 \end{proof}
 \noindent
  We have that $U_D\cap \mu^{-1}(0)$ is a symplectic submanifold of $N\subset T^*W$:
\begin{prop}
The Hamiltonian reduction of $U^-\times U_D$ for the $U^-$-action is isomorphic to $m\inv(0) = U_D\cap\mu^{-1}(0)$.
\end{prop}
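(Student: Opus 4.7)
The plan is to leverage the symplectic embedding of Proposition~\ref{symplectic embedding prop} to reduce the question to the standard fact that Hamiltonian reduction of $T^*U^-$ at the zero level by left translation collapses to a point. The $U^-$-action on $U^-\times U_D$ is by left multiplication on the first factor; this is $U^-$-equivariant for the action map $a$ by Lemma~\ref{action map}, and the pullback symplectic form $a^*\omega_{T^*W}$ from Proposition~\ref{form of symplectic form} makes the action Hamiltonian.

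First I will identify the moment map. The embedding $\Phi(u, p) = (u, m(p), p)$ of Proposition~\ref{symplectic embedding prop} is $U^-$-equivariant when $U^-$ acts on $T^*U^-\times T^*W$ by the canonical lift of left translation on $T^*U^-$ and trivially on $T^*W$. Since in the left-invariant trivialization $T^*U^- \cong U^-\times (\mathfrak{n}^-)^*$ the moment map for left translation is $(u, \xi)\mapsto -\on{Ad}^*(u)\xi$, pulling back along $\Phi$ yields the moment map $\tilde\mu : U^-\times U_D \to (\mathfrak{n}^-)^*$ given by $\tilde\mu(u, p) = -\on{Ad}^*(u)\,m(p)$. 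In particular $\tilde\mu^{-1}(0) = U^-\times m^{-1}(0)$.

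Next, Lemma~\ref{moment regularity} ensures that $m^{-1}(0) = U_D\cap\mu^{-1}(0)$ is smooth, and the $U^-$-action on $\tilde\mu^{-1}(0) = U^-\times m^{-1}(0)$ is manifestly free. Hence the quotient $\tilde\mu^{-1}(0)/U^-$ exists and is identified (as a scheme) with $m^{-1}(0)$ via the section $p\mapsto(e, p)$. To check that this identification is symplectic: along $\{e\}\times m^{-1}(0)$, the mixing term $\langle\on{fib}\wedge dm\rangle$ from Proposition~\ref{form of symplectic form} evaluates to zero on pairs of tangent vectors to $m^{-1}(0)$ (since $dm$ kills $T m^{-1}(0)$), so $a^*\omega_{T^*W}$ restricts to $\omega_r$, which is the pullback of $\omega_{T^*W}$ under the inclusion $m^{-1}(0) \hookrightarrow U_D \hookrightarrow T^*W$. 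This is precisely the symplectic structure that $m^{-1}(0)$ inherits as the coisotropic reduction of $U_D$ at $\mu = 0$.

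The main bookkeeping obstacle I expect is pinning down the sign and $\on{Ad}$ conventions for the moment map of left translation on $T^*U^-$ and verifying equivariance of $\Phi$ relative to those conventions; once aligned, smoothness, freeness, and agreement of symplectic forms all follow directly from the hypotheses already in place (Proposition~\ref{existence of U_D} and Lemma~\ref{moment regularity}).
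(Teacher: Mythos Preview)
Your proof is correct and follows essentially the same approach as the paper: both pull back the moment map from $T^*U^-\times T^*W$ via the symplectic embedding $\Phi$ of Proposition~\ref{symplectic embedding prop}, identify the zero level as $U^-\times m^{-1}(0)$, and quotient by the free $U^-$-action. The paper asserts the moment map is simply $m\circ p_2$ while you write it as $-\on{Ad}^*(u)\,m(p)$; this discrepancy is a trivialization convention and is irrelevant to the zero set and quotient. Your additional verification that the reduced symplectic form agrees with $\omega_r|_{m^{-1}(0)}$ via Proposition~\ref{form of symplectic form} is more thorough than the paper's terse argument, but not a different route.
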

\begin{proof}
It suffices to show that $m\circ p_2$ is a moment map for the $U^-$-action where $p_2$ denotes the second projection.  This follows immediately from the fact that $\Phi$ is a 
$U^-$-equivariant symplectic map, combined with the fact that, writing $\wt{\mu}: T^*U^-\times T^*W\rightarrow (\mathfrak{n}^-)^*$ for the moment map (which just equals projection on $(\mathfrak{n}^-)^*$), we have $m = \wt{\mu}\circ\Phi$.
\end{proof}

\begin{prop}
The pullback of the symplectic form along the composite map $U_D\cap \mu^{-1}(0)\rightarrow N^\dagger \rightarrow \mathcal{S}$ equals the symplectic form on $U_D\cap \mu^{-1}(0) = m^{-1}(0) \subset U_D$. It follows that the maps $\phi$ and $m^{-1}(0) = U_D\cap \mu\inv(0) \rightarrow \mathcal{S}$ are \'etale.
\end{prop}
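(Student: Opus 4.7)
The plan is to deduce both statements directly from Proposition \ref{symplectic reduction up to etale cover}. Restrict the identity $\phi^*\big(\omega_{T^*U^-}+\omega_{\mathcal{S}}\big) = a^*\omega_{T^*W}$ along the slice $U_D\cap m^{-1}(0)\hookrightarrow U^-\times U_D$, $p\mapsto (e,p)$. On this slice, $a$ is literally the inclusion $U_D\cap m^{-1}(0)\hookrightarrow T^*W$, so the right-hand side restricts to the induced form on the symplectic submanifold $U_D\cap \mu^{-1}(0)$, which by the preceding proposition is its symplectic form. On the other hand, $\phi(e,p)=(e,m(p),\Pi(dq(p)))=(e,0,\Pi(dq(p)))$ for $p\in m^{-1}(0)$, so the image lies in $\{e\}\times\{0\}\times\mathcal{S}$; along this product slice the tangent contribution of $T^*U^-$ is zero, so $\omega_{T^*U^-}+\omega_{\mathcal{S}}$ restricts to $\omega_{\mathcal{S}}$ and the left-hand side becomes the pullback of $\omega_{\mathcal{S}}$ along $p\mapsto \Pi(dq(p))$. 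Combining these observations gives the first assertion.

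For the \'etaleness claims I would combine a dimension count with the linear-algebraic fact that a map between manifolds of equal dimension whose pullback of a nondegenerate $2$-form is nondegenerate must be \'etale: nondegeneracy of the pullback forces the differential to be injective, and equal dimensions upgrade this to an isomorphism. Writing $n=\dim W$, the decomposition $T_z(T^*W) = \mathfrak{n}^-\oplus T_zY_\beta\oplus V$ gives $\dim N = 2n-\dim\mathfrak{n}^-$, whence $\dim(U^-\times U_D) = 2n = \dim(T^*U^-\times\mathcal{S})$, and $\dim m^{-1}(0) = 2n-2\dim\mathfrak{n}^- = \dim\mathcal{S}$ (using Lemma \ref{moment regularity}). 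Since $a$ is injective on tangent spaces at points of $U_D$ by Proposition \ref{existence of U_D}(1), and hence at all points by $U^-$-equivariance, and since source and target of $a$ have equal dimension, $a$ is \'etale; in particular $a^*\omega_{T^*W}$ is symplectic. Proposition \ref{symplectic reduction up to etale cover} then forces $\phi^*\big(\omega_{T^*U^-}+\omega_{\mathcal{S}}\big)$ to be nondegenerate, so $\phi$ is \'etale. The identical argument, now applied to the first half of the present proposition, shows $m^{-1}(0)\to\mathcal{S}$ is \'etale.

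The only real obstacle is a bookkeeping one: confirming that the restriction of $\omega_{T^*U^-}+\omega_{\mathcal{S}}$ to the slice $\{e\}\times\{0\}\times\mathcal{S}$ coincides with $\omega_{\mathcal{S}}$ without spurious cross-terms from $T^*U^-$. This is immediate from the product structure, since at $(e,0,s)$ the tangent space to the slice is canonically $T_s\mathcal{S}$, on which $\omega_{T^*U^-}$ vanishes trivially. With this in hand the rest is a direct translation of the previous symplectic computations and standard symplectic linear algebra.
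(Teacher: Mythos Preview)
Your proof is correct and follows the same approach as the paper, which simply writes ``The pullbacks of the symplectic forms are symplectic'' and leaves the rest implicit. You have spelled out the dimension count and the nondegeneracy-forces-injective-differential argument that the paper omits, and your restriction argument via Proposition \ref{symplectic reduction up to etale cover} is a valid way to verify the first assertion.
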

\begin{proof}
The pullbacks of the symplectic forms are symplectic. 
\end{proof}

\section{Deformation Quantization near a KN Stratum}\label{DQ}
Throughout Section \ref{DQ} we assume a connected reductive $G$ acts on the smooth variety $W$.  We assume that $W$ has trivialized canonical bundle and that $G$ acts on $K_W = W\times\C$ via the character $\gamma_G$, yielding a canonical quantum comoment map $\mu^{\on{can}}$.  We also assume that $T^*W$ comes equipped with a KN stratification, and that the line bundle
$\mathscr{L}$ is trivialized with its $G$-equivariant structure defined by a character $\chi: G\rightarrow \Gm$.
We fix a $\Gm$-action on $T^*W$ defining a filtration of $\D(W)$, if $W$ is a representation, or the operator filtration if $W$ is not.

\subsection{A Quantum Bimodule}\label{quantum bimodule}
We fix a one-parameter subgroup $\beta: \Gm\rightarrow G$ labelling a KN stratum and write $\gK = U^-\ltimes \Gm$ as in Section \ref{sec:first-sympl}.
Passing to the subgroup $\gK$ and choosing a slice $U_D$ near a point $z\in Z_\beta^{ss}$  as in Section \ref{symplectic geom}, we write 
\bd
\bigvar := U^-\times U_D.
\ed
  An equivariant
deformation quantization $(\C[\bigvar][\![\hbar]\!], \ast)$  is provided by the Moyal-Weyl product $\ast$.   Henceforth we write $\theo^{\hbar}(\bigvar)$ for this deformation quantization (with the Moyal-Weyl product); similarly, we write $\theo^{\hbar}(\cS)$ for $\C[\cS][\![\hbar]\!]$ with its Moyal-Weyl product.
Fix the canonical quantum comoment map $\mu^{U^-}$ and abusively use the same notation for the classical comoment map associated to the quantum map at
$\hbar=0$.  Let
\bd
M_{U^-} =\theo^{\hbar}(\bigvar) /\theo^{\hbar}(\bigvar)\mu^{U^-}(\mathfrak{n}^-);
\ed
this is a left $\theo^{\hbar}(\bigvar)$-module.  Define a linear map 
$a: \C[\cS] \rightarrow \on{End}_{\theo^{\hbar}(\bigvar)}(M_{U^-})$
by $a(f)(m) = m\ast (\pi_{\cS}\circ \phi)^*(f)$, where $\phi$ is the map from Proposition 
\ref{symplectic reduction up to etale cover} and $\pi_\cS: T^*U^-\times \cS\rightarrow \cS$ is projection.
\begin{prop}\label{reduction is Moyal}
\mbox{}
\begin{enumerate}
\item The map $a$ is well-defined: for each $f\in\C[\cS]$,  $a(f)$ is an $\theo^{\hbar}(\bigvar)$-module endomorphism of $M_{U^-}$.
\item  The $\hbar$-linear extension of the map $a$ defines an algebra homomorphism
\bd
\theo^{\hbar}(\cS) \longrightarrow\on{End}_{\theo^{\hbar}(\bigvar)}(M_{U^-}).
\ed  
\end{enumerate}
\end{prop}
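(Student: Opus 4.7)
The plan is to reduce the statement to the \'etale local model provided by Proposition \ref{symplectic reduction up to etale cover}, and then to exploit the direct-sum structure of the symplectic form on $T^*U^-\times\cS$. First I would invoke the compatibility of Kontsevich quantization with \'etale morphisms (Section \ref{kontsevich quant}(2)): since $\phi$ was shown to be \'etale, this yields an isomorphism of DQ algebras $\phi^{-1}\theo^\hbar(T^*U^-\times\cS)\xrightarrow{\sim}\theo^\hbar(\bigvar)$. Because the symplectic form on the target splits as a direct sum, the Kontsevich quantization likewise splits: under the Moyal-Weyl product on $\theo^\hbar(T^*U^-\times\cS)$, pullbacks from $\cS$ commute with pullbacks from $T^*U^-$, and the restriction of the product to $\pi_\cS^*\theo^\hbar(\cS)$ recovers the intrinsic product on $\theo^\hbar(\cS)$.

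Granted this, part (1) reduces to checking that right multiplication by $g := (\pi_\cS\phi)^*(f)$ normalizes the defining left ideal, i.e. that $\mu^{U^-}(X)\ast g \in \theo^\hbar(\bigvar)\ast\mu^{U^-}(\mathfrak{n}^-)$ for every $X\in\mathfrak{n}^-$. The function $g$ is pulled back from $\cS$ and is therefore $U^-$-invariant on $\bigvar$ (where $U^-$ acts by left translation on the first factor). By the defining property $[\mu^{U^-}(X),-]=\hbar\cdot\alpha(X)$ of a quantum comoment map (Definition \ref{qcomo}), $[\mu^{U^-}(X),g]=0$, so $\mu^{U^-}(X)\ast g = g\ast\mu^{U^-}(X)$ lies in the required ideal. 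For part (2), a direct calculation gives $a(f)\circ a(g)(m)=m\ast(\pi_\cS\phi)^*(g)\ast(\pi_\cS\phi)^*(f)$; by the product-splitting above, the two star-factors collapse to $(\pi_\cS\phi)^*(g\ast_\cS f)$, giving the algebra structure, up to the standard identification of right multiplications on a left module with the opposite product on $\theo^\hbar(\cS)$ (equivalently, via the anti-involution of the Moyal-Weyl algebra identifying it with its opposite).

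The main obstacle is verifying that, under the \'etale isomorphism above, the canonical quantum comoment map $\mu^{U^-}$ on $\theo^\hbar(\bigvar)$ corresponds to the pullback along $\pi_{T^*U^-}$ of the canonical comoment map on $\theo^\hbar(T^*U^-)$; this is what allows the $U^-$-invariance of $g$ to translate directly into the vanishing of the commutator. This compatibility is the $\theo^\hbar$-analogue of Remark \ref{compatibility of cans} applied to the $U^-$-equivariant \'etale map $\phi$, together with the observation that the canonical comoment map for the $U^-$-action on $T^*U^-\times\cS$ (through the first factor) already lies in $\pi_{T^*U^-}^*\theo^\hbar(T^*U^-)$, a subalgebra that commutes with $\pi_\cS^*\theo^\hbar(\cS)$. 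Once these naturality statements are in hand, the rest of the argument proceeds as above.
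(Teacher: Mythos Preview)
Your proposal is correct and follows essentially the same approach as the paper's (very terse) proof: $U^-$-invariance plus Definition~\ref{qcomo}(1) for part~(1), and the fact that $\phi$ intertwines symplectic forms---hence Poisson brackets, hence Moyal--Weyl products---for part~(2).

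One comment: the ``main obstacle'' you flag in the last paragraph is not actually needed. You have already finished part~(1) in your second paragraph: the function $g=(\pi_\cS\phi)^*f$ depends only on the $U_D$ coordinate (since $\pi_\cS\circ\phi(u,p)=\Pi(dq(p))$), so it is $U^-$-invariant, and Definition~\ref{qcomo}(1) applied to the quantum comoment map $\mu^{U^-}$ \emph{on $\theo^\hbar(\bigvar)$ itself} gives $[\mu^{U^-}(X),g]=\hbar\alpha(X)(g)=0$ directly. No comparison with the comoment map on $T^*U^-$ is required. The paper simply says ``Elements of $(\pi_{\cS}\circ\phi)^*\C[\cS]$ are $U^-$-invariant, hence normalize $\mu^{U^-}(\mathfrak{n}^-)$ by part~(1) of Definition~\ref{qcomo},'' and that is the whole argument. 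Your observation about the opposite-algebra issue in part~(2) is a legitimate subtlety that the paper does not address explicitly.
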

\begin{proof}
Elements of $(\pi_{\cS}\circ\phi)^*\C[\cS]$ are $U^-$-invariant, hence normalize $\mu^{U^-}(\mathfrak{n}^-)$ by part (1) of Definition \ref{qcomo}; the map $a$ is thus well-defined.  By 
Proposition \ref{symplectic reduction up to etale cover}, pullback by $\pi_{\cS}\circ\phi$ intertwines symplectic forms, hence Poisson brackets, hence
Moyal-Weyl quantizations.  
\end{proof}

\subsection{Comparison of Canonical Comoment Maps}\label{comparison of ccms}
Keeping the notation of Section \ref{quantum bimodule}, compatibility of quantization with \'etale maps implies:
\begin{lemma}\label{etale base change}
The natural
 $\gK$-equivariant map
$a\inv: \theo^{\hbar}(T^*W^\circ) \rightarrow \theo^{\hbar}(\bigvar)$
intertwines the Moyal-Weyl products.  
\end{lemma}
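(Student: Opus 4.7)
The plan is to deduce the lemma from the two properties of Kontsevich's canonical quantization recalled in Section \ref{kontsevich quant}: compatibility with pullback along \'etale morphisms (item (2)), and agreement on affine space with constant Poisson bivector with the Moyal--Weyl product (item (3)). In essence, the lemma is a bookkeeping consequence of the fact that the action map $a$ is \'etale.

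First I would verify that $a\colon \bigvar = U^-\times U_D \to T^*W$ is \'etale onto an open subset of $T^*W$ (which, after possibly shrinking, I identify with the $T^*W^\circ$ appearing in the statement). Proposition \ref{existence of U_D}(1) supplies infinitesimal transversality of the $U^-$-action with $U_D$, so $da$ is injective at each point of $\{e\}\times U_D$; $U^-$-equivariance of $a$ then propagates the injectivity to every point of $\bigvar$. A dimension count
\begin{equation*}
\dim\bigvar = \dim\mathfrak{n}^- + \dim U_D = \dim\mathfrak{n}^- + \bigl(\dim T^*W - \dim\mathfrak{n}^-\bigr) = \dim T^*W
\end{equation*}
upgrades injectivity to \'etaleness.

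Next, property (2) of Section \ref{kontsevich quant} provides a canonical isomorphism $a^{-1}\theo^{\hbar}_{T^*W^\circ} \xrightarrow{\sim} \theo^{\hbar}_{\bigvar}$ of sheaves of associative $\C[\![\hbar]\!]$-algebras quantizing the pullback of Poisson structures along $a$. By property (3), on any affine patch of $T^*W^\circ$ equipped with flat Darboux coordinates the Kontsevich quantization reduces to the Moyal--Weyl product, and pulling back under the \'etale map $a$ these local Moyal--Weyl structures are identified with the Moyal--Weyl structure on $\bigvar$ fixed in Section \ref{quantum bimodule}. Taking global sections yields the claimed intertwiner. The $\gK$-equivariance of the resulting map is automatic from the $\gK$-equivariance of $a$ (Lemma \ref{action map}) combined with the functoriality of the Kontsevich construction.

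The only substantive step is the \'etaleness of $a$, and that is immediate from the transversality built into the construction of $U_D$; so I expect no genuine obstacle, only some careful bookkeeping with the local \'etale coordinates used to realize the Kontsevich quantization on a general smooth variety.
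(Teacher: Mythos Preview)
Your proposal is correct and follows the same approach as the paper, whose proof is simply the observation that compatibility of Kontsevich quantization with \'etale maps yields the lemma. One minor point: $T^*W^\circ$ is the cotangent bundle of the fixed $\beta$-stable affine open $W^\circ\subseteq W$ chosen in Section~\ref{symplectic geom}, not the image of $a$, but since the substance of the argument is \'etaleness of $a$ plus functoriality of the quantization this misidentification does not affect your reasoning.
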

Lemma \ref{etale base change} allows us to consider the composite homomorphism
\bd
\mathfrak{g}\xrightarrow{\mu^{\on{can}}} \theo^{\hbar}(T^*W^\circ)\xrightarrow{a\inv} \theo^{\hbar}(\bigvar).
\ed
\begin{prop}\label{action of Euler vector fields}\mbox{}
Let $\mathsf{eu}_{T^*W}(\beta)$, $\mathsf{eu}_{\cS}(\beta)$ denote the (canonically-shifted) Euler operators for the $\beta$-action on $T^*W$  and
$\cS$, respectively: that is, the images of the canonical generator of $\on{Lie}(\Gm)$ under the canonical quantum comoment maps associated to the 
$\beta$-actions
(cf. Section \ref{diff ops}).  Then, letting $1\in M_{U^-}$ denote the canonical generator, we have
\bd
\mathsf{eu}_{T^*W}(\beta)\ast 1 = 1\ast \mathsf{eu}_{\cS}(\beta) - \frac{\hbar}{2} \on{wt}_{\mathfrak{n}^-}(\beta),
\ed
where $\on{wt}_{\mathfrak{n}^-}(\beta)$ denotes the weight of $\beta$ on $\bigwedge\nolimits^{\on{top}}\mathfrak{n}^-$.  
\end{prop}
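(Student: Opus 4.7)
The plan is to transport the identity to $\theo^\hbar(\bigvar)$ via the action map and decompose the resulting element along the product structure $\bigvar = U^-\times U_D$. By Lemma \ref{etale base change}, $a\inv$ intertwines the Moyal-Weyl products, and combined with Lemma \ref{momentary agreement} and Remark \ref{compatibility of cans}, this identifies $a\inv\mathsf{eu}_{T^*W}(\beta)$ with the canonical classical moment map $\mu^\beta_{T^*W}$ pulled back via $a$, viewed as an element of $\theo^\hbar(\bigvar)$ under the canonical inclusion. Both $a$ and $\phi$ are $\beta$-equivariant (Lemma \ref{action map} and the $\beta$-equivariance of $m$, $\Pi$, and $dq$), so $a^*\mu^\beta_{T^*W}$ and $\phi^*(\mu^\beta_{T^*U^-} + \mu^\beta_\cS)$ are two Hamiltonians for the $\beta$-action on the connected manifold $\bigvar$ and thus differ by a constant; evaluating at the $\beta$-fixed point $(e,z)\in \bigvar$ (where all three classical moment maps vanish since $\wt{d\beta}$ vanishes at any $\beta$-fixed point) shows this constant is zero. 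Thus in $\theo^\hbar(\bigvar)$,
\[
a\inv\mathsf{eu}_{T^*W}(\beta) \;=\; \Phi^*\mathsf{eu}_{T^*U^-}(\beta) \;+\; (\pi_\cS\circ\phi)^*\mathsf{eu}_\cS(\beta).
\]

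The core of the argument is the computation of $\Phi^*\mathsf{eu}_{T^*U^-}(\beta)\ast 1$ in $M_{U^-}$. I would choose exponential coordinates $u = \exp(\sum u_i X_i)$ on $U^-$ for a $\beta$-weight basis $X_1,\ldots, X_n$ of $\mathfrak{n}^-$ with weights $w_1,\ldots,w_n$; in these coordinates the $\beta$-conjugation action multiplies $u_i$ by $t^{w_i}$, so by \eqref{inf action of 1ps} the classical cotangent-lift moment map on $T^*U^-$ is $\mu^\beta_{T^*U^-}(u,\xi) = -\sum_i w_i u_i \xi_i$, and by Lemma \ref{momentary agreement} this is $\mathsf{eu}_{T^*U^-}(\beta)$ in $\theo^\hbar(T^*U^-)$. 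Under $\Phi$, $\xi_i$ pulls back to the $i$-th component $m_i(p)$ of the moment map $m\colon U_D\to \mathfrak{n}^{-,*}$. The Moyal-Weyl identity $u_i m_i(p) = u_i\ast m_i(p) + \tfrac{\hbar}{2}\{u_i,m_i(p)\}_\bigvar$, together with $\{u_i, m_i(p)\}_\bigvar = \phi^*\{u_i,\xi_i\}_{T^*U^-} = -1$ (Proposition \ref{symplectic reduction up to etale cover} makes $\phi$ symplectic and hence Poisson), yields the exact identity
\[
\Phi^*\mathsf{eu}_{T^*U^-}(\beta) \;=\; -\sum_i w_i\, u_i\ast m_i(p) \;-\; \frac{\hbar}{2}\on{wt}_{\mathfrak{n}^-}(\beta)
\]
in $\theo^\hbar(\bigvar)$; note the truncation of Moyal-Weyl at order $\hbar$ because $\{u_i,m_i(p)\}$ is constant.

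I would then verify that $m_i(p)\ast 1 = 0$ in $M_{U^-}$, which forces $\Phi^*\mathsf{eu}_{T^*U^-}(\beta)\ast 1 = -\tfrac{\hbar}{2}\on{wt}_{\mathfrak{n}^-}(\beta)\cdot 1$. Classically this is transparent: since $\mu^{U^-,\on{cl}}(X_i)(u,p) = \langle \on{Ad}^*(u) m(p), X_i\rangle = m_i(p) + O(u\cdot m(p))$, Taylor-expanding $\on{Ad}^*(u)$ in the $u_j$ and iterating yields an identity $m_i = \sum_l f_{il}(u)\mu^{U^-,\on{cl}}(X_l)$ with polynomial coefficients $f_{il}(u)$; lifting to Moyal-Weyl (using that $\mu^{U^-}$ is a quantum comoment map and, $U^-$ being unipotent, the quantum and classical comoment maps coincide up to a $\hbar$-correction which can be absorbed into the $f_{il}$) yields $m_i(p) = \sum_l f_{il}(u,\hbar)\ast \mu^{U^-}(X_l)$, and therefore $m_i(p)\ast 1 = 0$. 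For the $\cS$-piece, the very definition of the right action in Proposition \ref{reduction is Moyal} gives $(\pi_\cS\circ\phi)^*\mathsf{eu}_\cS(\beta)\ast 1 = 1\ast \mathsf{eu}_\cS(\beta)$ in $M_{U^-}$. Combining the two pieces yields the proposition.

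The main obstacle is the lift $m_i(p) = \sum_l f_{il}(u,\hbar)\ast \mu^{U^-}(X_l)$: one must check that the classical iterative substitution lifts consistently to Moyal-Weyl at every order in $\hbar$ without producing uncontrolled corrections. A cleaner alternative would mimic the Kashiwara-style calculation of Proposition \ref{application of kashiwara} directly: the module $M_{U^-}$ can, after an appropriate partial Fourier transform in the $U^-$-cotangent directions, be identified with a module supported on $\{m=0\}$ in the sense of Section \ref{kashiwara section}, and the shift $-\tfrac{\hbar}{2}\on{wt}_{\mathfrak{n}^-}(\beta)$ emerges directly by summing the half-density shift \eqref{half-form FT} over the $\beta$-weights of $\mathfrak{n}^-$, exactly as in the proof of Proposition \ref{application of kashiwara}.
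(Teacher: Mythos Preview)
Your approach is essentially the paper's: transport via the \'etale map $a$, split the $\beta$-moment map classically, compute the $\hbar$-shift from the Moyal-Weyl product of $u_i$ with $m_i(p)$, and use that the $m_i(p)$ kill the generator $1\in M_{U^-}$. The paper packages the star-product computation as Lemma~\ref{hbar shift}, proving truncation at order~$\hbar$ by observing (from the explicit form of the Poisson bivector in Proposition~\ref{form of symplectic form}) that $P(e_i^*\otimes m_i)$ has a \emph{constant in the first tensor slot}, so $P^2$ vanishes; your justification ``because $\{u_i,m_i(p)\}$ is constant'' is not quite the same thing, but your alternative of computing on $T^*U^-\times\cS$ (where $u_i,\xi_i$ are linear coordinates and truncation is automatic) and pulling back along the symplectic \'etale map $\phi$ is perfectly sound.

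Where you make life harder than necessary is the step $m_i(p)\ast 1=0$. You posit $\mu^{U^-,\mathrm{cl}}(X_i)(u,p)=\langle\mathrm{Ad}^*(u)m(p),X_i\rangle$ and then set up an iterative inversion to express $m_i$ in the left ideal, worrying about lifting this to all orders in~$\hbar$. In fact the paper has already shown (proof of Proposition~5.7) that $m\circ p_2$ \emph{is} the $U^-$-moment map on $\bigvar$ for $a^*\omega_{T^*W}$: there is no $\mathrm{Ad}^*(u)$-twist in these conventions. Since $U^-$ is unipotent, $\rho_{U^-}=0$, so the canonical quantum comoment map is just the classical function $m_i(p)$ viewed in $\theo^\hbar(\bigvar)$; thus $m_i(p)=\mu^{U^-}(e_i)$ on the nose and $m_i(p)\ast 1=0$ by the definition of $M_{U^-}$. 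Your ``main obstacle'' dissolves, and the Fourier-transform alternative you sketch is not needed.
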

We have maps
\bd
T^*U^-\times \cS \leftarrow U^-\times U_D \rightarrow T^*W
\ed
which are \'etale, symplectic, and equivariant for the $\Gm$-actions via $\beta$.  It follows that the canonically shifted Euler operators for the $\beta$-action are identified via pullbacks.  Hence it suffices to prove the desired equality of actions on the bimodule
$\theo^{\hbar}(T^*U^-\times \cS)/\theo^{\hbar}(T^*U^-\times \cS)\mu(\mathfrak{n}^-)$ (since this pulls back to $M_{U^-}$).  Using the exponential map to identify $\mathfrak{n}^-$ with $U^-$ equivariantly for $\beta$, one calculates in coordinates.   We explain this in more detail in Section \ref{beta moment maps} below.

\subsection{Proof of Proposition \ref{action of Euler vector fields}}\label{beta moment maps}
Since $U^-\times U_D\xrightarrow{a} T^*W\xrightarrow{dq} T^*\mathbb{A}^n$ is \'etale, we obtain a classical moment map $\mu_{\beta, U^-\times U_D} = a^*\mu_{\beta, T^*W} = a^*dq^*\mu_{\beta, T^*\mathbb{A}^n}$ on $U^-\times U_D$  by pullback.
  
The discussion above---specifically, Proposition \ref{symplectic embedding prop}---guarantees that 
\begin{equation}\label{moment map splitting}
\mu_{\beta, U^-\times U_D}(u,p) = \mu_{\beta,\mathcal{S}}(\Pi(p)) + \mu_{\beta, T^*U^-}(u, m(p))
\end{equation}
 (where $\Pi$ is defined as in \eqref{coisotropic projection}).   Choose a basis $e_1,\dots, e_s$ for $\mathfrak{n}^-$ consisting of $\beta$-weight vectors with corresponding coordinate functions $e_i^*$ on $U^-$.
  The exponential map identifies $U^-$ as a $\Gm$-variety (for the $\beta$-action) with $\mathfrak{n}^-$.  Using this identification, 
 we may then rewrite $\mu_{\beta, T^*U^-} = -\sum_i \on{wt}(e_i) e_i^* e_i$, where $e_i$ is viewed as a function on 
 $(\mathfrak{n}^-)^*$ and hence on $T^*U^-$ via projection and $\on{wt}(e_i)$ denotes the $\beta$-weight on $e_i$.  Then, writing $m_i(p) = \langle m(p), e_i\rangle$, \eqref{moment map splitting} becomes
 \begin{equation}\label{better moment map splitting}
 \mu_{\beta, U^-\times U_D}(u,p) = \mu_{\beta,\mathcal{S}}(\Pi(p)) -\sum_i \on{wt}(e_i) e_i^* m_i(p).
 \end{equation}
 We want also to compute the Poisson bracket $\{e_i^*, m_i(p)\}$ on $U^-\times U_D$.  To do this, note that, since $m$ is the pullback of the $U^-$-moment map, $dm_i(p) = -  i_{\wt{e}_i}a^*\omega_{T^*W}$ where 
 $\wt{e}_i$ denotes the vector field on $U^-\times U_D$ generated by $e_i\in \mathfrak{n}^-$.  But this is the constant coordinate vector field $e_i$ on $U^-$.  Thus:
 \begin{equation}\label{important poisson bracket}
  \{e_i^*, m_i(p)\} = -dm_i(p)(X_{e_i^*}) = (a^*\omega_{T^*W})(\wt{e}_i, X_{e_i^*})  = -\wt{e}_i(e_i^*)  = -1.
 \end{equation}
 One then has:
\begin{lemma}\label{hbar shift}
Use the notation above.
 Under the Moyal-Weyl product on $U^-\times U_D$, we have 
 \begin{equation}\label{star product calc}
 e_i^*\ast m_i(p) = e_i^*\cdot m_i(p) - \frac{\hbar}{2}.
 \end{equation}
 \end{lemma}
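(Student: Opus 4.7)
The plan is to reduce the calculation to the standard Moyal-Weyl formula on an affine symplectic vector space, using the \'etale symplectic map $\phi$ of Proposition \ref{symplectic reduction up to etale cover}. First I would observe that, under the exponential identification $U^-\cong\mathfrak{n}^-$ already built into the statement, $e_i^*$ is a linear coordinate on $U^-$ and coincides with the pullback along $\phi$ of the $i$-th coordinate function on the $U^-$-factor of $T^*U^-$. Likewise, $m_i(p)$ is by construction the pullback along $\phi$ of the $i$-th linear coordinate on the $(\mathfrak{n}^-)^*$-factor of $T^*U^-$, since $\phi(u,p)$ has second component $m(p)\in (\mathfrak{n}^-)^*$.

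Next, since $\phi$ is \'etale and symplectic, the compatibility of the Kontsevich quantization with \'etale symplectic morphisms (Section \ref{kontsevich quant}(2), cf.\ also Section \ref{from D to DQ}) implies that $\phi^*$ intertwines the Moyal-Weyl product on $T^*U^-\times\mathcal{S}$ with the Moyal-Weyl product on $U^-\times U_D$. It therefore suffices to compute $e_i^*\ast m_i$ on $T^*U^-\times\mathcal{S}$.

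Finally, the functions $e_i^*$ and $m_i$ are linear coordinates on the affine symplectic vector space $T^*U^-$ equipped with its standard translation-invariant symplectic form of Section \ref{std sympl form}; in particular $\{e_i^*, m_i\} = -1$ is \emph{constant}, so every iterated Poisson bracket of order $\geq 2$ among these two functions vanishes. The Moyal-Weyl formula \eqref{moyal} therefore truncates at first order and yields
$$e_i^*\ast m_i \;=\; e_i^*\cdot m_i + \frac{\hbar}{2}\{e_i^*, m_i\} \;=\; e_i^*\cdot m_i - \frac{\hbar}{2}.$$
Applying $\phi^*$ then gives \eqref{star product calc} on $U^-\times U_D$. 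The only thing to verify is that the value $\{e_i^*, m_i\} = -1$ on $T^*U^-$ is consistent with the direct computation \eqref{important poisson bracket} carried out on $U^-\times U_D$; this follows from the sign conventions of Section \ref{std sympl form} together with Proposition \ref{symplectic reduction up to etale cover}, and I do not anticipate any serious technical obstacle.
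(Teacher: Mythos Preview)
Your proof is correct and takes a genuinely different route from the paper's.

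The paper argues directly on $U^-\times U_D$: using Proposition~\ref{form of symplectic form} and the $U^-$-invariance of the Poisson bivector $P$, it writes $P$ in the form
\[
P=\sum_i\Big(\tfrac{\partial}{\partial e_i}\otimes Y_i+Y_i'\otimes\tfrac{\partial}{\partial e_i}\Big)+\sum_k Z_k\otimes Z_k',
\]
with $Y_i,Y_i',Z_k,Z_k'$ pulled back from $U_D$. From this one sees that $P(e_i^*\otimes m_i)$ is a constant (namely $-1$, by \eqref{important poisson bracket}), so $P^n(e_i^*\otimes m_i)=0$ for $n\geq 2$, and the Moyal expansion truncates. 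You instead transfer the whole computation along the \'etale symplectic map $\phi$ of Proposition~\ref{symplectic reduction up to etale cover} to $T^*U^-\times\mathcal{S}$, where $e_i^*$ and $m_i$ become honest linear Darboux coordinates on the $T^*U^-$ factor and the Moyal formula is the standard one for a constant bivector; the truncation is then immediate. Both arguments ultimately rest on the compatibility of the Moyal/Kontsevich quantization with \'etale pullback, but the paper applies it to $a$ and then computes with the (non-constant) pulled-back bivector, whereas you apply it to $\phi$ and compute on a linear model. Your approach is slightly slicker and uses Proposition~\ref{symplectic reduction up to etale cover} in an essential way; the paper's approach needs only Proposition~\ref{form of symplectic form} and gives a concrete reason, visible on $U^-\times U_D$ itself, for the vanishing of the higher Moyal terms.
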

 
 \begin{proof}
It follows from Formula \eqref{important poisson bracket} that $e_i^*\ast m_i(p) = e_i^*m_i(p) - \frac{\hbar}{2} + \mathcal{O}(\hbar^2)$.  However, note that, by Proposition \ref{form of symplectic form} and the $U^-$-invariance of the Poisson bivector field $P$ (thought of as a bidifferential operator), $P$ takes the form
\bd
P = \sum_i \left(\frac{\partial}{\partial e_i} \otimes Y_i + Y_i'\otimes \frac{\partial}{\partial e_i}\right) + \sum_k Z_k\otimes Z_k'
\ed
where each $Y_i, Y_i', Z_k, Z_k'$ is a vector field on $U^-\times U_D$ pulled back from $U_D$.  It follows that 
$P(e_i\otimes m_i(x)) = 1\otimes F$ for some function $F$ (which in fact is a scalar by \eqref{important poisson bracket}!) and thus
$P^n(e_i\otimes m_i(x)) = 0$ for $n\geq 2$; hence the definition of the Moyal product yields \eqref{star product calc}, as desired.
  \end{proof}

\begin{proof}[Proof of Proposition \ref{action of Euler vector fields}]
We first note that it follows from Formula \eqref{basic Euler op} that the Euler vector field $\mathsf{eu}(\beta)$ on a representation $\mathbb{A}^n$ is identified via the symmetrization map with the ``canonical'' classical moment map for the $\Gm$-action via $\beta$, that is, the moment map that takes the value $0$ at the base point.  Taking note of Remark \ref{compatibility of cans},  we thus obtain:
\begin{align*}
\on{Symm}\inv\big(\mathsf{eu}_{T^*W}(\beta)\big)\ast 1 & = a^*dq^*\mu_{\beta, T^*\mathbb{A}^n} =  \Pi^*\mu_{\beta,\mathcal{S}} - \sum_i \on{wt}(e_i) e_i^*\cdot m_i(p)
\hspace{1em} &  \text{by \eqref{better moment map splitting}}\\
& = 1\ast \on{Symm}\inv\left[\mathsf{eu}_{\cS}(\beta) - \sum_i\on{wt}(e_i) \left(e_i^*\ast m_i(p) +\frac{\hbar}{2}\right)\right] & \text{by Lemma \ref{hbar shift}}\\
& = 1\ast \on{Symm}\inv\big(\mathsf{eu}_{\cS}(\beta)\big) - \frac{\hbar}{2} \on{wt}_{\mathfrak{n}^-}(\beta),
\end{align*}
as claimed.
\end{proof}

\subsection{A Split Surjection of DQ Modules}\label{split surjection of DQ modules}
We maintain the notation of Sections \ref{quantum bimodule} and \ref{comparison of ccms}, 
we abusively write $\D(\cS)$ for the Weyl algebra associated to the linear symplectic space $\cS$.  
Suppose that $\mathsf{c}$ satisfies the condition of Corollary \ref{beta split surjection} for $\D(\cS)$, and let $M_{\mathsf{c}}^{\beta}(\chi^\ell)\otimes V_\ell\rightarrow M_{\mathsf{c}}^\beta$ denote the split surjection of $\D(\cS)$-modules from \eqref{evaluation}.  Note that by \cite[page~10]{McN1}, 
$M_{\mathsf{c}}^\beta(\chi^\ell)\cong M_{\mathsf{c}-\ell d\chi}\otimes \chi^{\ell}$.   

Recall that $\gK=U^-\ltimes \beta(\Gm)$.  Let $\mathsf{k} = \on{Lie}(\gK)$. 
By Section \ref{from D to DQ} and Proposition \ref{reduction is Moyal}, we have a homomorphism $\D(\cS)\rightarrow \End(M_{U^-})[\hbar\inv]$; tensoring up yields
 a split surjection of $\gK$-equivariant $\theo^{\hbar}(U^-\times U_D)$-modules
\begin{equation}\label{first induced sequence}
M_{U^-}[\hbar\inv]\otimes_{\D(\cS)}M_{\mathsf{c}}^{\beta}(\chi^\ell)\otimes V_\ell \rightarrow M_{U^-}[\hbar\inv]\otimes_{\D(\cS)}M_{\mathsf{c}}^{\beta}.
\end{equation}
\begin{prop}\label{heres the split surjection}
Suppose that $\mathsf{c}$ satisfies the condition of Corollary \ref{beta split surjection} for $\D(\cS)$.  
Then the split surjection \eqref{first induced sequence} is isomorphic as a map of $\gK$-equivariant DQ modules with a split surjection
  \begin{equation}\label{the honest split surjection}
\big(\theo^{\hbar}(\bigvar)/\theo^{\hbar}(\bigvar)\mu^{\on{can}}_{c-\ell d\chi}(\mathsf{k})\big)[\hbar\inv]\otimes \chi^{\ell}\otimes V_\ell
\longrightarrow 
\big(\theo^{\hbar}(\bigvar)/\theo^{\hbar}(\bigvar)\mu^{\on{can}}_{c}(\mathsf{k})\big)[\hbar\inv],
\end{equation}
 where $c:= \mathsf{c}+ \frac{1}{2}\on{wt}_{\mathfrak{n}^-}(\beta)$. 
 \end{prop}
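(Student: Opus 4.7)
The plan is to present both sides of \eqref{first induced sequence} and \eqref{the honest split surjection} as explicit quotients of $\theo^{\hbar}(\bigvar)[\hbar\inv]$-modules, and then match them. First I would observe that $M_{U^-}$ is a $(\theo^{\hbar}(\bigvar), \theo^{\hbar}(\cS))$-bimodule via Proposition~\ref{reduction is Moyal}, so that for any left $\D(\cS)$-module $N$ (identifying $\D(\cS)$ with $\theo^{\hbar}(\cS)[\hbar\inv]$ as in Section~\ref{from D to DQ}), the tensor product $M_{U^-}[\hbar\inv]\otimes_{\D(\cS)} N$ is naturally a left $\theo^{\hbar}(\bigvar)[\hbar\inv]$-module. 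Taking $N = M^\beta_{\mathsf{c}} = \D(\cS)/\D(\cS)\big(\eu_\cS(\beta)+\mathsf{c}\bullet\beta\big)$ yields the right-hand side of \eqref{first induced sequence}, realized explicitly as
\[
M_{U^-}[\hbar\inv]\big/ M_{U^-}[\hbar\inv]\cdot_{a}\big(\eu_\cS(\beta)+\mathsf{c}\bullet\beta\big),
\]
where $\cdot_a$ indicates the right action through $a$.

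The key input is then Proposition~\ref{action of Euler vector fields}: for the canonical generator $1\in M_{U^-}$,
\[
1\ast\eu_\cS(\beta) \;=\; \eu_{T^*W}(\beta)\ast 1 \;+\; \tfrac{\hbar}{2}\on{wt}_{\mathfrak{n}^-}(\beta)\cdot 1.
\]
Combined with Lemma~\ref{momentary agreement}, which identifies $\mu^{\on{can}}_c(\beta)$ with $\eu_{T^*W}(\beta)+\hbar\, c\bullet\beta$, this means that the right annihilator $\eu_\cS(\beta)+\mathsf{c}\bullet\beta$ of $1$ coincides with the left annihilator $\mu^{\on{can}}_c(\beta)$ for precisely the shifted parameter $c=\mathsf{c}+\tfrac{1}{2}\on{wt}_{\mathfrak{n}^-}(\beta)$. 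Since $M_{U^-}$ is already the quotient of $\theo^{\hbar}(\bigvar)$ by the left ideal generated by $\mu^{\on{can}}(\mathfrak{n}^-)$, imposing this additional relation exhibits $M_{U^-}[\hbar\inv]\otimes_{\D(\cS)} M^\beta_{\mathsf{c}}$ as $\big(\theo^{\hbar}(\bigvar)/\theo^{\hbar}(\bigvar)\mu^{\on{can}}_c(\mathsf{k})\big)[\hbar\inv]$, which is the target of \eqref{the honest split surjection}. Carrying out the identical argument with $M^\beta_{\mathsf{c}}(\chi^\ell) \cong M^\beta_{\mathsf{c}-\ell d\chi}\otimes\chi^\ell$ (as in Formula (4.2) of \cite{McN1}) produces the corresponding identification of the source of \eqref{first induced sequence} with the source of \eqref{the honest split surjection}, carrying the character twist $\chi^\ell$ through the tensor construction in the evident way.

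Finally, I would check that the evaluation map itself matches under these identifications. The map \eqref{first induced sequence} is built by tensoring $M_{U^-}[\hbar\inv]$ with the $\D(\cS)$-linear split surjection $M^\beta_{\mathsf{c}}(\chi^\ell)\otimes V_\ell\to M^\beta_{\mathsf{c}}$ from \eqref{evaluation}; tracking the canonical generators $1\otimes(\mathbf{1}\otimes v)\mapsto 1\otimes \phi(\mathbf{1}\otimes v)$ through the identifications above reproduces the natural evaluation on the DQ quotients, and splitness is preserved by the tensor functor $M_{U^-}[\hbar\inv]\otimes_{\D(\cS)} -$. $\gK$-equivariance is automatic from the $\gK$-equivariance of $a$ and of the ingredients. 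The principal technical worry is a purely bookkeeping one, namely keeping the $\hbar$-powers and the constant shift $\tfrac{1}{2}\on{wt}_{\mathfrak{n}^-}(\beta)$ consistent across the $\D$/DQ dictionary of Lemma~\ref{momentary agreement}; this is where the proof must be written carefully, but the structural statement follows entirely from Proposition~\ref{action of Euler vector fields} and the bimodule structure of $M_{U^-}$.
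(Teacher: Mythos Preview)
Your proposal is correct and follows exactly the paper's approach: the paper's proof is the single sentence ``This is immediate from Lemma~\ref{momentary agreement} and Proposition~\ref{action of Euler vector fields},'' and you have accurately unpacked how those two ingredients combine (the bimodule structure from Proposition~\ref{reduction is Moyal}, the Euler operator shift from Proposition~\ref{action of Euler vector fields}, and the $\D$/DQ dictionary from Lemma~\ref{momentary agreement}) to identify the quotients and the map. The only care needed, as you yourself flag, is tracking the $\hbar$-factors consistently when passing between the $\D(\cS)$-module $M^\beta_{\mathsf{c}}$ and its DQ avatar.
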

 \begin{proof}
 This is immediate from Lemma \ref{momentary agreement} and Proposition \ref{action of Euler vector fields}.
 \end{proof}

\subsection{Vanishing for $\D$-Modules Along a KN Stratum}\label{reduction}
We now want to study $(G,c)$-equivariant $\D$-modules microlocally near a KN stratum using the tools described in the previous section.  
In particular, we want to establish a vanishing statement that we can use in inductive arguments.  

Thus,
suppose $M$ is a $(G,c)$-equivariant 
$\D_W$-module and that, with respect to the filtration that we are fixing, we have $SS(M)\subseteq S_{\geq\beta}$ (a union of KN strata).  The main aim of Section \ref{reduction} is to prove that, if an appropriate condition on $c$ is satisfied, then any $G$-invariant section of $M$ lies in a submodule of $M$ with singular support in $S_{>\beta}$.  We do this by first reducing $M$ to an equivariant Moyal-Weyl module on an open set of $\cS$ and then using Corollary \ref{beta split surjection} for the residual $\beta$-action.

Hence, let $M_c^G = \D/\D\mu^{\on{can}}_c(\mathfrak{g})$ (notation as in Section \ref{quantum notation}), and suppose we have a (weakly) $G$-equivariant map $\phi: M_c^G\rightarrow M$.

First, suppose that $W$ is a $G$-representation.  As before, we write the list of $\mathsf{T}$-weights on the $d$-dimensional vector space $W$ as $\{\alpha_1, \dots, \alpha_d\}$.  
Recall the definition of $I_{G,T^*W}(\beta)$ from Section \ref{parameter shifts}.  We will prove:
\begin{thm}\label{reductive inductive-rep}
Suppose $SS(M)\subseteq S_{\geq \beta}$ and that $c$ satisfies:
\begin{equation}\label{main condition}
c(\beta)  \notin \left(I_{G,T^*W}(\beta) + \on{wt}_{\mathfrak{n}^-}(\beta)  + \frac{1}{2}\sum_{i=1}^d |\alpha_i\bullet\beta|\right).
\end{equation}
Then $m = \phi(\mathbf{1})$ lies in a $G$-invariant submodule $M'$ of $M$ with singular support $SS(M')\subseteq S_{>\beta}$.
\end{thm}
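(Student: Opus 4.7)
The plan is to prove Theorem~\ref{reductive inductive-rep} by reducing microlocally along the KN stratum $S_\beta$ to the Kashiwara/torus vanishing statement of Corollary~\ref{beta split surjection}(1). Since $\phi(M_c^G)$ already has singular support in $S_{\geq\beta}$ and $S_\beta = S_{\geq\beta}\setminus S_{>\beta}$ is open in $S_{\geq\beta}$, it suffices to exhibit an open cover of $S_\beta$ that avoids $S_{>\beta}$ and on which $\phi(M_c^G)$ vanishes microlocally. By $G$-equivariance and Proposition~\ref{existence of U_D}(ii), the opens $U^-\cdot U_D$ for $z\in Z_\beta^{ss}$ cover $U^-\cdot Y_\beta$; each misses $S_{>\beta}$ by Proposition~\ref{existence of U_D}(3), and Lemma~\ref{action map} identifies $U^-\cdot U_D$ with $\bigvar := U^-\times U_D$ via the (\'etale, injective) action map.

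Fix such an open and pass to DQ modules as in Sections~\ref{from D to DQ} and \ref{DQ}: choose good filtrations on $M$ and $M_c^G$ (Proposition~\ref{induction to DQ} preserves singular support), and by Lemma~\ref{etale base change} the pulled-back Kontsevich quantization is the Moyal--Weyl algebra $\theo^\hbar(\bigvar)$. The map $\phi$ lifts to a $\gK$-equivariant $\theo^\hbar(\bigvar)$-module map $\phi^\hbar\colon\mathscr{N}\to M^\hbar$ with $\mathscr{N} = \theo^\hbar(\bigvar)/\theo^\hbar(\bigvar)\mu^{\on{can}}_c(\mathfrak{g})$. Since $U^-$ acts freely on $\bigvar$, quantum Hamiltonian reduction by $U^-$ is an equivalence between weakly $(U^-,c|_{\mathfrak{n}^-})$-equivariant $\theo^\hbar(\bigvar)$-modules and $\theo^\hbar(\cS)$-modules, the reduced algebra being identified with the Moyal--Weyl algebra on the slice by Proposition~\ref{reduction is Moyal}. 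Write $\bar{\mathscr{N}}, \bar M$ for the reductions and $\bar\phi\colon\bar{\mathscr{N}}\to\bar M$ for the descended map. By Lemma~\ref{momentary agreement} combined with the shift in Proposition~\ref{action of Euler vector fields} (exactly as in the proof of Proposition~\ref{heres the split surjection}), $\bar{\mathscr{N}}$ is identified with the $\cS$-analogue of Kashiwara's module $M^\beta_{\mathsf{c}}$ of \eqref{module Mc} for the shifted parameter
\[
\mathsf{c} \;=\; c(\beta)-\tfrac12\on{wt}_{\mathfrak{n}^-}(\beta).
\]

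The singular-support hypothesis $SS(M)\cap(U^-\cdot U_D)\subseteq S_\beta$ transfers under the slice to $SS(\bar M)\subseteq W_+(\beta)\oplus W_-(\beta)^*\subseteq\cS$, the attracting subspace for the residual $\beta(\mathbb{G}_m)$-action; Lemma~\ref{lifting symbols for beta} upgrades this to the pointwise support condition required by Proposition~\ref{application of kashiwara}. Corollary~\ref{beta split surjection}(1), applied on $\cS$, then forces $\bar\phi=0$ whenever
\[
\mathsf{c}\;\notin\; I(\beta)_{\cS}+\tfrac12\sum_j|\gamma_j\bullet\beta|,
\]
where $\gamma_j$ runs over the $\sT$-weights of a Lagrangian polarization of $\cS$ and $I(\beta)_\cS$ is the set \eqref{I(beta)} for $\cS$. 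By the equivalence above, $\phi^\hbar$ then vanishes on $\bigvar$, so $\phi(M_c^G)$ vanishes microlocally on $U^-\cdot U_D$. Combining this over the cover yields $SS(\phi(M_c^G))\subseteq S_{>\beta}$; taking $M'\subseteq M$ to be the $G$-stable $\D$-submodule of sections whose cyclic $\D$-submodule has singular support in $S_{>\beta}$ (this is automatically $G$-stable since $S_{>\beta}$ is $G$-invariant) yields the desired conclusion with $m\in M'$.

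The main obstacle is the final combinatorial identification: translating the condition $\mathsf{c}\notin I(\beta)_{\cS}+\tfrac12\sum_j|\gamma_j\bullet\beta|$ into the stated hypothesis \eqref{main condition}. This reduces to the identity
\[
I(\beta)_{\cS}+\tfrac12\sum_j|\gamma_j\bullet\beta|+\tfrac12\on{wt}_{\mathfrak{n}^-}(\beta) \;=\; I_{G,T^*W}(\beta)+\on{wt}_{\mathfrak{n}^-}(\beta)+\tfrac12\sum_{i=1}^d|\alpha_i\bullet\beta|,
\]
which follows from the decomposition $\cS\cong V\oplus\bigl(W_+(\beta)\oplus W_-(\beta)^*\bigr)$ implicit in Proposition~\ref{symplectic reduction up to etale cover} together with $V\cong N_{S_\beta/T^*W}(z)$ (so that $I(\beta)_\cS$ recovers $I_{G,T^*W}(\beta)$ plus the contribution of the attracting weights), and the observation that the missing $\tfrac12\on{wt}_{\mathfrak{n}^-}(\beta)$ is precisely the half-sum of $\beta$-weights on the $\mathfrak{n}^-$-directions that the slice does not see --- the same phenomenon already isolated in Proposition~\ref{action of Euler vector fields}.
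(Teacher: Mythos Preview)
Your overall strategy---localize to $\bigvar = U^-\times U_D$, reduce by $U^-$, and invoke the torus-case vanishing on $\cS$---is natural, and the combinatorial translation you sketch at the end is essentially the paper's identity \eqref{relationship of weight sums}--\eqref{the condition we will use}. But there is a genuine gap where you appeal to Corollary~\ref{beta split surjection}(1) via Lemma~\ref{lifting symbols for beta} for the reduced map $\bar\phi:\bar{\mathscr{N}}\to\bar M$. Lemma~\ref{lifting symbols for beta} requires the module to be weakly $\Tmax$-equivariant and generated by a $\Tmax$-fixed vector; its proof pins down the annihilating operator by landing in a one-dimensional $\Tmax$-weight space of $\D$ (Lemma~\ref{K weight space}). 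Your $\bar M$ inherits only $\beta(\Gm)$-equivariance from the $(G,c)$-equivariance of $M$; with $\Gm$ alone the relevant weight spaces are far too large and the lifting-of-symbols argument collapses. (A smaller issue: the action map $a:U^-\times U_D\to T^*W$ is only \'etale, not an open immersion---Lemma~\ref{action map} treats $U^-\times Y_\beta$, not $U^-\times U_D$---so $a^{-1}(S_\beta)$ may have extra components; you pass over this.)

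The paper avoids the problem by running the argument in the opposite direction: it never reduces $M$. Instead it takes the split surjection of Corollary~\ref{beta split surjection}(2), which concerns the \emph{universal} objects $M^\beta_{\mathsf{c}}$ (and these \emph{do} carry the needed $\Tmax$-equivariance), pulls it back to $\theo^\hbar(\bigvar)$ via the bimodule $M_{U^-}$ (Proposition~\ref{heres the split surjection}), and then argues by contradiction directly on $\bigvar$: if the localized module $M'\neq 0$, the split surjection forces nonzero $(\chi\circ\beta)^\ell$-isotypic pieces in $M'(0)/\hbar M'(0)$ for $\ell\ll 0$ (Claim~\ref{nonzeroness}), while the support hypothesis together with Corollary~\ref{restriction of semi-invariants} forces those same pieces to vanish (Claim~\ref{vanishing on graded}). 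Thus $\Tmax$-equivariance is used only once, on objects that have it for free, rather than on the unknown $M$.
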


We begin by fixing some notation and observing a formula.  We use the conventions of Section \ref{symplectic geom}.  Thus, with $\beta: \Gm\rightarrow G$ a fixed KN one-parameter subgroup, $U^- = U^-(\beta)$ denotes the unipotent subgroup associated to the Lie subalgebra $\mathfrak{n}^- = \mathfrak{n}^-(\beta)\subset \mathfrak{g}$ on which $\beta$ has negative weights.  Our construction of the slice $U_D$ to the infinitesimal $\mathfrak{n}^-$-action near a point of $Z_\beta$ and of the reduced space $\cS$ implies that, as $\Gm$-representations via $\beta$, we have 
\begin{equation}\label{splitting for reduction}
T^*W \cong \cS \oplus T^*\mathfrak{n}^-.
\end{equation}  N
ote that this implies the following formula.  For a $\Gm$-representation $\mathsf{V}$, let $\on{wt}(\mathsf{V})$ denote the list of weights {\em with multiplicities included}: so if the weight $3$ appears four times, then the list $\on{wt}(\mathsf{V})$ will include the number $3$ four times.  Recall that we write
$\on{wt}_{\mathfrak{n}^-}(\beta) = -\sum_{w_k\in \on{wt}(\mathfrak{n}^-)} |w_k|$ for the sum of $\beta$-weights on $\mathfrak{n}^-$ (which, by definition of $\mathfrak{n}^-$, are all negative).  
It follows from the direct sum decomposition \eqref{splitting for reduction} that the following holds for $\beta$-weights:
\begin{equation}\label{relationship of weight sums}
 2\sum_{i=1}^d |\alpha_i\bullet\beta| = \sum_{w_j\in\on{wt}(T^*W)} |w_j|  = \left(\sum_{w_j\in \on{wt}(\cS)} |w_j|\right) - 2\on{wt}_{\mathfrak{n}^-}(\beta).
 \end{equation}
 Thus, the condition \eqref{main condition} becomes 
\bd
c(\beta) \notin  I_{G,T^*W}(\beta) + \frac{1}{4}\left(\sum_{w_j\in \on{wt}(\cS)} |w_j|\right) + \frac{1}{2}\on{wt}_{\mathfrak{n}^-}(\beta).
\ed
The splitting \eqref{splitting for reduction} implies that $I_{G,T^*W}(\beta) = I_{\Gm, \mathcal{S}}(\beta)$.  Thus, \eqref{main condition} is equivalently written as 
\begin{equation}\label{the condition we will use}
c(\beta)  - \frac{1}{2}\on{wt}_{\mathfrak{n}^-}(\beta) \notin  I_{\Gm, \mathcal{S}}(\beta) + \frac{1}{4}\left(\sum_{w_j\in \on{wt}(\cS)} |w_j|\right).
\end{equation}
Consequently, Theorem \ref{reductive inductive-rep} is a special case of the following.  

Let $W$ be a smooth quasiprojective variety with trivialized canonical bundle and canonical quantum comoment map $\mu^{\on{can}}$.  Suppose $T^*W$ is equipped with trivial line bundle $\mathscr{L} = \theo_{T^*W}$ with $G$-equivariant structure twisted by the character $\chi: G\rightarrow \Gm$, and that $T^*W$ is equipped with a KN stratification.  
\begin{thm}\label{reductive inductive}
Suppose $SS(M)\subseteq S_{\geq \beta}$ and that $c$ satisfies \eqref{the condition we will use}.  If $\phi: \mathscr{M}_c\rightarrow M$ is a $G$-equivariant homomorphism,
then $m = \phi(\mathbf{1})$ lies in a $G$-invariant submodule $M'$ of $M$ with singular support $SS(M')\subseteq S_{>\beta}$.
\end{thm}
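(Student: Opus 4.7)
The plan is to combine the slice construction of Section \ref{symplectic geom} with the model case Proposition \ref{heres the split surjection}, which recasts Corollary \ref{beta split surjection} in deformation-quantization form, and then assemble the resulting local split surjections into a global $G$-invariant submodule of $M$.

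First, equip $M$ with a $G$-stable good filtration and pass to its Rees completion $M^\hbar$. For each $z \in Z_\beta^{ss}$, Proposition \ref{existence of U_D} furnishes a $\beta$-stable slice $U_D$ with $(U^-\cdot U_D)\cap S_{>\beta}=\emptyset$; pulling $M^\hbar$ back along the \'etale action map $a\colon \bigvar = U^-\times U_D \to T^*W$ yields a $\gK$-equivariant DQ module $\wt M$ with $\on{supp}(\wt M) \subseteq a^{-1}(S_\beta) \subseteq U^- \times Y_\beta$. Because $m = \phi(\mathbf{1})$ is $G$-invariant, its image $\wt m\in\wt M$ is $\gK$-invariant and, after $\hbar$-inversion, defines a $\gK$-equivariant map from $\theo^\hbar(\bigvar)/\theo^\hbar(\bigvar)\mu^{\on{can}}_c(\mathsf{k})$ to $\wt M$.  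Via the decomposition \eqref{splitting for reduction} and the identity \eqref{relationship of weight sums}, the hypothesis \eqref{the condition we will use} is precisely the condition of Corollary \ref{beta split surjection} applied to the symplectic space $\cS$ with shifted parameter $\mathsf{c} = c - \frac{1}{2}\on{wt}_{\mathfrak{n}^-}(\beta)$, so Proposition \ref{heres the split surjection} supplies, for $\ell \ll 0$, a split surjection of $\gK$-equivariant DQ modules
\[
\big(\theo^\hbar(\bigvar)/\theo^\hbar(\bigvar)\mu^{\on{can}}_{c-\ell d\chi}(\mathsf{k})\big)[\hbar^{-1}]\otimes\chi^\ell\otimes V_\ell \twoheadrightarrow \big(\theo^\hbar(\bigvar)/\theo^\hbar(\bigvar)\mu^{\on{can}}_c(\mathsf{k})\big)[\hbar^{-1}].
\]
Composing with the morphism defined by $\wt m$ and invoking adjunction produces a finite factorization $\wt m = \sum_j \sigma_j \wt{m}'_j$, in which each $\wt{m}'_j$ is the image of the generator of a $(c-\ell d\chi)$-twisted standard module and each $\sigma_j$ is a $\chi^\ell$-semi-invariant element of $\theo^\hbar(\bigvar)$.

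De-completing and invoking the $G$-equivariance of the slice construction, one lifts this data to a $G$-equivariant $\D$-module map $\bigoplus_j \mathscr{M}_{c-\ell d\chi}\otimes\chi^\ell \to M$ whose image contains $m$.  By Corollary \ref{restriction of semi-invariants}, for $\ell \ll 0$ the $\sigma_j$ may be arranged to vanish along $U^-\times Y_\beta$, and since $a^{-1}(S_\beta)\subseteq U^-\times Y_\beta$ the pushforward of the image under $a$ has singular support in $T^*W$ avoiding $S_\beta$ throughout $a(\bigvar)$.  Proposition \ref{existence of U_D}(ii) yields a finite covering of $S_\beta$ by such $U^-$-saturated slices, and the $z$-independence of \eqref{the condition we will use} permits a uniform choice of $\ell$.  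Defining $M'\subseteq M$ to be the $G$-invariant sub-$\D$-module generated by the images of all the local maps, we obtain $m\in M'$ with $SS(M')\cap S_\beta = \emptyset$, whence $SS(M')\subseteq S_{>\beta}$.

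The main technical obstacle is the globalization: one must show that the locally constructed factorizations patch into a single well-defined $G$-equivariant sub-$\D$-module of $M$.  Because the splitting in Proposition \ref{heres the split surjection} is controlled by the choice of $V_\ell$, which can be taken $\sT$-stably (as in the proof of Corollary \ref{beta split surjection}(2)), and because the slice, the coisotropic reduction, and the quantization are all $\gK$-equivariantly natural, this reduces to careful bookkeeping with the $G$-action across different slices; standard equivariant techniques then complete the argument.
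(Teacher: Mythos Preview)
Your approach diverges from the paper's in a fundamental way, and the divergence creates a genuine gap.

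The paper's argument is much more direct: it takes $M' = \phi(\mathscr{M}_c)$, the cyclic submodule generated by $m$, which is automatically $G$-invariant and contains $m$, and shows that $SS(M') \subseteq S_{>\beta}$ by proving that the associated DQ module $\cR(M')^\hbar$ \emph{vanishes} on every slice $U^-\times U_D$ covering $U^-\cdot Y_\beta$. On each slice one passes to the direct summand supported on the component $U^-\times(Y_\beta\cap U_D)$ of $a^{-1}(S_\beta)$ (your inclusion $a^{-1}(S_\beta)\subseteq U^-\times Y_\beta$ is not correct; the paper handles the other components separately). The vanishing is then by contradiction: if this summand were nonzero, the split surjection of Proposition~\ref{heres the split surjection} would force its lattice quotient to have nonzero $(\chi\circ\beta)^\ell$-isotypic part for all $\ell\ll 0$; but that lattice quotient is a cyclic $\C[\bigvar]$-module with $\Gm$-invariant generator and support cut out by large-weight semi-invariants (Corollary~\ref{restriction of semi-invariants}), so every such isotypic part is zero. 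There is no gluing and no construction of a new submodule; the candidate $M'$ is fixed from the start, and the local vanishing statements need not be compared across slices.

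By contrast, you attempt to \emph{construct} $M'$ by factoring $m$ locally through $\chi^\ell$-twisted standard modules and then patching. This fails at exactly the point you flag as the ``main technical obstacle'': the slice is only $\gK = U^-\rtimes\beta(\Gm)$-equivariant, not $G$-equivariant, and there is no mechanism to promote a local DQ factorization on $U^-\times U_D$ to a global $G$-equivariant $\D$-module map $\bigoplus_j \mathscr{M}_{c-\ell d\chi}\otimes\chi^\ell \to M$. The space $\Hom_{(\D,G,c)}(\mathscr{M}_c(\chi^\ell), M)$ is a global invariant, and nothing in your argument produces elements of it; ``standard equivariant techniques'' do not bridge this. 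Even locally, the claim that the coefficients $\sigma_j$ ``may be arranged to vanish along $U^-\times Y_\beta$'' does not follow from Corollary~\ref{restriction of semi-invariants}: that corollary says the large-weight semi-invariants cut out $U^-\times Y_\beta$, but the specific $\sigma_j$ are determined by the splitting and are not freely adjustable. And even if they did vanish there, vanishing of symbols along $Y_\beta$ does not by itself bound the singular support of the submodule they generate. The paper's use of the split surjection is not to factor $m$, but only to detect nonzero isotypic components in a putatively nonzero local module.
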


\begin{proof}[Proof of Theorem \ref{reductive inductive}]
We may replace $M$ by 
$\phi(\mathscr{M}_c)$ and hence assume that $M$ is cyclic, generated by the $G$-invariant vector $m = \phi(\mathbf{1})$.
We give $M$ the induced filtration from $\mathscr{M}_c$; hence we get surjections $\cR(\mathscr{M}_c)\twoheadrightarrow \cR(M)$ and 
\begin{equation}\label{Rees surjection}
\cR(\mathscr{M}_c)^{\hbar}\twoheadrightarrow \cR(M)^\hbar
\end{equation}
 of Rees modules and DQ modules.    Keep the notation of Section \ref{split surjection of DQ modules}.  It will suffice to show that there is an open cover of $T^*W\smallsetminus S_{>\beta}$ by affines $U_\alpha$ such that the map \eqref{Rees surjection} (equivalently, its target) vanishes on restriction to each $U_\alpha$, or in other words that $\cR(M)^\hbar|_{U_\alpha}=0$.  
 
 In fact, however, it suffices to do something weaker.  By Proposition \ref{induction to DQ}, $\cR(M)^\hbar$ is supported on $S_{\geq\beta}$. Since $\cR(M)^\hbar$ is weakly $G$-equivariant, $U^-\cdot Y_\beta$ is open in $S_\beta$, and $S_{\beta} = G\cdot (U^-\cdot Y_{\beta})$ (where, as in Section \ref{symplectic geom}, $U^-$ is the negative unipotent subgroup associated to $\beta$), it suffices to show that each restriction $\cR(M)^\hbar|_{U_\alpha}$ is zero for some collection $\{U_\alpha\}$ of open sets in $T^*W\smallsetminus S_{>\beta}$ whose union contains
 $U^-\cdot Y_\beta$.   By Proposition \ref{existence of U_D}(ii), the images $a(U^-\times U_D)$ of the affine varieties $U^-\times U_D$ constructed in Section 
 \ref{symplectic geom}  form such a collection of $U_\alpha$.  
 Now, applying $a\inv$ to \eqref{Rees surjection}, inverting $\hbar$, and
 ``forgetting'' to $\gK = U^-\ltimes\beta(\Gm)$ gives a $\gK$-equivariant surjection
 \begin{equation}\label{DQ surjection}
 \big(\theo^{\hbar}_{\bigvar}/\theo^{\hbar}_{\bigvar}\mu^{\on{can}}_{\mathsf{c}}(\mathsf{k})\big)[\hbar\inv]
 \xrightarrow{-\cdot m} a\inv\cR(M)^{\hbar}[\hbar\inv],
 \end{equation}
 and by the above discussion it suffices to show that each map \eqref{DQ surjection} vanishes.

 In fact, it is better to replace $a\inv\cR(M)^{\hbar}[\hbar\inv]$ by a slightly smaller module.  Namely, note that, since $S_\beta$ is smooth and $a$ is \'etale,
 $a\inv(S_\beta)$ is a finite disjoint union $a\inv(S_\beta) = \coprod C_i$ of closed, $U^-$-invariant subvarieties $C_i$, one connected component $C_0$ of which is $U^-\times (Y_\beta\cap U_D)\subseteq
 U^-\times U_D$.  The decomposition of $a\inv(S_\beta)$ into connected components determines a direct sum decomposition 
$a\inv\cR(M)^{\hbar}[\hbar\inv] = \oplus M_i := \oplus a\inv\cR(M)^{\hbar}[\hbar\inv]|_{C_i}$, with the property that the projection of the multiplication-by-$m$ map to any single summand $M_i$ is zero if and only if $\on{mult}_m$ was zero.   Let $M'$ denote the direct summand corresponding to the component 
$C_0 = U^-\times (Y_\beta\cap U_D)$.

Thus, we have a $\gK$-equivariant surjective map
\bd
\big(\theo^{\hbar}(\bigvar)/\theo^{\hbar}(\bigvar)\mu^{\on{can}}_{\mathsf{c}}(\mathsf{k})\big)[\hbar\inv]
 \xrightarrow{-\cdot m} M',
 \ed
 with $\on{supp}(M')\subset U^-\times Y_\beta$.  Let 
$M'(0) = \theo^{\hbar}(\bigvar)\cdot m\subset M';$
 this is a lattice in $M'$.

Assume the condition \eqref{the condition we will use} is satisfied.  
Then, writing
$\mathsf{c} = c(\beta) - \frac{1}{2}\on{wt}_{\mathfrak{n}^-}(\beta),$
Proposition \ref{heres the split surjection} gives a split surjection 
 as in \eqref{the honest split surjection}.

\begin{claim}\label{nonzeroness}
If $M'\neq\{0\}$, then $M'(0)/\hbar M'(0)$ has nonzero $(\chi\circ\beta)^\ell$-isotypic component for all $\ell\ll 0$: that is, 
\bd
\Hom_{\Gm}\big((\chi\circ\beta)^\ell, M'(0)/\hbar M'(0)\big) = \big[M'(0)/\hbar M'(0)\otimes (\chi\circ\beta)^{-\ell}\big]^{\Gm} \neq 0.
\ed
\end{claim}
\begin{proof}[Proof of Claim]
If $M'\neq \{0\}$, then the split surjection of \eqref{the honest split surjection} shows that 
\bd
\Hom(\big(\theo^{\hbar}(\bigvar)/\theo^{\hbar}(\bigvar)\mu^{\on{can}}_{c-\ell d\chi}(\mathsf{k})\big)[\hbar\inv]\otimes \chi^{\ell}, M')^{\gK}\neq 0
\ed
for $\ell \ll 0$.
In particular, $\Hom_{\Gm}(\big((\chi\circ\beta)^\ell, M'\big)\neq 0$.  If $n\in M'$ is a nonzero $(\chi\circ\beta)^{\ell}$-isotypic vector, then there is some 
$a$ for which $\hbar^a \cdot n \in M'(0)\smallsetminus \hbar M'(0)$, and the image of $\hbar^a \cdot n$ in $M'(0)/\hbar M'(0)$ is thus a nonzero $(\chi\circ\beta)^{\ell}$-isotypic vector.
\end{proof}
\begin{claim}\label{vanishing on graded}
$\Hom_{\Gm}\big(\chi^\ell, M'(0)/\hbar M'(0)\big) = 0$ for $\ell\ll 0$. 
\end{claim}
\begin{proof}[Proof of Claim]
The quotient $M'(0)/\hbar M'(0)$  is a finitely generated $\C[\bigvar]$-module set-theoretically supported on $U^-\times (Y_\beta\cap U_D)$.
By Corollary \ref{restriction of semi-invariants}, this support is cut out by the $\chi^q$-semi-invariants in $\C[\bigvar]$ for $q\gg 0$.  Thus, for $q\gg 0$, every $\chi^q$-semi-invariant  in $\C[\bigvar]$
 kills $M'(0)/\hbar M'(0)$.  It follows by
\eqref{semi-invariant vs isotypic} that for $q\gg 0$, every $\chi^{-q}$-isotypic vector $f\in \C[\bigvar]$ kills $M'(0)/\hbar M'(0)$.  But now $M'(0)/\hbar M'(0)$ is generated by a $\beta(\Gm)$-invariant vector, the image of $\mathbf{1}$.  Thus, if $n\in M'(0)/\hbar M'(0)$, $n=f\cdot\mathbf{1}$ for some $f\in\C[\bigvar]$; and if $n$ is $\chi^\ell$-isotypic, we may choose a 
$\chi^\ell$-isotypic $f \in \C[\bigvar]$ in this expression (using reductivity of $\Gm$).  Now for $q = -\ell\gg 0$, we use that every such $f$
kills $M'(0)/\hbar M'(0)$ to conclude that $n = f\mathbf{1} = 0$.  This proves the claim. 
\end{proof}
\noindent
Claims \ref{nonzeroness} and \ref{vanishing on graded} imply that $M'=0$; by the discussion above, this suffices.
\end{proof}

\section{$t$-Exactness for Quantum Direct Images and Microlocalization}\label{main theorems section}
In this section we first prove the main vanishing statement for quantum Hamiltonian reduction and the resulting existence of a certain split surjective homomorphism, treating first the case in which the affine variety $W$ is a $G$-representation (Theorem \ref{reductive strong vanishing}, Section \ref{Vanishing and Split Surjections}) and then its extension to more general smooth quasiprojective $W$ (Theorem \ref{general affine strong vanishing}, Section \ref{Vanishing and Split Surjections for Affine}).  

At the time of writing, there are several different technical frameworks available for quantum geometry.  Although one essentially knows that all of these are equivalent, there is not yet a systematic treatment of such equivalences between all frameworks available in the literature.  Hence, in Sections \ref{W framework} and \ref{McN framework} we 
briefly explain how Theorem \ref{reductive strong vanishing} implies $t$-exactness results in two such frameworks, the $\cW$-algebras of Kashiwara, Schapira, et al. and the categorical framework of \cite{McN1}.  The generalization to other frameworks is equally straightforward.  

It is convenient to use the following.
\begin{lemma}\label{KN coisotropic lemma}
Suppose that $M$ is a $(G,c)$-equivariant $\D$-module with $SS(M)\subseteq S_{\geq\beta}$.  If $SS(M)\cap S_\beta \neq\emptyset$, then $S_\beta\cap\mu\inv(0)$ contains a nonempty coisotropic subvariety.
\end{lemma}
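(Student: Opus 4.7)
The plan is to combine three ingredients: Gabber's involutivity theorem for singular supports, the moment-map constraint forced by $(G,c)$-equivariance, and the topological structure of the KN stratification. Each is essentially a one-line application of a known fact.

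First I would note that, for any chosen good filtration, Gabber's theorem ensures $SS(M)$ is coisotropic in $T^*W$: each irreducible component has dimension at least $\dim W$ and is coisotropic at its smooth points. Next, I would use the $(G,c)$-equivariance of $M$ to prove $SS(M)\subseteq \mu\inv(0)$. Choosing a $G$-stable good filtration on $M$, the operator $\mu^{\on{can}}_c(X)$ coincides with the infinitesimal $G$-action of $X$ on $M$, hence preserves the filtration; its principal symbol must therefore act as $0$ on $\gr M$. Since that principal symbol is exactly the pullback $\mu^*(X)$ of the classical moment map, letting $X$ run through a basis of $\mathfrak{g}$ gives $SS(M)\subseteq \mu\inv(0)$.

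The third step is to observe that $S_\beta$ is open in $S_{\geq\beta}$: a point $x\in S_\beta$ cannot lie in $\overline{S_{\beta'}}$ for any $\beta'>\beta$, because the closure condition (4a) of Definition \ref{def:KN-strat} combined with the disjointness in (4b) would force $x\in S_{\beta''}$ for some $\beta''\geq\beta'>\beta$, contradicting $x\in S_\beta$. Consequently $SS(M)\cap S_\beta$ is open in $SS(M)$, and by hypothesis is nonempty; choose an irreducible component $Z\subseteq SS(M)$ whose intersection with $S_\beta$ is nonempty, so $Z\cap S_\beta$ is a nonempty open subvariety of the coisotropic variety $Z$, hence coisotropic, and is contained in $S_\beta\cap \mu\inv(0)$ by the second step. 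This produces the required nonempty coisotropic subvariety.

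I do not anticipate any real obstacle beyond a minor bookkeeping subtlety about the meaning of ``coisotropic'' at singular points of $SS(M)$, which is handled by passing to the smooth locus of an irreducible component.
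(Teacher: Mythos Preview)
Your proof is correct and follows essentially the same approach as the paper's own proof: both use that $(G,c)$-equivariance forces $SS(M)\subseteq\mu\inv(0)$, that $SS(M)$ is coisotropic, and that $S_\beta$ is open in $S_{\geq\beta}$, so $SS(M)\cap S_\beta$ is the desired nonempty coisotropic subvariety. You simply supply more detail at each step than the paper does.
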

\begin{proof}
Since $M$ is $(G,c)$-equivariant, $SS(M)\subseteq \mu\inv(0)$.  Also, $SS(M)$ is coisotropic; in particular, each irreducible component of $SS(M)$ is coisotropic.  Since $S_\beta$ is open in $S_{\geq\beta}$, it follows from the hypothesis of the lemma that  $\big(S_\beta\cap\mu\inv(0)\big)\cap SS(M) = S_\beta\cap SS(M)$ is empty or coisotropic.
\end{proof}
Recall from the introduction the subset $\mathsf{KN}$ of KN 1-parameter subgroups:
\bd
\mathsf{KN} = \big\{ \beta \; | \; S_\beta\cap \mu\inv(0) \; \text{contains a nonempty coisotropic subset}\big\}.
\ed

\subsection{Vanishing and Split Surjections for Representations}\label{Vanishing and Split Surjections}
Suppose first that $W$ is a representation of $G$.  Choose any refinement of the partial ordering on KN 1-parameter subgroups to a total ordering.

An ascending induction on $\beta$ (meaning a descending induction on strata!) yields:
\begin{thm}\label{reductive strong vanishing}
Let $W$ be a representation of $G$.  
Assume that, for every KN 1-parameter subgroup $\beta\in\mathsf{KN}$ for the $G$-action on $T^*W$,  $c$ satisfies 
\begin{equation}\label{main formula}
c(\beta)  \notin \left(I_{G,T^*W}(\beta) + \on{wt}_{\mathfrak{n}^-}(\beta) + \frac{1}{2}\sum_{i=1}^d |\alpha_i\bullet\beta|\right).
\end{equation}
  Then:
\begin{enumerate}
\item If $M$ is any object of $(\D,G,c)-\on{mod}$ with $SS(M)\subseteq (T^*W)^{uns}$, then $\Hom_{(\D,G,c)}(M_c, M)=0$.  
\item
For every $\ell\ll 0$, there is a finite-dimensional vector subspace 
\bd
V_\ell\subset \Hom_{(\D,G,c)}(M_c(\chi^\ell), M_c)
\ed
 for which the natural composite
evaluation map
\begin{equation}\label{G evaluation}
M_c(\chi^\ell)\otimes V_\ell \longrightarrow M_c(\chi^\ell)\otimes \Hom_{(\D,G,c)}\big(M_c(\chi^\ell), M_c\big) \longrightarrow 
M_c
\end{equation}
is a split surjective homomorphism of objects of $(\D,G,c)-\on{mod}$. 
\end{enumerate}
\end{thm}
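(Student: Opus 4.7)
The plan is to prove part (1) by iterating Theorem \ref{reductive inductive-rep}, and then to derive part (2) along the lines of Corollary \ref{beta split surjection}(2).

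For part (1), given $\phi\in\Hom_{(\D,G,c)}(M_c,M)$, set $m=\phi(\mathbf{1})\in M^G$ and replace $M$ by the cyclic submodule $\D m$, so that $SS(M)\subseteq (T^*W)^{uns}=\bigsqcup_{\beta\in\mathsf{KN}} S_\beta$. Write $\mathfrak{S}(M)=\{\beta\in\mathsf{KN}\mid SS(M)\cap S_\beta\neq\emptyset\}$, a finite set, and induct on $|\mathfrak{S}(M)|$ to show $M=0$. If $\mathfrak{S}(M)=\emptyset$ then $SS(M)=\emptyset$ and $M=0$. Otherwise, pick $\beta\in\mathfrak{S}(M)$ minimal in the partial order: every other $\beta'\in\mathfrak{S}(M)$ is then either $\geq\beta$ or incomparable to $\beta$, and in the incomparable case $S_\beta\cap\overline{S_{\beta'}}=\emptyset$ because $\overline{S_{\beta'}}\subseteq S_{\geq\beta'}$ and $\beta\not\geq\beta'$. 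Consequently $\Omega := T^*W\smallsetminus\bigcup\{\overline{S_{\beta'}}:\beta'\in\mathfrak{S}(M),\ \beta'\not\geq\beta\}$ is a $G$-invariant open containing $S_\beta$ with $SS(M)\cap\Omega\subseteq S_{\geq\beta}\cap\Omega$. Shrinking the slice data of Proposition \ref{existence of U_D} so that $a(U^-\times U_D)\subseteq\Omega$ (which is permitted since the construction of $U_D$ there allows arbitrary shrinking to principal affine neighborhoods of $z\in Z_\beta^{ss}$), the local microlocal argument behind Theorem \ref{reductive inductive-rep} applies and yields $SS(M)\cap S_\beta=\emptyset$, strictly decreasing $|\mathfrak{S}(M)|$. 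Iteration closes the induction.

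For part (2), I mimic Corollary \ref{beta split surjection}(2). By adjunction $\Hom_{(\D,G,c)}(M_c(\chi^\ell),M_c)\cong (M_c\otimes\chi^{-\ell})^G$, and its associated graded for the induced filtration embeds in $(\C[\mu^{-1}(0)]\otimes\chi^{-\ell})^G$, which by Lemma \ref{semi-invariant vs isotypic} identifies with the $\chi^{-\ell}$-semi-invariants on $\mu^{-1}(0)$. By the GIT description of the $\chi$-unstable locus, for $\ell\ll 0$ the common zero locus of such sections is exactly $\mu^{-1}(0)\cap (T^*W)^{uns}$, so the cokernel of the full evaluation map from $M_c(\chi^\ell)\otimes\Hom_{(\D,G,c)}(M_c(\chi^\ell),M_c)$ to $M_c$ has unstable singular support. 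I choose a finite-dimensional, $G$-stable subspace $V_\ell$ (possible by rationality of the $G$-action) large enough that the cokernel of the restricted evaluation map $M_c(\chi^\ell)\otimes V_\ell\to M_c$ also has unstable singular support; part (1) applied to this cokernel forces it to vanish. Taking $G$-invariants of the resulting short exact sequence---exact by reductivity of $G$---and lifting the identity of $M_c$ produces the required splitting.

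The main obstacle is the localization step in part (1). Theorem \ref{reductive inductive-rep} is stated under the \emph{global} hypothesis $SS(M)\subseteq S_{\geq\beta}$, whereas at each inductive step one only has $SS(M)\cap\Omega\subseteq S_{\geq\beta}\cap\Omega$. The key technical point is verifying that the slice construction, the deformation-quantization analysis of Section \ref{split surjection of DQ modules}, and especially the direct-sum decomposition of $a^{-1}\cR(M)^\hbar[\hbar^{-1}]$ along connected components of $a^{-1}(S_\beta)$ all remain valid once $U_D$ has been shrunk to lie inside $\Omega$.
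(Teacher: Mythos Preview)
Your proposal is correct and follows the same overall strategy as the paper: for (1), iterate Theorem~\ref{reductive inductive-rep} across the strata; for (2), produce a map whose cokernel has unstable singular support and then invoke (1) to split it.  Part (2) in particular is exactly the paper's argument, with a bit more detail supplied on why the cokernel has unstable support.

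Where you diverge is in the bookkeeping for part (1).  The paper simply refines the partial order to a total order and runs an ascending induction on $\beta$; there is no need to introduce the open set $\Omega$ or to re-enter the proof of Theorem~\ref{reductive inductive-rep} with shrunk slice data.  Your concern about ``incomparable'' strata (those $\beta'$ with $q(\beta')=q(\beta)$ but $\beta'\neq\beta$) is legitimate under a strict reading of $S_{\geq\beta}$, and your fix---further shrinking $U_D$ so that $a(U^-\times U_D)$ avoids each $\overline{S_{\beta'}}$---does work, since each such $\overline{S_{\beta'}}$ is a $G$-invariant closed set missing $z\in Z_\beta^{ss}$, and the construction of Proposition~\ref{existence of U_D}(3) handles any such set by the identical argument.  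So the obstacle you flag is not a real difficulty: the slice construction, the deformation-quantization analysis, and the component decomposition of $a^{-1}\cR(M)^\hbar[\hbar^{-1}]$ are all local near $S_\beta$ and survive the shrinking unchanged.  That said, this whole detour can be avoided by absorbing the extra avoidance into Proposition~\ref{existence of U_D}(3) from the start (replacing $S_{>\beta}$ there by the union of $S_{>\beta}$ with the closures of the finitely many strata of the same $q$-value), after which Theorem~\ref{reductive inductive-rep} applies directly at each stage of the total-order induction.

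One small point in part (2): when you write ``part (1) applied to this cokernel forces it to vanish,'' part (1) literally gives $\Hom_{(\D,G,c)}(M_c,\operatorname{coker})=0$; the vanishing of the cokernel itself follows because the quotient map $M_c\to\operatorname{coker}$ is an element of that Hom space.  The paper phrases the splitting step slightly differently (applying $\Hom(M_c,-)$ and lifting the identity directly), but the two arguments are equivalent.
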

\begin{proof}
(1) follows by induction on $\beta$ using Theorem \ref{reductive inductive-rep}.  For (2), given $\ell \ll 0$, the cokernel of the evaluation map $M_c(\chi^\ell)\otimes \Hom_{(\D,G,c)}\big(M_c(\chi^\ell), M_c\big) \longrightarrow 
M_c$ is a $(\D, G,c)$-module with $\chi$-unstable support.  Choose any finite-dimensional subspace $V_\ell$ as in the statement of the theorem 
such that the cokernel of the composite map \eqref{G evaluation} still has $\chi$-unstable support.  Part (1) of the theorem then yields a surjection
\bd
\Hom(M_c, M_c(\chi^\ell)\otimes V_\ell)\twoheadrightarrow 
\Hom(M_c, M_c),
\ed
and any element in $\Hom(M_c, M_c(\chi^\ell)\otimes V_\ell)$ in the preimage of $\on{Id}\in \Hom(M_c, M_c)$ provides a splitting as claimed in part (2).
\end{proof}
\subsection{Vanishing for Quasiprojective Varieties}\label{Vanishing and Split Surjections for Affine}
Now suppose that $W$ is a smooth, connected quasiprojective $G$-variety.  
We fix the operator filtration on $\D_W$.\footnote{We remark, however, that the statements and proofs would work more generally; for example, if $W$ is affine and equipped with a contracting $\Gm$-action commuting with $G$, we can give $\D(W)$ a {\em Kazhdan-type filtration} as in Section 4 of \cite{GG}.}    

We assume:
\begin{enumerate}
\item[(i)] the canonical bundle $K_W$ is trivial and is $G$-equivariantly isomorphic to $\theo_W$ twisted by a character $\gamma_G: G\rightarrow \Gm$.
\end{enumerate}  
We note that this assumption is harmless.  Indeed, replacing $W$ by the principal $\Gm$-bundle $\wt{W}\xrightarrow{\pi} W$ whose points correspond to nonzero vectors in $K_W$, we find that $\wt{W}$ is a smooth quasiprojective $G\times\Gm$-space for which $\pi^*K_W$ is a trivial, and then $K_{\wt{W}}$ is also trivial and $G\times\Gm$-equivariant.  Moreover, $K_{\wt{W}}$ extends  $G\times\Gm$-equivariantly to any smooth $G\times\Gm$-equivariant compactification of $\wt{W}$, and if $s$ is a nonvanishing section of $K_{\wt{W}}$,  $\on{div}(g\cdot s) = \on{div}(s)$ for all $g\in G$ (recall that $G$ is connected).  It follows that $G\times\Gm$ acts on the line $\C\cdot s$ by a character.  Now any Hamiltonian reduction of $W$ for $G$ comes from a Hamiltonian reduction of $\wt{W}$ for $G\times\Gm$.

We define the $\rho$-shift and canonical quantum comoment map as in Section \ref{diff ops}; we use these to define $(G,c)$-equivariant $\D$-modules.  We assume also that:
\begin{enumerate}
\item[(ii)] $T^*W$ comes with a KN stratification, and
\item[(iii)] the polarization $\mathscr{L}$ appearing in the definition of KN stratification is trivial, with $G$-action given by twisting the standard one on $\theo_{T^*W}$ by a character $\chi: G\rightarrow \Gm$.
\end{enumerate}  

\begin{remark}
By Proposition \ref{prop:KN-affine-var},
if $W$ is affine and the polarization $\mathscr{L}$ is trivial and determined by a character $\chi$, then $T^*W$ possesses a KN stratification.  
\end{remark}

For each connected component $Z_{\beta, i}$ of the $\beta$-fixed locus we define $I_{G,T^*W}(\beta, i)$ and $\on{abs-wt}_{N_{Z_{\beta,i}/T^*W}}(\beta)$ as in the introduction.

\begin{thm}\label{general affine strong vanishing}
Let $W$ be a smooth quasiprojective variety satisfying (i),(ii), and (iii) above.  Suppose that for every $\beta\in\mathsf{KN}$ and every connected component $Z_{\beta,i}$ of the $\beta$-fixed locus, we have 
\begin{equation}\label{second main formula}
c(\beta)  \notin \left(I_{G,T^*W}(\beta, i) + \on{wt}_{\mathfrak{n}^-}(\beta) + \frac{1}{4}\on{abs-wt}_{N_{Z_{\beta,i}/T^*W}}(\beta)\right).
\end{equation}
Then:
\begin{enumerate}
\item If $M$ is any object of $(\D_W,G,c)-\on{mod}$ with $SS(M)\subseteq (T^*W)^{uns}$, then $\Hom_{(\D,G,c)}(\mathscr{M}_c, M)=0$. 
\end{enumerate}
Suppose in addition that $W$ is an affine variety.  Then:
\begin{enumerate} 
\item[(2)]
For every $\ell\ll 0$, there is a finite-dimensional vector subspace 
\bd
V_\ell\subset \Hom_{(\D,G,c)}(M_c(\chi^\ell), M_c)
\ed
 for which the natural composite
evaluation map
\bd
M_c(\chi^\ell)\otimes V_\ell \longrightarrow M_c(\chi^\ell)\otimes \Hom_{(\D,G,c)}\big(M_c(\chi^\ell), M_c\big) \longrightarrow 
M_c
\ed
is a split surjective homomorphism of objects of $(\D,G,c)-\on{mod}$. 
\end{enumerate}
\end{thm}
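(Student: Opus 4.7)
The plan is to deduce part (1) by descending induction on KN strata using Theorem \ref{reductive inductive} as the single-stratum lever, and then to obtain (2) from (1) by the evaluation-map argument concluding Theorem \ref{reductive strong vanishing}.

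For (1), fix a linear refinement of the partial order on $\mathsf{KN}$. Given $M \in (\D, G, c)-\on{mod}$ with $SS(M) \subseteq (T^*W)^{\chi-\on{uns}}$ and a homomorphism $\phi \colon \mathscr{M}_c \to M$, set $m = \phi(\mathbf{1})$; the goal is to show $m = 0$. Replacing $M$ by the cyclic submodule $\D m$ and inducting downward on $\beta$, it suffices to prove the single-stratum statement: if $SS(\D m) \subseteq S_{\geq \beta}$ then $\D m$ is contained in a $G$-stable submodule with singular support in $S_{>\beta}$, which after iteration exhausts $\mathsf{KN}$ and forces $m = 0$. This is exactly Theorem \ref{reductive inductive}, applied at a base point in each connected component $Z_{\beta, i}$ of $Z_\beta^{ss}$: by Proposition \ref{existence of U_D}(ii), the $G$-saturates of the resulting slice neighborhoods cover $S_\beta = G \cdot (U^- \cdot Y_\beta)$, so the only thing that must be verified is the slice-side hypothesis \eqref{the condition we will use} at each such base point. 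The weight invariants entering \eqref{the condition we will use} are constant on each component $Z_{\beta, i}$, and direct substitution using the two identities
$$I_{G, T^*W}(\beta, i) = I_{\Gm, \cS}(\beta), \qquad \on{abs-wt}_{N_{Z_{\beta, i}/T^*W}}(\beta) = \sum_{w \in \on{wt}(\cS)} |w| - 2 \on{wt}_{\mathfrak{n}^-}(\beta)$$
shows that \eqref{second main formula} and \eqref{the condition we will use} are equivalent. Both identities follow from the $\beta$-equivariant decomposition $T_z(T^*W) = \cS \oplus \mathfrak{n}^- \oplus (\mathfrak{n}^-)^*$ provided by the slice construction of Section \ref{symplectic geom}, the fact that $T_z Z_{\beta, i}$ is exactly the $\beta$-fixed subspace of $\cS$, and the observation that $N_{S_\beta/T^*W}(z) \cong \cS_{<0}$ is symplectically dual to $\cS_{>0}$ inside $\cS$.

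For (2), assume $W$ is affine and fix $\ell \ll 0$. The adjunction and filtration computation from the proof of Corollary \ref{beta split surjection}(2) identifies a subspace of the associated graded of $\Hom_{(\D, G, c)}(M_c(\chi^\ell), M_c)$ with the $\chi^{-\ell}$-isotypic component of $\C[\mu\inv(0)]$, which by Lemma \ref{semi-invariant vs isotypic} is the space of $\chi^\ell$-semi-invariant functions on $\mu\inv(0)$; for $\ell$ sufficiently negative these sections have common vanishing locus equal to the $\chi$-unstable part of $\mu\inv(0)$. Hence the cokernel of the full evaluation map has singular support contained in $(T^*W)^{\chi-\on{uns}}$. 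Choose a finite-dimensional, $G$-stable subspace $V_\ell \subseteq \Hom_{(\D, G, c)}(M_c(\chi^\ell), M_c)$ large enough that the cokernel of the restricted evaluation map still has $\chi$-unstable singular support; applying part (1) to this cokernel yields surjectivity of $\Hom_{(\D, G, c)}(M_c, -)$ on the restricted evaluation map, and reductivity of $G$ then allows $\on{Id}_{M_c}$ to be lifted through this surjection to produce the splitting.

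The substantive technical work---identifying weight data between the ambient $T^*W$ and the reduced slice $\cS$---was already carried out stratum-locally in Section \ref{reduction} for the representation case. The only genuinely new ingredient is the bookkeeping across possibly multiple connected components of $Z_\beta$, which is why \eqref{second main formula} must be imposed componentwise; beyond this the argument is formal, and no fundamentally new obstacle arises.
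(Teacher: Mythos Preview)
Your proof is correct and follows the same approach as the paper: part (1) by induction on $\beta$ via Theorem \ref{reductive inductive}, and part (2) by the evaluation-map splitting argument from Theorem \ref{reductive strong vanishing}. You have in fact supplied more detail than the paper does---its proof is the single sentence ``Repeat the proof of \ref{reductive strong vanishing} using Theorem \ref{reductive inductive} in place of Theorem \ref{reductive inductive-rep},'' leaving the componentwise translation between \eqref{second main formula} and the slice condition \eqref{the condition we will use} implicit, whereas you have written out the weight identities that make it work.
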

\begin{proof}
Repeat the proof of \ref{reductive strong vanishing} using Theorem \ref{reductive inductive} in place of 
Theorem \ref{reductive inductive-rep}.  
\end{proof}

\subsection{Application to Microlocalization for $\cW$-Algebras}\label{W framework}
The split surjection \eqref{G evaluation} provides a versatile tool: applying any additive functor to it, we obtain a split surjection of the resulting objects. Thus, for example, inducing to modules over a $\Gm$-equivariant formal deformation quantization as in \cite[Chapter 6]{KSDQ} we also obtain split surjections of 
$\Gm$-equivariant DQ-modules; further applying symplectic reduction as in \cite[\S 2.5]{KR} then yields split surjections of quantized line bundles on the symplectic quotient corresponding to the characters $\chi^\ell$.   

More precisely, starting from the canonical $\cW$-algebra on $T^*W$ for a smooth affine variety $W$ with action of a reductive $G$ for which the classical moment map $\mu$ is flat, suppose one gets as the GIT quotient at the character $\chi$ a smooth symplectic variety $\resol = \mu\inv(0)/\!\!/_{\chi} G$.  
Let $\cW_{\resol}(c)$ be the $\cW$-algebra on $\resol$ obtained by quantum Hamiltonian reduction using the quantum comoment map $\mu^{\on{can}}_c$, as in \cite[\S 2.5]{KR}.  It is standard in GIT that the sequence of line bundles on $\resol$ associated to the characters $\chi^{\ell}$ is ample.  Hence:
\begin{thm}\label{W thm}
Suppose the hypothesis on $c$ of Theorem \ref{reductive strong vanishing} is satisfied.  Then for every good $\cW_{\resol}(c)$-module $M$, we have
$H^i(M)=0$ for $i\neq 0$.  In particular, the global section functor is an exact functor of good $\cW_{\resol}(c)$-modules.
\end{thm}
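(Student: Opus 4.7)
The plan is to transport the split surjection of Theorem \ref{reductive strong vanishing}(2) to the $\cW$-algebra side on $\resol$ and combine it with Serre vanishing, in the standard Beilinson--Bernstein style.

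First, I would pass from the category $(\D,G,c)-\on{mod}$ to good $\Gm$-equivariant $\cW_\resol(c)$-modules on $\resol$ via the induction functor of Section \ref{from D to DQ} composed with the quantum Hamiltonian reduction of \cite[\S 2.5]{KR}. This composite is additive and so preserves split surjections. Under it, $M_c$ descends to $\cW_\resol(c)$ (as a module over itself), while $M_c(\chi^\ell)$ descends to a good $\cW_\resol(c)$-module $\cL_\ell$ quantizing the $\ell$-th tensor power of the polarization line bundle $\mathscr{L}$ on $\resol$. For $\ell\ll 0$ I therefore obtain a split surjection
\bd
\cL_\ell\otimes V_\ell \twoheadrightarrow \cW_\resol(c)
\ed
of good $\Gm$-equivariant $\cW_\resol(c)$-modules.

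Second, I would apply $\mathbb{R}\on{Hom}_{\cW_\resol(c)}(-,M)$ to this split surjection for an arbitrary good $\Gm$-equivariant $\cW_\resol(c)$-module $M$. Splitness forces an injection in every cohomological degree,
\bd
H^i(\resol,M) \;\hookrightarrow\; H^i(\resol, M\otimes \cL_\ell\inv)\otimes V_\ell^*,
\ed
where on the left I have used $\mathbb{R}\on{Hom}_{\cW}(\cW,-)=\mathbb{R}\Gamma$ and on the right identified $\mathbb{R}\on{Hom}_{\cW}(\cL_\ell,M)$ with $\mathbb{R}\Gamma(\resol, M\otimes \cL_\ell\inv)$ (valid since $\cL_\ell$ quantizes a line bundle, hence is locally free of rank one).

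Third, I would invoke ampleness together with Serre vanishing. In the conventions of \cite{King} (compare the proof of Corollary \ref{beta split surjection}(2)), for $\ell\ll 0$ the $\chi^{-\ell}$-semi-invariants cut out the unstable locus, so $\mathscr{L}^{\otimes(-\ell)}$ is the ample line bundle on $\resol$ produced by GIT. Consequently $\cL_\ell\inv$ quantizes a high tensor power of an ample bundle, and Serre vanishing for good $\cW$-modules on $\resol$---proved by passing to a coherent $\hbar$-lattice in $M$ and reducing to classical Serre vanishing on the associated graded---then gives $H^i(\resol, M\otimes \cL_\ell\inv)=0$ for $i>0$ and $-\ell$ sufficiently large. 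The injection above forces $H^i(\resol,M)=0$ for all $i>0$, which implies exactness of $\Gamma$ on short exact sequences of good $\Gm$-equivariant $\cW_\resol(c)$-modules. The main obstacle is bookkeeping: one must check that the $\D$-to-$\cW$ induction followed by Hamiltonian reduction sends $M_c$ and $M_c(\chi^\ell)$ to the objects I claim, and that the sign of $\ell$ in Theorem \ref{reductive strong vanishing}(2) matches the sign making $\cL_\ell\inv$ ample after reduction; the geometric input (GIT ampleness and Serre vanishing) is then standard.
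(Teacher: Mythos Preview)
Your proposal is correct and follows essentially the same route as the paper. The paper's one-line proof simply notes that inducing the split surjection of Theorem~\ref{reductive strong vanishing}(2) to $\cW_\resol(c)$-modules verifies condition~(2.5) of \cite[Theorem~2.9]{KR}, and then cites that theorem as a black box; what you have written is precisely an unpacking of the proof of \cite[Theorem~2.9]{KR}---transport the split surjection, apply $\mathbb{R}\on{Hom}_{\cW}(-,M)$, and kill the right-hand side by Serre vanishing for good $\cW$-modules via passage to an $\hbar$-lattice. Your identification of the images of $M_c$ and $M_c(\chi^\ell)$ under induction plus reduction, and your sign bookkeeping for ampleness, are exactly the points one must check, and the paper's text immediately preceding the theorem (``It is standard in GIT that the sequence of line bundles on $\resol$ associated to the characters $\chi^{\ell}$ is ample'') records the same observation.
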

\begin{proof}
Inducing the split surjection of Theorem \ref{reductive strong vanishing} to $\cW_{\resol}(c)$-modules implies that condition (2.5) of Theorem 2.9 of \cite{KR} is satisfied.
\end{proof}

\begin{remark}
The argument of \cite{KR} using the split surjection is in essence extremely general, and can be readily adapted to essentially any other reasonable framework for sheaves of quantum algebras deforming a symplectic variety.
\end{remark}

\subsection{Application to Microlocalization for Localized Categories}\label{McN framework}
Assume that the classical moment map $\mu$ is flat.  We write
\bd
\cE_{\resol}(c)-\on{mod} \overset{\on{def}}{=} (\D(W),G,c)-\on{mod}/(\D(W),G,c)-\on{mod}^{uns}
\ed 
for the quotient of the category of $(G,c)$-equivariant $\D$-modules by the full subcategory of modules with unstable singular support; we let 
$D(\cE_{\resol}(c))$ denote its unbounded derived category.   In \cite{McN1} we define a functor
$D(\cE_{\resol}(c))\xrightarrow{\mathbb{R}\map_*} D(U_c)$
from the microlocal derived category to the derived category of left modules for the algebra $U_c = M_c^{G}$.   Note that the microlocal derived 
category depends on the choice of group character $\chi: G\rightarrow \Gm$.   Corollary \ref{reductive strong vanishing} yields:

\begin{thm}\label{t-exactness}
Assume that $c$ satisfies the hypothesis of Theorem \ref{reductive strong vanishing}.
Then the functor $\mathbb{R}\map_*$ is $t$-exact.  
\end{thm}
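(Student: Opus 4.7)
The strategy is to derive Theorem~\ref{t-exactness} from Theorem~\ref{reductive strong vanishing}(2) via a categorical criterion from \cite{McN1}. The underlying idea is familiar from Beilinson-Bernstein-style localization: once one knows that $\overline{\mathscr{M}}_c$ behaves like an ample projective generator of the quotient category, $t$-exactness of its derived $\on{Hom}$-out functor follows formally from the existence of split surjections out of the ample generators.

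First, I would recall that $\map_*$ is the descent to the quotient category of $M \mapsto \on{Hom}_{(\D,G,c)}(\mathscr{M}_c, M)$, so that $\mathbb{R}\map_*$ is represented by the image $\overline{\mathscr{M}}_c$ of $\mathscr{M}_c$ in $\cE_{\resol}(c)-\on{mod}$. Since $\map_*$ is left exact and $\mathbb{R}\map_*$ is its right-derived functor, $t$-exactness is equivalent to the vanishing $\on{Ext}^i_{\cE_{\resol}(c)}(\overline{\mathscr{M}}_c, M) = 0$ for all $i \geq 1$ and all $M$ in the heart of the natural $t$-structure on $D(\cE_{\resol}(c))$. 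Next, I would transport the split surjection of Theorem~\ref{reductive strong vanishing}(2) into the quotient category: since both the morphism $M_c(\chi^\ell) \otimes V_\ell \twoheadrightarrow M_c$ and its splitting lie in $(\D,G,c)-\on{mod}$, they descend to $\cE_{\resol}(c)-\on{mod}$ and exhibit $\overline{\mathscr{M}}_c$ as a direct summand of $\overline{M_c(\chi^\ell)} \otimes V_\ell$ for every $\ell \ll 0$. This summand property is the crucial homological input.

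Finally, I would invoke the categorical criterion of \cite{McN1}: given such a system of split surjections for all $\ell \ll 0$, the object $\overline{\mathscr{M}}_c$ represents a $t$-exact functor on $D(\cE_{\resol}(c))$. Here the objects $\{\overline{M_c(\chi^\ell)}\}_{\ell \ll 0}$ play the role of an ample family of projective generators in $\cE_{\resol}(c)-\on{mod}$---a Serre-type fact reflecting the $\chi$-ampleness of the GIT linearization---and a direct summand of a projective is projective, whence the vanishing of higher $\on{Ext}$. The main obstacle, handled in \cite{McN1}, is the purely categorical argument that translates the split-surjection data into honest projectivity of $\overline{\mathscr{M}}_c$ inside the Serre localization at unstably-supported modules; with this piece of abstract nonsense in hand, Theorem~\ref{reductive strong vanishing}(2) supplies precisely the hypothesis needed to complete the proof.
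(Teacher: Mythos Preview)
Your proposal is correct in spirit and follows a route the paper itself flags as viable: the paper explicitly notes, just before its proof, that one could argue via Theorem~\ref{reductive strong vanishing}(2), but then chooses instead to present a proof based on part~(1). So you and the paper diverge.

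The paper's argument is more direct and does not pass through any projectivity statement. It works at the level of abelian categories: given a short exact sequence $0\to M_1\to M_2\to M_3\to 0$ in $\cE_{\resol}(c)$-mod, apply the right adjoint $\Gamma_c$ of the quotient functor $\pi_c$ to get a left-exact sequence in $(\D,G,c)$-mod whose cokernel has unstable singular support (a general fact about Serre quotients, cf.\ Theorem~5.8 of \cite{McN1}). Then part~(1) kills $\Hom_{(\D,G,c)}(M_c,-)$ applied to this cokernel, and since $\Hom_{(\D,G,c)}(M_c,-)=(-)^G$ is already exact on $(\D,G,c)$-mod (reductivity of $G$), the $\map_*$-sequence is exact. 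This avoids entirely the question of whether the $\overline{M_c(\chi^\ell)}$ are projective in the quotient category.

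Your approach, by contrast, buys a cleaner conceptual picture (``$\overline{\mathscr{M}}_c$ is a summand of an ample projective'') at the cost of relying on an auxiliary statement---projectivity of the $\overline{M_c(\chi^\ell)}$ in $\cE_{\resol}(c)$-mod---that you attribute to \cite{McN1} without pinning down the reference. That step is not free: it amounts to knowing that $\Hom_{(\D,G,c)}(M_c(\chi^\ell),-)$ annihilates unstably-supported modules, which is itself a vanishing statement of the same flavor as part~(1) but for a shifted parameter. The paper's choice of part~(1) sidesteps this circularity. Your route is the one used for the $\cW$-algebra framework in Section~\ref{W framework} (where one can cite the analogous criterion in \cite{KR} directly), so the strategy is sound, but you should either locate the precise projectivity statement in \cite{McN1} or acknowledge that the paper's part-(1) argument is the more self-contained path.
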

As an alternative to the use of Theorem \ref{reductive strong vanishing}(2), we give a
proof based on Theorem \ref{reductive strong vanishing}(1).
\begin{proof}
Recall the following from \cite{McN1}.
In the notation of \cite{McN1}, $\pi_c: (\D,G,c)-\on{mod}\rightarrow \cE_{\resol}(c)-\on{mod}$ denotes the projection on the quotient category and 
 $\Gamma_c: \cE_{\resol}(c)-\on{mod}\rightarrow (\D,G,c)-\on{mod}$ is the right adjoint to the projection $\pi_c$.  Then, for any object $M$ of 
 $\cE_{\resol}(c)-\on{mod}$, we have 
 \bd
\map_*\big(\pi_c(M)\big) = \Hom_{\D}\big(M_c, \Gamma_c\circ \pi_c(M)\big)^{G}.
 \ed
 Suppose
$ 0\rightarrow M_1\rightarrow M_2\rightarrow M_3\rightarrow 0$
 is exact in $\cE_{\resol}(c)-\on{mod}$.  Then 
 \bd
 0\rightarrow \Gamma_c(M_1)\rightarrow \Gamma_c(M_2)\rightarrow \Gamma_c(M_3)
 \ed
 is exact in $(\D,G,c)-\on{mod}$, and, furthermore, 
 \bd
 SS\left[\on{coker}\big(\Gamma_c(M_2)\rightarrow \Gamma_c(M_3)\big)\right] \subseteq T^*W^{uns}
 \ed
 (this last property is standard but it is immediate, for example, from Theorem 5.8 of \cite{McN1}). 
 Theorem \ref{reductive strong vanishing} thus implies that 
$ \Hom_{\D}\big(M_c, \on{coker}\big(\Gamma_c(M_2)\rightarrow \Gamma_c(M_3)\big)\big)^{G} = 0.$
 Since $\Hom_{\D}(M_c, -)^{G}$ is an exact functor of $(G,c)$-equivariant $\D$-modules (cf. Lemma 3.4 of \cite{McN1}), it follows that
$ 0\rightarrow\map_*M_1\rightarrow \map_*M_2\rightarrow \map_*M_3\rightarrow 0$
 is exact in $U_c-\on{mod}$.  The theorem follows. 
 \end{proof}

Theorem 1.1 of \cite{McN1} states that
the left adjoint $\mathbb{L}\map^*$ of $\mathbb{R}\map_*$ is cohomologically bounded if and only if $\mathbb{R}\map_*$ is an equivalence of derived categories.  In particular, combining Theorem 1.1 of \cite{McN1} and Theorem \ref{t-exactness} above, we find:
\begin{corollary}\label{abelian equiv}
Suppose that 
\begin{enumerate}
\item $\mathbb{L}\map^*$ is cohomologically bounded.  
\item The Lie algebra character $c$ satisfies the hypothesis of Theorem \ref{reductive strong vanishing}.
\end{enumerate}
Then $\map^*, \map_*$ form mutually quasi-inverse equivalences of abelian categories.
\end{corollary}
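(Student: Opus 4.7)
The plan is to combine the two hypotheses with the results already established. Hypothesis (1), together with Theorem 1.1 of \cite{McN1}, asserts that the adjoint pair $(\mathbb{L}\map^*, \mathbb{R}\map_*)$ is a pair of mutually quasi-inverse derived equivalences between $D(\cE_{\resol}(c))$ and $D(U_c)$. Hypothesis (2), combined with Theorem \ref{t-exactness}, asserts that $\mathbb{R}\map_*$ is $t$-exact for the standard $t$-structures. What remains is to pass from a $t$-exact derived equivalence to an equivalence of hearts, i.e., of abelian categories.

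To do this, the key step is to show that $\mathbb{L}\map^*$ is also $t$-exact. Let $N \in U_c-\on{mod}$ lie in the heart of the target $t$-structure and put $M = \mathbb{L}\map^* N$. The $t$-exactness of $\mathbb{R}\map_*$ gives $H^i(\mathbb{R}\map_* M) \cong \mathbb{R}\map_*(H^i M)$ for every $i$, while the derived equivalence gives $\mathbb{R}\map_* M \cong \mathbb{R}\map_* \mathbb{L}\map^* N \cong N$, which is concentrated in degree zero. Hence $\mathbb{R}\map_*(H^i M) = 0$ for every $i \neq 0$; since $\mathbb{R}\map_*$ is an equivalence, it is in particular conservative, so $H^i M = 0$ for $i \neq 0$, and therefore $\mathbb{L}\map^* N$ lies in the heart $\cE_{\resol}(c)-\on{mod}$. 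Thus $\mathbb{L}\map^*$ is $t$-exact as desired.

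Having established $t$-exactness of both functors, I would then observe that the restrictions $\map_* := \mathbb{R}\map_*|_{\cE_{\resol}(c)-\on{mod}}$ and $\map^* := \mathbb{L}\map^*|_{U_c-\on{mod}}$ land in the respective hearts and form an adjoint pair. Since the derived unit $N \to \mathbb{R}\map_* \mathbb{L}\map^* N$ and counit $\mathbb{L}\map^* \mathbb{R}\map_* M \to M$ are isomorphisms (by the derived equivalence), their restrictions to objects of the hearts are also isomorphisms. This yields mutually quasi-inverse exact equivalences $\map^*, \map_*$ of the abelian categories, proving the corollary.

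The main obstacle, insofar as there is one, is the $t$-exactness of the quasi-inverse $\mathbb{L}\map^*$, which is not itself asserted by Theorem \ref{t-exactness}. The conservativity argument above handles this cleanly, so the corollary is essentially a formal consequence of Theorem 1.1 of \cite{McN1} and Theorem \ref{t-exactness}: the former supplies a derived equivalence, the latter supplies $t$-exactness on one side, and the equivalence automatically propagates $t$-exactness to the other side.
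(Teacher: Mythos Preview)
Your proposal is correct and follows exactly the same approach as the paper: the paper simply states that the corollary follows by ``combining Theorem 1.1 of \cite{McN1} and Theorem \ref{t-exactness} above,'' treating the passage from a $t$-exact derived equivalence to an abelian equivalence of hearts as standard. You have supplied the details of that standard passage (showing the quasi-inverse $\mathbb{L}\map^*$ is automatically $t$-exact via conservativity), which the paper leaves implicit.
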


\section{Example: type $A$ Spherical Rational Cherednik Algebra}\label{type A spherical section}
In this section we give a quick and easy deduction of a slightly weaker form of the exactness part of \cite{KR}.

Fix $n\geq 1$.  
Let $W = \mathfrak{gl}_n \times \C^n$ with $G = GL_n$ acting in the usual way.  Let $\sT$ denote the maximal torus of diagonal matrices in $G$, with Lie algebra $\mathfrak{t}$ and standard rational inner product.    If $e_i$ denotes the $i$th standard basis vector in
$\C^n = \mathfrak{t}\cong \mathfrak{t}^*$, the weights of $\sT$ on $W$ are $e_i - e_j$, $1\leq i, j\leq n$, and $e_i$, $1\leq i\leq n$.  
Fix the determinant character $\mathsf{det}: G\rightarrow \Gm$.  Then $\lambda = d\mathsf{det}|_{\sT} = \sum_{i=1}^n e_i$.  Thus 
$\lambda\bullet (e_i-e_j) = 0$ for all $i,j$ and $\lambda\bullet e_i = 1$ for all $i$. 

Choose a subset $\boldsymbol{\alpha} = \{\alpha_i\}$ of the weights of $\sT$ on $W$ to produce a KN 1-parameter subgroup.  Via the action of the Weyl group $W=S_n$, we may assume that the subset of the weights $e_1,\dots, e_n$ that appear in $\boldsymbol{\alpha}$ is exactly $e_{k+1}, \dots, e_n$ for some $0\leq k\leq n-1$, or is empty.  Suppose that the subset of such weights appearing is nonempty, and let $e_i-e_j$ be an additional weight in $\boldsymbol{\alpha}$.  If $i,j\geq k+1$ then $e_i-e_j$ lies in the span of $e_{k+1}, \dots, e_n$.  If $i,j\leq k$ then $e_i-e_j$ is orthogonal to the span of $e_{k+1}, \dots, e_n$.  If exactly one of $i,j$---say, $i$---lies in $1, \dots, k$ then the span of $e_i-e_j, e_{k+1}, \dots, e_n$ equals the span of $e_j, e_{k+1}, \dots, e_n$.  Thus, we may assume, when computing the span of the elements of $\boldsymbol{\alpha}$, that $\boldsymbol{\alpha}$ consists of the weights $e_{k+1}, \dots, e_n$ together with some subset of the weights $e_i-e_j$ with $1\leq i,j\leq k$.  
The projection of $\lambda$ on the orthocomplement to the 
span of weights is thus either $0$ (if $k=0$)  or  $\sum_{i=1}^k e_i$.  The first of these corresponds to the trivial 1-parameter subgroup and may be discarded.  Write $\beta_k$ for the 1-parameter subgroup corresponding to $\sum_{i=1}^k e_i$.  

We now compute the terms in Formula \eqref{main formula} for $\beta_k$.   Since $\beta_k \bullet \alpha_i$ is $\pm 1$ or $0$ for every $i$, we find
that $I(\beta_k) = \mathbb{Z}_{\geq 0}$.    The shift that appears in Formula \eqref{main formula} is 
\bd
\frac{1}{2}\sum_i |\alpha_i\bullet\beta_k| + \sum_{\gamma_i\in \on{wt}(\mathfrak{n}_{\beta_k}^-)} \gamma_i \bullet \beta.
\ed
Note that $|(e_i-e_j)\bullet\beta_k| = 1$ if and only if exactly one of $i, j$ lies in $\{1,2, \dots, k\}$ and is zero otherwise.  We thus get for the shift
\bd 
\frac{1}{2}\sum_{i\in\{1,\dots,k\}, j\in\{k+1,\dots, n\}} 2|(e_i-e_j)\bullet \beta| + \frac{1}{2}\sum_{i=1}^n e_i\bullet\beta + 
 \sum_{\gamma_i\in \on{wt}(\mathfrak{n}_{\beta_k}^-)} \gamma_i \bullet \beta
 \ed
 \bd
 = \frac{1}{2}(2(n-k)k) + \frac{1}{2} k + -(n-k)k =  \frac{k}{2}.
 \ed
 Consider the character $c\sum_{i=1}^n e_i$ on $\mathfrak{gl}_n$.  Note that, for the space $W$ above, 
 $\ds\rho = \frac{-1}{2} \sum_{i=1}^n e_i$ (since $\mathfrak{gl}_n$ is reductive, its weights sum to zero).   Write $c'\sum_{i=1}^n e_i = c\sum_{i=1}^n e_i - \rho$.  Theorem \ref{t-exactness} says that $t$-exactness holds provided that for $1\leq k \leq n$,
 \bd
 \left(c'\sum_{i=1}^n e_i\right) \bullet \beta_k \notin I(\beta_k) + \frac{k}{2} = \mathbb{Z}_{\geq 0} + \frac{k}{2}.
 \ed
 Since $\sum_{i=1}^n e_i \bullet \beta_k  = k$, this becomes
$\ds c'k \notin \mathbb{Z}_{\geq 0} + \frac{k}{2}$
 or $\ds c'\notin \frac{1}{k}\mathbb{Z}_{\geq 0} + \frac{1}{2}$.  Since $\ds c = c'-\frac{1}{2}$, we conclude that $t$-exactness holds provided 
$\ds c\notin \bigcup_{k=1}^n \frac{1}{k}\mathbb{Z}_{\geq 0}.$
Under the conventions of \cite{GGS}, our $c$ corresponds to their $-c$; hence in the notation of \cite{GGS} we have shown that
$t$-exactness holds provided $c$ is not a rational number of the form $\ds\frac{a}{b}$ for $a\leq 0$, $1\leq b\leq n$.  
 Now by \cite{GGS}, Theorem 2.8, we conclude:
 \begin{corollary}
 Exactness holds for microlocalization of the type $A$ spherical Cherednik algebra $eH_c e$ provided 
 \bd
 c\notin \Big\{-1 + \frac{a}{b} \;\; \Big| \;\; b\in \{1,2,\dots, n\}, a\in \mathbb{Z}_{\leq 0}\Big\}.
 \ed 
\end{corollary}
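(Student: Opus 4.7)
The plan is to specialize Theorem~\ref{t-exactness} to the Hamiltonian reduction realization of the spherical Cherednik algebra, namely $W = \mathfrak{gl}_n\times\C^n$ with the natural $G=GL_n$-action and the polarization $\chi=\mathsf{det}$. By \cite{GGS}, Theorem~2.8, for suitable values of the parameter the algebra $U_c$ obtained via quantum Hamiltonian reduction from these data is isomorphic to $eH_{c'}e$ for a $c'$ differing from our $c$ by an explicit shift; it is this isomorphism that will translate the conclusion of Theorem~\ref{t-exactness} into exactness of microlocalization for $eH_ce$.

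First I would carry out the combinatorics of the $KN$ stratification of $T^*W$, which in fact has already been done in the body of this section. The Weyl group action reduces the enumeration of KN $1$-parameter subgroups to a list $\beta_1,\ldots,\beta_n$, where $\beta_k$ corresponds to $\sum_{i=1}^k e_i\in\mathfrak{t}$. For each $\beta_k$ the weights of the maximal torus on $W$ pair with $\beta_k$ to values in $\{-1,0,1\}$, from which one reads off $I(\beta_k)=\mathbb{Z}_{\geq 0}$ and the numerical shift equal to $k/2$. Plugging into condition~\eqref{main formula} and accounting for the $\rho$-shift $\rho=-\tfrac12\sum e_i$ (since $\mathfrak{gl}_n$ is reductive), the hypothesis of Theorem~\ref{t-exactness} becomes $c'\notin \tfrac1k\mathbb{Z}_{\geq 0}+\tfrac12$ for each $1\le k\le n$, or equivalently $c = c'-\tfrac12\notin\bigcup_{k=1}^n\tfrac{1}{k}\mathbb{Z}_{\geq 0}$.

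Next I would invoke Theorem~2.8 of \cite{GGS} to identify the quantum Hamiltonian reduction $U_c$ with (a spherical subalgebra of) the rational Cherednik algebra at parameter $-1+c$ (in the conventions of \cite{GGS}, under which their $c$ corresponds to our $-c$). Combined with the standard microlocalization framework in either of the forms of Section~\ref{W framework} or~\ref{McN framework}, the $t$-exactness conclusion of Theorem~\ref{t-exactness} transports to exactness of the global sections functor on the microlocalized category of $eH_ce$-modules. Writing the forbidden set of parameters in the \cite{GGS} normalization yields precisely
\[
c\in\Big\{-1+\tfrac{a}{b}\;\Big|\; b\in\{1,\ldots,n\},\ a\in\mathbb{Z}_{\leq 0}\Big\}
\]
as the set to avoid, giving the corollary.

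The only step demanding real care, as opposed to bookkeeping, is the verification that the $KN$ $1$-parameter subgroups listed above exhaust $\mathsf{KN}$ and that each $\beta_k$ does contribute (i.e.\ its stratum meets $\mu^{-1}(0)$ in a coisotropic subset, so Lemma~\ref{KN coisotropic lemma} does not shrink the list further). This is where a wrong sign or a missed convex-hull minimum would silently weaken the exactness range. Everything else is a direct application of the already-computed shift formula together with the algebra identification of \cite{GGS}.
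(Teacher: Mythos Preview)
Your proposal is correct and follows essentially the same approach as the paper: specialize Theorem~\ref{t-exactness} to $W=\mathfrak{gl}_n\times\C^n$, use the KN computation carried out in the section to obtain $c\notin\bigcup_{k=1}^n\frac{1}{k}\mathbb{Z}_{\geq 0}$, and then invoke \cite{GGS}, Theorem~2.8 together with the sign convention (their $c$ is our $-c$) to translate into the stated range for $eH_ce$. One small remark: your caution about whether each $\beta_k$ actually lies in $\mathsf{KN}$ is relevant only to sharpness, not to correctness---including superfluous $\beta_k$ would only make the sufficient condition more restrictive, so the corollary as stated remains valid regardless; the direction that matters for validity is that the list $\beta_1,\dots,\beta_n$ \emph{exhausts} the KN subgroups, which the convex-hull analysis in the section establishes.
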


\bibliographystyle{alpha}

\begin{thebibliography}{BLPW2}








\bibitem[BKu]{BKu} G. Bellamy and T. Kuwabara, On deformation quantizations of hypertoric varieties,
{\em Pacific J. Math.} {\bf 260} (2012), no. 1, 89--127.

\bibitem[BDMN]{BDMN} G. Bellamy, C. Dodd, K. McGerty, and T. Nevins, Categorical cell decomposition of quantized symplectic algebraic varieties, {\tt arXiv:1311.6804}.






\bibitem[BB]{BB} A. Bia\l ynicki-Birula, Some theorems on actions of algebraic groups, {\em Ann. of Math. (2)} {\bf 98} (1973), no. 3, 480--497.




\bibitem[BPW]{BPW} T. Braden N. Proudfoot, and B. Webster, Quantizations of conical symplectic resolutions I: local and global structure, 
{\tt arXiv:1208.3863}.




\bibitem[DK]{DK} C. Dodd and K. Kremnizer, A localization theorem for finite $W$-algebras, {\tt arXiv:0911.2210}.








\bibitem[EGGO]{EGGO} P. Etingof, W.-L. Gan, V. Ginzburg, and A. Oblomkov, 
Harish-Chandra homomorphisms and symplectic reflection algebras for wreath products,
{\em Publ. Math. Inst. Hautes \'Etudes Sci.} {\bf 105} (2007), 91--155.








\bibitem[GG]{GG} W. L. Gan and V. Ginzburg, Quantization of Slodowy slices, {\em Int. Math. Res. Not.} (2002), no. 5, 243--255.


\bibitem[GGS]{GGS} V. Ginzburg, I. Gordon, and J. T. Stafford, Differential operators and Cherednik algebras, {\em Selecta Math. (N.S.)} {\bf 14} (2009), no. 3-4, 629--666.



\bibitem[GS1]{GS1} I. Gordon and J. T. Stafford, Rational Cherednik algebras and Hilbert schemes,
{\em Adv. Math.} {\bf 198} (2005), no. 1, 222--274.



\bibitem[GS2]{GS2} I. Gordon and J. T. Stafford, Rational Cherednik algebras and Hilbert schemes II: representations
and sheaves, {\em Duke Math. J. } {\bf 132} (2006), no. 1, 73--135.

\bibitem[Go]{Gordon} I. Gordon, A remark on rational Cherednik algebras and differential operators on the 
cyclic quiver, {\em Glasg. Math. J.} {\bf 48} (2006), no. 1, 145--160.


\bibitem[GS]{GS} V. Guillemin and S. Sternberg, {\em Symplectic techniques in physics}, reprinted version, Cambridge University Press, 1990.


\bibitem[Ho]{Holland} M. Holland, Quantization of the Marsden-Weinstein reduction for extended Dynkin quivers,
{\em Ann. Sci. \'Ecole Norm. Sup. (4)} {\bf 32} (1999), no. 6, 813--834.






\bibitem[Ka]{Kashiwara} M. Kashiwara, {\em Systems of microdifferential equations}, Birkh\"auser, Boston, 1983.

\bibitem[Ka1]{KashiwaraEqui} M. Kashiwara, Equivariant derived category and representations of real semisimple Lie groups, in {\em Representation theory and complex analysis}, Lecture Notes in Mathematics 1931, Springer-Verlag, 2008.

\bibitem[KR]{KR} M. Kashiwara and R. Rouquier, Microlocalization of rational Cherednik algebras, {\em Duke 
Math. J.} {\bf 144} (2008), no. 3, 525--573.

\bibitem[KS]{KSDQ} M. Kashiwara and P. Schapira, Deformation quantization modules, {\em Asterisque} Soc. Math. France, {\bf 345} (2012).



\bibitem[Ke]{Kempf} G. Kempf, Instability in invariant theory, {\em Ann. of Math. (2)} {\bf 108} (1978), no. 2, 299--316.

\bibitem[Kin]{King} A. King, Moduli of representations of finite-dimensional algebras, {\em Quart. J. Math. Oxford Ser. (2)} {\bf 45} (1994), no. 180, 515--530.

\bibitem[Kir]{Kirwan} F. Kirwan, {\em Cohomology of quotients in symplectic and algebraic geometry}, Princeton University Press, Princeton, 1984.


\bibitem[Kn]{Knop} F. Knop, Weyl groups of Hamiltonian manifolds, I, {\tt arXiv:dg-ga/9712010}.


\bibitem[LiA]{LiA} Y. Li, A geometric realization of modified quantum algebras, {\tt arXiv:1007.5834}.

\bibitem[LiB]{LiB} Y. Li, On geometric realizations of quantum modified algebras and their canonical bases II, {\tt arXiv:1009.0838}.



\bibitem[Lo1]{Losev1} I. Losev,  Symplectic slices for actions of reductive groups, {\em Mat. Sb.} {\bf 197} (2006), no. 2, 75--86.

\bibitem[Lo2]{Losev2} I. Losev, Isomorphism of quantizations via quantization of resolutions, {\em Advances in Math.} {\bf 231} (2012), 1216--1270.

\bibitem[McN]{McN1} K. McGerty and T. Nevins, Derived equivalence for quantum symplectic resolutions, {\em Selecta Math.}, accepted for publication, {\tt arXiv:1108.6267}.



\bibitem[Mum]{Mumford} D. Mumford, J. Fogarty, and F. Kirwan, {\em Geometric Invariant Theory}, third enlarged edition, Springer-Verlag, Berlin, 1994.

\bibitem[MVdB]{MVdB} I. Musson and M. Van den Bergh, Invariants under tori of rings of differential operators and related topics, {\em Mem. Amer. Math. Soc.} {\bf 136} (1998), no. 650.

\bibitem[Ob]{Oblomkov} A. Oblomkov, Deformed Harish-Chandra homomorphism for the cyclic quiver, {\em Math. Res. Lett.} {\bf 14} (2007), no. 3, 359--372.




\bibitem[Su]{Su} H. Sumihiro, Equivariant completion II, {\em Kyoto J. Math.} {\bf 15} (1975), no. 3, 573--605.

\bibitem[T]{Teleman} C. Teleman, The quantization conjecture revisited, {\em Ann. of Math. (2)} {\bf 152} (2000), no. 1, 1--43.




\bibitem[We]{Web} B. Webster, A categorical action on quantized quiver varieties, {\tt arXiv:1208.5957}.

\bibitem[Zh]{Zheng} Hao Zheng, Categorification of integrable representations of quantum groups, {\tt arXiv:0803.3668}.

\end{thebibliography}

\end{document}